\documentclass[11pt]{amsart}
\usepackage{amsmath,amssymb, graphicx, amscd,latexsym,here}
\makeatletter
\newtheorem{Theorem}{Theorem}
\newtheorem{Lemma}[Theorem]{Lemma}
\newtheorem{Corollary}[Theorem]{Corollary}
\newtheorem{Proposition}[Theorem]{Proposition}

\newtheorem{Definition}[Theorem]{Definition}
\newtheorem{Remark}[Theorem]{Remark}

\newtheorem{Example}[Theorem]{Example}

\newtheorem{Observation}[Theorem]{Observation}
\newtheorem{Assertion}[Theorem]{Assertion}

\newcommand{\eps}{\varepsilon}
\newcommand{\ome}{\omega}

\newcommand\la{\lambda}

\newcommand\al{\alpha}

\newcommand\si{\sigma}
\newcommand\be{\beta}
\newcommand\Si{\Sigma}
\newcommand\ga{\gamma}
\newcommand\Ga{\Gamma}
\newcommand\de{\delta}
\newcommand\De{\Delta}

\newcommand\cN{\mathcal  N}

\newcommand\cP{\mathcal  P}

\newcommand\cR{\mathcal  R}

\newcommand\cU{\mathcal  U}

\newcommand\cV{\mathcal  V}

\newcommand\cF{\mathcal F}
\newcommand\BC{ {\mathbb C}}
\newcommand\BN{ {\mathbb  N}}

\newcommand\BZ{{\mathbb  Z}}
\newcommand\BR{ {\mathbb  R}}
\newcommand\BP{ {\mathbb  P}}

\newcommand\bfu{\mbox {\bf  u}}

\newcommand\bfv{\mbox {\bf  v}}

\newcommand\bfx{\mbox {\bf  x}}

\newcommand\bfw{\mbox {\bf  w}}
\newcommand\bfm{\mbox {\bf  m}}
\newcommand\bfn{\mbox {\bf  n}}

\newcommand\bfz{\mbox {\bf  z}}
\newcommand\bfy{\mbox {\bf  y}}
\newcommand\bfa{\mbox {\bf  a}}

\newcommand\bfb{\mbox {\bf  b}}

\newcommand\nl{\newline}
\newcommand\what{\widehat}
\newcommand\tl{\tilde}
\newcommand\wtl{\widetilde}

\newcommand\ord{{\rm{ord}\/}}
\newcommand\Int{\rm{Int}\/}

\newcommand\Ker{{\rm{Ker}\/}}
\newcommand\lkn{{\rm{lkn}\/}}

\newcommand\id{{\rm{id}}}

\newcommand\modulo{\rm{modulo}\/}

\newcommand\rank{\rm{rank}\/}
\newcommand\codim{\rm{codim}\/}

\newcommand\Cone{\rm{Cone}\/}

\newcommand\rdeg{{\rm{rdeg}\/}}
\newcommand\pdeg{{\rm{pdeg}\/}}

\newcommand\inv{^{-1}}

\def\mapright#1{\smash{\mathop{\longrightarrow}\limits^{{#1}}}}

\def\mapdown#1{\Big\downarrow\rlap{$\vcenter{\hbox{$#1$}}$}}

\def\mapup#1{\Big\uparrow\rlap{$\vcenter{\hbox{$#1$}}$}}






\def\inv{^{-1}}

\begin{document}
\title[Non-degenerate mixed functions
]
{Non-degenerate mixed functions
}

\author
[M. Oka ]
{Mutsuo Oka }
\address{\vtop{
\hbox{Department of Mathematics}
\hbox{Tokyo  University of Science}
\hbox{26 Wakamiya-cho, Shinjuku-ku}
\hbox{Tokyo 162-8601}
\hbox{\rm{E-mail}: {\rm oka@rs.kagu.tus.ac.jp}}
}}

\keywords {Mixed function, polar weighted polynomial, Milnor fibration}
\subjclass[2000]{14J70,14J17, 32S25}

\begin{abstract}
Mixed functions are analytic functions in variables
$z_1,\dots, z_n$ and their conjugates $\bar z_1,\dots, \bar z_n$.
We introduce the notion of Newton non-degeneracy for mixed  functions
 and develop a basic tool for the study of mixed hypersurface singularities.
We  show the
 existence of a canonical resolution of the singularity,
and the existence of the  Milnor fibration under the strong
 non-degeneracy condition. 
\end{abstract}
\maketitle

\maketitle

\section{Introduction}
Let $f(\bfz)$ be a holomorphic function of $n$-variables $z_1,\dots,
z_n$ such that $f({\bf 0})=0$. As is well-known,
J. Milnor proved that there exists a positive number $\eps_0$ such that
the argument mapping
$f/|f|:S_\eps^{2n-1}\setminus K_\eps \to S^1$  is a locally trivial
fibration
for any positive $\eps$ with $\eps\le \eps_0$ where
$K_\eps=f\inv(0)\cap S_\eps^{2n-1}$ (\cite{Milnor}). 
In the same book, he proposed to study the links coming from a pair of real-valued real analytic
functions $g(\bfx,\bfy),h(\bfx,\bfy)$ where $\bfz=\bfx+\bfy i$.
Namely putting $f(\bfx,\bfy):=g(\bfx,\bfy)+i\, h(\bfx,\bfy):\BR^{2n}\to
\BC$,
he proposed to study the  condition for 
$f/|f|: S_\eps^{2n-1}\setminus K_\eps\to S^1$ to be a fibration. 
This  is an interesting problem. 
In fact, if one can find such a pair of analytic functions 
$g,h$,  it may give an interesting  link variety $K_\eps$ whose
complement $S_\eps^{2n-1}\setminus K_\eps$ is fibered over $S^1$ where $K_\eps$
cannot come from any complex analytic links.
 The difficulty is that for an arbitrary choice of $g,h$, it is usually
not a fibration.
A breakthrough is given by the work of Ruas, Seade and Verjovsky
\cite{R-S-V}.
After this work, 
many examples of pairs $\{g,h\}$ which  give real Milnor fibrations
have been investigated.
However
in  most papers,  certain restricted types of
functions are mainly considered
(\cite{G-L-M,G-L-M2,SeadeBook,Pichon-Seade,F-P-W,Pichon1,Bodin-Pichon}).

The purpose  of this paper is to propose a wide class of pairs $\{g,h\}$  such that the corresponding 
mapping $f=g+i\,h$ defines a  Milnor fibration.
We consider a complex valued  analytic function  $f$ expanded in a convergent
 power series of variables  $\bfz=(z_1,\dots, z_n)$ and
   $\bar\bfz=(\bar z_1,\dots, \bar z_n)$
$$f(\bfz,\bar \bfz)=\sum_{\nu,\mu}c_{\nu,\mu}\,\bfz^\nu\bar\bfz^\mu$$
where 
$ \bfz^\nu=z_1^{\nu_1}\cdots z_n^{\nu_n}$ for $ \nu=(\nu_1,\dots,\nu_n)$
(respectively
$\bar\bfz^\mu=\bar z_1^{\mu_1}\cdots\bar z_n^{\mu_n}$  for 
$ \mu=(\mu_1,\dots,\mu_n)$)
as usual.
   Here $\bar z_j$ is the complex conjugate of $z_j$. 
 We call $f(\bfz,\bar \bfz)$ {\em a mixed analytic function} (or {\em a mixed polynomial, if
 $f(\bfz,\bar \bfz)$ is a polynomial})
of
$z_1,\dots, z_n$. We are interested in the topology of  the hypersurface
$V=\{\bfz\in \BC^n\,|\, f(\bfz,\bar\bfz)=0\}$,
 which we call {\em a mixed hypersurface}.
Here we use the terminology {\it hypersurface} in order to point out the
 similarity 
with complex analytic hypersurfaces. We will see later that 
 $\codim_{\BR}\,V=2$ 
if $V$ is non-degenerate (Theorem \ref{isolatedness}).
We denote the set of mixed functions of variables $\bfz,\bar\bfz$
by $\BC\{\bfz,\bar\bfz\}$.
This approach is equivalent to the original one.
In fact, writing  $\bfz=\bfx+ i\, \bfy $  with $z_j=x_j+ i\,y_j $
 $j=1,\dots,n$,  and
using real variables $\bfx=(x_1,\dots, x_n)$
and $\bfy=(y_1,\dots, y_n)$, and dividing $f(\bfz,\bar\bfz)$
 in the real and the imaginary parts
so that
$f(\bfx,\bfy)=g(\bfx,\bfy)+ i\,h(\bfx,\bfy)$ where $g:=\Re\, f,\,h:=\Im\,f$,
we can  see
that $V$ is defined by two real-valued 
analytic functions $g(\bfx,\bfy),\,h(\bfx,\bfy)$  of
 $2n$-variables $x_1,y_1,\dots, x_n,y_n$.
Conversely, for a given
real analytic variety $W=\{g(\bfx,\bfy)=h(\bfx,\bfy)=0\}$ which is defined by two real-valued analytic
functions $g,\,h$,
we can consider $W$ as a mixed hypersurface by introducing a mixed function
$f(\bfz,\bar\bfz)=0$ where
\[
 f(\bfz,\bar\bfz):=g(\frac{\bfz+\bar\bfz}2,\frac{\bfz-\bar\bfz}{2\, i})
\,+\,i\,h(\frac{\bfz+\bar\bfz}2,\frac{\bfz-\bar\bfz}{2 \, i}).
\]
The advantage of our view point is that we can use rich techniques of
complex hypersurface singularities.
For complex hypersurfaces defined by  holomorphic functions, the notion of the
non-degeneracy in the sense of the  Newton boundary
 plays an important role for the resolution of
singularities
and the determination of the Milnor fibration
(\cite{Kouchnirenko,Varchenko,ResHyp87,Principal90,Okabook}).
%
We will introduce the notion of {\em non-degeneracy}
 for mixed functions or mixed polynomials
and prove basic properties in \S 2 and \S 3.

In \S 4, we will give a canonical resolution  of
 mixed hypersurface singularities.
First we take an admissible toric modification
$\what \pi: X\to \BC^n$. This does not resolve the singularities but it
turns out that we only need a real modification or a polar modification
after the toric modification
to complete the resolution (Theorem \ref{real-resolution}).

In \S 5, we consider the Milnor fibration of a given mixed function $f(\bfz,\bar\bfz)$.
 It turns out that {\em  the non-degeneracy} is not enough for the existence of
the Milnor fibration of $f$. We need {\em the strong non-degeneracy} of 
 $f(\bfz,\bar\bfz)$  which guarantees the existence of the  Milnor fibration
 (Theorem
 \ref{MilnorFibering1},Theorem \ref{Milnor2}).
We show that the Milnor fibrations of  the first type and of the second type,
\[
 f/|f|: S_\eps\setminus K_\eps\to S^1\quad
\text{and}
\quad
f:\partial E(r,\de)^*\to S_\de^1,
\]
are equivalent (Theorem \ref{equivalence}).
 We also show that for a polar weighted homogeneous polynomial, the global
 fibration
is equivalent to the above two fibrations (Theorem 33).

In \S 6, 
 we will see that the mixed singularities are much more complicated
than the complex singularities
and that the topological equivalence class  is not a  combinatorial invariant
even
in the easiest case of plane curves.

In \S 7, we discuss  Milnor fibrations for non-isolated mixed
singularities under the super strong non-degeneracy condition
(Theorem \ref{NIM}).

In \S 8, we give an A'Campo type formula for the zeta function of the
Milnor fibration in the case of mixed curves (Theorem \ref{MixAC}).

This paper is a continuation of the previous one \cite{OkaPolar}
and we use the same notations. This paper consists of the following
sections.
We hope this paper provides a systematical method to study mixed singularities.

{\small
\begin{center} Contents
\end{center}
\begin{enumerate}
\item[Section 1.] Introduction
\item[Section 2.] Newton boundary and  non-degeneracy of mixed
		 functions
\item[Section 3.] Isolatedness of the singularities
\item[Section 4.] Resolution of the singularities
\item[Section 5.] Milnor fibration
\item[Section 6.] Curves defined by mixed functions
\item[Section 7.] Milnor fibration for mixed polynomials with
  non-isolated singularities
\item[Section 8.] Resolution of a polar type  and the  zeta function
\end{enumerate}
\noindent Below are
notations we use frequently in this paper:
\begin{eqnarray*}\begin{split}
& S_r^{2n-1},\,S_r=\{\bfz=(z_1,\dots, z_n)\in \BC^n\,|\, \|\bfz\|=r\},\,
\text{(sphere of the radius $r$)}\\
&  \|\bfz\|=\sqrt{|z_1|^2+\cdots+|z_n|^2}\\
&B_r^{2n},\,B_r=\{\bfz=(z_1,\dots, z_n)\in \BC^n\,|\, \|\bfz\|\le r\}\,\,
\text{(ball of the radius $r$)}\\
&\BC^I=\{\bfz=(z_1,\dots, z_n)\,|\, z_j=0,\,j\notin I\},
\,
B_r^I=\{\bfz\in \BC^I\,|\, \|\bfz\|\le r\}\\
&\BC^{*I}=\{\bfz=(z_1,\dots, z_n)\,|\, z_j=0 \iff j\notin I\}\\
&\BC^{*n}=\BC^{*I},\,B^{*n}=B^{*I}\, \,\text{with}\,\,I=\{1,\dots,n\}\\
&\BR^{+n}=\{(x_1,\dots,x_n)\in \BR^n\,|\, x_j\ge 0,\,j=1,\dots,n\}\\
&(\bfz,\bfw)=z_1\bar w_1+\cdots+z_n\bar w_n:\,\,\text{hermitian
 inner product}\\
&\Re(\bfz,\bfw)=\Re(z_1\bar w_1+\cdots+z_n\bar w_n):\,\,
\text{real Euclidean inner product}\\
&D(\de):=\{\eta\in \BC\,|\,|\eta|\le \de\},\,\,
D(\de)^*:=\{\eta\in \BC\,|\,0<|\eta|\le \de\}\\
&S_\de^1:=\{\eta\in \BC\,|\,|\eta|=\de\}.
\end{split}
\end{eqnarray*}}
\section{Newton boundary and non-degeneracy of mixed functions}
\subsection{Polar weighted homogeneous polynomials}
\subsubsection{Radial degree and polar degree}
Let $M=\bfz^{\nu}\bar\bfz^\mu$ be a mixed monomial
where $\nu=(\nu_1,\dots, \nu_n)$,  $\mu=(\mu_1,\dots,\mu_n)$ and let
$P={}^t(p_1,\dots, p_n)$ be a weight vector.
We define {\em the radial degree of $M$}, $\rdeg_P\, M$
and {\em the polar degree of $M$}, $\pdeg_P\,M$ with respect to $P$
by 
\[
\rdeg_P\,M= \sum_{j=1}^np_j(\nu_j+\mu_j),\quad
\pdeg_P\,M=\sum_{j=1}^n p_j(\nu_j-\mu_j).
\]
\subsubsection{Weighted homogeneous polynomials}
Recall that a polynomial $h(\bfz)$  is called
{\em a  weighted homogeneous polynomial with weights 
$P={}^t(p_1,\dots, p_n)$ }  if $p_1,\dots, p_n$ are integers and 
 there exists a positive integer $d$ so that 
\[
 f(t^{p_1}z_1,\dots, t^{p_n}z_n)=t^d f(\bfz),\,\, t\in \BC.
\]
The integer
$d$ is called the degree of $f$ with respect to the weight vector $P$.

A mixed polynomial $f(\bfz,\bar\bfz)=\sum_{i=1}^\ell c_i \,\bfz^{\nu_i}\bar\bfz^{\mu_i}$
is called {\em  a  radially weighted homogeneous polynomial }
if 
there exist integers $q_1,\dots, q_n\ge 0$ and $d_r>0$
such that 
 it satisfies the equality:
\[
 f(t^{q_1}z_1,\dots, t^{q_n}z_n,t^{q_1}\bar z_1,\dots, t^{q_n}\bar z_n)=
t^{d_r}f(\bfz,\bar\bfz),\quad t\in \BR^{*}.
\]
Putting $Q={}^t(q_1,\dots, q_n)$, this is equivalent to
$\rdeg_Q\,\bfz^{\nu_i}\bar\bfz^{\mu_i}=d_r$
for $i=1,\dots, \ell$ with $c_i\ne 0$.
Write $f=g+i\, h$ so that $g,\,h$ are polynomials with real coefficients of $2n$-variables
$(x_1,y_1,\dots, x_n,y_n)$.
If $f$ is a  radially weighted homogeneous polynomial of type
$(q_1,\dots,q_n;d_r)$,  $g(\bfx,\bfy)$ and $h(\bfx,\bfy)$ are 
weighted homogeneous polynomials of type
$(q_1,q_1,\dots,q_n,q_n;d_r)$ (i.e., $\deg x_j=\deg\,y_j=q_j$).

A polynomial $f(\bfz,\bar \bfz)$ is called {\em a polar weighted
homogeneous polynomial} if there exists 
a weight vector $(p_1,\dots, p_n)$ and a non-zero integer $d_p$ such that
\begin{eqnarray*}
& f(\la^{p_1}z_1,\dots, \la^{p_n}z_n,\bar\la^{p_1}\bar
 z_1,\dots,\bar\la^{p_n}\bar z_n)=\la^{d_p}f(\bfz,\bar\bfz),\,\la\in \BC^*,\,|\la|=1
\end{eqnarray*}
 where 
 $\gcd(p_1,\dots, p_n)=1$. Usually we assume that $d_p>0$.
This is equivalent to
\[
 \pdeg\,_P\,\bfz^{\nu_i}\bar\bfz^{\mu_i}=d_p,\quad i=1,\dots, \ell.
\]
 Here the weight $p_i$ can be zero or a negative integer.
The weight vector  $(p_1,\dots, p_n)$ 
is  called {\em  the polar weights}  and
$d_p$ is called   the {\em polar degree} respectively.
This notion was first introduced by  Ruas-Seade-Verjovsky {\cite{R-S-V}
and Cisneros-Molina \cite{Molina}.
 In \cite{OkaPolar}, 
we have assumed that a polar weighted homogeneous polynomial
is  also a radially
weighted homogeneous polynomial.
Although  it is not necessary to be assumed, we will only consider such
polynomials
in this paper.

Recall that the radial weights and polar weights define $\BR^*$-action
and $S^1$-action on $\BC^n$ respectively by
\begin{eqnarray*}
& t\circ \bfz=(t^{q_1}z_1,\dots, t^{q_n}z_n),\quad 
t\circ \bar\bfz=(t^{q_1}\bar z_1,\dots, t^{q_n}\bar z_n),\quad
t\in \BR^*
\\
& \la\circ \bfz=(\la^{p_1}z_1,\dots, \la^{p_n}z_n),\quad \la\circ \bar
 \bfz=\overline{\la\circ \bfz},\quad \la\in
 S^1\subset \BC
 \end{eqnarray*}
In other words, this  is an $\BR^*\times S^1$ action on $\BC^n$.
\begin{Lemma}\label{transversality}
Let $f(\bfz,\bar\bfz)$ be a radially weighted homogeneous 
polynomial,
 $V=\{\bfz\in \BC^{n}\,|\, f(\bfz,\bar\bfz)=0\}$ and
 $V^*=V\cap \BC^{*n}$.
Assume  that $V\setminus\{O\}$ (respectively
 $V^*$) is smooth and $\codim_{\BR}V=2$. If the radial weight vector is strictly positive,
namely $q_j> 0$ for any $j=1,\dots, n$,
 the sphere
$S_r$
intersects transversely with $V\setminus\{O\}$ (resp. with  $V^*$ ) for
 any $r>0$.
\end{Lemma}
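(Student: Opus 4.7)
The plan is to exploit the invariance of $V$ under the radial $\BR^*$-action and use the Euler vector field as a witness to transversality.

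First I would observe that, since $f$ is radially weighted homogeneous of type $(q_1,\dots,q_n;d_r)$, the variety $V$ is invariant under the $\BR^+$-action $t\circ\bfz=(t^{q_1}z_1,\dots,t^{q_n}z_n)$; indeed, if $\bfz\in V$ then $f(t\circ\bfz,t\circ\bar\bfz)=t^{d_r}f(\bfz,\bar\bfz)=0$. The same invariance holds on the torus $\BC^{*n}$, so $V^*$ is also $\BR^+$-invariant.

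Next, for any $\bfz_0\in V\setminus\{O\}$ (resp.\ $\bfz_0\in V^*$), the velocity at $t=1$ of the orbit $t\mapsto t\circ\bfz_0$ is the Euler vector field
\[
E(\bfz_0)\defby(q_1z_{0,1},\dots,q_nz_{0,n})\in T_{\bfz_0}\BC^n,
\]
regarded as a real tangent vector. Because the orbit lies in the smooth manifold $V\setminus\{O\}$ (resp.\ $V^*$) and $\bfz_0\neq O$ guarantees the orbit is non-constant, one gets $E(\bfz_0)\in T_{\bfz_0}(V\setminus\{O\})$ (resp.\ in $T_{\bfz_0}V^*$).

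Now I would compute the real inner product with the outward normal $\bfz_0$ to $S_r$ at $\bfz_0$:
\[
\Re(E(\bfz_0),\bfz_0)=\sum_{j=1}^n q_j|z_{0,j}|^2>0,
\]
which is strictly positive because all $q_j>0$ by hypothesis and $\bfz_0\neq O$. Hence $E(\bfz_0)$ is not tangent to $S_r$.

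Finally, since $T_{\bfz_0}V$ has real codimension $2$ and $T_{\bfz_0}S_r$ has real codimension $1$ in $T_{\bfz_0}\BC^n$, transversality at $\bfz_0$ is equivalent to the existence of a vector in $T_{\bfz_0}V$ not contained in $T_{\bfz_0}S_r$; the Euler vector $E(\bfz_0)$ is exactly such a vector. Running this over every point of $V\setminus\{O\}$ (resp.\ $V^*$) yields the desired transversality for every $r>0$. There is no serious obstacle in this argument; the only thing to check with care is that strict positivity of every $q_j$ is used in the inner-product computation (without it, $\bfz_0$ could concentrate on coordinates with weight zero and the inner product could vanish).
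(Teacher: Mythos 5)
Your proof is correct and is essentially the same argument as the paper's, just presented in direct rather than contrapositive form: the paper assumes failure of transversality (linear dependence of $d\phi, dg, dh$) and derives a contradiction by differentiating $\phi$ along the radial orbit, whereas you observe directly that the orbit's velocity vector (the Euler field) lies in $T_{\bfz_0}V$ and pairs positively with the outward normal $\bfz_0$ to $S_r$. The key computation $\Re(E(\bfz_0),\bfz_0)=\sum_j q_j|z_{0,j}|^2>0$ is precisely the paper's $\frac{d}{dt}\phi(\ell(t))|_{t=1}>0$, so the two proofs share the same essential content.
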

We are mainly  considering the case that $V\setminus\{O\}$ has no
mixed singularity in the sense of \S \ref{mixed singular}.
\begin{proof}
This is essentially the same with Proposition 4 in \cite{OkaPolar}.
In Proposition 4, we have assumed that $f(\bfz,\bar\bfz)$ is  polar
weighted homogeneous
 but we did not use this assumption in the proof. The radial
action is enough as we will see below.
 Assume that three vectors $dg, \,dh,\, d\phi$
are linearly dependent at $\bfz_0=(\bfx_0,\bfy_0)\in V^*$, where
 $f(\bfz,\bar\bfz)=g(\bfx,\bfy)+ih(\bfx,\bfy)$ and
 $\phi(\bfx,\bfy)=\sum_{j=1}^n (x_j^2+y_j^2)$.
As  $V\setminus\{O\}$ (resp. $V^*$) is non-singular, we can find real numbers $\al,\be$ so that 
$d\phi(\bfx_0,\bfy_0)=\al\,dg(\bfx_0,\bfy_0)+\be\,dh(\bfx_0,\bfy_0)$.
Here $d\phi,\,dg,\,dh$ are the  respective gradient vectors of the functions $\phi,g,h$. For example, 
$dg(\bfx,\bfy)=(\frac{\partial g}{\partial x_1},\frac{\partial g}{\partial y_1}\dots,\frac{\partial g}{\partial x_n},
\frac{\partial g}{\partial y_n})$.
Let $\ell(t)=(t\circ \bfx_0,t\circ \bfy_0),\,t\in \BR^+$ be
 the orbit of $\bfz_0$ by the radial action.
Let $\bfv$ be the tangent vector of the orbit. Then
we have :
\begin{eqnarray*}\begin{split}
&\ell(t)=(t^{q_1}x_{01},t^{q_1}y_{01},\cdots,t^{q_n}x_{0n},t^{q_n}y_{0n})\\
&\frac{d}{dt}\phi(\ell(t))|_{t=1}=\Re(d\phi(\bfx_0,\bfy_0),\bfv)=2\sum_{i=1}^n
 q_i(x_{0i}^2+y_{0i}^2)>0.
\end{split}\end{eqnarray*}
On the other hand, we also have  the equality:
\begin{eqnarray*}\begin{split}
 \frac{d}{dt}\,\phi(\ell(t))|_{t=1}&=\al\,
		  \Re(dg(\bfx_0,\bfy_0),\bfv)+\be\,
\Re (dh(\bfx_0,\bfy_0),\bfv)\\
&=\al\,\frac{dg(\ell(t))}{dt}|_{t=1}+\be\, \frac{dh(\ell(t))}{dt}|_{t=1}=0.
\end{split}
\end{eqnarray*}
This is an obvious  contradiction to the above inequality.
\end{proof}

\subsection{Newton boundary of a mixed function}\label{Newton boundary}
Suppose that we are given a mixed analytic function
$f(\bfz,\bar \bfz)=\sum_{\nu,\mu}c_{\nu,\mu}\,\bfz^\nu\bar\bfz^\mu$.
We always assume that $c_{0,0}=0$ so that $O\in f\inv(0)$.
We call the variety $V=f\inv(0)$ {\em the mixed hypersurface}.
The {\em  radial Newton polygon} $\Ga_+(f;\bfz,\bar\bfz)$ (at the origin) of  a mixed
function $f(\bfz,\bar \bfz)$ is defined  by the
convex hull of
\[
 \bigcup _{c_{\nu,\mu}\ne 0}(\nu+\mu)+\BR^{+n}.
\]
Hereafter we  call  $\Ga_+(f;\bfz,\bar\bfz)$ simply the Newton polygon of $f(\bfz,\bar\bfz)$.
{\em The Newton boundary} $\Ga(f;\bfz,\bar\bfz)$ is 
defined by the union of compact faces
of $\Ga_+(f)$.
Observe that $\Ga(f)$ is nothing but the ordinary Newton boundary if
$f$
is a complex analytic function.
For a given positive integer vector $P=(p_1,\dots, p_n)$, we associate a linear
function
$\ell_P$ on $\Ga(f)$ defined by 
$\ell_P(\nu)=\sum_{j=1}^n p_j\nu_j$ for $\nu\in \Ga(f)$
 and let $\De(P,f)=\De(P)$ be the face where $\ell_P$ takes
its minimal value. In other words, $P$ gives radial weights for variables
$z_1,\dots, z_n$ by $\rdeg_P\,z_j=\rdeg_P\bar z_j=p_j$
and $\rdeg_P\,\bfz ^{\nu}\bar\bfz^{\mu}=\sum_{j=1}^n p_j(\nu_j+\mu_j)$.
To distinguish the points on the Newton boundary and weight vectors, we denote
by $N$ the set of integer weight vectors
and  denote a vector $P\in N$ by a column vectors. We denote by $ N^+,\,N^{++}$ the
subset of positive or   strictly positive weight vectors respectively.
Thus $P={}^t(p_1,\dots, p_n)\in N^{++}$
(respectively $P\in N^+$) if and only if $p_i>0$ (resp. $p_i\ge 0$)
for any $i=1,\dots, n$.
 We denote the minimal value of $\ell_P$ by $d(P;f)$ or simply
$d(P)$. 
Note that 
\[
 d(P;f)=\min\,\{\rdeg_P\, \bfz^{\nu}\bar\bfz^{\mu}\,|\,c_{\nu,\mu}\ne 0\}.
\]
For a positive weight $P$, we define
{\em the face function $f_P(\bfz,\bar\bfz)$} by
\[
 f_P(\bfz,\bar\bfz)=\sum_{\nu+\mu\in \De(P)} c_{\nu,\mu}\, \bfz^{\nu}\bar\bfz^\mu.
\]
\begin{Example}\label{Ex2}{\rm
Consider a mixed function $f:=z_1^3\bar z_1^2+z_1^2z_2^2+z_2^3\bar z_2$.
The Newton boundary $\Ga(f;\bfz,\bar\bfz)$
has two faces $\De_1,\De_2$ which have weight vectors
$ P:={}^t(2,3)$ and  $ Q:={}^t(1,1)$ respectively.
The corresponding invariants are }
\[
 \begin{split}
&f_P(\bfz,\bar\bfz)=z_1^3\bar z_1^2+z_1^2z_2^2,\,\,d(P;f)=10\\
&f_Q(\bfz,\bar\bfz)=z_1^2z_2^2+z_2^3\bar z_2,\,\, d(Q;f)=4.
\end{split}
\]
\begin{figure}[htb,here]
\setlength{\unitlength}{1bp}
\begin{picture}(600,150)(-100,0)
\put(0,130){\special{epsfile=./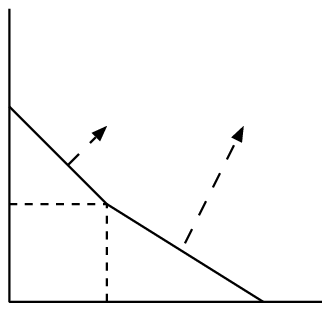 vscale=1 hscale=1}}
\put(30,100){$Q$}
\put (60,100){$P$}
\put (60,60){$\De_1$}
\put (10,95){$\De_2$}
\put (-8,70) {$2$}
\put (-8,105){$4$}
\put (25,30) {$2$}
\put(75,30){$5$}
\end{picture}
\vspace{-1.0cm}
\caption{$\Ga(f)$}
\label{Example2}
\end{figure}
\end{Example}
It is sometimes  important to consider the 
convex hull of vertices  $\what\De(P)$ in $\BR^{n}\times \BR^{n}$
which is defined by
\[
 \what \De(P)=\text{convex hull of}\left\{
(\nu,\mu)\in \BR^{n}\times \BR^{n}
\,|\,c_{\nu,\mu}\ne 0,\,\nu+\mu\in \De(P)
\right\}
\]
Let $S:\BR^{n}\times \BR^{n}\to \BR^{n}$ be the map defined by 
$(\nu,\mu)\mapsto \nu+\mu$. Then $\De(P)=S(\what \De(P))$ by the
definition.
We call $\what \De(P)$ {\em the mixed face of $\Ga(f)$ } and  
$\De(P)$ {\em the radial face of $\Ga(f)$ with respect
to $P$} respectively, when the distinction is necessary.
\subsection{Non-degenerate functions}
Suppose that  $f(\bfz,\bar\bfz)$ is a given 
  mixed function $f(\bfz,\bar\bfz)$.
For  $P\in N^{++}$,
the face function $f_P(\bfz,\bar\bfz)$ is a radially weighted
homogeneous polynomial of type $(p_1,\dots,p_n;d)$ with $d=d(P;f)$.

\begin{Definition}{\rm
Let $P$ be a strictly positive weight vector. We say that 
$f(\bfz,\bar\bfz)$ is 
{\em non-degenerate } 
 for  $P$,
if  
the fiber $f_P\inv(0)\cap \BC^{*n}$ contains no  critical point of the mapping
$f_P:\BC^{*n}\to \BC$.
In particular, $f_P\inv(0)\cap \BC^{*n}$ is  a smooth real codimension
$2$ manifold or an empty set.
We say that $f(\bfz,\bar\bfz)$ is {\em strongly non-degenerate}
for $P$
if  
the mapping   $f_P:\BC^{*n}\to \BC$ has no critical points.
If $\dim\, \De(P)\ge 1$, we further assume that $f_P:\BC^{*n}\to \BC $
 is surjective
onto $\BC$.


A mixed function $f(\bfz,\bar\bfz)$ is called {\em  non-degenerate}
(respectively {\em strongly non-degenerate})
if $f$ is 
non-degenerate (resp. {\em strongly non-degenerate})
for any strictly positive weight vector $P$.

Consider the function
 $f(\bfz,\bar\bfz)=z_1\bar z_1+\cdots+z_n\bar z_n$. Then $V=f\inv(0)$
is a single point $\{O\}$. By the above definition, $f$ is a
 non-degenerate  mixed function. To avoid such an  unpleasant situation,
we say that a mixed function  $g(\bfz,\bar\bfz)$ is {\em a true
 non-degenerate function} if it satisfies further {\em the
non-emptiness  condition}:

$(NE)$ : For any  $P\in N^{++}$ with $\dim\,\De(P,g)\ge 1$,
the fiber $g_P\inv(0)\cap \BC^{*n}$ is non-empty.
}\begin{Remark}{\rm 
Assume that $f(\bfz)$ is a holomorphic function. Then $f_P(\bfz)$
is a weighted homogeneous polynomial and we have the Euler equality:
\[
d(P;f) f_P(\bfz)=\sum_{i=1}^n p_iz_i \frac{\partial f_P}{\partial z_i}(\bfz).
\]
Thus $f_P:\BC^{*n}\to \BC$ has no critical point over $\BC^*$.
Thus $f$ is non-degenerate for $P$ implies $f$ is strongly
 non-degenerate for $P$. This is also the case if $f_P(\bfz,\bar\bfz)$
is a polar weighted homogeneous polynomial.}
\end{Remark}

\begin{Example}{\rm 
{\rm I}. Consider 
the mixed function $f:=z_1^3\bar z_1^2+z_1^2z_2^2+z_2^3\bar z_2$
which we have  considered in Example \ref{Ex2}.
Then $f$ is strongly non-degenerate for each of the  weight vectors
$P={}^t(2,3),\,Q={}^t(1,1)$.

\vspace{.2cm}\noindent
{\rm II}. Consider a mixed function
\[
 g(\bfz,\bar\bfz)=z_1\bar z_1+\dots+z_r\bar z_r-(z_{r+1}\bar
 z_{r+1}+\cdots
+z_n\bar z_n),\quad 1\le r \le n-1.
\]
Then $V=g\inv(0)$ is  a smooth real codimension one variety and thus 
it is degenerate for $P={}^t(1,1,\dots, 1)$.

\vspace{.2cm}\noindent
{\rm III.} Consider  a mixed function
\[
 f(\bfz,\bar\bfz)=z_1^2+a z_1\bar z_2+\bar z_2^2,\quad a\in \BC.
\]
Then $f$ is non-degenerate if and only if $a\ne \pm 2$.

\vspace{.2cm}\noindent
{\rm IV.} Finally we give an example of a mixed function
which is non-degenerate but not
 strongly non-degenerate.
Consider  a mixed function
\begin{eqnarray*}
\begin{split}
f(\bfz,\bar\bfz)
&=1/4\,{{  z_1}}^{2}-1/4\,{{  \bar z_1}}^{2}+{  z_1}\,{  \bar z_1}- \left( 1
+i \right)  \left( {  z_1}+{  z_2} \right)  \left( {  \bar z_1}+{  
\bar z_2 } \right) \\
&=g(x_1,x_2,y_1,y_2)+ih(x_1,x_2,y_1,y_2)\\
\text{where}\,\,\,&g(x_1,x_2,y_1,y_2)={{  x_1}}^{2}+{{  y_1}}^{2}- \left( {  x_1}+{  x_2} \right) ^{2}-
 \left( {  y_1}+{  y_2} \right) ^{2}\\
&h(x_1,x_2,y_1,y_2)={  x_1}\,{  y_1}- \left( {  x_1}+{  x_2} \right) ^{2}- \left( {
  y_1}+{  y_2} \right) ^{2}
\end{split}
\end{eqnarray*}
$f$ is a radially  homogeneous polynomial of degree 2 but it is not polar weighted
homogeneous.
One can check that $f$ is  non-degenerate for the weight vector 
$P={}^t(1,1)$ but it is not strongly non-degenerate.
In fact, it has two families of critical points
\[
 t\mapsto (x_1,x_2,y_1,y_2)=(t,-t,\pm t, \mp t),\,\,0<t
\]
with the critical values $(2\pm i)t^2$.} 
\end{Example}
\end{Definition}
\begin{Proposition}
Let $g(\bfz,\bar\bfz)$ be a radially weighted homogeneous polynomial
and let $M:=\bfz^{\bfa}\bar\bfz^{\bfb}$ be a mixed monomial
and put $h:=Mg(\bfz,\bar\bfz)$.
Then $0$ is a regular value of $g:\BC^{*n}\to \BC$ 
 if and only if $0$ is a regular value of
$h:\BC^{*n}\to \BC$.\end{Proposition}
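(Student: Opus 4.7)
The plan is to reduce the statement to a pointwise comparison of real Jacobians on the common zero locus, using only the Leibniz rule and the fact that the monomial $M$ is invertible on $\BC^{*n}$. First observe that since $\bfz^{\bfa}\bar\bfz^{\bfb}$ never vanishes on $\BC^{*n}$, we have the set equality $h\inv(0)\cap \BC^{*n}=g\inv(0)\cap \BC^{*n}$, so the question of regularity of $0$ for $g$ and for $h$ concerns the same points.

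Next I would compute the complex differentials. At any $\bfz_0\in g\inv(0)\cap\BC^{*n}$, the Leibniz rule gives
\[
\frac{\partial h}{\partial z_i}(\bfz_0,\bar\bfz_0) = M(\bfz_0,\bar\bfz_0)\,\frac{\partial g}{\partial z_i}(\bfz_0,\bar\bfz_0),\qquad
\frac{\partial h}{\partial \bar z_i}(\bfz_0,\bar\bfz_0) = M(\bfz_0,\bar\bfz_0)\,\frac{\partial g}{\partial \bar z_i}(\bfz_0,\bar\bfz_0),
\]
because every term carrying a derivative of $M$ is annihilated by the factor $g(\bfz_0,\bar\bfz_0)=0$. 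Passing to the real differential, this says that the real Jacobian $J_\BR h(\bfz_0):\BR^{2n}\to \BC$ is obtained from $J_\BR g(\bfz_0)$ by post-composition with multiplication by the complex scalar $M(\bfz_0,\bar\bfz_0)\in \BC^{*}$.

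Finally I would invoke the observation that for a mixed function viewed as a map $\BR^{2n}\to \BR^2$, a point is critical exactly when its real Jacobian fails to have rank $2$. Multiplication by a nonzero complex number is an $\BR$-linear automorphism of the target $\BC$, so it preserves the rank of the real Jacobian. Hence $\bfz_0$ is a critical point of $h|_{\BC^{*n}}$ if and only if it is a critical point of $g|_{\BC^{*n}}$, and the equivalence of the regular value conditions at $0$ follows.

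There is no real obstacle here; the one conceptual point worth stating carefully is the translation between ``critical point of a mixed function'' and ``rank of the real Jacobian'', which ensures that multiplication by the nonzero scalar $M(\bfz_0,\bar\bfz_0)$ is an honest equivalence of the critical-point condition. Note that the radial weighted homogeneity of $g$ plays no role in the argument; the statement holds for any mixed function $g$.
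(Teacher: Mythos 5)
Your proof is correct and follows essentially the same route as the paper: you check that $h^{-1}(0)\cap\BC^{*n}=g^{-1}(0)\cap\BC^{*n}$ and that on this locus the differential of $h$ equals $M$ times the differential of $g$ (the Leibniz terms with $dM$ drop out since $g=0$), so that multiplication by the nonzero scalar $M(\bfz_0,\bar\bfz_0)$ preserves the rank of the real Jacobian. Your closing remark that radial weighted homogeneity is never used is also accurate.
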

The assertion is immediate from the definition because 
$g\inv (0)\cap \BC^{*n}=h\inv (0)\cap\BC^{*n} $ and the tangential map
$dh_{\bfw}:T_{\bfw}\BC^{*n}\to T_0\BC$ is equal to $Mdg_{\bfw}$ for any
$\bfw\in g\inv(0)\cap \BC^{*n}$.

Recall that  for a subset $I\subset \{1,\dots,n\}$, we use the notations
$\BC^I=\{\bfz\in \BC^n\,|\,z_j=0,\, j\notin I\}$ and $f^I=f|_{\BC^I}$.
\begin{Proposition}\label{restriction} Assume that $f(\bfz,\bar\bfz)$ is a 
non-degenerate (respectively strongly non-degenerate)
mixed function. Assume that  $f^I$ is not constantly zero for some
$I\subset \{1,2,\dots,n\}$.
Then $f^I$ is a
 non-degenerate (resp. strongly non-degenerate) function  as a function of
 variables $\{z_i,\bar z_i,\,|\, i\in I\}$.
\end{Proposition}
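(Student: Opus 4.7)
The plan is to prove each assertion by contraposition. Given any strictly positive integer weight vector $P'$ on the index set $I$, I will extend it to a strictly positive weight $P \in N^{++}$ on $\{1,\dots,n\}$ in such a way that the face function $f_P$ on $\BC^n$ does not involve the coordinates $z_j, \bar z_j$ for $j \notin I$, and its restriction to $\BC^I$ coincides with $(f^I)_{P'}$. Once this identification is secured, any critical point of $(f^I)_{P'}$ on $\BC^{*I}$ lifts to a critical point of $f_P$ on $\BC^{*n}$, which (respecting the zero-fiber in the non-degenerate case) contradicts the hypothesis on $f$.

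To construct $P$, set $d := d(P'; f^I)$, finite since $f^I \not\equiv 0$, and pick an integer $L > d$. Define $P$ by $p_i := p'_i$ for $i \in I$ and $p_j := L$ for $j \notin I$. For any exponent $\nu + \mu$ in the support of $f$ with coordinates outside $I$ all zero, $\ell_P(\nu+\mu) = \ell_{P'}(\nu+\mu) \ge d$. For any other exponent, some $j \notin I$ has $\nu_j + \mu_j \ge 1$, giving $\ell_P(\nu+\mu) \ge L > d$. Hence $d(P; f) = d$ and $\Delta(P, f) = \Delta(P', f^I)$ (viewed inside $\BR^n$ with coordinates outside $I$ equal to zero); consequently $f_P(\bfz, \bar\bfz)$ is a polynomial only in the variables $\{z_i, \bar z_i : i \in I\}$ that agrees with $(f^I)_{P'}$ on $\BC^I$.

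The lift is then immediate. If $\bfw \in \BC^{*I}$ is a critical point of $(f^I)_{P'}: \BC^{*I} \to \BC$, define $\tilde\bfw \in \BC^{*n}$ by setting the coordinates indexed by $I$ equal to those of $\bfw$ and the remaining coordinates equal to $1$. Since $f_P$ does not depend on the coordinates outside $I$, its partial derivatives with respect to $z_j, \bar z_j$ for $j \notin I$ vanish everywhere, while its partials in the $I$-directions at $\tilde\bfw$ agree with those of $(f^I)_{P'}$ at $\bfw$. Hence any $\BR$-linear dependence of the differentials of the real and imaginary parts of $(f^I)_{P'}$ at $\bfw$ persists at $\tilde\bfw$, making $\tilde\bfw$ a critical point of $f_P$; moreover $f_P(\tilde\bfw) = (f^I)_{P'}(\bfw)$ lands on the zero set when required. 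For the surjectivity clause in the strongly non-degenerate case, observe that the image of $f_P$ on $\BC^{*n}$ coincides with the image of $(f^I)_{P'}$ on $\BC^{*I}$, and $\dim \Delta(P, f) = \dim \Delta(P', f^I)$, so surjectivity transfers as well.

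The main technical point is the careful choice of the complementary weights $p_j$ large enough so that $\Delta(P, f)$ lies entirely inside the coordinate subspace $\BR^I$; the threshold $L > d$ above is what makes this work. Once the identification $f_P = (f^I)_{P'}$ is secured, the transfer of critical points and of surjectivity is a straightforward bookkeeping argument using the fact that $f_P$ is independent of the complementary coordinates.
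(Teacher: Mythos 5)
Your proof is correct and takes essentially the same approach as the paper's: extend the weight vector $P'$ on $I$ to a strictly positive weight on $\{1,\dots,n\}$ by assigning a sufficiently large value $L$ to the complementary indices, so that the face function $f_P$ involves only the variables indexed by $I$ and equals $(f^I)_{P'}$. Your explicit threshold $L>d(P';f^I)$ and the explicit lift of a critical point by inserting $1$'s in the extra coordinates are just slightly more detailed renditions of the paper's ``$q_i=\nu$ for $i\notin I$, $\nu$ sufficiently large'' and ``as $f_Q$ contains only variables $z_i,\,i\in I$.''
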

\begin{proof}
The proof is exactly parallel to that of Proposition 1.5, 
\cite{Okabook}.
Take a compact face $\De$ of $\Ga(f^I)$. There is a strictly positive
 weight
vector $P={}^t(p_i)_{i\in I}\in N^I$ such that $\De=\De(P,f^I)$.
We  consider   a strictly positive weight vector
$Q={}^t(q_1,\dots, q_n)$ such that $q_i=p_i$ for $i\in I$ and 
$q_i=\nu$ for $i\notin I$.
It is easy to see that $f_Q(\bfz,\bar\bfz)=f^I_{P}(\bfz_I,\bar\bfz_I)$
if $\nu$ is  sufficiently large.
Here $f_P^I=(f^I)_P$. Now by the assumption, $0$ is not a critical value of
$f_Q:\BC^{*n}\to \BC$ 
(respectively $f_Q:\BC^{*n}\to \BC$
has no critical points). As $f_Q$ contains only 
variables $z_i,\,i\in I$, 
$0$ is not a critical value of $f^I_{P}:\BC^{*I}\to \BC$
(resp.  $f^I_{P}:\BC^{*I}\to \BC$  has no critical points).
\end{proof}

For a complex valued mixed function $f(\bfz,\bar\bfz)$, we use the notation (\cite{OkaPolar}):
\begin{eqnarray*}
&d f(\bfz,\bar\bfz)=(\frac{\partial f}{\partial z_1},\dots,\frac{\partial
 f}{\partial z_n})\in \BC^n,\quad
\bar d f(\bfz,\bar\bfz)=(\frac{\partial f }{\partial\bar z_1},\dots,\frac{\partial
 f}{\partial\bar z_n})\in \BC^n
\end{eqnarray*}
We use  freely the following convenient criterion for
a given point to be a critical point
 as a function to $\BC$ in this
paper.
\begin{Proposition} {\rm (Proposition 1, \cite{OkaPolar})}
\label{critical point}
The following two conditions are equivalent. Let $\bfw\in \BC^n$.
\begin{enumerate}
\item  $\bfw$ is a critical point of $f:\BC^n\to \BC$.
\item There exists a complex 
number $\alpha$ with $|\al|=1$ such that 
$ \overline{df(\bfw,\bar\bfw)}=\al\, \bar d f(\bfw,\bar\bfw)$.
\end{enumerate}
\end{Proposition}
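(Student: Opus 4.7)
The plan is to translate the real condition (differentials of $g:=\Re f$ and $h:=\Im f$ being $\BR$-linearly dependent at $\bfw$) into a complex condition on the Wirtinger derivatives $df$ and $\bar d f$. The point $\bfw$ is critical for $f:\BC^n\to\BC$ precisely when the $\BR$-linear differential $df_\bfw:\BC^n\to\BC$ has rank at most $1$, which happens iff the image lies inside some real line in $\BC$, iff there exists $\beta\in \BC$ with $|\beta|=1$ such that $\beta\,df_\bfw(\bfv)\in\BR$ for every $\bfv\in\BC^n$.

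Now I would expand $df_\bfw$ using Wirtinger calculus, writing
\[
df_\bfw(\bfv)=\sum_{j=1}^n \frac{\partial f}{\partial z_j}(\bfw,\bar\bfw)\,v_j+\sum_{j=1}^n \frac{\partial f}{\partial \bar z_j}(\bfw,\bar\bfw)\,\bar v_j,
\]
and use $\overline{\partial f/\partial z_j}=\partial \bar f/\partial \bar z_j$. The condition $\beta\,df_\bfw(\bfv)=\overline{\beta\,df_\bfw(\bfv)}$ becomes, after rearranging,
\[
\beta\sum_j \tfrac{\partial f}{\partial z_j}\,v_j+\beta\sum_j \tfrac{\partial f}{\partial \bar z_j}\,\bar v_j
=\bar\beta\sum_j \overline{\tfrac{\partial f}{\partial \bar z_j}}\,v_j+\bar\beta\sum_j \overline{\tfrac{\partial f}{\partial z_j}}\,\bar v_j.
\]
Since the $2n$ real-valued linear functionals $\{v_j,\bar v_j\}_{j=1}^n$ are $\BR$-linearly independent on $\BC^n$, I can match coefficients to extract the pair of equations
\[
\beta\,\tfrac{\partial f}{\partial z_j}(\bfw,\bar\bfw)=\bar\beta\,\overline{\tfrac{\partial f}{\partial \bar z_j}(\bfw,\bar\bfw)},\qquad j=1,\dots,n,
\]
the second family being the conjugate of the first. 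Multiplying by $\beta$ and setting $\alpha:=\beta^2$ yields $\overline{df(\bfw,\bar\bfw)}=\alpha\,\bar d f(\bfw,\bar\bfw)$ with $|\alpha|=1$.

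For the converse, given $\alpha$ with $|\alpha|=1$ satisfying the stated identity, I choose any square root $\beta$ of $\alpha$ and run the same computation backwards to conclude that $\beta\,df_\bfw(\bfv)\in\BR$ for every $\bfv$, so that $df_\bfw$ has $\BR$-rank at most one and $\bfw$ is critical.

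The one delicate point — and the only real obstacle — is the coefficient matching step: one must argue carefully that the asserted equality of two $\BR$-linear functionals on $\BC^n$, written as linear combinations of $v_j$ and $\bar v_j$, forces equality of the coefficients. This is where the $\BR$-linear independence of $\{v_j,\bar v_j\}$ over $\BC^n$ (equivalently, the invertibility of the $2\times 2$ block relating $(v_j,\bar v_j)$ to $(\Re v_j,\Im v_j)$) is used. Everything else is bookkeeping in Wirtinger calculus.
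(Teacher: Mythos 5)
The paper does not prove this proposition; it cites Proposition 1 of \cite{OkaPolar}, so there is no in-paper argument to compare against. Your proof is correct and is the standard Wirtinger-calculus argument: the equivalence between criticality and the image of the $\BR$-linear differential $df_\bfw:\BC^n\to\BC$ lying in a real line, then rotating by $\beta$ with $|\beta|=1$ and matching coefficients in $v_j$ and $\bar v_j$ to obtain $\beta\,f_{z_j}(\bfw,\bar\bfw)=\bar\beta\,\overline{f_{\bar z_j}(\bfw,\bar\bfw)}$, hence $\overline{df}=\beta^2\,\bar d f$.

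One terminological slip: the functionals $v_j$ and $\bar v_j$ are complex-valued, not real-valued, so "$\BR$-linearly independent real-valued functionals" is not quite the right justification for the coefficient matching. What you actually need, and what is true, is the elementary fact that if $\sum_j a_j v_j + \sum_j b_j\bar v_j = 0$ for all $\bfv\in\BC^n$ with $a_j,b_j\in\BC$, then every $a_j=b_j=0$; this follows immediately by substituting $\bfv=e_j$ (giving $a_j+b_j=0$) and $\bfv=i\,e_j$ (giving $a_j-b_j=0$). Equivalently, $\{v_j,\bar v_j\}_j$ is a $\BC$-basis of $\operatorname{Hom}_{\BR}(\BC^n,\BR)\otimes_{\BR}\BC$. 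With that wording fixed, the proof is complete and correct; in particular the degenerate case $df_\bfw=0$ is automatically covered since any unit $\alpha$ works there.
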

\noindent
Hereafter we use the simplified  notation
$\overline{df}(\bfw,\bar\bfw)$ for $ \overline{df(\bfw,\bar\bfw)}$.
\begin{Example}{\rm
Let us consider the following mixed polynomials
\[
 f_1=z_1\bar z_1-z_2^2,\quad f_2=z_1\bar z_1-z_2\bar z_2,
\quad f_3=z_1^2\bar z_1-z_2^2\bar z_2\\
\]
and the corresponding mixed varieties
$V_i=f_i\inv(0),\,i=1,2,3$. Each of them has an isolated singularity at
the origin.
In fact, as real varieties, they are described as follows.
\begin{eqnarray*}\begin{split}
V_1&=\{(\bfx,\bfy)\,|\, x_1^2+y_2^2=x_2^2-y_2^2,\,x_2y_2=0\}\\
&=\{(\bfx,\bfy)\,|\, x_1^2+y_2^2=x_2^2,\,y_2=0\}\\
V_2&=\{(\bfx,\bfy)\,|\, x_1^2+y_2^2=x_2^2+y_2^2\},\,\dim_{\BR} V_2=3\\
V_3&=\{(z_1,z_2)\,|\,z_1=r_1\exp(i\,\theta_1 ),\,z_2=r_2\exp(i\,\theta_2 ),\, r_1=r_2,\,\theta_1=\theta_2\}\\
&=\{(z_1,z_2)\,|\,z_1-z_2=0\}.
\end{split}
\end{eqnarray*} 
$V_3$ is a special case of polynomials which has been considered 
in \cite{R-S-V}.
$f_1,\,f_3$ are non-degenerate but $f_2$ is a degenerate mixed function
as it is not surjective (onto $\BC$) and 
$\dim_{\BR} V_2=3$.
Note also that $\overline{df_2}=\bar df_2=(z_1,-z_2)$.
 $f_1$ is not  a polar weighted
homogeneous polynomial ( as the monomial $z_1\bar z_1$
can not have a positive degree)  while $f_3$ is a polar
weighted polynomial
of type $(1,1;1).$}\end{Example}
\subsection{Some useful functions}
Let $J$ be a subset of $\{1,\dots,n\}$ and
consider {\em the J-conjugation   map}
$\iota_J:\BC^n\to \BC^n$ defined by:
\[
 \iota_J:\,(z_1,\dots, z_n)\mapsto (w_1,\dots, w_n),\,
w_j=\begin{cases} z_j&\,j\notin J\\
\bar z_j&\, j\in J.
\end{cases}
\] 
Of course, we define $\iota_J(\bar z_j)=\overline{\iota_J(z_j)}$.

Let $f(\bfz,\bar\bfz)$ be a mixed function. 
We call that  $f(\bfz,\bar\bfz)$ is 
{\em  J-conjugate  holomorphic}  if  $f$ is an analytic function 
of the variables
 $\{z_j\,|\,j\notin J\}$ 
and $\{\bar z_k\,|\, k\in J\}$, or equivalently
$f\circ \iota_J(\bfz)$ is a holomorphic function.

A mixed polynomial $f(\bfz,\bar\bfz)$ is called  a {\em J-conjugate
weighted homogeneous polynomial}
if   $f\circ\iota_J(\bfz)$ is a weighted homogeneous
polynomial. Let $P=(p_1,\dots, p_n)$  be the weight vector of 
$f\circ\iota_J(\bfz)$ and let $d$ be the degree.
We say that  $f(\bfz,\bar\bfz)$ is {\em a J-conjugate
weighted homogeneous polynomial of the weight type}
 $(p_1,\dots, p_n;d)$.
The following is obvious by the definition.
\begin{Proposition}
Assume that  $f(\bfz,\bar\bfz)$ is a J-conjugate weighted homogeneous
polynomial of the  weight type $(p_1,\dots, p_n;d)$.
Then $f(\bfz,\bar\bfz)$ is a polar weighted polynomial with
the polar weight type $(\iota_J P;d)$
where 
\[
\iota_J P=(p_1',\dots, p_n'),\quad p_j'=\begin{cases}-p_j\quad &j\in J\\ p_j\quad &j\notin J
\end{cases}
\]
Furthermore
$f(\bfz,\bar\bfz)$ is also a radially weighted homogeneous polynomial
 of the radial weight type $(p_1,\dots, p_n;d)$.
\end{Proposition}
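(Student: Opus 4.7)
The plan is to verify everything by tracking what happens monomial by monomial. Since $f\circ\iota_J(\bfz)$ is, by hypothesis, a holomorphic weighted homogeneous polynomial of type $(p_1,\dots,p_n;d)$, the mixed function $f(\bfz,\bar\bfz)$ itself is supported on monomials of the restricted form
\[
M_\nu(\bfz,\bar\bfz)\;=\;\prod_{j\notin J}z_j^{\nu_j}\cdot\prod_{k\in J}\bar z_k^{\nu_k},
\]
and the assumption translates into the linear condition $\sum_{j=1}^n p_j\,\nu_j=d$ for every exponent $\nu$ with a nonzero coefficient. Everything else is a direct substitution.

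First I would verify the polar statement. Set $P'=\iota_J P$, i.e.\ $p_j'=-p_j$ for $j\in J$ and $p_j'=p_j$ for $j\notin J$, and consider the $S^1$-action $\la\circ z_j=\la^{p_j'}z_j$ for $|\la|=1$. The key observation is that $|\la|=1$ gives $\bar\la^{-p_k}=\la^{p_k}$ for $k\in J$, so for $k\in J$ one has $\overline{\la\circ z_k}=\bar\la^{\,-p_k}\bar z_k=\la^{p_k}\bar z_k$, while for $j\notin J$ one simply has $\la\circ z_j=\la^{p_j}z_j$. Substituting into $M_\nu$, every factor contributes $\la^{p_j\nu_j}$, and therefore
\[
M_\nu(\la\circ\bfz,\overline{\la\circ\bfz})\;=\;\la^{\sum_j p_j\nu_j}\,M_\nu(\bfz,\bar\bfz)\;=\;\la^d\,M_\nu(\bfz,\bar\bfz),
\]
so $f(\la\circ\bfz,\overline{\la\circ\bfz})=\la^d\,f(\bfz,\bar\bfz)$ for every $\la\in S^1$. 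This is exactly the definition of polar weighted homogeneity with polar weight vector $\iota_J P$ and polar degree $d$.

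Next I would check the radial statement, which is even easier because no complex conjugation intervenes. For $t\in\BR^*$ put $t\circ z_j=t^{p_j}z_j$; since $t$ is real, $t\circ\bar z_j=t^{p_j}\bar z_j$ without any sign change. Evaluating $M_\nu$ at $(t\circ\bfz,t\circ\bar\bfz)$ again produces the single factor $t^{\sum_jp_j\nu_j}=t^d$, so $f(t\circ\bfz,t\circ\bar\bfz)=t^d f(\bfz,\bar\bfz)$, giving radial weighted homogeneity of type $(p_1,\dots,p_n;d)$.

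There is genuinely no obstacle: the proof is bookkeeping. The only point one must be careful about is the sign flip in the polar weights on $J$, which is forced by the identity $\bar\la^{-p_k}=\la^{p_k}$ that holds precisely because $|\la|=1$; the corresponding identity fails for general $t\in\BR^*$, which is why the radial weights remain $(p_1,\dots,p_n)$ rather than $\iota_J P$.
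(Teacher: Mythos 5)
Your verification is correct and is exactly the straightforward check the paper has in mind — indeed the paper simply declares the proposition ``obvious by the definition'' and gives no argument at all. Your monomial-by-monomial bookkeeping, with the single nontrivial observation that $\bar\la^{-p_k}=\la^{p_k}$ on $S^1$ (explaining the sign flip in the polar weights over $J$ and its absence in the radial weights), is precisely the content one would want spelled out.
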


Let $M=\bfz^\nu\bar\bfz^\mu$ be a mixed monomial and let 
$g(\bfz,\bar\bfz)=M\cdot f(\bfz,\bar\bfz)$
where $f(\bfz,\bar\bfz)$  is a $J$-conjugate weighted homogeneous
polynomial.
 We say
$g(\bfz,\bar\bfz)$ is  {\em a pseudo J-conjugate weighted homogeneous
polynomial} if $\pdeg_{P'} g\ne 0$
where $P'=\iota_J P$
is the polar weight vector of $f(\bfz,\bar\bfz)$.
Note that $g\circ\iota_J(\bfz)$ need not to be holomorphic.
Further, if  $J=\emptyset$, we say that $g$  is {\em a pseudo 
 weighted homogeneous
polynomial}. Then $g$ takes the form $M\,f(\bfz)$ where $f$  a weighted
homogeneous
polynomial and $M$ is  a mixed monomial.
\begin{Proposition}\label{non-degeneracy}
Assume that  $f(\bfz,\bar\bfz)$ is a $J$-conjugate  weighted homogeneous
polynomial of  the weight type  $(p_1,\dots, p_n;d)$.
Let  $M=\bfz^\nu\bar\bfz^\mu$ be a monomial and assume that
$g(\bfz,\bar\bfz)=Mf(\bfz,\bar\bfz)$ is a pseudo $J$-conjugate weighted
 homogeneous polynomial, namely $\pdeg_{P'} M+d\ne 0$.
Then $g:\BC^{*n}\to \BC$ has no critical points if and only if
$f:\BC^{*n}\to \BC$ has no critical points.
\end{Proposition}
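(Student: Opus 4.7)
The plan is to reduce both critical-point questions to the common zero locus $f\inv(0)\cap\BC^{*n}=g\inv(0)\cap\BC^{*n}$ (the equality holds because $M$ is nowhere zero on $\BC^{*n}$). First I would show that critical points of $f$, respectively of $g$, in $\BC^{*n}$ must lie on this common zero locus; then I would check that on the zero locus the two critical-point conditions coincide.

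To handle the first reduction, I would prove a vanishing lemma: if a mixed function $h:\BC^{*n}\to\BC$ is simultaneously radially weighted homogeneous with radial degree $d_r\neq 0$ and polar weighted homogeneous with polar degree $d_p\neq 0$, then every critical point $\bfw\in\BC^{*n}$ of $h$ satisfies $h(\bfw)=0$. The strategy is to start from Proposition \ref{critical point}, which gives $\overline{h_{z_j}(\bfw)}=\alpha\,h_{\bar z_j}(\bfw)$ at a critical point, multiply by $q_j\bar w_j$, respectively $p_j\bar w_j$, sum over $j$, and compare with the radial, respectively polar, Euler identity. A short manipulation forces $\overline{h(\bfw)}=\alpha\,h(\bfw)$ from the radial identity and $\overline{h(\bfw)}=-\alpha\,h(\bfw)$ from the polar identity; subtracting yields $h(\bfw)=0$.

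Applying the lemma to $f$ is immediate: by hypothesis $f$ is $J$-conjugate weighted homogeneous of type $(p_1,\dots,p_n;d)$ with $d>0$, hence both radially weighted (weight $P$, degree $d$) and polar weighted (weight $P'=\iota_J P$, degree $d$). For $g=Mf$ the radial degree is $d+\rdeg_P M\ge d>0$ because $p_j,\nu_j,\mu_j\ge 0$, while the polar degree is $d+\pdeg_{P'}M$, which is nonzero precisely by the pseudo $J$-conjugate hypothesis. Hence every critical point of either function inside $\BC^{*n}$ lies in $f\inv(0)\cap\BC^{*n}$.

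On this common vanishing set the product rule simplifies at $\bfw$ with $f(\bfw)=0$ to $g_{z_j}(\bfw)=M(\bfw)f_{z_j}(\bfw)$ and $g_{\bar z_j}(\bfw)=M(\bfw)f_{\bar z_j}(\bfw)$, so the critical condition for $g$ at $\bfw$ becomes $\overline{f_{z_j}(\bfw)}=\bigl(\alpha M(\bfw)/\overline{M(\bfw)}\bigr)\,f_{\bar z_j}(\bfw)$. Since the multiplier has unit modulus, this is exactly the critical condition for $f$ with a rescaled unit-modulus parameter, and the same computation runs in reverse. Combining with the previous paragraph, the critical sets of $f$ and of $g$ inside $\BC^{*n}$ coincide, which implies the claimed equivalence. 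The main obstacle is the conjugation bookkeeping in the vanishing lemma: both Euler identities must be used, since either alone yields only the softer relation $\overline{h(\bfw)}=\pm\alpha\,h(\bfw)$. This is also the step where the pseudo $J$-conjugate hypothesis $\pdeg_{P'}M+d\neq 0$ is essential, guaranteeing that the polar degree of $g$ is nonzero so that the lemma applies to $g$.
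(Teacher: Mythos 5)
Your proof is correct and follows essentially the same route as the paper's: reduce to the common zero locus $f\inv(0)\cap\BC^{*n}=g\inv(0)\cap\BC^{*n}$, where the critical conditions for $f$ and for $g=Mf$ visibly coincide because $M$ is nowhere zero there and only rotates the differential, after first observing that a mixed polynomial which is both radially and polar weighted homogeneous with nonzero degrees can have critical points only on its zero fiber. The paper cites this last fact as known (from the earlier remark on polar weighted polynomials) and leans on the preceding unnamed proposition for the comparison; your explicit algebraic derivation from Proposition \ref{critical point} together with the radial and polar Euler identities is a correct and welcome fill-in of the detail the paper leaves implicit.
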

\begin{proof}
As $g(\bfz,\bar\bfz)$ is a polar weighted polynomial, the only possible
 singular fiber is $g\inv (0)$. Thus the assertion is immediate 
as $g\inv(0)=f\inv(0)$ in $\BC^{*n}$.
\end{proof}
\begin{Example}{\rm
Let $f(\bfz,\bar\bfz)=z_1^{2}+\dots+z_{n-1}^{2}+\bar z_{n}^3$. 
Then $f$ is  a $J$-conjugate weighted homogeneous polynomial
of  the weight type $(3,\dots,3,2;6)$
with $J=\{n\}$.
A mixed polynomial $g(\bfz,\bar\bfz)=\bfz^\nu\bar\bfz^{\mu}f(\bfz,\bar\bfz)$
is a pseudo J-conjugate weighted homogeneous polynomial if
$$3\sum_{j=1}^{n-1}(\nu_i-\mu_i)-2(\nu_n-\mu_n)+6\ne 0.$$
}\end{Example}
\begin{Definition}{\rm 
Let $f(\bfz,\bar\bfz)$ be a mixed function.
We say that $f$ is a {\em  Newton pseudo conjugate  weighted homogeneous
polynomial}
if for any  $P\in N^{++}$,  there exists a subset
 $J(P)\subset \{1,\dots,n\}$ such that 
the face function
$f_P(\bfz,\bar\bfz)$ is a $J(P)$-pseudo conjugate weighted homogeneous 
polynomial.
Here $J(P)$ 
can differ for 
each $P$.
For a Newton pseudo conjugate weighted homogeneous
function, the non-degeneracy condition 
is easily checked by Proposition \ref{non-degeneracy}.}
\end{Definition}
\begin{Example}{\rm 
I. Let $f(\bfz,\bar\bfz)=z_1^5+z_1^2\bar z_2^2+z_2^m\bar z_2^2$ with 
 $m\ge 2$. Then the Newton boundary has two faces and the
corresponding weights are
$P=(2,3)$ and $Q=(m,2)$. The face functions are
\[
 f_P(\bfz,\bar\bfz)=z_1^2(z_1^{3}+\bar z_2^2),\,
f_Q(\bfz,\bar\bfz)=\bar z_2^2(z_1^2+z_2^m)
\]
and
$f$ is a Newton  pseudo conjugate 
weighted homogeneous polynomial if $m\ne 2$.
Note that for $m=2$, the polar degree of $f_Q(\bfz,\bar\bfz)$ is $0$.
See also the next example.
We  give a class of functions which can not be  non-degenerate.

\vspace{.3cm}\noindent
II. Consider the radially weighted homogeneous polynomial
\[
 f(\bfz,\bar\bfz)=\sum_{i=1}^n c_j z_j^{a_j}\bar
 z_j^{a_j},\quad c_1,\dots, c_n\in \BC^*.
\] where $a_1,\dots, a_n$ are positive integers.
This is very special as $ z_j^{a_j}\bar z_j^{a_j}=|z_j|^{2a_j}\ge 0$.
Let $\Omega:=\{\sum_{j=1}^n \al_j c_j\,|\, \al_j>0\}$ be the open cone
of the complex numbers $\BC$
 generated by $c_1,\dots, c_n$.}
\end{Example}
\begin{Proposition}\label{cone}Let $f(\bfz,\bar\bfz)$ be as above.
The image of $f:\BC^{*n}\to \BC$ is $\Omega$ and 
$f$ is  a submersion on $\Omega$.
\end{Proposition}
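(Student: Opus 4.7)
My plan is to exploit the single identity $z_j^{a_j}\bar z_j^{a_j}=|z_j|^{2a_j}\in\BR_{>0}$, which realises $f$ as a composition $f=L\circ\Phi$, where
\[
\Phi:\BC^{*n}\to(\BR_{>0})^n,\quad \bfz\mapsto(|z_1|^{2a_1},\dots,|z_n|^{2a_n}),
\]
and $L:\BR^n\to\BC$ is the real-linear map $(t_1,\dots,t_n)\mapsto\sum_{j=1}^n c_j t_j$. Both assertions of the proposition will drop out of this factorisation.

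First I would note that $\Phi$ is surjective onto $(\BR_{>0})^n$ (invert by $|z_j|=t_j^{1/(2a_j)}$), so the image of $f$ is $L((\BR_{>0})^n)=\{\sum_j\alpha_j c_j\mid\alpha_j>0\}=\Omega$. Next, for the submersion assertion, $\Phi$ is itself a submersion (being a product of the individual surjective submersions $z\mapsto|z|^{2a}:\BC^*\to\BR_{>0}$), and the linear map $L$ is a submersion onto its image $V:=L(\BR^n)\subset\BC$; restricted to the open set $(\BR_{>0})^n$, $L$ remains a submersion onto $L((\BR_{>0})^n)=\Omega$. Composing, $f=L\circ\Phi:\BC^{*n}\to\Omega$ is a submersion, with $\Omega$ viewed as an open subset of $V$ equipped with its natural manifold structure.

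There is no serious obstacle here; the only subtlety is that $V$ may be one- or two-dimensional over $\BR$, depending on whether $c_1,\dots,c_n$ lie on a common real line through the origin, and the phrase ``submersion on $\Omega$'' must be parsed accordingly. As a sanity check, one may verify the non-criticality of every point of $\BC^{*n}$ directly via Proposition~\ref{critical point}: computing $\partial f/\partial z_j=a_j c_j z_j^{a_j-1}\bar z_j^{a_j}$ and $\partial f/\partial\bar z_j=a_j c_j z_j^{a_j}\bar z_j^{a_j-1}$, the criterion $\overline{df}(\bfw)=\alpha\,\bar df(\bfw)$ collapses, after cancelling the nonzero factor $a_j w_j^{a_j}\bar w_j^{a_j-1}$, to $\bar c_j/c_j=\alpha$ for every $j$; such an $\alpha\in S^1$ exists precisely when the $c_j$'s are collinear, which is exactly the situation in which the real rank of $df$ drops from $2$ to $1$ and $\Omega$ is one-dimensional.
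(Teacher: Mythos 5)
Your proof is correct and takes essentially the same route as the paper: the paper establishes the submersion property by computing the directional derivatives of $f$ along the radial lines $t\mapsto(w_1,\dots,tw_j,\dots,w_n)$, obtaining $2a_j|w_j|^{2a_j}c_j$, which is precisely the factorisation $f=L\circ\Phi$ you make explicit; surjectivity is proved in both by inverting $|w_j|^{2a_j}=\alpha_j$. Your observation that ``submersion on $\Omega$'' must be read relative to the real span of the $c_j$'s, together with the sanity check via Proposition~\ref{critical point}, is a helpful clarification the paper leaves implicit (and indeed is what the paper relies on in the ensuing Corollary when it distinguishes $\dim_{\BR}\Omega=1$ from $\dim_{\BR}\Omega=2$).
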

\begin{proof} As $ z_j^{a_j}\bar z_j^{a_j}>0$ for $z_j\ne 0$, 
$f(\BC^{*n})\subset \Omega$. For an $\eta\in \Omega$, write $\eta$ as
$\eta=\sum_{j=1}^n \al_jc_j$ with $\al_j>0$. Take $w_j$ so that 
$|w_j|^{2a_j}=\al_j$. Then $\bfw=(w_1,\dots, w_n)\in f\inv(\eta)\cap
 \BC^{*n}$.
Thus the image of $f$ is onto $\Omega$.
We identify $ T_{f(\bfw)}\BC$ with $\BC$ by $\al\frac{\partial}{\partial x}+\be \frac{\partial}{\partial y}$ $\leftrightarrow$ $\al+i\,\be $.
Here the coordinates of $\BC$ are $x+ i\,y$.
Then it is easy to see that 
the tangent vector of the $j$-th radial line $r_j(t,\bfw)$ defined by
 $t\mapsto (w_1,\dots, tw_j,\dots,w_n)$
is mapped by $d_{\bfw}f$ to $2a_j|w_j|^{2a_j}c_j$. This
 implies that $f:\BC^{*n}\to \BC$ is a submersion onto $\Omega$.
\end{proof}
\begin{Corollary} Let $f(\bfz,\bar\bfz)=\sum_{i=1}^n c_j z_j^{a_j}\bar
 z_j^{a_j}=\sum_{i=1}^n c_j|z_j|^{2a_j}$  as in Proposition \ref{cone}.
 \begin{enumerate}
\item  If $0\in \Omega$, $V=f\inv(0)\subset \BC^{*n}$ is smooth and
 non-empty.

\item
 $f(\bfz,\bar\bfz)$ is not a true non-degenerate mixed function.
\end{enumerate}
\end{Corollary}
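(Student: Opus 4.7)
For (1), Proposition \ref{cone} already asserts that $f:\BC^{*n}\to\BC$ has image $\Omega$ and is a submersion onto $\Omega$; so the hypothesis $0\in\Omega$ immediately forces $V\cap\BC^{*n}=f^{-1}(0)\cap\BC^{*n}$ to be non-empty (by surjectivity onto $\Omega$) and smooth (as the regular-value preimage of a submersion). Nothing more is needed.

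For (2), the strategy is to exhibit, for every such $f$, a strictly positive weight vector $P$ at which the face function $f_P$ is pathological in one of two complementary ways: either $f_P^{-1}(0)\cap\BC^{*n}=\emptyset$, so (NE) fails, or every point of this zero fiber is a critical point of $f_P$, so $f$ is degenerate at $P$. The Newton polygon $\Gamma(f)$ has vertices $2a_ke_k$, $k=1,\dots,n$, so for each pair of indices $i\ne j$ I would pick $P\in N^{++}$ with $p_i=a_j$, $p_j=a_i$, and $p_k$ large enough for $k\ne i,j$, arranging $\Delta(P)$ to be exactly the edge joining the $i$-th and $j$-th vertices. Then $\dim\Delta(P)=1$ and $f_P(\bfz,\bar\bfz)=c_i|z_i|^{2a_i}+c_j|z_j|^{2a_j}$, and the same argument as in the proof of Proposition \ref{cone} identifies the image of $f_P$ on $\BC^{*n}$ with the open cone $\Omega_{\{i,j\}}=\{\beta_ic_i+\beta_jc_j\mid\beta_i,\beta_j>0\}$; this cone contains $0$ precisely when $c_i/c_j\in\BR_{<0}$.

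Now the case split. If some pair $\{i,j\}$ has $c_i/c_j\notin\BR_{<0}$, the corresponding $f_P^{-1}(0)\cap\BC^{*n}$ is empty and (NE) fails, so $f$ is not true non-degenerate. Otherwise every pair of coefficients is antiparallel in the sense $c_i/c_j\in\BR_{<0}$, which forces all the $c_k$ onto a common real line through $0$; on such a line at most two nonzero points can be pairwise on opposite rays, so (assuming $n\ge 2$) we must have $n=2$ and, after rescaling, $c_2=-\lambda c_1$ with $\lambda>0$. In this subcase $f$ takes values in the real line $\BR c_1\subset\BC$, so its real differential has rank $\le 1$ everywhere; Proposition \ref{critical point} confirms this by giving $\overline{df}=(\bar c_1/c_1)\,\bar d f$ at every point, with $|\bar c_1/c_1|=1$. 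Taking $P={}^t(a_2,a_1)$ then gives $f_P=f$, $\dim\Delta(P)=1$, and a non-empty zero fiber on $\BC^{*n}$ (since now $0\in\Omega_{\{1,2\}}$) consisting entirely of critical points of $f_P$; hence $f$ is degenerate at $P$ and (2) still holds.

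The main obstacle is this dichotomy itself: once the direct attempt to violate (NE) fails on every edge simultaneously, the coefficients have collapsed to a real line, and one must recognize that this collapse forces $f$ to be everywhere-critical as a $\BC$-valued map, so that degeneracy is achieved through Proposition \ref{critical point} with $\alpha=\bar c_1/c_1$ rather than through emptiness of any fiber. The rest of the argument is bookkeeping in the Newton diagram to realize each desired edge as some $\Delta(P)$.
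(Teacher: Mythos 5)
Your proof is correct and takes essentially the same route as the paper: reduce to a pair $\{i,j\}$, observe that the two-term polynomial $c_i|z_i|^{2a_i}+c_j|z_j|^{2a_j}$ has image the open cone $\Omega_{c_i,c_j}$, and split on whether $0\in\Omega_{c_i,c_j}$. One place where your version is actually more careful: the weight the paper writes, with $\deg_S z_i=\deg_S z_j=1$, would make $\Delta(S)$ a single vertex and $f_S$ a single monomial whenever $a_i\neq a_j$, so the paper's claim $f_S=c_i|z_i|^{2a_i}+c_j|z_j|^{2a_j}$ does not follow from that weight; your choice $p_i=a_j$, $p_j=a_i$ (with $p_k$ large for the other indices) gives $p_i\cdot 2a_i=p_j\cdot 2a_j$ in all cases, so $\Delta(P)$ really is the edge $\overline{(2a_ie_i)(2a_je_j)}$ and $\dim\Delta(P)=1$ as required for $(\mathrm{NE})$ to apply. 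Beyond that you reorganize the case analysis: the paper fixes one pair and exhausts the cases $\dim_{\BR}\Omega_{c_i,c_j}=2$, $\dim_{\BR}\Omega_{c_i,c_j}=1$ with $0\notin\Omega_{c_i,c_j}$, and $\dim_{\BR}\Omega_{c_i,c_j}=1$ with $0\in\Omega_{c_i,c_j}$, while you run a global dichotomy (some pair has $c_i/c_j\notin\BR_{<0}$, or all pairs are antiparallel, forcing $n=2$). The extra observation about $n\ge 3$ is true but not needed, since the paper's per-pair argument already covers every configuration; both versions land on the same two conclusions — an empty face fiber violating $(\mathrm{NE})$, or a non-empty fiber consisting entirely of critical points, which you verify directly by the identity $\overline{df}=(\bar c_1/c_1)\,\bar df$.
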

\begin{proof}
The first assertion is immediate from Proposition \ref{cone}.
For the second assertion,
take  any  two dimensional subspace $\BC^I$ of $\BC^n$ with $I=\{i,j\}$.
the open cone $\Omega(c_i,c_j)$ generated by $c_i,\,c_j$ cannot be
the whole 
 $\BC$.
Considering the weight vector $S$ so that $\deg_S z_k=N$, $k\ne i,j$
and $\deg_S z_k=1$ for $k=i,j$, we see that 
$f_S(\bfz,\bar\bfz)=c_i|z|^{2a_i}+c_j|z_j|^{2a_j}$,
as long as $N$ is sufficiently large.
If $\dim_{\BR}\Omega_{c_i,c_j}=2$, it is easy to see that 
$0\notin \Omega_{c_i,c_j}$. Thus 
$(f^{I})\inv(0)\cap \BC^{*I}=\emptyset$.
If $\dim_{\BR}\Omega_{c_i,c_j}=1$, either $0\notin \Omega_{c_i,c_j}$
or $0\in \Omega_{c_i,c_j}$. If $0\notin \Omega_{c_i,c_j}$,
$(f^{I})\inv(0)\cap \BC^{*I}=\emptyset$ as above.
If $0\in \Omega_{c_i,c_j}$, $\arg c_i+\arg c_j=0$ and the real dimension
 of $(f^I)\inv(0)\cap
\BC^{*2}$ is 3 and any point of $(f^I)\inv(0)$
is a critical point. Thus in any case $f^I$ is 
not true non-degenerate.
\end{proof}

\begin{Example}
{\rm Consider 
\[
 \begin{split}
&g(\bfz,\bar\bfz)=\sum_{j=1}^n|z_j|^{2a_j},\quad
h(\bfz,\bar\bfz)=\sum_{j=1}^m
|z_j|^{2a_j}-\sum_{j=m+1}^n|z_j|^{2a_j}
\end{split}
\]
with $1<m<n$.
Then the image of $g$ and $h$ are  the strictly positive  half real line $\{x\ge
0\}$
and  the whole real line $\BR$ respectively
and $g\inv(0)\cap \BC^{*n}=\emptyset$
and $\dim_{\BR}h\inv(0)=2n-1$.
}\end{Example}
\subsection{Pull-back of a polar weighted homogeneous
   polynomial}
Let $\si=(p_{ij})=(P_1,\dots, P_n)$ be a unimodular matrix where 
$P_j={}^t(p_{1j},\dots, p_{nj})$ is the $j$-th
column vector. Consider the toric morphism
\[\begin{split}
& \psi_\si:\BC^n\to \BC^n,\quad
\bfw\mapsto \bfz=(z_{1},\dots, z_{n})\\
&z_{j}=w_1^{p_{j1}}\cdots w_n^{p_{jn}},\quad j=1,\dots,n.
\end{split}
\]
See \S 4.1 for more details. Let $f(\bfz,\bar\bfz)=\sum_{j=1}^m
c_{\mu,\nu}\bfz^{\mu_j}{\bar\bfz}^{\nu_j}$
be a polar weighted homogeneous polynomial of type
$(p_1,\dots, p_n;d_p)$ and let 
$(q_1,\dots, q_n;d_r)$ be the radial weights.
Then they satisfy the equality:
\[
 \sum_{j=1}^n (\mu_j-\nu_j)p_j=d_p,\,\,\,
\sum_{j=1}^n(\nu_j+\nu_j)q_j=d_r,\,j=1,\dots, m
\] where $P={}^t(p_1,\dots, p_n)$ and $Q={}^t(q_1,\dots, q_n)$.
Consider the pull-back
\[
 \psi_\si^*(f)(\bfw,\bar\bfw)=\sum_{j=1}^mc_{\mu,\nu}
\psi_\si^*(\bfz^{\mu_j}{\bar\bfz}^{\nu_j})
=\sum_{j=1}^mc_{\mu,\nu}\bfw^{\mu_j'}{\bar\bfw}^{\nu_j'}
\]
where
$\mu_j'=\mu_j\si,\quad \nu_j'=\nu_j\si$ and $\mu_j,\nu_j$ are considered
raw
vectors.
We define  
$ P':=\si\inv P$. 
Then we see that 
\[\begin{split}
& (\mu_j'+\nu_j')Q'=(\mu_j+\nu_j)\si\si\inv Q=d_r\\
& (\mu_j'-\nu_j')P'=(\mu_j-\nu_j)\si\si\inv P=d_p
\end{split}
\]
for any $j=1,\dots, m$. 
Thus
\begin{Lemma} Let $f(\bfz,\bar\bfz)$ be a polar weighted mixed polynomial of
the radial weight type 
$(q_1,\dots, q_n;d_r)$ and 
of the  polar weight type
$(p_1,\dots, p_n;d_p)$.
Then $g(\bfw,\bar\bfw):=\psi_\si^*f(\bfw,\bar\bfw)$ is also a polar
 weighted homogeneous polynomial.
The  radial weight type and  the  polar weight type are
$(q_1',\dots, q_n';d_r)$ and
 $(p_1',\dots, p_n';d_p)$ respectively
where 
\[
\left(\begin{matrix}q_1'\\
\vdots\\ q_n'
\end{matrix}\right)=\si\inv \left(
\begin{matrix}q_1\\
\vdots\\
 q_n\end{matrix}
\right),\quad
\left(\begin{matrix}p_1'\\
\vdots\\
p_n'
\end{matrix}\right)=\si\inv
\left(\begin{matrix}p_1\\
\vdots\\
 p_n
\end{matrix}
\right).
\]
Two fibrations are isomorphic by $\psi_\si$, using the following
 commutative diagram.
\[
 \begin{matrix}
\BC^{*n}&\mapright{f}&\BC^*\\
\mapup{\psi_\si}&&\mapup{\id}\\
\BC^{*n}&\mapright{g}&\BC^*
\end{matrix}
\]
(The commutativity implies
that  $\psi_\si$ is a fiber preserving diffeomorphism.)
\end{Lemma}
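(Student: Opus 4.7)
The plan is a direct computation relying on the transformation rule for monomials under the toric morphism $\psi_\si$, together with the observation that the unimodularity of $\si$ makes it a diffeomorphism of $\BC^{*n}$.

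First I would verify the pull-back formula for an individual monomial. Since $z_j = w_1^{p_{j1}}\cdots w_n^{p_{jn}}$, conjugation gives $\bar z_j = \bar w_1^{p_{j1}}\cdots \bar w_n^{p_{jn}}$ (the exponents are integers, so the formula passes through the conjugate without modification). Thus for a row-exponent $\mu = (\mu_1,\dots,\mu_n)$,
\[
\psi_\si^*(\bfz^\mu) = \prod_{j=1}^n w_1^{p_{j1}\mu_j}\cdots w_n^{p_{jn}\mu_j} = \bfw^{\mu\si},
\]
and similarly $\psi_\si^*(\bar\bfz^\nu) = \bar\bfw^{\nu\si}$. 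Hence
\[
g(\bfw,\bar\bfw) = \sum_{j=1}^m c_{\mu_j,\nu_j}\,\bfw^{\mu_j'}\bar\bfw^{\nu_j'},
\qquad \mu_j' := \mu_j\si,\ \nu_j' := \nu_j\si,
\]
exactly as stated.

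Next I would check the weight-type claim. Setting $Q' := \si^{-1}Q$ and $P' := \si^{-1}P$, a one-line matrix calculation gives
\[
(\mu_j'+\nu_j')Q' = (\mu_j+\nu_j)\,\si\,\si^{-1}Q = (\mu_j+\nu_j)Q = d_r,
\]
\[
(\mu_j'-\nu_j')P' = (\mu_j-\nu_j)\,\si\,\si^{-1}P = (\mu_j-\nu_j)P = d_p,
\]
for every $j=1,\dots,m$. This shows that $g$ is polar weighted homogeneous with radial weight type $(q_1',\dots,q_n';d_r)$ and polar weight type $(p_1',\dots,p_n';d_p)$.

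Finally, for the commutative diagram: by construction $g = f\circ\psi_\si$, so the square commutes tautologically. Unimodularity of $\si$ means $\si^{-1}$ has integer entries, so $\psi_{\si^{-1}}$ is an inverse to $\psi_\si$ on the torus $\BC^{*n}$; thus $\psi_\si$ is a diffeomorphism of $\BC^{*n}$ onto itself, and identifies the fibers of $g$ over $\BC^*$ with those of $f$. No step presents a real obstacle; the only subtle point is keeping row/column conventions straight (exponent vectors act on the right on $\si$, while weight vectors are acted on the left by $\si^{-1}$), which is what makes the cancellation $\si\,\si^{-1} = I$ line up correctly in both degree computations.
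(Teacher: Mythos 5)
Your proof is correct and takes essentially the same route as the paper: you pull back monomial by monomial, observe that the exponent row-vectors transform by right-multiplication by $\si$, and then cancel $\si\,\si^{-1}$ against the column weight vectors to preserve both the radial and polar degrees. The one point you make explicit that the paper leaves implicit is that unimodularity of $\si$ gives $\psi_\si$ an inverse $\psi_{\si^{-1}}$ on the torus, which is precisely what justifies the fiber-preserving diffeomorphism claim.
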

\section{Isolatedness of the singularities}
Let
$f(\bfz,\bar\bfz)=\sum_{\nu,\mu}\,c_{\nu,\mu}\,\bfz^\nu\bar\bfz^\mu$.
As we are mainly interested in the topology of a germ
of a mixed hypersurface  at the origin,
we always assume that $f$ does not have the constant term so that 
$O\in f\inv(0)$. Put $V=f\inv(0)\subset \BC^n$.
\subsection{Mixed singular points}\label{mixed singular}
We say that $\bfw\in V$ is a {\em mixed singular point}
if $\bfw$ is a critical point of the mapping $f:\BC^n\to \BC$.
We say that $V$ is {\em mixed non-singular} if it has no mixed singular
points.
If $V$ is mixed non-singular, $V$ is smooth variety of real codimension two.
Note that a singular point of $V$ (as a point of a real algebraic variety)
is a mixed singular point of $V$
but the converse is not necessarily true.
For example, every point of the sphere $S=\{z_1\bar z_1+\cdots+z_n\bar
z_n=1\}$
is a mixed singular point.
\subsection{Non-vanishing coordinate subspaces}
For a subset $J\subset \{1,2,\dots,n\}$, we consider the subspace
$\BC^J$ and the restriction $f^J:=f|_{\BC^J}$.
Consider the set
\[
 \cN\cV(f)=\{I\subset\{1,\dots,n\}\,|\, f^{I}\not \equiv 0\}.
\]
We call $\cN\cV(f)$ {\em the set of non-vanishing coordinate subspaces
 for $f$}.
Put 
\[
 V^{\sharp}=\bigcup_{I\in \cN\cV(f)}V\cap\BC^{*I}.
\]

\begin{Theorem}\label{isolatedness} Assume that 
$f(\bfz,\bar\bfz)$
is a true non-degenerate  mixed function. Then  
there exists a positive number $r_0$ such that the following properties
are satisfied.
\begin{enumerate}
\item{\rm (Isolatedness of the singularity)}
The mixed hypersurface $V^{\sharp}\cap B_{r_0}$
is mixed non-singular. In particular, $ \codim_{\BR}V^{\sharp}=2$.
\item {\rm (Transversality)} The sphere $S_r$
with $0<r\le r_0$ intersects $V^{\sharp}$ transversely.
\end{enumerate}
\end{Theorem}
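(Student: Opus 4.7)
The plan is to prove both (1) and (2) by the standard contradiction via the Curve Selection Lemma (CSL): if either conclusion failed at the origin, there would be a real analytic arc of counterexamples, and looking at its leading terms we would produce a strictly positive weight vector $P$ for which $f_P$ has a critical point on $\BC^{*I}\cap f_P\inv(0)$, contradicting non-degeneracy. Throughout the proof I will use Proposition \ref{critical point} to detect mixed critical points and Proposition \ref{restriction} to pass from $f$ to any restriction $f^I$ with $I\in\cN\cV(f)$, which is itself non-degenerate.

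For (1), I first fix $I\in\cN\cV(f)$ and argue on the stratum $\BC^{*I}$; the finitely many choices of $I$ and a standard compactness argument yield a uniform $r_0$. Suppose the conclusion fails, so that mixed singular points of $f^I$ accumulate at $O$. The set of such points is semialgebraic (by Proposition \ref{critical point}, it is cut out by $\overline{df^I}(\bfw,\bar\bfw)=\al\,\bar df^I(\bfw,\bar\bfw)$ for some $|\al|=1$, which is a closed semialgebraic condition). Apply CSL to obtain a real analytic arc $\bfz(t)\in\BC^{*I}$, $t\in[0,\eps)$, with $\bfz(0)=O$, every $\bfz(t)$ a mixed critical point of $f^I$, and a unit complex number $\al(t)$ with $\overline{df^I}(\bfz(t),\overline{\bfz(t)})=\al(t)\,\bar df^I(\bfz(t),\overline{\bfz(t)})$. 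Puiseux-expand the coordinates as $z_i(t)=a_i t^{p_i}+(\text{higher})$ for $i\in I$ with $a_i\ne 0$ and $p_i\in\BQ_{>0}$ (after clearing a common denominator we can take the $p_i$ to be positive integers). The vector $P=(p_i)_{i\in I}$ determines the face $\De(P,f^I)$, and we extend $P$ to an element of $N^{++}$ by setting the remaining weights very large so that $f_P=f^I_P$. Taking leading terms in $t$ in each component of the equation $\overline{df^I}(\bfz(t),\overline{\bfz(t)})=\al(t)\bar df^I(\bfz(t),\overline{\bfz(t)})$, one finds $\al(t)\to \al_0$ with $|\al_0|=1$ and that the vector $\bfa=(a_i)_{i\in I}\in\BC^{*I}$ satisfies $\overline{df^I_P}(\bfa,\bar\bfa)=\al_0\,\bar df^I_P(\bfa,\bar\bfa)$, i.e.\ $\bfa$ is a critical point of $f^I_P:\BC^{*I}\to\BC$. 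Evaluating $f^I$ along the arc and expanding shows $f^I_P(\bfa,\bar\bfa)=0$, so this critical point lies on $(f^I_P)\inv(0)\cap\BC^{*I}$, contradicting non-degeneracy of $f^I$ for $P$ (Proposition \ref{restriction}). This proves (1); the codimension statement then follows because at every nonsingular mixed point of $V^{\sharp}$, $df$ and $\bar df$ span a two-dimensional real subspace, so $V^{\sharp}$ has real codimension $2$.

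For (2), suppose transversality fails for arbitrarily small $r$. Then there are points $\bfw_k\in V^{\sharp}\cap\BC^{*I}$ with $\bfw_k\to O$ such that $dg,dh,d\phi$ are linearly dependent at $\bfw_k$, where $f^I=g+ih$ and $\phi(\bfx,\bfy)=\sum_{i\in I}(x_i^2+y_i^2)$. This non-transversality is again a closed semialgebraic condition, so CSL produces a real analytic arc $\bfz(t)\in V^{\sharp}\cap\BC^{*I}$ with $\bfz(0)=O$ and real analytic functions $\al(t),\be(t)$ such that $d\phi=\al\,dg+\be\,dh$ along the arc (after assuming $dg,dh$ are independent; if they are dependent we fall into case (1)). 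Equivalently, there is a unit $\ga(t)\in\BC$ with $2\overline{\bfz(t)}=\ga(t)\,\overline{df^I}(\bfz(t),\overline{\bfz(t)})+\overline{\ga(t)}\,\bar df^I(\bfz(t),\overline{\bfz(t)})$ (up to a real scalar absorbed into $\ga$; this is the complex reformulation of the $\al\,dg+\be\,dh$ condition). Puiseux-expand $\bfz(t)$ as before to produce a strictly positive weight vector $P$. Taking leading $t$-exponents in the differentiated identity $\frac{d}{dt}f^I(\bfz(t),\overline{\bfz(t)})\equiv 0$ together with the non-transversality relation gives, at the leading coefficient $\bfa\in\BC^{*I}$, the identity $\overline{df^I_P}(\bfa,\bar\bfa)=\al_0\,\bar df^I_P(\bfa,\bar\bfa)$ for some $|\al_0|=1$, while $f^I_P(\bfa,\bar\bfa)=0$. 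This again contradicts non-degeneracy for $P$.

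The main obstacle will be the leading-term analysis in the second step: one must carefully track the exponents of $t$ in each coordinate of the critical-point equation so that the leading terms of $\overline{df^I}(\bfz(t),\overline{\bfz(t)})$ and $\bar df^I(\bfz(t),\overline{\bfz(t)})$ genuinely come from $f^I_P$ and not from higher-order pieces, and simultaneously ensure that the leading $\al(t)$ has modulus one in the limit. Once the $|\al_0|=1$ conclusion is verified the contradiction is immediate, and the same mechanism handles both (1) and (2); the non-emptiness condition (NE) is not needed here because the arc itself furnishes a point $\bfa\in f^I_P\inv(0)\cap\BC^{*I}$ at which non-degeneracy is violated.
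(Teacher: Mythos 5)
Your proof of part (1) follows essentially the same route as the paper: Curve Selection, Taylor (or Puiseux) expansion along the arc, passage to the face function $f^I_P$, and the conclusion that $\bfa$ is a critical point of $f^I_P$ on $(f^I_P)^{-1}(0)\cap\BC^{*I}$. The only cosmetic difference is that you fix $I\in\cN\cV(f)$ in advance and use a compactness step, whereas the paper extracts $I$ from the curve itself; either works. One small slip: the criticality you start from should be criticality of $f$ (on $V^\sharp$), and you should record explicitly that the arc lies on $V$ so that $f^I_P(\bfa,\bar\bfa)=0$ is actually forced.

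Part (2), however, is not correct as stated, and it also differs from what the paper does. You claim that Puiseux-expanding the arc of non-transversality points, differentiating $f^I(\bfz(t),\bar\bfz(t))\equiv 0$, and ``combining with the non-transversality relation'' produces the relation $\overline{df^I_P}(\bfa,\bar\bfa)=\al_0\,\bar df^I_P(\bfa,\bar\bfa)$. This deduction is not justified, and it is the wrong thing to aim for: non-transversality of $S_r$ with $V^\sharp$ at $\bfw$ means $d\phi(\bfw)\in\text{span}_\BR(dg(\bfw),dh(\bfw))$, i.e.\ $2\bfz=\ga\,\overline{df}+\bar\ga\,\bar df$ for some $\ga\in\BC$ (note it is $2\bfz$, not $2\bar\bfz$ as you wrote); this expresses that the \emph{position vector} is a combination of the gradients, not that the gradients are proportional to each other, so there is no reason the leading-order analysis should produce a mixed critical point of $f^I_P$. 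What the paper does is simply cite the standard Milnor argument (Corollary 2.9 of Milnor's book), which once (1) is established needs no face-function analysis at all: if critical points of $\phi=\|\bfz\|^2$ restricted to the now-smooth $V^\sharp$ accumulated at $O$, Curve Selection would give an arc $\bfz(t)\in V^\sharp$ with $\bfz(0)=O$ consisting of such critical points; but then $\dot\bfz(t)\in T_{\bfz(t)}V^\sharp$ and $d\phi$ annihilates $T_{\bfz(t)}V^\sharp$, so $\frac{d}{dt}\|\bfz(t)\|^2=0$, forcing $\|\bfz(t)\|^2$ to be constant equal to $0$, a contradiction. You should replace your leading-term computation in (2) by this argument.
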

\begin{proof}
We prove that the origin is an isolated mixed  singularity.
Or $V^\sharp\cap  B_{r_0}$ has no mixed singularity, if $r$ is
 sufficiently small.
Denote  the mixed singular locus of $V$ by  $\Sigma_m(V) $.
Assume the contrary. Using the Curve Selection Lemma (\cite{Milnor,Hamm1}),
we can find a real analytic curve
$\bfz(t)\in \BC^n,\,0\le t\le 1$ so that
$\bfz(t)\in \Si_m(V)\cap V^{\sharp}$ for $t\ne 0$
and $\bfz(0)=O$. Using Proposition  \ref{critical point}
we can find a real analytic  family $\la(t)$ in $S^1\subset \BC$
such that 
\begin{eqnarray}\label{Sing-cond1}
 \overline{df}(\bfz(t),\bar\bfz(t))=\la(t)\, \bar df(\bfz(t),\bar\bfz(t)).
\end{eqnarray}
Put $I=\{j\,|\, z_j(t)\not \equiv 0\}$. As $\bfz(t)\in V^{\sharp}$,
$I\in \cN\cV(f)$,
the restriction $f^I=f|_{\BC^I}$ is not constantly zero.
 We may assume that
$I=\{1,\dots,m\}$ and we consider $f^I$ and 
the Taylor expansion of  $\bfz(t)$:
\begin{eqnarray*}
 &\bfz_i(t)=b_i t^{a_i}+\text{(higher terms)},\,b_i\ne 0\quad
i=1,\dots, m\\
&\la(t)=\la_0+\la_1 t+\text{(higher terms)},\quad \la_0\in S^1\subset \BC.
\end{eqnarray*}
Put $A=(a_1,\dots, a_m)$ and  we consider the face function $f^I_A$
of $f^I(\bfz,\bar\bfz)$. Let $d=d(A;f^I)>0$ and $\bfb=(b_1,\dots,
 b_m)\in \BC^{*m}$.
Then we have
\begin{eqnarray*}
&\frac{\partial f}{\partial z_j}(\bfz(t),\bar\bfz(t))=\frac{\partial f^I_A}{\partial
 z_j}(\bfb,\bar\bfb)\,t^{d-a_j}+\text{(higher terms)},\quad j=1,\dots,m\\
 &\frac{\partial f}{\partial \bar z_j}(\bfz(t),\bar\bfz(t))=\frac{\partial f^I_A}{\partial
 \bar z_j}(\bfb,\bar\bfb)\,t^{d-a_j}+\text{(higher terms)}\quad j=1,\dots,m.\\
 \end{eqnarray*}
Observe that by the equality (\ref{Sing-cond1}),   we have the following
 equality:
\[
 \ord_t{\frac{\partial f^I}{\partial z_j}
 \,(\bfz(t),\bar\bfz(t))}=\ord_t \,
 \frac{\partial f^I}{\partial \bar z_j}(\bfz(t),\bar\bfz(t)),\,\,j=1,\dots,m.
\]
 Thus by (\ref{Sing-cond1}), we get the equality:
 \[
  \overline{df^I_A}(\bfb,\bar\bfb)=\la_0\, \bar df^I_A(\bfb,\bar\bfb).
 \]
On the other hand, the equality $f^I(\bfz(t))\equiv 0$ implies that 
$ f^I_A(\bfb,\bar\bfb)= 0$.
  This implies that $\bfb\in \BC^{*I}$ is a critical point of 
${f^I_A}:\BC^{*I}\to \BC$, which is a contradiction to the non-degeneracy of 
$f^I(\bfz,\bar\bfz)$.

The second  assertion
 is the result of  a standard argument ( Corollary 2.9, \cite{Milnor}).
\end{proof}

We say that $f$ is {\em $k$-convenient} if 
 $J\in \cN\cV(f)$ for any $J\subset \{1,\dots,n\}$ with $|J|=n-k$.
We say that $f$ is {\em convenient} if $f$ is $(n-1)$-convenient.
Note that $V^\sharp=V\setminus\{O\}$ if $f$ is convenient.
For a given $\ell$ with $0<\ell\le n$, we put 
$ W(\ell)=\{\bfz\in \BC^n\,|\, |I(\bfz)|\le \ell\}$
where $I(\bfz)=\{i|z_i=0\}$.
Thus $W(n-1)=\BC^{*n}$.
If $f$ is $\ell$-convenient, $V\cap W(\ell)\subset V^{\sharp}$.

\begin{Corollary}
Assume that $f(\bfz,\bar\bfz)$ is a  convenient true  non-degenerate
mixed polynomial. Then $V=f\inv(0)$ has an isolated
mixed singularity
at the origin.
\end{Corollary}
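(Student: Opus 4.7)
The plan is to deduce this corollary directly from Theorem \ref{isolatedness} by showing that the convenience hypothesis forces the equality $V^{\sharp}=V\setminus\{O\}$. Once that identification is in hand, part (1) of Theorem \ref{isolatedness} immediately supplies a ball $B_{r_0}$ inside which $(V\setminus\{O\})\cap B_{r_0}$ is mixed non-singular, which is exactly the assertion of isolatedness at the origin.

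First I would verify the inclusion $V^{\sharp}\subseteq V\setminus\{O\}$, which holds with no convenience hypothesis: every element of $V^{\sharp}$ lies in some $\BC^{*I}$ with $I\in\cN\cV(f)$, and our standing normalization $f(O)=0$ forces $\emptyset\notin\cN\cV(f)$, so $I\neq\emptyset$ and in particular the origin is excluded from each $\BC^{*I}$ appearing in the union defining $V^{\sharp}$.

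The content is in the reverse inclusion $V\setminus\{O\}\subseteq V^{\sharp}$. Fix $\bfw\in V$ with $\bfw\ne O$ and let $I(\bfw)=\{i\,|\,w_i\ne 0\}$, so $I(\bfw)$ is a nonempty subset of $\{1,\dots,n\}$ and $\bfw\in\BC^{*I(\bfw)}$. Convenience (i.e., $(n-1)$-convenience) says that every singleton $\{i\}\subset\{1,\dots,n\}$ lies in $\cN\cV(f)$; picking any $i\in I(\bfw)$ and noting that $f^{I(\bfw)}$ restricted to the coordinate line $\BC^{\{i\}}$ coincides with $f^{\{i\}}\not\equiv 0$, we conclude $f^{I(\bfw)}\not\equiv 0$ and hence $I(\bfw)\in\cN\cV(f)$. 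Therefore $\bfw\in\BC^{*I(\bfw)}\cap V\subseteq V^{\sharp}$, establishing the inclusion.

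Having $V^{\sharp}=V\setminus\{O\}$, Theorem \ref{isolatedness}(1) produces $r_0>0$ such that $V^{\sharp}\cap B_{r_0}=(V\setminus\{O\})\cap B_{r_0}$ contains no mixed singular point; equivalently, $O$ is the only possible mixed singular point of $V$ in $B_{r_0}$, which is the claim. There is no real obstacle in this argument beyond bookkeeping: the heavy lifting (Curve Selection together with non-degeneracy on each stratum $\BC^{*I}$) is already absorbed into Theorem \ref{isolatedness}, and convenience is precisely the combinatorial hypothesis needed to promote the stratified statement to a statement about all of $V\setminus\{O\}$.
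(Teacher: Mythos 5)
Your argument is correct and follows exactly the paper's intended route: the paper itself remarks, just before the corollary, that ``$V^\sharp = V\setminus\{O\}$ if $f$ is convenient,'' and then the corollary is an immediate consequence of Theorem~\ref{isolatedness}(1). You have simply supplied the short verification of that set equality (using that each singleton lies in $\cN\cV(f)$ under $(n-1)$-convenience, together with $\emptyset\notin\cN\cV(f)$ since $c_{0,0}=0$) which the paper leaves implicit.
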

\begin{Remark} The assumption ``true'' is to make sure that 
$V^*=f\inv(0)\cap \BC^{*n}$ is non-empty.
\end{Remark}

\section{Resolution of the singularities}
We consider a mixed analytic function $f(\bfz,\bar\bfz)$ and
the corresponding mixed hypersurface $V=f\inv(0)$.
 We assume that $O\in V$
is an isolated mixed singularity, unless otherwise stated.

 If $f$ is complex analytic, a ``resolution of $f$''
is usually understood as a proper  holomorphic mapping
$\varphi: X\to \BC^n$  so that

(i) $E:=\varphi\inv(O)$ is a union of smooth 
(complex analytic) divisors
which  intersect transversely and $\varphi:X-E\to \BC^n-\{O\}$
is biholomorphic,

(ii) the divisor  $(\varphi^*f)$ is a union of smooth 
divisors intersecting  transversely and we can write
 $(\varphi^*f)=\widehat V\cup E$ where 
$\widehat V$ is the  strict transform of $V$ (= the closure of
$\varphi\inv(V-\{O\})$), 

(iii) for any point $P\in E_I^*\cap \what V$ with $I=\{i_1,\dots, i_s\}$, there exists an analytic coordinate chart
$(u_1,\dots, u_n)$ so that the pull-back of $f$
 is written as $U\times u_1^{m_1}\cdots u_j^{m_j}$
where $U$ is a unit in a neighborhood of $P$,
 $E_{i_k}=\{u_k=0\}$
($k=1,\dots,s-1$) and $\what V=\{u_s=0\}$.
Here $E_I^*:=\cap E_{i\in I}\setminus \cup_{j\notin I}E_j$.

For a mixed hypersurface, a resolution of this type
 does not exist in general.
The main reason is that  there is no complex structure in the tangent
 space of $V$.
Nevertheless we will show that 
 a suitable toric modification partially resolves such  
singularities. 
\subsection{Toric modification and resolution of complex analytic singularities }
For the reader's convenience, we recall some basic facts about 
 the  toric modifications at the origin.
 We use the notations and the terminologies of
 \cite{ResHyp87,Principal90,Okabook} and \S \ref{Newton boundary}.

\subsubsection{Toric modification}
Let $A =(a_{i,j}) \in GL(n, \BZ)$ with $\det A =\pm 1$. We  call 
such a matrix {\it a unimodular matrix}.
We associate to $A$ a birational morphism  
$$\psi_A : \BC^{*n} \to \BC^{*n}$$
which is defined by 
$\psi_A(\bfz)=(z_1^{a_{1,1}}\dotsb z_{n}^{a_{1,n}},\dots, z_1^{a_{n,1}
}\dotsb z_{n}^{a_{n,n}})$.  If the coefficients of $A$
are non-negative, $\psi_A$ can be defined on $\BC^n$.
Note that $\psi_A$ is a group homomorphism of the algebraic group
$\BC^{*n}$
and we have 
\[
 \psi_A\inv=\psi_{A\inv},\quad
\psi_A\circ \psi_B=\psi_{AB}.
\]

We consider the space of integer  weight vectors $N$ and we denote 
 weight vectors by column vectors. Here the coordinates
 $\bfz=(z_1,\dots, z_n)$ is fixed. The space of the weight vectors
with coefficients in $\BR$
is denoted by $N_{\BR}$. 

Now we consider the subspace of positive weight
 vectors
$N_{\BR}^+$.
Let $P_1,\dots$, $P_m$ be  vectors in $N_{\BR}^+$. 
The polyhedral cone
generated by $P_1,\dots, P_m$
 is defined by
\[
 \Cone (P_1,\dots, P_m):=\{  t_1 P_1+\dots+t_mP_m  \in N | t_i \in \BR, ~t_i \ge 0,~i=1,\dots,m\}.
\]
The interior of $\Cone(P_1,\dots, P_m)$ is called an open cone and it is
defined as
\[
 \Int \Cone (P_1,\dots, P_m):=\{  t_1 P_1+\dots+t_mP_m  \in N | t_i \in \BR, ~t_i > 0,~i=1,\dots,m\}.
\]
The cone $ \Cone (P_1,\dots, P_m)$ is called a {\em simplicial cone} if
$\{P_1,\dots, P_m\}$ are linearly independent.
We  consider only the case where $P_1,\dots, P_m$ are integer vectors.
We call  $P_1,\dots, P_m$ {\em the vertices} of the cone,
 if  $P_1,\dots, P_m$ are chosen to be
primitive
integer vectors, by multiplications of rational numbers if necessary.
It is called a {\em  regular simplicial cone} if $\{P_1,\dots, P_m\}$
can be a part of $\BZ$-basis of $N$.
For a regular simplicial cone 
$ \si=\Cone(P_1,\dots, P_n)$ of dimension $n$ with vertices $P_1,\dots,
P_n$,
we associate a unimodular matrix $A$ whose $j$-th column is $P_j$.
By an abuse of notation, we also denote  $A$ by $\si$.
Let $E_1,\dots, E_n$ be the standard basis of $N$. 
( $E_j={}^t(0,\dots,1,0,\dots, 0)$ where $1$ is at the  $j$-coordinate.)
Then
$\Cone(E_1,\dots, E_n)$ is a regular simplicial cone and 
it is nothing but $N_{\BR}^+$.

We consider a simplicial cone subdivision
$\Si^*$ of the cone
$\Cone(E_1,\dots, E_n)$  for which 
every cone is regular. Such a subdivision is  called a {\em regular fan}.
Suppose that $\Si^*$ is a regular fan.
 Let $\mathcal S$ be the set of $n$-dimensional
cones and let $\mathcal V^+$ be the set of strictly positive vertices.
For simplicity, we assume that the vertices of $\Si^*$ are the union of 
$\{E_1,\dots, E_n\}$ and $\mathcal V^+$.
For each $\si\in \mathcal S$, we consider a copy of a complex Euclidean space $\BC_\si^n$
with coordinates $\bfu_\si=(u_{\si1},\dots, u_{\si n})$ and the morphism
$\pi_\si:\BC_\si^n\to \BC^n$ defined by
$\pi_\si(\bfu_\si)=\psi_\si(\bfu_\si)$. Taking the disjoint sum
$\amalg_{\si\in \mathcal S}\BC_\si^n$, we glue
together
$\amalg_{\si\in \mathcal S}\BC_\si^n$
under the following equivalence relation:
\[
 \bfu_\si\sim \bfu_\tau \quad
\text{if}\quad \psi_{\tau\inv \si}\,\text{ is well-defined at}\,\, \bfu_\si
\,\,\text{and}\,\,\psi_{\tau\inv\si}(\bfu_\si)=\bfu_\tau.
\]
We denote  the quotient space
$\amalg_{\si\in \mathcal S}\BC_\si^n/\sim$ by  $X_{\Si^*}$.
 Then
$X_{\Si^*}$  is a complex manifold of dimension $n$
and the morphisms $\pi_\si:\BC_\si^n\to \BC^n,\,\si\in \mathcal S$
are compatible with the identification and thus they
 define a birational proper holomorphic mapping 
\[
 \what\pi:\, X_{\Si^*}\to \BC^n.
\]
The restriction $\what \pi$ to $X_{\Si^*}\setminus \what\pi\inv(0)$
is a biholomorphic onto $\BC^n\setminus \{O\}$.
We call $\what \pi: X_{\Si^*}\to \BC^n $ {\em the toric modification}
associated with the  regular fan $\Si^*$ {\cite{ResHyp87,Okabook}}.
The irreducible  exceptional divisors correspond bijectively to the
vertices
$P\in \mathcal V^+$ and we denote it by $\what E(P)$.
Then $\what\pi\inv(O)=\bigcup_{P\in \cV^+}\what E(P)$.

The easiest non-trivial case is when
$\mathcal V^+=\{P={}^t(1,\dots,1)\}$. In this case, $X_{\Si^*}$
is nothing but the ordinary blowing-up at the origin of $\BC^n$.

\subsubsection{Dual Newton diagram and admissible toric modifications}
Let $f(\bfz,\bar\bfz)=\sum_{\nu,\mu}c_{\nu,\mu}\bfz^\nu\bar\bfz^\mu$ be a germ of mixed function
in $n$ variables $z_1,\dots, z_n$.
We introduce an equivalence relation in $N_{\BR}^+$ by
\[
 P\sim Q,\,P,Q\in N_{\BR}^+\iff \De(P;f)=\De(Q;f).
\]
The set of equivalence classes  gives an open polyhedral cone subdivision
of $N_{\BR}^+$ and we denote it as $\Ga^*(f;\bfz)$ and we call it {\em the
dual Newton diagram}. 
Let $\Si^*$ be a regular fan which is a regular simplicial cone subdivision
of $\Ga^*(f)$. If $\Si^*$ is a  regular simplicial cone subdivision
of $\Ga^*(f)$,
the toric modification 
$\what \pi: X_{\Si^*}\to \BC^n$ is called {\em admissible } for 
$f(\bfz,\bar\bfz)$.
The basic fact for  non-degenerate holomorphic functions is:
\begin{Theorem}( \cite{ResHyp87,Principal90,Okabook}) Assume that $f(\bfz)$ be a non-degenerate convenient
analytic function with an isolated singularity at the origin.
Let $\what\pi: X_{\Si^*}\to \BC^n$ be an admissible toric
 modification. Then  it is a good resolution of the mapping $f:\BC^n\to
 \BC$
at the origin.
\end{Theorem}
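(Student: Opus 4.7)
\smallskip
\noindent\textbf{Proof proposal.} The plan is to work locally on each toric chart and show that, after pull-back by $\what\pi$, the function $f$ factors into a monomial times a smooth factor whose zero locus meets the exceptional divisor transversally. The key computation takes place in an affine chart $\BC_\si^n$ for a top-dimensional cone $\si = \Cone(P_1,\dots,P_n) \in \cS$, where $\pi_\si = \psi_\si$ is the toric map $z_i = u_{\si 1}^{p_{i1}}\cdots u_{\si n}^{p_{in}}$. For any monomial $\bfz^\nu$ one has $\psi_\si^*\bfz^\nu = \bfu_\si^{\nu\si}$, and the exponent of $u_{\si j}$ equals $\ell_{P_j}(\nu) = \langle \nu, P_j\rangle$, which is $\ge d(P_j;f)$ on $\Ga_+(f)$ with equality exactly on $\De(P_j;f)$. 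Hence one obtains the factorisation
\[
\psi_\si^* f(\bfu_\si) \;=\; \prod_{j:\, P_j\in \cV^+} u_{\si j}^{d(P_j;f)} \cdot \tl f_\si(\bfu_\si),
\]
where $\tl f_\si$ is holomorphic. The standard basis vectors $E_i$ contribute no exceptional divisor since $d(E_i;f)=0$ by convenience.

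Next I would identify the strict transform $\what V$ with $\{\tl f_\si = 0\}$ and analyse its restriction to the divisors $\what E(P_j) \cap \BC_\si^n = \{u_{\si j}=0\}$. Setting $u_{\si j}=0$ in $\tl f_\si$ kills every monomial coming from $\nu \notin \De(P_j;f)$, so $\tl f_\si|_{u_{\si j}=0}$ is obtained from the face polynomial $f_{P_j}$ by pull-back and division by the monomial factor corresponding to the remaining columns $P_k$ ($k\ne j$). Because $\psi_\si$ restricts to an isomorphism of the big torus $(\BC^*)^n$, this identifies $\tl f_\si|_{u_{\si j}=0}\cap (\BC^*)^n$ with $f_{P_j}$ on $\BC^{*n}$ modulo the $P_j$-action, up to a non-vanishing monomial factor. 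By the remark following the definition, non-degenerate holomorphic face functions are automatically strongly non-degenerate (Euler relation), so $f_{P_j}:\BC^{*n}\to\BC$ has no critical points. Therefore $\tl f_\si|_{u_{\si j}=0}$ is a submersion on the torus part of the exceptional divisor.

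Combining these pieces, at any point $P\in \what E(P_{j_1})^*\cap\cdots\cap \what E(P_{j_s})^*\cap \what V$ inside $\BC_\si^n$ (with $\{j_1,\dots,j_s\}\subset\{1,\dots,n\}$), the function $\tl f_\si$ is smooth and non-vanishing along the intersection of the $s$ divisors; by the submersion property above one can choose a new local analytic coordinate $v$ in place of one of the remaining $u_{\si k}$'s so that $\tl f_\si = v\cdot U$ with $U$ a unit, while the other coordinates $u_{\si j_1},\dots,u_{\si j_s}$ are kept. This produces the desired local normal form $\psi_\si^*f = U\cdot u_{\si j_1}^{d(P_{j_1})}\cdots u_{\si j_s}^{d(P_{j_s})}\cdot v$. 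Convenience together with isolatedness of the singularity ensures that no intersection stratum of $E$ lies entirely in $\what V$ away from the exceptional locus, so the normal-crossing condition on $(\psi_\si^*f) = \what V \cup E$ is satisfied everywhere.

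The main obstacle, as usual in this type of argument, is the bookkeeping that keeps track of which coordinate gets replaced and of checking that the normal-crossing property persists at the deepest strata $\what E_I^*\cap \what V$: one must verify simultaneously that $\what V$ is smooth at such points and that its tangent space is transverse to each $\what E(P_{j_k})$. This reduces to the non-vanishing of a Jacobian determinant built from the partial derivatives of $f_{P_{j_1}},\dots,f_{P_{j_s}}$, and it is here that the non-degeneracy hypothesis for the cone $\Cone(P_{j_1},\dots,P_{j_s})$ (equivalently, for any $P$ in its interior) enters in an essential way through Proposition \ref{restriction}.
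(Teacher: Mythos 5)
Your overall plan matches the structure of the argument the paper uses for its mixed analogue, Theorem \ref{real-resolution} (the paper only cites \cite{ResHyp87,Principal90,Okabook} for the complex case, but the proof given there is the same chart‑by‑chart computation). The factorization $\psi_\si^* f = \prod_{j} u_{\si j}^{d(P_j;f)}\,\tl f_\si$ and the identification of $\tl f_\si|_{u_{\si j}=0}$ with the pulled‑back face function are correct, and the use of the Euler relation to convert non‑degeneracy into a submersion on $\BC^{*n}$ is fine.

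However, the step you yourself flag as the main obstacle — smoothness and transversality at the deeper strata $\what E_I^*\cap\what V$ — is handled incorrectly, and the essential mechanism is missing. The key fact, stated explicitly in the paper's proof of Theorem \ref{real-resolution} (``the important point here is that $\what f_{\De,\si}$ does not contain the variable $\tl x_1$''), is that the function $\tl f_{\De,\si} := \psi_\si^* f_{\De(\tau_I)} / \prod_j u_{\si j}^{d(P_j)}$ is \emph{independent of every exceptional coordinate} $u_{\si j}$, $j\in I$, because $\langle\nu,P_j\rangle = d(P_j)$ for each monomial $\nu$ of $f_{\De(\tau_I)}$ and each $j\in I$. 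That independence is precisely what lets you translate the non‑degeneracy statement, which lives on the open torus $\BC_\si^{*n}$, to the closed stratum $\{u_{\si j}=0,\ j\in I\}$ without any limiting argument: one replaces the vanishing $u_{\si j}$ by $1$ to land in $\BC_\si^{*n}$, applies the submersion property of $f_P$ (for $P\in\Int\,\tau_I$), and concludes $\partial\tl f_{\De,\si}/\partial u_{\si k}\ne 0$ for some $k\notin I$. Your proposal never records this independence, so the passage from the torus to $E_I^*$ is not justified.

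Moreover, the reduction you describe — ``non‑vanishing of a Jacobian determinant built from the partial derivatives of $f_{P_{j_1}},\dots,f_{P_{j_s}}$'' — is not what actually happens. In the hypersurface case only the \emph{single} face function $f_{\De(\tau_I)} = f_P$ for any $P\in\Int\,\Cone(P_{j_1},\dots,P_{j_s})$ enters, and the condition is simply the non‑vanishing of its gradient; there is no Jacobian of several face functions. Finally, Proposition \ref{restriction} is not the relevant tool here: it concerns restriction of $f$ to coordinate subspaces $\BC^I$, whereas what you need at a deep stratum is non‑degeneracy of the face function for a face of $\Ga(f)$ of higher codimension, which is already built into the definition of non‑degeneracy for all $P\in N^{++}$. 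Until the independence of $\tl f_{\De,\si}$ from the exceptional coordinates is made explicit and the argument is rephrased around the single face function $f_{\De(\tau_I)}$, the proof of transversality at $|I|>1$ strata is not complete.
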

This is a starting observation of the present paper.
 \subsection{Blowing up  examples}
We consider some examples.

\begin{Example}
{\rm A. Let  $C_1=\{(z_1,z_2)\in
\BC^2\,|\,z_1^2-z_2^2=0\}\}$, $V_1=\{(z_1,z_2)\in
\BC^2\,|\,f_1(\bfz,\bar\bfz)=0\}$ 
and $V_2=\{(z_1,z_2)\in\BC^2\,|\,f_2(\bfz,\bar\bfz)=0\}$
where  $f_1(\bfz,\bar\bfz)=\bar z_1^2-z_2^2$ and
 $f_2(\bfz,\bar\bfz)=z_1\bar z_1-z_2^2$. $C_1$ is 
a union of two smooth complex line,
$V_1$ is a union of two smooth
real planes, $\bar z_1\pm z_2=0$ and $V_2$ is an irreducible variety.  Consider
\begin{eqnarray*}
{\what \pi}_1:X_1\to \BC^2
\end{eqnarray*}
where ${\what \pi}_1:X_1\to \BC^2$  is the toric modification associated
 with the regular
fan
generated by vertices
\begin{eqnarray*}
 \Si_1^*=\left\{E_1=\left(\begin{matrix}1\\0\end{matrix}\right),P=\left(\begin{matrix}1\\1\end{matrix}\right), E_2=\left(\begin{matrix}0\\1\end{matrix}\right)
\right\}.
\end{eqnarray*}
Geometrically, ${\what \pi}_1$ is an ordinary blowing up.
Note  that for the complex curve $C_1$,
the two components are separated 
by a single blowing up $\what \pi_1$.
We will see what happens to the two other mixed curves $V_1,\,V_2$.
In the toric coordinate $\BC_\si^2$ with
$\si=\Cone(P,E_2)$ and the toric coordinates $(u_{1},u_{2})$,
the strict transform $\widehat V_1,\,\widehat V_2$ of $V_1,\,V_2$  are defined  in 
the torus $\BC_\si^{*2}$  as 
\[\begin{split}
&\what C_1\cap \BC_\si^{*2}=\{(u_1,u_2)\in  \BC_\si^{*2}\,|\, u_1^2-u_1^2u_2^2=u_1^2(1-u_2^2)=0\}\\
&\widehat V_1\cap \BC_\si^{*2}=\{(u_1,u_2)\in  \BC_\si^{*2}\,|\, \bar u_1^2-u_1^2u_2^2=0\},\\
& \widehat V_2\cap \BC_\si^{*2}=
\{ (u_1,u_2)\in  \BC_\si^{*2}\,|\, u_1(\bar u_1-u_1u_2^2)=0\}.
\end{split}
\]
The first expression shows that $\what C_1$ is already smooth and
 separated
into two peaces.
 Unlike the case of holomorphic functions, we observe that
$$\{(u_1,u_2)\in \BC_\si^2\,|\, \bar u_1^2-u_1^2u_2^2=0\}\supsetneq
\widehat V_1,
\,\{ (u_1,u_2)\in \BC_\si^2\,|\, \bar u_1-u_1u_2^2=0\}\supsetneq \widehat V_2
$$
as $\widehat E(P)=\{u_1=0\}\not\subset \widehat V_i,\,i=1,2$.
In both cases, we see that the 1-sphere  
$|u_2|=1$ appears as their intersection with the exceptional divisor 
$\widehat E(P)$.
 It is easy to see that for $\widehat V_1$, both irreducible components
$L_\pm^*=\{(u_1,u_2)\in \BC_\si^{*2}|\bar u_1 \pm u_1u_2=0\}$
satisfy the limit equality
$\widehat L_\eps^*\cap \widehat E(P)=\{(0,u_2)||u_2|=1\}$ with $\eps=\pm$.
Thus  ${\widehat L_+}\cap {\widehat L_-}$ is  the 1-sphere $|u_2|=1$ and the
ordinary blowing up does not separate the  two smooth components.
For $\widehat V_2$, we will see later that it has two  link components. See
\S 6 for the definition of the link components.
This illustrates the complexity of the limit set of the tangent lines in
the mixed varieties. 

\vspace{.3cm}\noindent
B. We consider an ordinary cusp (complex analytic) $C_2=\{z_2^2-z_1^3=0\}$
and a mixed curve  $V_3=\{z_2^2-z_1^2\bar z_1=0\}$ with the same Newton boundary and an
admissible toric blowing up ${\what \pi}:X_2\to \BC^2$
which is associated with the regular simplicial fan:
\begin{eqnarray*}
\Si_2^*=\left\{E_1, P=
\left(\begin{matrix}1\\1\end{matrix}\right),
Q=\left(\begin{matrix}2\\3\end{matrix}\right), R=
\left(\begin{matrix}1\\2\end{matrix}\right),E_2\right\}
\end{eqnarray*}
 Let
$(u_{1},u_{2})$ be the toric coordinate of $\BC_\si^2$
with $\si=(Q,R)=\left(\begin{matrix}2&1\\3&2\end{matrix}\right)$. Then
the pull back of the defining polynomials are defined in this coordinate chart as
\[\begin{split}
&\what C_2\cap \BC_\si^{*2}=\{(u_1,u_2)\in \BC_\si^{*2}\,|\, u_1^6u_2^3(u_2-1)=0\}\\
&\what V_3\cap \BC_\si^{*2}=\{(u_1,u_2)\in \BC_\si^{*2}\,|\,u_1^4u_2^2(u_1^2u_2^2-\bar u_1^2\bar u2)=0\}.
\end{split}
\]
Observe that 
$\what C_2$ is smooth and transverse to the exceptional divisor 
$\what E(Q)=\{u_1=0\}$.
The strict transform $\widehat V_3$ is defined by
$u_{1}^2\,u_{2}^2-\bar u_{1}^2\, \bar u_{2}=0$ in $\BC_{\si}^{*2}$.
We see  again that
 for $\wtl V_3$, a sphere 
$|u_{2}|=1$ appears as the intersection with the exceptional divisor.
We observe that  $\widehat V_3\cap \widehat E(Q)=\{(0,u_{2})||u_{2}|=1\}$.

\vspace{.3cm}
The above  examples show that the toric modification does not resolve
the singularities of  non-degenerate  mixed
 hypersurfaces.
To get a good resolution of a mixed hypersurface singularity,
 we need to compose a toric modification with a 
normal real blowing up or  a normal polar
modification which we introduce below. }
\end{Example}
\subsection{Normal real blowing up and normal polar blowing up of $\BC$}
Consider the complex plane with two coordinate systems
$z=x+  i \,y$ and $z=r\exp(i\,\theta )$. 
We can consider the following  two modifications.

\vspace{.2cm}
\noindent
(I) Let $\iota_{\BR}:\BC\setminus\{O\}\to \BC\times \BR\BP^1$ defined by
$z=x+ i\,y \mapsto (z,[x:y])$ and let $\cR\BC$ be the closure of the image of
$\iota_{\BR}$.
This is called the real blowing up. $\cR\BC$ is a real two dimensional
manifold which has two coordinate charts $(U_0,(\tilde x,t))$ and 
$(U_1,(s,\tilde y))$. These coordinates are defined by
$\tilde x=x, t=y/x$ and $\tilde y=y,s=x/y$.
The canonical projection $\ome_{\BR}:\cR\BC\to \BC$
is given as  $\ome_{\BR}(\tilde x,t)=\tilde x(1+ i\,t)$
and 
$\ome_{\BR}(s,\tilde y)=\tilde y(s+ i)$. Note that  $\ome_{\BR}\inv(O)=
\BR\BP^1$
and 
$\ome_{\BR}:\cR\BC\setminus \{O\}\times \BR\BP^1\to \BC\setminus\{O\}$
is diffeomorphism.

\vspace{.2cm}\noindent
(II) Consider the polar embedding
$\iota_p:\BC\setminus\{O\}\to \BR^+\times S^1$ which is defined by
$\iota_p(r\exp(\theta\,i ))=(r,\exp(\theta\,i ) )$. 
Here $\BR^+=\{x\in \BR|x\ge 0\}$.
Let $\cP\BC=\BR^+\times S^1$ and $\omega_p:\cP\BC\to \BC$ be the projection
defined by $\omega_p(r,\exp(\theta\,i))=r\exp(\theta\,i )$.
We can see easily that 
$\omega_p\inv(O)=\{0\}\times S^1$ and 
$\omega_p: \cP\BC\setminus \{0\}\times S^1\to \BC\setminus\{O\}$ is a
diffeomorphism.
Note that $\cP\BC$ is a manifold with boundary.

\subsubsection{Canonical factorization}\label{factorization}
There exists a canonical mapping $\psi: \cP X\to \cR\BC$ which is defined
 by
\[
 \psi(r,\exp(\theta\,i))=\begin{cases}
&(\tilde x,t)=(r\cos\theta,\tan\theta),\quad \theta\ne \pm \frac{\pi}2\\
&(s,\tilde y)=(\cot\theta,r\sin\theta),\quad \theta\ne 0,\pi
\end{cases}
\]
It is obvious that $\psi$ gives the commutative diagram
\[
 \begin{matrix}
\cP\BC&\mapright{\psi}& \cR\BC\\
\mapdown{\omega_{p}}&&\mapdown{\omega_{\BR}}\\
\BC&=&\BC
\end{matrix}
\]
Note that the restriction of $\psi$ over the exceptional sets
is a $2:1$ map:
\[
 \psi: \{O\}\times S^1\to \{O\}\times\BR\BP^1,\quad
\exp(\theta\,i )\mapsto [\cos(\theta):\sin(\theta)]
\]

\subsection{Resolution of a mixed function}
Let $f(\bfz,\bar \bfz)$ be a  mixed function and let
$V=f\inv(0)$ and we assume that $V$ has an isolated mixed singularity at the
origin and the real codimension of $V$ is  two.
 (Note that if  $V$ is non-degenerate, it has a real
codimension two by the definition of non-degeneracy and Theorem \ref{isolatedness}.)
Let $Y$ be a real analytic manifold of dimension $2n$
and let $\Phi:Y\to \BC^n$ be a proper  real analytic mapping.
We say that $\Phi:Y\to \BC^n$ is {\em a resolution of a real type}
(respectively {\em a resolution of a polar type}) of the mixed function
 $f$ 
if 
\begin{enumerate}
\item Let $E=\Phi\inv(O)$ and let $E=E_1\cup\cdots\cup E_r$ be
the irreducible components. Each $E_j$ is a real codimension one 
smooth subvariety.
\item $Y$ is a real analytic manifold of dimension $2n$.
 For a resolution of a real type,
$Y$  has no boundary 
while for a resolution of a polar type
$Y$ is   a real analytic manifold with boundary 
and $\partial Y=E$.
\item The restriction $\Phi:Y-E \to \BC^n\setminus\{O\}$ is a real
 analytic
      diffeomorphism.
\item Let $\widetilde  V$ be the strict transform of $V$
(=the closure of $\Phi\inv(V\setminus\{O\})$).
Then $\widetilde V$ is a smooth manifold of real codimension 2 in an open
      neighborhood of $E$.
\item For  $I=\{i_1,\dots, i_t\}$, put
 $E_I^*:=\bigcap_{k=1}^tE_{i_k}\setminus\bigcup_{j\notin I} E_j$. For
 $P\in E_I^*\cap \wtl V$,
 there exists a local real analytic coordinate system
$(U,(u_1,\dots, u_{2n}))$ centered at $P$ such that 
\[
 \Phi^*f(\bfu)=u_1^{m_1}\cdots u_t^{m_t}(u_{t+1}+\,i\,u_{t+2})
\]
so that 
$U\cap E_{i_j}=\{u_j=0\}$ for $j=1,\dots, t$ and 
$U\cap \widetilde V= \{u_{t+1}+\,i\,u_{t+2}=0\}$.
In the case of a resolution of a polar type, we assume also that 
$Y\cap U=\{u_1\ge 0,\dots, u_t\ge 0\}$.
\end{enumerate}
For example, assume that $t=1$ for simplicity. 
Then the condition (5) says the following.
\noindent
If we are considering 
a resolution of a real type, 
\[
 U\cong \BR^{2n}\,\text{or}\,\,B^{2n},\,E_{i_1}=\{u_1=0\},
\quad \Phi^*f(\bfu)=u_1^{m_1}(u_2+i\,u_3),
\]
\noindent
 if we are considering a resolution of polar type,
\[
 U\cong \BR^{2n}\cap \{u_1\ge 0\},\, E_{i_1}=\{u_1=0\},
\, \Phi^*f(\bfu)=u_1^{m_1}(u_2+i\,u_3).
\] See the next section for more details.

\subsubsection{Normal real blowing up}
Let $X$ be a complex manifold of dimension $n$ with a finite
number of  smooth
complex divisors
$E_1,\dots, E_\ell$  such that the union of divisors
   $E=\bigcup_{i=1}^\ell E_i$
has at most normal crossing singularities.
Then we can consider the  composite of real modifications  for the normal
complex 1-dimensional subspaces
along the  divisor $E_1,\dots, E_\ell$. Put it as
$\omega_{\BR}: \cR X\to X$
and we call it {\em the normal real blowing up} along $E$.
 It is immediate from the definition that 

\vspace{.3cm}
\begin{enumerate}
\item $\cR X$ is a differentiable manifold and 
$\omega_{\BR}: \cR X\,\backslash \,{\omega_{\BR}\inv(E)}\to Y\,\backslash\, E$ is a
   diffeomorphism.

\item Inverse image $\widetilde E_j:=\omega_{\BR}\inv(E_j)$ of $E_j$
is a real codimension 1 variety which is fibered over $E_j'$ with a fiber
$S^1$. Here $E_j'$ is the normal real blowing up of $E_j$
along $\bigcup_{i\ne j} E_i\cap E_j$.
Putting $E_I^*:=\bigcap _{i\in I}E_i\backslash \bigcup_{j\notin I}E_j$,
 $\widetilde E_I^*:=\omega_{\BR}\inv(E_I^*)$ is fibered over $E_I^*$ with fiber 
$(S^1)^k$ where  $k=|I|$.
\end{enumerate}
Take a point $P\in E_I^*$ and choose a local coordinate
$(W,(u_1,\dots, u_n))$ so that $I=\{1,\dots, m\}$ and 
$E_j=\{u_j=0\},\,j=1,\dots,m$.
Then $\omega_{\BR}\inv(W)$ is
isomorphic to $(\cR\BC)^m\times \BC^{n-m}$ covered by $2^m$ coordinates
$W_{\eps_1,\dots,\eps_m}=U_{\eps_1}\times\cdots\times U_{\eps_m}\times
      \BC^{n-m}$ where $\eps_j=0$ or $1$.
For example, $W_{1,0,\dots,0}$ has the coordinates 
(as a real analytic manifold)
$(s_1,\tilde y_1,\tilde x_2,t_2,\dots,\tilde x_m,t_m,u_{m+1},\dots, u_n)$
 so that the projection to the coordinate chart ${\bf u}\in W$ is  given by
\[
 u_1=\tilde y_1( s_1 + i ),\,u_2=\tilde x_2(1+i\,t_2  ),\dots, u_m=\tilde
 x_m
(1+i\,t_m   ).
\]
In this coordinate chart, the exceptional real divisor
$\widetilde E_j,\,j\le m$ is defined by $\wtl E_1=\{\tilde y_1=0\}$ 
and $\wtl E_j=\{\tilde x_j=0\}$ for $2\le j\le m$.
\subsubsection{Normal polar blowing up}
We can also consider the composite of the polar blowing ups along 
exceptional
divisors, which we denote as
$\omega_p: \cP X\to X$. In the same coordinate chart
$(W,\bfu),\,\bfu=(u_1,\dots, u_n)$ as in the previous discussion,
$\omega_p\inv(W)$ is written as
\[
 \omega_p\inv(W)=(\BR^+\times S^1)\times \cdots\times(\BR^+\times S^1)\times \BC^{n-m}
\]
 with coordinates
 $(r_1,\exp(i\,\theta_1 ),\dots,
r_m,\exp(i\,\theta_m ),u_{m+1},\dots, u_n)$
and the projection is given by
\[
 \begin{split}
& (r_1,\exp(i\,\theta_1 ),\dots,
r_m,\exp(i\,\theta_m ),u_{m+1},\dots, u_n)\mapsto (u_1,\dots,
u_n),\,\\
&u_j=r_j\exp(i\,\theta_j ),\,j=1,\dots, m.
\end{split}
\]
Note that $\cP X$ is a manifold with boundary and
$\omega_p\inv(E_j)$ is the boundary component which is  given by 
$\{r_j=0\}$.

\subsection{A resolution of a real type and a resolution of a  polar type }
Now we can state our main result for the resolution of non-degenerate
mixed singularities.
 Assume that 
$f(\bfz,\bar\bfz)=\sum_{\nu,\mu}\,c_{\nu,\mu}\,\bfz^\nu\bar\bfz^\mu$
is a non-degenerate convenient mixed function
and consider the mixed hypersurface $V=f\inv(0)$.
Let $\Ga(f)$ be the Newton boundary and let $\Ga^*(f)$ be the dual
 Newton diagram. Take a regular simplicial cone subdivision
in the sense of \cite{Okabook} and let 
${\what \pi}: X\to \BC^n$ be the associated toric modification.
Let $\cV^+$ be the set of 
strictly positive vertices of $\Si^*$
and let $\widehat E(P),P\in \cV^+$  be the exceptional divisors.
We may assume that the vertices which are not strictly positive are
the canonical bases
$\{E_1,\dots, E_n\}$ of $N$ by the convenience assumption
where $E_j={}^t(0,\dots, 1,\dots,0)$.
Put $\widehat E=\bigcup _{P\in \mathcal P} \widehat E(P)$.
Then we take the normal real blowing-ups  $\omega_{\BR}: \cR X\to X $
 along the exceptional divisors of $\widehat E$. Then we consider the composite
\[
 \Phi:={\what \pi}\circ \omega_{\BR}: \cR X\mapright{\omega_{\BR}} X\mapright{{\what \pi}}
 \BC^n,\quad \xi\mapsto {\what \pi}(\omega_{\BR}(\xi)).
\]
Put $\wtl E(P):=\omega_{\BR}\inv(\widehat E(P))$ with $P\in\cV^+$.
\begin{Theorem}\label{real-resolution} 
$\Phi: \cR X\to \BC^n$
gives a good resolution  of a real type of  $f$ at the origin
and the exceptional divisors are ${\widetilde E}(P)$ for $P\in \mathcal V^+$.
The multiplicity of $\wtl E(P)$ of the function $\Phi^*f$ along 
$\wtl E(P)$ is $d(P;f)$.
\end{Theorem}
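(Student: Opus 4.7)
The plan is to verify the five defining properties of a real-type resolution locally on each chart of $\cR X$, reducing everything to a computation in a toric chart followed by the normal real blowing-up.

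First I would fix a top-dimensional regular cone $\si = \Cone(P_1,\dots,P_n) \in \cS$ and work in $\BC_\si^n$. After reordering, assume $P_1,\dots,P_m \in \cV^+$ and $P_{m+1},\dots,P_n \in \{E_1,\dots,E_n\}$. The pull-back formula $\psi_\si^*(\bfz^\nu\bar\bfz^\mu) = \prod_j u_j^{\langle P_j,\nu\rangle}\bar u_j^{\langle P_j,\mu\rangle}$, combined with $\langle P_j,\nu+\mu\rangle \ge d(P_j;f)$ for every monomial of $f$ (with equality exactly on $\De(P_j;f)$), lets me factor out a common $|u_j|$-power. Passing to the local chart of $\cR X$ where $u_1 = \tilde y_1(s_1+i)$ and $u_j = \tilde x_j(1+it_j)$ for $2\le j\le m$, each $u_j^a\bar u_j^b$ becomes $\tilde y_1^{a+b}(s_1+i)^a(s_1-i)^b$ (and analogously for the $\tilde x_j$). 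Collecting the real powers gives
\[
\Phi^* f \;=\; \tilde y_1^{d(P_1;f)}\,\tilde x_2^{d(P_2;f)}\cdots \tilde x_m^{d(P_m;f)}\,G,
\]
where $G$ is real analytic, $\wtl E(P_1) = \{\tilde y_1 = 0\}$ and $\wtl E(P_j) = \{\tilde x_j = 0\}$ for $2\le j\le m$. This immediately yields the multiplicity statement and conditions (1)--(3) of the definition of a real-type resolution.

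The heart of the argument is to verify that the strict transform $\wtl V = \{G = 0\}$ is a smooth real codimension-two submanifold near every exceptional stratum and that $\Phi^* f$ attains the local normal form required by condition (5). Fix $Q \in \wtl E_I^* \cap \wtl V$ with $I = \{j_1,\dots,j_s\} \subseteq \{1,\dots,m\}$. Unwinding the substitution shows that $G$, restricted to the exceptional stratum through $Q$, equals up to a non-vanishing factor the face function $f_{P_{j_k}}$ evaluated at a point $\bfw \in \BC^{*n}$ determined by the non-exceptional coordinates of $Q$. Non-degeneracy of $f$ at each $P_{j_k}$, applied via Proposition \ref{restriction}, guarantees that $f_{P_{j_k}}$ has no critical points on $f_{P_{j_k}}^{-1}(0)\cap \BC^{*n}$. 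By Proposition \ref{critical point}, this forces $d\Re G$ and $d\Im G$ to remain linearly independent at $Q$ in the directions transverse to $\wtl E$. The implicit function theorem then produces local real analytic coordinates $(v_1,\dots,v_{2n})$ in which $\wtl E_{i_k} = \{v_k = 0\}$ and $\wtl V = \{v_{s+1}+iv_{s+2} = 0\}$, giving
\[
\Phi^* f \;=\; v_1^{d(P_{j_1};f)}\cdots v_s^{d(P_{j_s};f)}\,(v_{s+1}+iv_{s+2}),
\]
which is exactly the normal form required by condition (5).

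The main obstacle will be bookkeeping the non-degeneracy hypothesis through the two successive modifications so that it yields the desired transversality uniformly across all strata $\wtl E_I^*$. In particular, at points where several exceptional divisors meet simultaneously with non-convenient coordinate hyperplanes, one must check that the face functions $f_{P_{j_k}}$, further restricted to coordinate subspaces $\BC^I$ determined by the vanishing of the non-strict variables, remain non-degenerate --- this is the content of Proposition \ref{restriction}, but it has to be invoked on the right subcones after the toric change of variables. If the direct coordinate computation becomes unwieldy (especially in tracking the $(s_1\pm i)^{a}(s_1\mp i)^b$ factors), the Curve Selection Lemma argument used in the proof of Theorem \ref{isolatedness} provides a cleaner conceptual verification that $\wtl V$ is mixed non-singular along each $\wtl E_I^*$.
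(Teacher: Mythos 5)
Your proposal matches the paper's proof: pull back in a toric chart, factor out the exceptional powers $\tilde x_j^{d(P_j;f)}$, observe that the residual function modulo the exceptional coordinates coincides with the pullback of the face function, and then translate the rank-two condition required by condition (5) back to $\BC^{*n}$ (by evaluating at a point with all exceptional coordinates set to a nonzero value) so that non-degeneracy of $f_\De$ can be invoked directly, with the implicit function theorem finishing the local normal form. One small correction: the fact that $f_{P}$ has no critical points on $f_P^{-1}(0)\cap\BC^{*n}$ is the \emph{definition} of non-degeneracy and does not come from Proposition \ref{restriction} --- that proposition concerns restrictions $f^I$ to coordinate subspaces, and the paper's proof avoids invoking it by placing itself at a point whose non-exceptional toric coordinates are all nonzero (hence mapping into $\BC^{*n}$); your remark that $f^I$ would be needed if the strict transform met a coordinate hyperplane along the exceptional locus is a valid subtlety, but it is not explicitly treated in the paper's own proof either.
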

Let $f(\bfz,\bar\bfz)=g(\bfx,\bfy)+ih(\bfx,\bfy)$ be the decomposition
of $f$ into the real and the imaginary part. Then the above assertion
for the multiplicity is equivalent to: the mutiplicities
of $\Phi^*g,\,\Phi^*h$ along $\wtl E(P)$  are the same and  equal to  $d(P;f)$.
\begin{proof}
We use the same notations as those in \cite{ResHyp87,Principal90, Okabook}. Let
 $ \wtl V,\,\widehat V$ be the
strict transforms of $V$ into $\cR X$ and $X$ respectively.
\[
 \begin{matrix}
\Phi:&\cR X&\mapright{\omega_{\BR}}&X&\mapright{{\what \pi}}&\BC^n\\
&\cup&                           &\cup&&\cup\\
& \wtl V&\mapright{\omega_{\BR}}&\widehat V&\mapright{{\what \pi}}& V&
\end{matrix}
\]
Take any point ${\tl \xi}\in  \wtl V\cap \Phi\inv(O)$ and consider 
$\what\xi=\Phi({\tl \xi})\in \widehat V$.
Assume that $\what\xi$ is in a toric coordinate chart $\BC_\si^n$
with $\si=(P_1,P_2,\dots,P_n)$ which  is a unimodular 
matrix. 
Assume  that  $\what\xi\in \widehat E(P_1,P_2,\dots,P_s)^*$ where 
$\widehat E(P_1,P_2,\dots,P_s)^* = \bigcap_{i=1}^s\widehat E(P_i)\setminus
 \bigcup_{j>s}\widehat E(P_j)$.
For simplicity, we assume that $s=1$ and  $\what\xi\in \widehat E(P_1)^*$,
leaving the other cases to the reader, as the argument is exactly
 the same.
We denote the coordinates
in this chart as $(u_{\si 1},\dots,u_{\si n})$ and
 $u_{\si,j}=x_{\si j}+ i\, y_{\si j} $. 
For simplicity, we write simply $u_j,\,x_j,\,y_j$ for $u_{\si j},\,x_{\si
 j},\,y_{\si j}$ respectively.
By the assumption, 
$\what\xi=(0,\xi_{2},\dots, \xi_n)$ with $\xi_j\ne 0,\,j\ge 2$
in the toric coordinate space $\BC_\si^n$.
We may assume that ${\tl \xi}\in (\BC_\si^n)_0$.
The coordinates of $(\BC_\si^n)_0$ are given by
$(\tilde x_{ 1},t_{ 1},u_{ 2},\dots,u_{ n})$.
The divisor $\widetilde E(P_1)$ is given by $\{\tilde x_{ 1}=0\}$ and
the projection $\omega_{\BR}|(\BC_\si^n)_0 \to \BC_\si^n$ is given as 
\begin{eqnarray*}
 (\tilde x_{ 1},t_{ 1},u_{ 2},\dots,u_{ n})
\mapsto (u_{ 1},\dots,
u_{ n}),\,\,
 u_{ 1}=\tilde x_{ 1}(1+i\,t_{ 1}).\qquad\qquad
\end{eqnarray*}
Let  $\De=\De(P_1)$.
Take an arbitrary monomial  $\bfz^{\nu}\bar\bfz^\mu$. 
Then we observe that
\begin{eqnarray*}\begin{split}
 &\pi_\si^*(\bfz^{\nu}\bar\bfz^\mu)=u_1^{P_1(\nu)}\cdots
 u_n^{P_n(\nu)}\times
\bar u_1^{P_1(\mu)}\cdots \bar u_n^{P_n(\mu)}\quad\text{and}\\
&\omega_{\BR}^* \pi_\si^*(\bfz^{\nu}\bar\bfz^\mu)=
\tilde x_{ 1}^{P_1(\nu+\mu)}(1+i\, t_{1} )^{P_1(\nu)}(1-i\, t_{ 1}  )^{P_1(\mu)}
\prod_{j=2}^n u_{ j}^{P_j(\nu)}\bar u_{ j}^{P_j(\mu)}.
\end{split}
\end{eqnarray*}
Here we recall that $P_1(\nu)=\sum_{j=1}^n p_{j1}\nu_j$.
By the definition of $d(P_1)$, for any monomial $\bfz^{\nu}\bar\bfz^\mu$
which appears in $f(\bfz,\bar\bfz)$, we have
\begin{eqnarray*}
\begin{split}
&P_1(\nu)+P_1(\mu)\ge d(P_1),\,\,\text{and}\\
&        \,P_1(\nu)+P_1(\mu)= d(P_1)
\iff \nu+\mu\in \De(P_1).
\end{split}
\end{eqnarray*}
Thus we can write the pull-back function as
\begin{eqnarray*}
 &\Phi^*f(\tilde x_{ 1},t_{ 1},\bfu_{\si}')=
\tilde x_{ 1}^{d(P_1)}
\times \what f_\si(\tilde x_{ 1},t_{ 1},\bfu_\si')\\
 &\Phi^* f_{\De}(\tilde x_{ 1},t_{ 1},\bfu_{\si}')=
\tilde x_{ 1}^{d(P_1)}
\times \what f_{\De,\si}(t_{ 1},\bfu_\si')\\
& \what f_\si(\tilde x_{ 1},t_{ 1},\bfu_\si')
\equiv \what f_{\De,\si}(t_{ 1},\bfu_\si')
\,\,\modulo\,\,\, (\tilde x_{1}).
\end{eqnarray*}
where $\bfu_\si'=(u_{2},\dots,u_{ n})$.
The important point here 
 is that $ \what f_{\De,\si}$ does not contain
the variable $\tilde x_{1}$.
In the above notation, the strict transform $ \wtl V$ is defined by
$\what f_\si(\tilde x_{1},t_{1},\bfu_\si')=0$ in $(\BC_\si^n)_0$.
Let ${\tl \xi}=(0,\tau_{1},\xi_2,\dots,\xi_n)$ in the coordinates
$(\tilde x_{1},t_{1},\bfu_\si')$.
 Using the expression
$f(\bfz,\bar \bfz)=g(\bfx,\bfy)+  i \, h(\bfx,\bfy)$
and $f_\De(\bfz,\bar \bfz)=g_\De(\bfx,\bfy)+  i \, h_\De(\bfx,\bfy)$,
we write these functions
$\what f_\si,\,\what f_{\De,\si}$ as the sum of real-valued functions: 
\begin{eqnarray*}\begin{split}
& \what f_\si(\tilde x_{1},t_{1},\bfx_\si',\bfy_{\si}')=
\what g_\si(\tilde x_{1},t_{1},\bfx_\si',\bfy_{\si}')+  i \,
\what h_\si(\tilde x_{1},t_{1},\bfx_\si',\bfy_{\si}')\\
& \what f_{\De,\si}(t_{1},\bfx_\si',\bfy_{\si}')=
\what g_{\De,\si}(t_{1},\bfx_\si',\bfy_{\si}')+  i \,
\what h_{\De,\si}(t_{1},\bfx_\si',\bfy_{\si}'),\\
&\qquad \text{where}\quad 
\bfx_\si'=(x_{ 2},\dots,x_{ n}),\,\,
\bfy_\si'=(y_{2},\dots, y_{ n}).
\end{split}
\end{eqnarray*}
The main assertion in Theorem \ref{real-resolution} is that
 the rank of the Jacobian matrix
of the  functions $\tilde x_{ 1},\what g_{\si},\what h_\si$:
\[
J:= \frac{\partial (\tilde x_{ 1},\what g_\si,\what h_\si)}
{\partial(\tilde x_{1},t_{1},\bfx_\si',\bfy_\si')}({\tl \xi})
=\left(
\begin{matrix}
1&0\\
\star&
 \frac{\partial (\what g_\si,\what h_\si)}
{\partial(t_{1},\bfx_\si',\bfy_\si')}({\tl \xi})\\
\end{matrix}
\right)
\]
is $3$, which is
 equivalent to
\[
\rank\, \left(\frac{\partial (\what g_\si,\what h_\si)}
{\partial(t_{1},\bfx_\si',\bfy_\si')}({\tl \xi})\right)=2.
\]
Note that $g_{\De,\si}({\tl \xi})=h_{\De,\si}({\tl \xi})=0$ and
\[
g_\si-g_{\De,\si}\equiv 0,\quad h_\si-h_{\De,\si}\equiv 0\,\,
\modulo \,\,(\tilde x_{1})
\]
 therefore
\begin{eqnarray}\label{eq1}
 \frac{\partial
 (g_\si,h_\si)}{\partial(t_{1},\bfx_\si',\bfy_\si')}
({\tl \xi})=
\frac{\partial (g_{\De,\si},h_{\De,\si})}{\partial(t_{1},\bfx_\si',\bfy_\si')}({\tl \xi}).
\end{eqnarray}
Now recall that $g_{\De,\si},h_{\De,\si}$ does not contain the  variable
$\tilde x_{ 1}$. Define a modified point 
${\tl \xi}'\in (\BC_\si^n)_0$ by 
 ${\tl \xi}'=(1,\tau_{ 1},\xi_2,\dots,\xi_n)$
and put ${\what\xi}'=\omega_{\BR}({\tl \xi}')\in \BC_\si^{*n}$ and 
$\bfw_0=\pi_\si({\what\xi}')\in \BC^{*n}$.
Put $\bfw_0=\bfx_0+ i\,\bfy_0  $.
(Recall that $\pi_\si:\BC_\si^n\to \BC^n$ is the projection of the toric
 modification in this chart.)
Then as $g_{\De,\si}({\what\xi}')=g_{\De,\si}({\tl \xi}')=0$, we have
\begin{eqnarray*}\begin{split}
 \rank\,\left(\frac{\partial
 (g_{\De,\si},h_{\De,\si})}{\partial(
t_{1},\bfx_\si',\bfy_\si')}({\tl \xi})\right)
&=\rank\,\left(\frac{\partial
 (g_{\De,\si},h_{\De,\si})}{\partial(
t_{1},\bfx_\si',\bfy_\si')}({\tl \xi}')\right)\\
&=
 \rank\, \left(\frac{\partial
 (g_{\De,\si},h_{\De,\si})}{\partial(
\tilde x_{1},t_{1},\bfx_\si',\bfy_\si')}({\tl \xi}')\right)
\end{split}
\end{eqnarray*}
Now  we consider the hypersurface
\begin{eqnarray*}\begin{split}
V_\De^*:=&\{\bfz\in \BC^{*n}\,|\, f_\De(\bfz)=0\}\\
=&\{\bfx+ i\,\bfy \in\BC^{*n} \,|\,
g_\De(\bfx,\bfy)=h_\De(\bfx,\bfy)=0\}
\end{split}
\end{eqnarray*}
where $z_j=x_j+ i\,y_j,\,j=1,\dots, n$ and 
$\bfx=(x_1,\dots, x_n),\,\bfy=(y_1,\dots, y_n)$.
Note that $\bfw_0\in V_\De^*$.
As $f_\De(\bfw_0)=0$ and 
$\Phi={\what \pi}\circ\omega_{\BR}:\Phi\inv(\BC^{*n})\to \BC^{*n}$
 is a diffeomorphism,
we see that 
\begin{eqnarray*}\begin{split}
 \rank\,\left(\frac{\partial
 (g_{\De,\si},h_{\De,\si})}{\partial(
\tilde x_{1},t_{1},\bfx_\si',\bfy_\si')}({\tl \xi}')\right)&=
\rank\,\left( \frac{\partial
 (\tilde x_{1}^{d(P_1)} g_{\De,\si}, \tilde
 x_{1}^{d(P_1)}h_{\De,\si})}
{\partial(\tilde x_{1},t_{1},\bfx_\si',\bfy_\si')}({\tl \xi}')\right)\\
&=
\rank\,\left(\frac{\partial
 (g_{\De},h_{\De})}{\partial(
\bfx,\bfy)}(\bfx_0,\bfy_0)\right)=2
\end{split}
\end{eqnarray*}
where $\bfw_0=\bfx_0+i\,\bfy_0$.
The first equality is the result of 
$g_{\De \si}({\tl\xi}')=h_{\De \si}({\tl\xi}')=0$.
The last equality follows from  the non-degeneracy condition
which assumes that
$f_\De: \BC^{*n}\to \BC$ has $0$ as a regular value.
\end{proof}

We can also use the normal polar
blowing-up
$\omega_p: \cP X\to X$ along  $\widehat E(P),\,P\in \cV^+$ and the composite
$\Phi_p: \cP X\to \BC^n$.
Put $\wtl E(P):=\Phi_p\inv(\widehat E(P))$, $P\in\cV^+$.
\begin{Theorem}\label{polar-resolution}Under the same assumption as in Theorem
 \ref{real-resolution}, $\Phi_p:\cP X\to X$ gives  a good resolution
of a polar type of $f(\bfz,\bar\bfz)$ where $\Phi_p$ is the composite
\[
 \Phi_p:\cP X\mapright{\omega_p}X\mapright{{\what \pi}} \BC^n.
\]
The multiplicity of $\wtl E(P)$ of the function $\Phi_p^*f$ along 
$\wtl E(P)$ is $d(P;f)$.
There is a canonical factorization
$\eta:\cP X\to \cR X$ so that $\omega_p=\omega_{\BR}\circ \eta$
and $\Phi_p=\Phi\circ\eta$.
 \end{Theorem}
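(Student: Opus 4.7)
The plan has two pieces: constructing the factorization $\eta$, and then verifying the polar resolution properties directly in polar coordinates (rather than transferring from Theorem \ref{real-resolution} via $\eta$, since the factors $\cos\theta,\sin\theta$ appearing in $\psi$ vanish on codimension one sets and could disturb the normal form).

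For the construction of $\eta$: the normal polar blowing-up $\omega_p\colon \cP X\to X$ and the normal real blowing-up $\omega_{\BR}\colon \cR X\to X$ are both built chart-by-chart by applying the $1$-dimensional constructions $\cP\BC\to \BC$ and $\cR\BC\to \BC$ to the normal direction of each exceptional divisor $\widehat E(P)$. The canonical map $\psi\colon \cP\BC\to \cR\BC$ of \S\ref{factorization} is intrinsic to the line $\BC$, so it glues across overlapping charts to yield a globally defined real analytic map $\eta\colon \cP X\to \cR X$ satisfying $\omega_p=\omega_{\BR}\circ\eta$. Composing with $\what\pi$ gives $\Phi_p=\Phi\circ\eta$.

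For the resolution claim, pick $\tl\xi\in \wtl V_p$ lying over a point $\what\xi\in \widehat E(P_1)^*$ in the toric chart $\BC_\si^n$, $\si=(P_1,\dots,P_n)$. Use polar coordinates $(r_1,\theta_1,u_2,\dots,u_n)$ on $\cP X$ with $r_1\ge 0$ and $u_1=r_1 e^{i\theta_1}$. Substituting into $\pi_\si^*(\bfz^\nu\bar\bfz^\mu)=\prod_j u_j^{P_j(\nu)}\bar u_j^{P_j(\mu)}$ gives
\[
\Phi_p^*(\bfz^\nu\bar\bfz^\mu)=r_1^{P_1(\nu+\mu)}\,e^{i\theta_1(P_1(\nu)-P_1(\mu))}\prod_{j\ge 2}u_j^{P_j(\nu)}\bar u_j^{P_j(\mu)},
\]
so, exactly as in the real case,
\[
\Phi_p^*f=r_1^{d(P_1)}\,\what f_\si^p(r_1,\theta_1,u_2,\bar u_2,\dots),\qquad \what f_\si^p|_{r_1=0}=f_{\De,\si}^p(\theta_1,u_2,\dots),
\]
with $f_{\De,\si}^p$ built from the monomials supported on $\De(P_1)$. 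This identifies the multiplicity along $\wtl E(P_1)=\{r_1=0\}$ as $d(P_1;f)$.

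Finally, splitting $\what f_\si^p=\what g_\si^p+i\what h_\si^p$, I would show the rank of the Jacobian of $(\what g_\si^p,\what h_\si^p)$ in $(\theta_1,x_2,y_2,\dots,x_n,y_n)$ at $\tl\xi$ equals $2$; this gives both smoothness of $\wtl V_p$ and, together with the extra variable $r_1$, the required local normal form $\Phi_p^*f=r_1^{d(P_1)}(v+iw)$ with $\{r_1\ge 0\}$ giving the boundary $\partial \cP X$. Reducing mod $r_1$ as in \eqref{eq1}, this rank equals the analogous rank for $f_{\De,\si}^p$. The map $(\theta_1,u_2,\dots,u_n)\mapsto \pi_\si(e^{i\theta_1},u_2,\dots,u_n)$ is a local diffeomorphism onto $\BC^{*n}$ (this is the same fact about $\omega_p|_{\text{torus}}$ that underlies $\omega_{\BR}$ in the real case), so non-degeneracy of $f$ for $P_1$---which guarantees $0$ is a regular value of $f_{P_1}\colon \BC^{*n}\to \BC$---transfers to the required rank, mirroring the final step of the proof of Theorem \ref{real-resolution}. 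The main obstacle is the bookkeeping for this chain of pullbacks in polar coordinates, specifically verifying that $\partial/\partial\theta_1$ contributes a nonzero column in the torus direction wherever it is needed; this follows because the polar weight of $f_{P_1}$ controls the $\theta_1$-derivative, and the polar angle together with the remaining complex coordinates span the tangent space of $\BC^{*n}$.
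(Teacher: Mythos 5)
Your approach is essentially the paper's: compute the monomial pullback in the polar chart, extract the factor $r_1^{d(P_1)}$, reduce the rank computation modulo $r_1$, and transfer it to $\BC^{*n}$. The chart-wise gluing of $\psi$ to build $\eta$ is also how the paper intends ``the factorization follows from \S\ref{factorization}.'' However, there is a concrete error in your final step. The map $(\theta_1,u_2,\dots,u_n)\mapsto \pi_\si(e^{i\theta_1},u_2,\dots,u_n)$ goes from a $(2n-1)$-manifold $S^1\times(\BC^*)^{n-1}$ to the $2n$-manifold $\BC^{*n}$, so it cannot be a local diffeomorphism; consequently, regularity of $0$ for $f_{P_1}\colon\BC^{*n}\to\BC$ does \emph{not} automatically restrict to the required rank along this $(2n-1)$-dimensional slice. (The parenthetical ``this is the same fact ... that underlies $\omega_\BR$'' repeats the misconception: in the real case the paper does not restrict to a slice but adds back the $\tilde x_1$ variable.)

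The repair is exactly what the proof of Theorem \ref{real-resolution} does, and it is worth stating explicitly. Since $\what g_{\De,\si}^p,\what h_{\De,\si}^p$ do not depend on $r_1$, you may (i) evaluate the $(\theta_1,\bfx_\si',\bfy_\si')$-Jacobian at the shifted point $\tl\xi'=(1,\theta_1^0,\xi_2,\dots,\xi_n)$ instead of at $\tl\xi=(0,\theta_1^0,\xi_2,\dots,\xi_n)$ without changing the rank, and (ii) adjoin the $r_1$-column, which is identically zero, again without changing the rank. Now at $\tl\xi'$ the factor $r_1^{d(P_1)}$ is a unit, $\what g_{\De,\si}^p=\what h_{\De,\si}^p=0$, and the full $2n$-variable map $\Phi_p$ \emph{is} a diffeomorphism near $\tl\xi'$ (since $r_1>0$). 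Hence the full $2n$-variable Jacobian of $(\what g_{\De,\si}^p,\what h_{\De,\si}^p)$ at $\tl\xi'$ equals, up to a nonsingular change of coordinates, the Jacobian of $(g_\De,h_\De)$ at $\bfw_0=\Phi_p(\tl\xi')\in V_\De^*$, and non-degeneracy gives rank $2$. Without the $r_1=1$ shift and the adjoined zero column, the transfer of the rank condition is not justified. On a minor point, your worry that the $\cos\theta,\sin\theta$ factors in $\psi$ could disturb the normal form is not really an obstruction: $\psi$ is a local diffeomorphism everywhere (choose the chart of $\cR\BC$ in which the relevant trigonometric factor is nonvanishing), so one could also transfer the real-type normal form via $\eta$; but the direct polar computation you chose is just as clean and is what the paper does.
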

\begin{proof} The proof is almost the same as that of Theorem
 \ref{real-resolution}. 
For an arbitrary monomial  $\bfz^{\nu}\bar\bfz^\mu$. 
Then we observe that
\begin{eqnarray*}\begin{split}
 &\pi_\si^*(\bfz^{\nu}\bar\bfz^\mu)=u_1^{P_1(\nu)}\cdots
 u_n^{P_n(\nu)}\times
\bar u_1^{P_1(\mu)}\cdots \bar u_n^{P_n(\mu)}\quad\text{and}\\
&\omega_{p}^* \pi_\si^*(\bfz^{\nu}\bar\bfz^\mu)=
r_{ 1}^{P_1(\nu+\mu)} \exp(P_1(\nu-\mu)\theta_1\,i)
\prod_{j=2}^n u_{ j}^{P_j(\nu)}\bar u_{ j}^{P_j(\mu)}.\\
\end{split}
\end{eqnarray*}
Thus we simply replace $(\tl x_1,t_1,\bfu_\si')$ by 
$(r_1,\theta_1,\bfu_\si')$ with $u_{\si1}=r_1\exp(i\,\theta_1)=\tl
 x_1(1+i\,t_1)$
in the previous calculation.
The factorization follows from \S \ref{factorization}.

 \end{proof}
\begin{Remark}{\rm 
The assertion of Theorem \ref{real-resolution}
and Theorem \ref{polar-resolution}  says that the strict transform
$\wtl V$ is a ``Cartier divisor'' in the sense that it is locally 
defined by a single 
complex-valued real analytic function in $\mathcal R X$, although $\widehat V$
 is not a Cartier divisor in $X$.
Note also that  the pull-back of $g$ and $h$ are real-valued
 functions
which have the same multiplicity $d(P)$ along $\wtl E(P),\,P\in \cV^+$.}

\end{Remark}
\begin{Example}\label{counter-example}{\rm 
We consider two modifications:
\begin{eqnarray*}
{\what \pi}_1:X_1\to \BC^2,\quad {\what \pi}_2:X_2\to \BC^2
\end{eqnarray*}
where ${\what \pi}_j:X_j\to \BC^2$  is the toric modification associated
 with the regular
fan $\Si_j^*$ ($j=1,2$) which  are defined by the vertices as follows.
\begin{eqnarray*}
 \Si_1^*=\left\{E_1=\left(\begin{matrix}1\\0\end{matrix}\right),P=\left(\begin{matrix}1\\1\end{matrix}\right), E_2=\left(\begin{matrix}0\\1\end{matrix}\right)
\right\},\\
\Si_2^*=\left\{E_1, P=
\left(\begin{matrix}1\\1\end{matrix}\right),
Q=\left(\begin{matrix}2\\3\end{matrix}\right), R=
\left(\begin{matrix}1\\2\end{matrix}\right),E_2\right\}
\end{eqnarray*}

\noindent
1. Let $V_1=f(\bfz,\bar\bfz)=\bar z_1^2-z_2^2=0$. This is a union of two smooth
real planes
$z_2\pm \bar z_1=0$. In the toric coordinate chart $\BC_\si^2$ with
$\si=\Cone(P,E_2)$,
the strict transform $ \wtl V_1$ of $V_1$ is defined  in $\BC_\si^{*2}$ by
\[
\widehat V_1:\quad \bar u_1^2-u_1^2u_2^2=0.
\]
We have seen that $\widehat V_1\cap \widehat E(P)=\{u_1=0\,|\, |u_2|=1\}$.
Now take  the normal real blowing up along $\widehat E(P)$,
$\omega_{\BR}: \cR X\to X$.
The strict transform is defined in $(\BC_\si^2)_\eps$ as
\begin{eqnarray*}
\widehat V_1&=\{(\tilde x_1,t_1,u_2)\in \BR^2\times \BC\,|\, (1-i\, t_1)^2-(1+i\, t_1)^2u_2^2=0\}\\
&=\{(\tilde y_1,s_1,u_2)\in \BR^2\times \BC\,|\, (s_1- i)^2-(s_1+ i)^2u_2^2=0\}
\end{eqnarray*}
Note these equations give two smooth components $L_\eps,\,\eps=\pm 1$
 which are disjoint:
\[
 \{(\tilde x_1,t_1,u_2)\in \BR^2\times \BC\,|\, (1-i\, t_1)\pm (1+i\, t_1)u_2=0\}.
\]
This expression shows that the strict transform is embedded in
the cylinder $|u_2|=1$. 
Let us see this in a normal polar modification
$\omega_p:\cP X\to X$.
Now $\cP X$ is locally diffeomorphic to the 
 product of $S^1\times \BR^+\times \BC $ and
the strict transform is now defined in a simple equation
\[
 \tilde V_1=\{(r_1,\exp(\theta\,i ),u_2)\,|\, u_2=\mp \exp(-2\,\theta\,i )\}
\]
and it has  two link components.
This shows that the strict transform is a product (it does not depend on
$r_1$)
and for a fixed $r_1$, they are parallel torus knots in 
$S^1\times S^1=S^1\times \{|u_2|=1\}$. Observe that the direction of
twisting is opposite in the first and the second $S^1$'s
 with respect to the canonical orientation
of $S^1$.

\noindent
2. Let us consider another mixed curve:
\[
 V_2: \{z_1\bar z_1-z_2^2=0\}
\] 
Equivalently $V_2$ is defined by
\[
 \{(x_1,y_1,x_2,y_2)\in \BR^4\,|\,x_1^2+y_1^2=x_2^2-y_2^2,\,x_2y_2=0\}.
\] This can be defined as
\begin{eqnarray*}
V_2= \{(x_1,y_1,x_2,y_2)\in \BR^4\,|\,y_2=0,\,x_2^2=x_1^2+y_1^2\}.
\end{eqnarray*}
This curve is real analytically (or real algebraically) irreducible at the origin (see
\cite{Benedetti-Risler} for the definition) but
we can see that $V_2\setminus\{O\}$ has two connected components
$z_2=|z_1|$ and $z_2=-|z_1|$. Thus  for the geometrical study of real
analytic varieties, especially for the study of real analytic curves, it is better 
to see
the connected components of $f\inv(0)\setminus \{O\}$.
We  apply the same toric modification ${\what \pi}_1$ and we consider
its 
 strict transform on the toric chart $\Cone (P,E_2)$
where we use  the same notation as in Example \ref{counter-example}.
\[
 \widehat V_2:\bar u_1-u_1u_2^2=0.
\]
Again we see that $\widehat V_2\cap \widehat E(P)=\{(0,u_2)\,|\,|u_2|=1\}$.
Take   the normal real blowing up along $\widehat E(P)$.
The strict transform is defined in $(\BC_\si^2)_\eps$ as
\begin{eqnarray*}
\wtl V_2&=\{(\tilde x_1,t_1,u_2)\in \BR^2\times \BC\,|\, (1-i\, t_1)-(1+i\, t_1)u_2^2=0\}\\
&=\{(\tilde y_1,s_1,u_2)\in \BR^2\times \BC\,|\, (s_1- i)-(s_1+ i)u_2^2=0\}
\end{eqnarray*}
which is non-singular. They have two real analytic  components:
\begin{eqnarray*}
&\{(\tilde x_1,t_1,u_2)\in \BR^2\times \BC\,|\,u_2\pm(1-i\, t_1)/\sqrt{1+t_1^2}=0\}\quad\text{or}\\
&\{(\tilde y_1,s_1,u_2)\in \BR^2\times \BC\,|\,
 u_2\pm (s_1- i)/ \sqrt{s_1^2+1}
=0\}
\end{eqnarray*}
Note that $\sqrt{1+t_1^2}$ is a real analytic function, although 
$\sqrt{x_1^2+y_1^2}$ is not an analytic function at $O$.
The above expression says that $\wtl V_2$ is a product
 \[
  \left \{(t_1,u_2)|\sqrt{1+t_1^2}\,u_2\pm(1-i\, t_1)=0\right\}\times \BR
  \]
  where the
second factor is the line with coordinate $\tilde x_1$.
Using the   resolution of a polar type, 
 $\wtl V_2$
is simply written as
\[
 \wtl V_2=\{(r_1,\theta_1,u_2)\in \BR^+\times S_1\times \BC\,|\,
u_2\pm \exp(-\,\theta_1\,i )=0\}.
\]
 Again we observe that it is a product of torus knots and $\BR^+$.

\vspace{.3cm}\noindent
3. Next we consider $V_3=\{z_2^2+z_1^2\bar z_1=0\}$. 
The Newton boundary is the same with that of the cusp singularity
$z_2^2+z_1^3=0$. Thus 
we use the toric modification ${\what \pi}_2: X_2\to \BC^2$. Let
$(u_1,u_2)$ be the toric coordinate
of the chart  $\si=(Q,R)=\left(\begin{matrix}2&1\\3&2\end{matrix}\right)$. Then
the pull back of $f$ is defined in this coordinate chart as
\[
\what f(u_{1},u_{2})= (u_{1}^3\,u_{2}^2)^2+(u_{1}^2\,u_{2})^2 (\bar u_{1}^2 \,\bar u_{2})=u_1^4u_2^2\,(u_1^2u_2^2+\bar u_{1}^2 \,\bar u_{2})
\]
Thus the strict transform can be written as $u_{1}^2\,u_{2}^2+\bar u_{1}^2\, \bar u_{2}=0$.
Thus again we see that
 $\widehat V_3\cap \widehat E(Q)=\{(0,u_2)\,|\,|u_2|=1\}$  and  $S^1$ appears as the limit of $\widehat V_3\cap\what E(Q)$ where
$\what E(Q)$ is the exceptional divisor corresponding to $Q$.
We take a normal polar modification
$\omega_p:\cP X_2\to X_2$ and consider this  in the coordinate chart $\omega_p\inv(\BC_\si^2)$
with coordinates
$(r_1,\exp(\theta_1\,i ),r_2,\exp(\theta_2\,i ))$
with $u_1=r_1\exp(\theta_1\,i),\,u_2=r_2\exp(\theta_2\,i )$. Then
 $\wtl V_3$ is defined by
\[
 \tilde V_3=\{r_2\exp(3\,\theta_2\,i )+\exp(-4\,\theta_1\,i )=0\}
\]
which implies that 
$r_2=1$ and $3\theta_2\equiv -4\theta_1$ mod $2\pi$.
We see that $\wtl V_3\cap \widetilde E(Q)$ is a torus knot
but the orientations  for $\theta_1$ and $\theta_2$
are reversed.

In the resolution of a  real type, the equation
 is apparently a little complicated.
In the chart $\BC_{\si,0,0}$,  $\wtl V_3\cap \widetilde E(Q)$ is given
by $g(t_1,s_2,\tilde x_2)=h(t_1,s_2,\tilde x_2)=0$ where
\begin{eqnarray*}
&g(t_1,s_2,\tilde x_2)={\tilde x_2}-{\tilde x_2}\,s_2^{2}-4\,{\tilde x_2}\,{t_1}\,{s_2}-{
\tilde x_2}\,t_1^{2}+{\tilde x_2}\,t_1^{2}s_2^{2}+1-2\,{
t_1}\,{s_2}-t_1^{2}\\
&h(t_1,s_2,\tilde x_2)=-2\,{\tilde x_2}\,{s_2}-2\,{\tilde x_2}\,{t_1}+
2\,{\tilde x_2}\,{t_1}\,
s_2^{2}+2\,{\tilde x_2}\,t_1^{2}{s_2}+{s_2}+2\,{t_1}-
t_1^{2}{s_2}.
\end{eqnarray*}
Taking the resultant of $g(t_1,s_2,\tilde x_2)$ and $h(t_1,s_2,\tilde x_2)$
in $t_1$, we see that $s_2^2\tilde x_2^2+s_2^2=1$ which corresponds to
$r_2=1$
in the polar resolution.}
\end{Example}
\subsubsection{Pseudo weighted homogeneous hypersurface}
Suppose that $f(\bfz,\bar\bfz)$ is a convenient non-degenerate
mixed function, let ${\what \pi}: X\to \BC^n$ be an admissible toric
modification and let $\omega_{\BR}:\cR X\to X$ be a real modification
along the exceptional divisors as in Theorem \ref{real-resolution}.
Suppose that for a  strictly positive weight $P$,
 $f_P(\bfz,\bfz)$ is a pseudo weighted homogeneous
polynomial.
Write it as
$f_P(\bfz,\bar\bfz)=Mh(\bfz)$ where  $M$ is a mixed monomial and
 $h(\bfz)$
is a  weighted homogeneous polynomial
with $h\inv(0)\cap \BC^{*n}$  being smooth.
Take a toric coordinate chart $\si=(P_1,\dots, P_n)$ with $P=P_1$.
Put $d_M=\rdeg_P\,M$ and $d_h=\rdeg_P \,h$. Then $\rdeg_P\,f=d_M+d_h$.
Then the strict transform $\widehat V$
in the  toric coordinates $\BC_\si^n$ is already non-singular.
Using
the same notation as in the proof of Theorem  \ref{real-resolution},
we have 
\begin{eqnarray*}
&{\what \pi}^*f_P(\bfu_\si,\bar\bfu_\si)={\what \pi}^*(M)\,{\what \pi}^*h(\bfu_\si)=
M'u_{1}^{d_h}\tl h(\bfu_\si')
\end{eqnarray*} where $M'$ is a mixed monomial
and  $h(\bfu')$ is a polynomial of 
$u_{\si2},\dots,u_{\si n}$. Let 
$ E(P_1)=\{\bfu'\in \BC_{\si}^{n-1}\,|\, \tilde h(\bfu_\si')=0\}$
be the exceptional divisor. Then 
$\widehat V$ is diffeomorphic to the product
$\BC\times E(P_1)$. Now we take the normal real modification.
The defining equation of the strict transform 
$\wtl V$ in $(\BC_{\si})_0$ is given as
$\what f_\si(\tilde x_{1},t_\si,\bfu_\si')=0$ where
\begin{eqnarray*}\begin{split}
&\what f_\si(\tilde x_{1},t_\si,\bfu_\si')\equiv 
\what f_{P,\si}(\tilde x_{1},t_\si,\bfu_\si')\,\,\modulo (\tilde x_1)\\
&\Phi^*f_P(\tilde x_{1},t_\si,\bfu_\si')\,=\,
\tilde x_{1}^{d(P_1)}\,\what f_{P, \si}(\tilde x_{1},t_\si,\bfu_\si')\\
&\what f_{P,\si}(\tilde
 x_{1},t_\si,\bfu_\si')\,=\,(1+t_{1}\,i)^a\,(1-t_{1}\,i)^b\,\tilde h
(u_{\si2},\dots,u_{\si n}).
\end{split}
\end{eqnarray*}
Thus we see that  $\wtl V$ is a
product  
$\cR\BC\times  E(P_1)$. The modification $\omega_p:\cP X\to X$
is simply the polar modification of the trivial factor $\BC$.

\section{Milnor fibration}
In this section, we study the Milnor fibration,
assuming  that  $f(\bfz,\bar\bfz)$ is
 a strongly
 non-degenerate convenient mixed function.
We have seen in Theorem \ref{isolatedness} that
there exists a positive number $r_0$ 
 such that $V=f\inv(0)$ is mixed non-singular except at the origin
in the ball $B_{r_0}^{2n}$
and the sphere $S_r^{2n-1}$ intersects transversely
with $V$ for
any $0<r\le r_0$.
The following is a key assertion for which we need the strong non-degeneracy.
\begin{Lemma}\label{MilnorFibering2}
Assume that  $f(\bfz,\bar\bfz)$ is
 a strongly
 non-degenerate convenient mixed function.
For any fixed positive number $r_1$ with  $r_1\le r_0$, there
 exists
 positive numbers
 $\de_0\ll  r_1$
such that for any $\eta\ne 0,\,|\eta|\le \de_0$ and $r$ with $r_1\le
 r\le r_0$,
{\rm (a)}
the fiber $V_\eta:=f\inv(\eta)$ has no mixed singularity inside the ball
 $B_{r_0}^{2n}$ and
{\rm (b)}  the intersection $V_\eta\cap S_{r}^{2n-1}$ is transverse
and smooth.
\end{Lemma}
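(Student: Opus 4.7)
The plan is to prove both (a) and (b) by contradiction, reducing in each case to a statement ruled out by Theorem \ref{isolatedness} or by the strong non-degeneracy hypothesis. The Curve Selection Lemma (\cite{Milnor,Hamm1}) and Proposition \ref{critical point} are the two main tools, used exactly as in the proof of Theorem \ref{isolatedness}. The crucial new point is that in (a) the arc I shall produce does not lie in $V$, so I have no control on the value of the face function at the limiting critical point; this is precisely where \emph{strong} non-degeneracy (and not merely non-degeneracy) is indispensable.

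For (a), suppose no such $\delta_0$ works. Then I can find points $\bfz_k\in B_{r_0}^{2n}$, each a critical point of $f:\BC^n\to\BC$, with $0\ne f(\bfz_k)\to 0$. Passing to a subsequence, $\bfz_k\to \bfz_0\in V$. If $\bfz_0\ne 0$, then $\bfz_0\in V\setminus\{O\}=V^{\sharp}$ by convenience, and because the critical locus of $f$ is closed, $\bfz_0$ is a mixed singular point of $V^{\sharp}$, contradicting Theorem \ref{isolatedness}(1). So $\bfz_0=0$. I then apply the Curve Selection Lemma to the real analytic set
\[
\Sigma^{*}:=\{\bfz\in B_{r_0}\setminus V\,:\, \bfz\text{ is a critical point of }f\},
\]
whose closure contains the origin, obtaining a real analytic arc $\bfz(t)$, $0\le t\le 1$, with $\bfz(0)=0$ and $\bfz(t)\in\Sigma^{*}$ for $t>0$. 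Put $I=\{j\,:\,z_j(t)\not\equiv 0\}$; since $f(\bfz(t))\not\equiv 0$ while the non-$I$ coordinates vanish identically, $f^I\not\equiv 0$. Writing $z_j(t)=b_jt^{a_j}+(\text{higher})$ for $j\in I$, setting $A=(a_j)_{j\in I}$, $\bfb=(b_j)_{j\in I}$, and invoking Proposition \ref{critical point} with $\lambda(t)=\lambda_0+\cdots\in S^1$, the leading-term matching of Theorem \ref{isolatedness} yields
\[
\overline{df^I_A}(\bfb,\bar\bfb)=\lambda_0\,\bar d f^I_A(\bfb,\bar\bfb),
\]
so $\bfb\in\BC^{*I}$ is a critical point of $f^I_A:\BC^{*I}\to\BC$. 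By Proposition \ref{restriction}, $f^I$ inherits strong non-degeneracy, so $f^I_A$ has no critical points on $\BC^{*I}$—a contradiction.

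For (b), if the conclusion fails, there are sequences $\eta_k\to 0$, $r_k\in[r_1,r_0]$ and $\bfz_k\in V_{\eta_k}\cap S_{r_k}^{2n-1}$ where the intersection is non-transverse. Writing $f=g+ih$ and $\phi(\bfz)=\|\bfz\|^2$, non-transversality at $\bfz_k$ is equivalent to linear dependence of $d\phi,dg,dh$ there (the shift $f\mapsto f-\eta_k$ does not alter derivatives). By compactness, after a subsequence $r_k\to r_{\ast}\in[r_1,r_0]$ and $\bfz_k\to \bfz_0$ with $\|\bfz_0\|=r_{\ast}\ge r_1>0$, so $\bfz_0\ne 0$ and $f(\bfz_0)=0$; hence $\bfz_0\in V^{\sharp}$ by convenience. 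By continuity $d\phi,dg,dh$ remain linearly dependent at $\bfz_0$, contradicting Theorem \ref{isolatedness}(2). The main obstacle is the leading-term extraction in (a), but this is essentially a verbatim repetition of the calculation in Theorem \ref{isolatedness}; the hypothesis $f(\bfz(t))\equiv 0$ was not used in that calculation until the very last step, so the argument transfers to the present setting once strong non-degeneracy replaces non-degeneracy at that final step.
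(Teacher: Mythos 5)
Your proof is correct and follows the paper's argument: both use the Curve Selection Lemma together with Proposition \ref{critical point} and the leading-term analysis from Theorem \ref{isolatedness}, observing that since the arc now lies off $V$ one cannot conclude $f^I_A(\bfb,\bar\bfb)=0$, so the full strong non-degeneracy (no critical points anywhere on $\BC^{*I}$, not merely over $0$) is exactly what finishes the contradiction. Your proof is slightly more careful than the paper's in two places the paper leaves implicit — reducing to an arc emanating from the origin by first disposing of a nonzero limit point via Theorem \ref{isolatedness}(1), and spelling out the compactness argument for (b) — but these are elaborations of the same route, not a different one.
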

\begin{proof}
As  the assertion (b) follows from the compactness argument,  we show the
 assertion (a) by contradiction.
We assume that 
(a) does not hold. Then using  the Curve Selection
 Lemma (\cite{Milnor,Hamm1}),  we can find an analytic  path
$\bfz(t),\,0\le t\le 1$ such that 
$\bfz(0)=O$ and 
$f(\bfz(t),\bar\bfz(t))\ne 0$ for $t\ne 0$ and $\bfz(t)$ is a critical point
 of the function $f:\BC^n\to \BC$.
The proof is similar to that of Theorem \ref{isolatedness}
as we will see below.
Using Proposition \ref{critical point},
we can find a real analytic  family $\la(t)$ in $S^1\subset \BC$
such that 
\begin{eqnarray}\label{Sing-cond}
 \overline{df}(\bfz(t),\bar\bfz(t))=\la(t)\, \bar df(\bfz(t),\bar\bfz(t)).
\end{eqnarray}
Put $I=\{j\,|\,z_j(t)\not \equiv 0\}$.
We may assume that 
 $I=\{1,\dots,m\}$ and we consider $f^I$. 
As $f(\bfz(t),\bar \bfz(t))=f^I(\bfz(t),\bar \bfz(t))\not \equiv 0$, we
 see that $f^I\ne 0$. 
Consider the Taylor expansions of $\bfz(t)$ and $\la(t)$:
\begin{eqnarray*}
 &\bfz_i(t)=b_i \,t^{a_i}+\text{(higher terms)},\,b_i\ne 0\quad
i=1,\dots, m\\
&\la(t)=\la_0+\la_1\, t+\text{(higher terms)},\quad \la_0\in S^1\subset \BC.
\end{eqnarray*}
Let  $A=(a_1,\dots, a_m)$, $\bfb=(b_1,\dots, b_m)$
  and  we consider the face function $f^I_A$
of $f^I(\bfz,\bar\bfz)$.
Then 
we have
\begin{eqnarray*}\begin{split}
&\frac{\partial f}{\partial z_j}(\bfz(t),\bar\bfz(t))=\frac{\partial f^I_A}{\partial
 z_j}(\bfb)\,t^{d-a_j}+\text{(higher terms)},\\
 &\frac{\partial f}{\partial \bar z_j}(\bfz(t),\bar\bfz(t))=\frac{\partial f^I_A}{\partial
 z_j}(\bar\bfb)\,t^{d-a_j}+\text{(higher terms)},\,\,\,d=d(A;f^I).
\end{split}
 \end{eqnarray*}
Observe that we have the following equality for any $j$
 by the equality (\ref{Sing-cond}):
\[
 \ord_t\,{\frac{\partial f^I}{\partial z_j}
 \,(\bfz(t),\bar\bfz(t))}=\ord_t \,
 \frac{\partial f^I}{\partial \bar z_j}(\bfz(t),\bar\bfz(t)).
\]
 Thus by (\ref{Sing-cond}), we get the equality:
  $\overline{df^I_A}(\bfb,\bar \bfb)=\la_0\, \bar df^I_A(\bfb,\bar\bfb)$
and $\bfb\in \BC^{*n}$.
  This implies that $\bfb$ is 
 a critical point of $f_A^I:\BC^{*I}\to \BC$, which is a contradiction to the
 strong non-degeneracy of 
$f^I(\bfz,\bar\bfz)$.
\end{proof}
\subsection{ Milnor fibration, the second description}
Put 
\[\begin{split}
&D(\de_0)^*=\{\eta\in \BC\,|\,0<|\eta|\le \de_0\},\,\,
S^1_{\de_0}=\partial D(\de_0)^*=\{\eta\in \BC\,|\,|\eta|=\de_0\}\\
&E(r,\de_0)^*=f\inv(D(\de_0)^*)\cap B_r^{2n},\,\,
\partial E(r,\de_0)^*=f\inv(S_{\de_0}^1)\cap B_r^{2n}.
\end{split}
\]
By Lemma \ref{MilnorFibering2} and the theorem of Ehresman
(\cite{Wolf1}), we obtain the following  description of   the Milnor fibration
 of the second type
(\cite{Hamm-Le1}).
\begin{Theorem}{\rm(The second description of the Milnor fibration)}\label{Milnor2}
Assume that $f(\bfz,\bar\bfz)$ is a convenient, strongly non-degenerate
mixed function. 
Take positive numbers $r_0,r_1$
and $\de_0$ such that $r\le r_0$ and $\de_0\ll r_1$ as in
 Lemma
\ref{MilnorFibering2}.
Then $f: E(r,\de_0)^*\to D(\de_0)^*$ and 
$f:\partial E(r,\de_0)^*\to S_{\de_0}^1$ are  locally trivial fibrations
and the topological isomorphism class does not depend on the choice of 
$\de_0$ and $r$.
\end{Theorem}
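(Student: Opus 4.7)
The strategy is to realize $f: E(r,\delta_0)^* \to D(\delta_0)^*$ as a proper stratified submersion and invoke Ehresman's fibration theorem for manifolds with corners; the second fibration then follows by restriction to the preimage of $S^1_{\delta_0}$.

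Having chosen $r_0, r_1, \delta_0$ as in Lemma \ref{MilnorFibering2} and fixed $r$ with $r_1\le r\le r_0$, I observe that $E(r,\delta_0)^* = f^{-1}(D(\delta_0)^*) \cap B_r^{2n}$ is naturally a manifold with corners whose boundary decomposes into $f^{-1}(S^1_{\delta_0}) \cap B_r^{2n}$ (the target boundary, i.e.\ $\partial E(r,\delta_0)^*$), $f^{-1}(D(\delta_0)^*) \cap S_r^{2n-1}$ (the radial boundary), and their intersection $f^{-1}(S^1_{\delta_0}) \cap S_r^{2n-1}$ (the corner). Lemma \ref{MilnorFibering2}(a) guarantees that every $\eta \in D(\delta_0)^*$ is a regular value of $f$ on $B_{r_0}^{2n}$, so $f$ is a submersion on the open stratum. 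Lemma \ref{MilnorFibering2}(b) gives transversality of $V_\eta$ with $S_r^{2n-1}$, equivalently that $f|_{S_r^{2n-1}}$ is a submersion to $\BC$ on the radial-boundary stratum. Moreover $f$ is proper on $E(r,\delta_0)^*$: any compact $K \subset D(\delta_0)^*$ is bounded away from $0$, so $f^{-1}(K) \cap B_r^{2n}$ is closed in the compact ball $B_r^{2n}$.

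The corner version of Ehresman's theorem then yields the first locally trivial fibration $f: E(r,\delta_0)^* \to D(\delta_0)^*$. Restricting to $f^{-1}(S^1_{\delta_0})$ gives the second fibration $f: \partial E(r,\delta_0)^* \to S^1_{\delta_0}$; the required submersion condition is inherited from the ambient one, since for each $p \in \partial E(r,\delta_0)^*$ one has $T_p(f^{-1}(S^1_{\delta_0})) = df_p^{-1}(T_{f(p)} S^1_{\delta_0})$, and surjectivity of $df_p$ onto $T_{f(p)}\BC$ forces surjectivity of the restricted differential. Independence of the topological type from $r$ and $\delta_0$ follows by a standard parameter argument: any two admissible pairs $(r,\delta_0)$ and $(r',\delta_0')$ can be joined by a path remaining within the range allowed by Lemma \ref{MilnorFibering2}, and Ehresman applied to the resulting one-parameter family of total spaces produces the required fibration equivalence.

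The main technical point is handling the corner stratum $f^{-1}(S^1_{\delta_0}) \cap S_r^{2n-1}$, where one must combine the two independent transversality conditions of Lemma \ref{MilnorFibering2} to produce the product-collar neighborhood demanded by the corner version of Ehresman's theorem. Both conditions are however simultaneously furnished by the lemma, so this is a formal verification rather than a real obstacle; the substantive content of the theorem is entirely concentrated in Lemma \ref{MilnorFibering2}.
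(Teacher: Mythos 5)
Your proof is correct and takes essentially the same route as the paper, which disposes of the theorem in one line by citing Lemma \ref{MilnorFibering2} together with Ehresmann's fibration theorem; you have simply made explicit the manifold-with-corners structure of $E(r,\de_0)^*$, the properness of $f$, and the parameter argument for independence of $(r,\de_0)$, all of which the paper leaves implicit.
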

\subsection{ Milnor fibration, the first description}
We consider now the  original Milnor fibration on the sphere, which is
defined as follows: 
\[
 \varphi:S_r^{2n-1}\setminus K_r\to S^1,
\quad \bfz\mapsto \varphi(\bfz)=f(\bfz,\bar\bfz)/|f(\bfz,\bar\bfz)|
\]
where $K_r=V\cap S_r^{2n-1}$. The fibrations of this type for mixed functions 
and related topics have
been studied by 
many authors (\cite{R-S-V,Seade, G-L-M,SeadeBook,
Pichon-Seade,Bodin-Pichon}).
But most of the works treat rather special classes of functions.
The mapping $\varphi$ can be identified with 
$\varphi(\bfz)=-\Re (i \log f(\bfz))$, taking the argument $\theta$
as a local
coordinate of the circle $S^1$.
We use the  basis
$\{\frac{\partial}{\partial z_j},\,\frac{\partial}{\partial \bar z_j}\,|\,
j=1,\dots, n\}$ of the tangent space
$T_{\bfz}\BC^n\otimes \BC$.
For a mixed  function $g(\bfz,\bar\bfz)$,
we use two complex ``gradient vectors''
defined by
\[
 dg=(\frac{\partial g}{\partial z_1},\dots,
 \frac{\partial g}{\partial z_n}),
\quad
\bar dg=(\frac{\partial g}{\partial \bar z_1},\dots,
 \frac{\partial g}{\partial \bar z_n}).
\]
Take a  smooth path
$\bfz(t),\,-1\le t\le 1$ with $\bfz(0)=\bfw\in \BC^n\setminus V$
and put $\bfv=\frac {d\bfz} {dt}(0)\in T_{\bfw}\BC^n$.
Then
we have 
\begin{eqnarray*}
\begin{split}
-{\frac{ d}{dt}}&\left( \Re (i\, \log f(\bfz(t),\bar\bfz(t))\right)_{t=0}\\
&=-\Re\left(\sum_{i=1}^n
 i\,\left\{\frac{\partial f}{\partial z_j}(\bfw,\bar\bfw )\frac {d z_j}{dt}(0)+
\frac{\partial f}{\partial \bar z_j}(\bfw,\bar\bfw )\frac {d\bar z_j}{dt}(0)
\right\}/{f(\bfw,\bar\bfw )}\right )\\
&=\Re(\bfv,i\,\overline{ d\log f}(\bfw,\bar\bfw ))+
\Re(\bar \bfv,i\,\overline{\bar d\log f}(\bfw,\bar\bfw ))
\\
&=\Re(\bfv,i\,\overline{ d\log f}(\bfw,\bar\bfw ))+
\Re( \bfv,-i\,{\bar d\log f}(\bfw,\bar\bfw ))\\
&=\Re(\bfv, i\,( \overline{d\log f}-{\bar d\log f})(\bfw,\bar\bfw )).
\end{split}\end{eqnarray*}
Namely we have 
\begin{eqnarray}\label{argument-derivative}
-{\frac{ d}{dt}}\left( \Re (i\, \log
		  f(\bfz(t),\bar\bfz(t))\right)_{t=0}=
\Re(\bfv, i\,( \overline{d\log f}-{\bar d\log f})(\bfw,\bar\bfw )).
\end{eqnarray}
Thus by the same argument as in Milnor \cite{Milnor}, we get
\begin{Lemma}\label{ExistenceRadius0} A point
$\bfz\in S_r^{2n-1}\setminus K_r$ is a critical point of 
$\varphi$ if and only if the  two complex vectors
$i\,(\overline{d\log f}(\bfz,\bar\bfz)-{\bar d \log f}(\bfz,\bar\bfz))$
 and $\bfz$ are  linearly dependent over $\BR$.
\end{Lemma}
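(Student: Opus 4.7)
The plan is to combine the differential formula \eqref{argument-derivative} already established above with the standard description of the tangent space of the sphere in terms of the real inner product. Equation \eqref{argument-derivative} identifies the differential of $\varphi=-\Re(i\log f)$ at a point $\bfw\in\BC^n\setminus V$ as the real linear functional
\[
d\varphi_{\bfw}(\bfv)=\Re\bigl(\bfv,\;i\,(\overline{d\log f}-\bar d\log f)(\bfw,\bar\bfw)\bigr),\qquad \bfv\in T_{\bfw}\BC^n.
\]
I would simply abbreviate $\bfa:=i\,(\overline{d\log f}-\bar d\log f)(\bfw,\bar\bfw)$, so that $d\varphi_{\bfw}(\bfv)=\Re(\bfv,\bfa)$.

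Next I would observe that the tangent space $T_{\bfw}S_r^{2n-1}$ is the real orthogonal complement of $\bfw$ with respect to the real Euclidean inner product $\Re(\cdot,\cdot)$; that is, $T_{\bfw}S_r^{2n-1}=\{\bfv\in \BC^n\mid \Re(\bfv,\bfw)=0\}$. By definition, $\bfw\in S_r^{2n-1}\setminus K_r$ is a critical point of $\varphi$ if and only if $d\varphi_{\bfw}$ vanishes on $T_{\bfw}S_r^{2n-1}$, i.e.\ iff $\Re(\bfv,\bfa)=0$ for every $\bfv$ with $\Re(\bfv,\bfw)=0$.

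The last step is the elementary fact from linear algebra over $\BR$: a real linear functional $\bfv\mapsto \Re(\bfv,\bfa)$ on $\BC^n\cong \BR^{2n}$ vanishes on the real hyperplane $\{\Re(\bfv,\bfw)=0\}$ if and only if $\bfa$ is a real multiple of $\bfw$, in other words $\bfa$ and $\bfw$ are $\BR$-linearly dependent. Applying this with $\bfa=i\,(\overline{d\log f}-\bar d\log f)(\bfw,\bar\bfw)$ yields exactly the stated criterion. The proof is essentially a reformulation; the only thing to be slightly careful about is using the real inner product (not the Hermitian one) consistently, since the tangent condition on the sphere and the derivative formula \eqref{argument-derivative} are both expressed through $\Re(\cdot,\cdot)$, so no conjugation mismatch occurs. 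There is no serious obstacle.
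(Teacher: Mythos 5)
Your proof is correct and is exactly the argument the paper has in mind: the paper derives the differential formula \eqref{argument-derivative} and then says ``by the same argument as in Milnor,'' which is precisely the linear-algebra step you spell out (the differential of $\varphi$ restricted to $T_{\bfz}S_r^{2n-1}=\{\bfv\mid\Re(\bfv,\bfz)=0\}$ vanishes iff $i(\overline{d\log f}-\bar d\log f)$ is an $\BR$-multiple of $\bfz$, noting $\bfz\ne 0$). No gap.
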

\noindent
The key assertion is the following.
\begin{Lemma}\label{ExistenceRadius1} Assume that $f(\bfz,\bar\bfz)$ is a
strongly 
non-degenerate mixed function. Then there exists a positive number
 $r_0$ such that  the two complex vectors
$i\,(\overline{d\log f}(\bfz,\bar\bfz)-{\bar d \log f}(\bfz,\bar\bfz))$
 and $\bfz\in S_r\setminus K_r$ are
linearly independent over $\BR$
for any $r$ with $0< r\le r_0$.
\end{Lemma}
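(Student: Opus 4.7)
The plan is to argue by contradiction using the Curve Selection Lemma, following the strategy of Theorem~\ref{isolatedness} and Lemma~\ref{MilnorFibering2}. Set $\eta(\bfz,\bar\bfz):=i(\overline{d\log f}-\bar d\log f)(\bfz,\bar\bfz)$. The set
$$W:=\{\bfz\in\BC^n\setminus V:\bfz\text{ and }\eta(\bfz,\bar\bfz)\text{ are }\BR\text{-linearly dependent}\}$$
is real semi-algebraic, and failure of the lemma places $O\in\overline W$. The Curve Selection Lemma \cite{Milnor,Hamm1} supplies a real analytic arc $\bfz:[0,1]\to\BC^n$ with $\bfz(0)=O$ and $\bfz(t)\in W$ for $t>0$. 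Since $f(\bfz(t),\bar\bfz(t))\ne 0$ on the arc, clearing the $|f|^2$ denominator in the dependency produces real analytic $\alpha(t),\beta(t)$, not both identically zero, with $\alpha\bfz+\beta\tilde\eta\equiv 0$, where $\tilde\eta_j:=i(f\overline{\partial_{z_j}f}-\bar f\partial_{\bar z_j}f)(\bfz(t),\bar\bfz(t))$. As $\bfz\not\equiv 0$, necessarily $\beta\not\equiv 0$, so $\mu(t):=-\alpha(t)/\beta(t)$ is a \emph{real} meromorphic function of $t$ satisfying
$$
i\bigl(f\overline{\partial_{z_j}f}-\bar f\partial_{\bar z_j}f\bigr)(\bfz(t),\bar\bfz(t))=\mu(t)\,|f(\bfz(t),\bar\bfz(t))|^2\,z_j(t),\quad j=1,\dots,n. \quad(\ast)
$$

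If $\mu\equiv 0$, then $(\ast)$ reads $f\,\overline{\partial_{z_j}f}=\bar f\,\partial_{\bar z_j}f$, so by Proposition~\ref{critical point} (with $\alpha=\bar f/f\in S^1$) the arc $\bfz(t)$ consists of critical points of $f:\BC^n\to\BC$; the argument of Lemma~\ref{MilnorFibering2} then produces a critical point of $f_A^I$ on $\BC^{*I}$ and contradicts strong non-degeneracy via Proposition~\ref{restriction}. Assume then $\mu\not\equiv 0$. Let $I=\{j:z_j(t)\not\equiv 0\}$, expand $z_j(t)=a_jt^{k_j}+\text{h.o.t.}$ with $a_j\ne 0$, and set $A=(k_j)_{j\in I}$, $\bfa=(a_j)\in\BC^{*I}$, $d=d(A;f^I)$, $b=f_A^I(\bfa,\bar\bfa)$, $g_j=\partial_{z_j}f_A^I(\bfa,\bar\bfa)$, $h_j=\partial_{\bar z_j}f_A^I(\bfa,\bar\bfa)$. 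In the generic case $b\ne 0$, set $\lambda_0:=b/\bar b\in S^1$. Substituting the expansions $f(\bfz(t))=bt^d+\cdots$, $\partial_{z_j}f(\bfz(t))=g_jt^{d-k_j}+\cdots$, $\partial_{\bar z_j}f(\bfz(t))=h_jt^{d-k_j}+\cdots$ into $(\ast)$ and matching orders in $t$ yields, for each $j\in I$, either the critical-point identity $\bar g_j=\bar\lambda_0 h_j$, or, on a subset $J\subseteq I$ (where $\operatorname{ord}\mu=-2k_j$ is then forced to be constant), the perturbed identity
$$ \bar g_j-\bar\lambda_0 h_j=-i\mu_0\bar b\,a_j,\qquad \mu_0\in\BR\setminus\{0\},\qquad j\in J. \qquad(\star) $$

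If $J=\varnothing$, then $\overline{df_A^I}(\bfa,\bar\bfa)=\bar\lambda_0\,\bar d f_A^I(\bfa,\bar\bfa)$, so Proposition~\ref{critical point} makes $\bfa\in\BC^{*I}$ a critical point of $f_A^I$, contradicting strong non-degeneracy. Otherwise, inject $(\star)$ together with $g_j=\lambda_0\bar h_j$ ($j\in I\setminus J$) into the radial Euler identity for the radially weighted homogeneous polynomial $f_A^I$,
$$ \sum_{j\in I}k_j\bigl(a_jg_j+\bar a_jh_j\bigr)=d\,b, $$
divide by $b$, and extract the imaginary part: using $\mu_0\in\BR$, everything collapses to
$$\mu_0\sum_{j\in J}k_j|a_j|^2=0,$$
impossible since $\mu_0\ne 0$ and every $k_j|a_j|^2>0$. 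Hence $J=\varnothing$ after all. The residual case $b=f_A^I(\bfa,\bar\bfa)=0$ is the principal technicality: one must replace $b$ by the actual leading coefficient of $f(\bfz(t),\bar\bfz(t))$ in $t$ (coming from a deeper face of $\Ga(f^I)$ or from higher-order corrections to $\bfz(t)$) and re-run the same Euler-identity step on that refined expansion. The crux of the argument is the imaginary-part cancellation, which rests essentially on the \emph{reality} of $\mu$; without it the error terms $-i\mu_0\bar b\,a_j$ in $(\star)$ would never be annihilated.
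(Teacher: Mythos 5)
Your argument follows the paper's strategy essentially step for step (Curve Selection Lemma, Taylor expansion, face function $f_A^I$, critical-point dichotomy via $J$, then an Euler-type identity whose imaginary part collapses to $\mu_0\sum_{j\in J}k_j|a_j|^2=0$), but it leaves a genuine gap at exactly the point you flag: the case $b:=f_A^I(\bfa,\bar\bfa)=0$. That case is not a ``residual technicality'' you may postpone --- it is structurally unavoidable, because $\lambda_0:=b/\bar b$ and your relation $(\star)$ are undefined when $b=0$, so everything downstream (injecting $(\star)$ into the radial Euler identity, dividing by $b$) collapses. You correctly diagnose that one must replace $b$ by the leading coefficient $\alpha$ of $f(\bfz(t),\bar\bfz(t))$ in $t$, but you do not carry this out, and in particular you do not verify that the imaginary-part cancellation survives the replacement. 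It does, but the reason is non-trivial and worth recording: with $\lambda_0=\alpha/\bar\alpha$ in place of $b/\bar b$, the normalized Euler identity becomes
\[
\sum_{j}k_j\Bigl(\frac{a_j\bar h_j}{\bar\alpha}+\frac{\bar a_j h_j}{\alpha}\Bigr)+i\,\mu_0\sum_{j\in J}k_j|a_j|^2=\frac{d\,b}{\alpha},
\]
and the right-hand side is \emph{real} whether $b=0$ (giving $0$) or $b\ne 0$ (since then $q=d$ and $\alpha=b$, giving $d$); so the imaginary part again yields $\mu_0\sum_{j\in J}k_j|a_j|^2=0$ and the contradiction.

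The paper avoids the dichotomy altogether. It works with $\alpha$ from the outset, and instead of invoking the radial Euler identity directly it compares orders in $\frac{d}{dt}f(\bfz(t),\bar\bfz(t))$: the left side has order $q-1$ with coefficient $q\alpha$, while the right side has order $\ge d-1$ with coefficient $R=\sum_j p_j(a_jg_j+\bar a_jh_j)=d\,b$ (Euler). From $J\ne\emptyset$ one shows $\Re(-iR/\alpha)\ne 0$, hence $R\ne 0$, which forces $q=d$ (equivalently $b\ne 0$) and $q\alpha=R$; the real-part contradiction then follows. In other words, the paper \emph{derives} $b\ne 0$ rather than hypothesizing it, so there is never a second case. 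You should either adopt that order-comparison step, or carry out the $\alpha$-substitution explicitly and observe the reality of $db/\alpha$; as written, the proof is incomplete.
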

\begin{proof}
We do not assume the convenience of $f(\bfz,\bar\bfz)$ for this lemma.
We proceed as the proof of Lemma 4.3 \cite{Milnor}. Assuming the
 contrary,
we can find an analytic path
$ \bfz(t),\,0\le t\le 1$ such that 

\begin{enumerate}
\item[(a)] $\bfz(0)=O$ and $\bfz(t)\in \BC^n\setminus V$ for $t>0$.
\item[(b)]
 $i\,(\overline{d\log f}-\bar d\log f)(\bfz(t),\bar \bfz(t))=\la(t)
	   \bfz(t)$ 
for some $\la(t)$ such that $\la(t)$ is a real number.
\end{enumerate}
As $\overline{d \log f}-\bar d\log f$ does not vanish
outside of $f\inv(0)$ near the origin by 
Lemma \ref{MilnorFibering2} and Proposition \ref{critical point},
we see that $\la(t)\not \equiv 0$. Consider 
the subset  $I=\{j|z_j(t)\not \equiv 0\}\subset \{1,\dots,n\}$.
For simplicity, we may assume that  $I=\{1,\dots, m\}$.
Consider the Taylor expansions:
\begin{eqnarray*}
&z_j(t)=a_j \, t^{p_j}+\text{(higher terms)},\,\, a_j\ne 0,\,p_j>0,\,j\in I.
\end{eqnarray*}
Put $P={}^t(p_1,\dots, p_m)$, $\bfa=(a_1,\dots, a_m)\in \BC^{*I}$ and
 $d=d(P;f^I)$.
Then we consider the expansions:
\begin{eqnarray*}
\begin{split}
&f(\bfz(t),\bar\bfz(t))=f^I(\bfz(t),\bar\bfz(t))=\al\, t^{q}+\text{(higher terms)},\,\,q\ge d,\,\al\ne 0\\
&\frac{\partial f^I}{\partial z_j}(\bfz(t),\bar\bfz(t))=\frac{\partial
 f_P^I}{\partial z_j}(\bfa,\bar\bfa)\, t^{d-p_j}+\text{(higher
 terms)},\,1\le j\le m\\
&\frac{\partial f^I}{\partial \bar z_j}(\bfz(t),\bar\bfz(t)))=
\frac{\partial f_P^I}{\partial \bar z_j}(\bfa,\bar\bfa)\, t^{d-p_j}+\text{(higher terms)},\,1\le j\le m\\
&\la(t)=\la_0\,t^s+\text{(higher terms)},\,\,\la_0\in \BR^*.
\end{split}
\end{eqnarray*}
The assumption (b) implies that  for $1\le j\le m$,
\begin{eqnarray*}
i\,\left(\overline{\frac{\partial f_P^I}{\partial z_j}}(\bfa,\bar\bfa)/\bar \al -
\frac{\partial f_P^I}{\partial \bar z_j}(\bfa,\bar\bfa)/\al
\right)=\begin{cases}
0&\quad d-p_j-q<s+p_j\\
\la_0 \,a_j,&\quad
d-p_j-q=s+p_j.
\end{cases}
\end{eqnarray*}
Define $J\subset \{1,\dots, m\}$ by  $J:=\{j\,|\,d-p_j-q\,=\,s+p_j\}$.
Assume that  $J=\emptyset$.
Then we have the equality 
$$\overline{df_P^I}(\bfa,\bar\bfa)=\frac{\bar \al}{\al}\times\bar
 df^I(\bfa,\bar\bfa),\,\,j\le m$$
which implies  $f_P^I:\BC^{*I}\to \BC$ has a critical point
at
$\bfz=\bfa$ by Proposition \ref{critical point}. This is a contradiction to the strong non-degeneracy.
Thus we have shown that $J\ne \emptyset$.
We consider the differential:
\begin{eqnarray*}\begin{split}
\frac {d}{dt}f^I(\bfz(t),\bar\bfz(t))&=
\sum_{j=1}^m\frac{\partial f^I}{\partial
 z_j}(\bfz(t),\bar\bfz(t))\frac{dz_j(t)}{dt}
+\sum_{j=1}^m\frac{\partial f^I}{\partial\bar z_j}(\bfz(t),\bar\bfz(t))\frac{d\bar z_j(t)}{dt}\\
&=q\al\, t^{q-1}+\text{(higher terms)}.
\end{split}\end{eqnarray*}
The two terms of the right side of the first row can be written as follows.
\begin{eqnarray*}\begin{split}
&\left(\frac{d\bfz(t)}{dt},\overline{df^I}(\bfz(t),\bar\bfz(t))\right)
=\left(P\bfa,\overline{ df_P^I}(\bfa,\bar\bfa)\right)t^{d-1}
+\text{(higher terms)},\\
& \left(\frac{d\bar\bfz(t)}{dt},\overline{\bar
		  df^I}(\bfz(t),\bar\bfz(t)
\right)
=\left(P\bar\bfa,
 \overline{\bar df_P^I}(\bfa,\bar\bfa)\right)t^{d-1}
+\text{(higher terms)}\\
\end{split}
\end{eqnarray*}
where $P\bfa=(p_1a_1,\dots, p_ma_m)$ and 
$P\bar\bfa=(p_1\bar a_1,\dots, p_m\bar a_m)$.
Thus we get 
\begin{eqnarray*}\begin{split}
 q\al\, t^{q-1}&+\text{(higher terms)}=\\
&\left((P\bfa,\overline{ df_P^I}(\bfa,\bar\bfa))+(P\bar\bfa,
 \overline{\bar df_P^I}(\bfa,\bar\bfa))
\right)\,t^{d-1}+\text{(higher terms)}.
\end{split}
\end{eqnarray*}
Observe that
\begin{eqnarray*}\begin{split}
\Re\left(P\bfa,i\,\frac{ \overline{ df_P^I}(\bfa,\bar\bfa)}{\bar\al}\right)&+\Re
\left(P\bar\bfa,i\,\frac{ \overline{\bar
		  df_P^I}(\bar\bfa,\bar\bfa)}{\bar\al}\right)\\
&=
\Re\left(P\bfa,i\,\frac{ \overline{ df_P^I}(\bfa,\bar\bfa)}{\bar\al}-
i\,\frac{ {\bar df_P^I}(\bar\bfa,\bar\bfa)}{\al}\right )\\
&=\Re\left(\sum_{j\in J} \la_0\,|a_j|^2\,p_j\right)
=\la_0\sum_{j\in J} |a_j|^2p_j
\ne 0
\end{split}
\end{eqnarray*}
as $J\ne \emptyset$.
Thus we see that
 \begin{multline*}
\left(P\bfa,\,\overline{ df_P^I}(\bfa,\bar\bfa)\right)+\left(P\bar\bfa,\,
 \overline{\bar df_P^I}(\bfa,\bar\bfa)\right)
=\\
\al\, i\,\left((P\bfa,\,i\,\frac{\overline{ df_P^I}(\bfa,\bar\bfa)}{\bar \al})+
(P\bar\bfa,\,i\,\frac{ \overline{\bar df_P^I}(P\bar\bfa,\bar\bfa)}{\bar\al})
\right)\ne 0.\\
\end{multline*}
This implies that
$q=d$ (namely $f_P^I(\bfa,\bar\bfa)\ne 0$) and 
\begin{eqnarray*}\begin{split}
 q\,\al&=(P\bfa,\,\overline{ df_P^I}(\bfa,\bar\bfa) )+
(P\bar\bfa,\,\overline{\bar df_P^I}(\bfa,\bar\bfa)),\quad\text{or}\,\,\\
qi&=\left(P\bfa,\,i\,\frac{ \overline{\bar df_P^I}(\bfa,\bar\bfa)}{\bar\al}\right)+
\left(P\bar\bfa,\,i\,\frac{ \overline{\bar df_P^I}(\bar\bfa,\bar\bfa)}{\bar\al}\right).
\end{split}\end{eqnarray*}
Taking the real part of the last equality,
we get  an obvious contradiction:
\[ 0=\Re(\sum_{j\in J} \la_0\,|a_j|^2\,p_j)=\sum_{j\in J} \la_0\,|a_j|^2\,p_j\ne 0.
\]
\end{proof}
\begin{Observation}\label{tangent-space}
Let $\bfw\in f\inv(\eta),\,\eta\ne 0$ be a smooth point. Then the
 tangent space $T_{\bfw}f\inv(\eta)$ is the real subspace of $\BC^n$
 whose vectors are
 orthogonal in $\BR^{2n}$ to the two
 vectors
\[
  i\,(\overline{d\log f}-\bar d\log f)(\bfw,\bar\bfw),\,\,
(\overline{d\log f}+\bar d\log f)(\bfw,\bar\bfw).
\]
\end{Observation}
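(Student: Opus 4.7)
The plan is to read off the tangent space directly from the defining equation $f(\bfz,\bar\bfz)=\eta$ of the fiber and then translate the resulting complex identity into two real orthogonality conditions. First I would take a smooth path $\bfz(t)$ in $f\inv(\eta)$ through $\bfw$ with tangent vector $\bfv=\bfz'(0)$, differentiate the identity $f(\bfz(t),\bar\bfz(t))\equiv \eta$ at $t=0$, and rewrite the chain-rule sum in terms of the hermitian inner product $(\,,\,)$ introduced in the notation section: this gives the single complex identity
\[
0\;=\;(\bfv,\overline{df}(\bfw,\bar\bfw))+\overline{(\bfv,\bar df(\bfw,\bar\bfw))}.
\]

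Next I would split this into its real and imaginary parts. Using the identity $\Im(\bfv,\bfw)=\Re(\bfv,i\bfw)$ for the hermitian inner product, the two resulting real equations become
\[
\Re(\bfv,(\overline{df}+\bar df)(\bfw,\bar\bfw))=0,\qquad \Re(\bfv,i(\overline{df}-\bar df)(\bfw,\bar\bfw))=0.
\]
Since $\Re(\,,\,)$ is the Euclidean inner product on $\BR^{2n}$, these say exactly that $\bfv$ is Euclidean-orthogonal to the two vectors $(\overline{df}+\bar df)(\bfw,\bar\bfw)$ and $i(\overline{df}-\bar df)(\bfw,\bar\bfw)$. As $\bfw$ is a smooth point of $f\inv(\eta)$, which has real codimension $2$, these two independent conditions cut out the tangent space completely.

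It remains to pass from $df,\bar df$ to $d\log f=df/f$ and $\bar d\log f=\bar df/f$. Since $f(\bfw)=\eta\ne 0$, a direct computation shows that the ordered pair $(\overline{d\log f}+\bar d\log f,\,i(\overline{d\log f}-\bar d\log f))(\bfw,\bar\bfw)$ is obtained from the pair $(\overline{df}+\bar df,\,i(\overline{df}-\bar df))(\bfw,\bar\bfw)$ by an invertible real-linear change of basis (the transition matrix has determinant $1/|\eta|^2$). Hence the two real $2$-planes they span in $\BC^n$ coincide, and therefore so do their Euclidean orthogonal complements, yielding the claimed description of $T_\bfw f\inv(\eta)$.

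The main obstacle is only careful bookkeeping between the hermitian and Euclidean inner product conventions and the factor of $i$ arising from $\Im\leftrightarrow\Re\circ(i\cdot)$; no analytic input beyond $\eta\ne 0$ and smoothness of the fiber is needed. Note that the paper's equation (\ref{argument-derivative}) already executes exactly this type of computation for the argument $\arg f$, so the present observation is essentially its enhancement which also uses the modulus $|f|$ to obtain the full tangent plane rather than a single hyperplane.
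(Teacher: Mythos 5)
Your proof is correct and takes essentially the same approach as the paper: differentiate the defining constraint along a curve in the fiber, split into real and imaginary parts via the Hermitian inner product, and identify the two resulting Euclidean normal directions. The only cosmetic difference is that you differentiate $f(\bfz(t),\bar\bfz(t))\equiv\eta$ and then pass to the $d\log f$-normals by an explicit change of basis with determinant $1/|\eta|^2$, whereas the paper differentiates $\log f(\bfz(t),\bar\bfz(t))$ directly so that the $d\log f$-normals appear immediately from the already-established identity (\ref{argument-derivative}) and its $\log|f|$ companion.
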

\begin{proof} 
Assume that $\bfz(t),\,-\eps\le t\le \eps$ is a smooth
 curve in $f\inv(\eta)$
with 
  $\bfz(0)=\bfw$ and 
$\cV(\bfw)=\bfv\in T_{\bfw}f\inv(\eta)$. 
The assertion follows from the next calculation.
\begin{eqnarray*} 
\begin{split}
&\frac d{dt}\log\,f(\bfz(t),\,\bar\bfz(t))=
 \frac{\Re\log f(\bfz(t),\,\bar\bfz(t))}{dt}|_{t=0}
 -{\frac{ d}{dt}}\left( \Re (i\, \log f(\bfz(t),\bar\bfz(t))\right)_{t=0}
\\
& =\Re(\bfv,\,(\overline{d\log f}+\bar d\log f)(\bfw,\bar\bfw))+\Re(\bfv, i\,( \overline{d\log f}-{\bar d\log f})(\bfw,\bar\bfw )).
\end{split}
\end{eqnarray*}
\end{proof}
Now we are ready to prove the existence of the Milnor fibration of the
first description.
\begin{Theorem}\label{MilnorFibering1}{\rm (Milnor fibration, the first description)}
Let $f(\bfz,\bar\bfz)$ be a strongly
 non-degenerate convenient mixed function.
 There exists a positive number $r_0$ such that 
\[
 \varphi=f/|f|: S_r^{2n-1}\setminus K_r\to S^1
\]
is a  locally trivial fibration for any $r$ with $0<r\le r_0$.
\end{Theorem}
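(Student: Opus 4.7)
The plan is to combine Lemma \ref{ExistenceRadius1} with the classical Ehresmann-type argument of Milnor \cite{Milnor} (Chapter 4). Fix $r_0 > 0$ as furnished by Lemma \ref{ExistenceRadius1}, and take $0 < r \le r_0$. For any $\bfz \in S_r^{2n-1} \setminus K_r$, Lemma \ref{ExistenceRadius1} asserts the $\BR$-linear independence in $\BR^{2n}$ of $\bfz$ and
\[
v(\bfz):=i\bigl(\overline{d\log f} - \bar d\log f\bigr)(\bfz,\bar\bfz),
\]
and Lemma \ref{ExistenceRadius0} then translates this directly into the statement that $\bfz$ is not a critical point of $\varphi = f/|f|$. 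Hence $\varphi : S_r^{2n-1} \setminus K_r \to S^1$ is a submersion at every point, and $d\varphi_\bfz$ has a canonical one-dimensional kernel complement along the tangential projection of $v(\bfz)$.

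To upgrade this submersion to a locally trivial fibration, I would construct a smooth tangent vector field $w$ on $S_r^{2n-1} \setminus K_r$ whose flow lifts the constant unit vector field on $S^1$. Explicitly, take $w(\bfz)$ to be the Euclidean orthogonal projection of $v(\bfz)$ onto $T_{\bfz} S_r^{2n-1}$, rescaled so that $\Re(w(\bfz), v(\bfz)) = 1$; by the linear independence above, this tangential projection is nonzero at every point of $S_r^{2n-1} \setminus K_r$, so $w$ is a well-defined smooth vector field. Formula (\ref{argument-derivative}) then guarantees that any integral curve $\bfz(t)$ of $w$ satisfies $\tfrac{d}{dt}\arg f(\bfz(t)) = 1$, so its image under $\varphi$ is a uniform rotation of $S^1$. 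If the flow of $w$ is defined for all $t \in \BR$, translation by time $t$ yields diffeomorphisms $\phi_t : \varphi\inv(e^{i\theta}) \to \varphi\inv(e^{i(\theta+t)})$, and these furnish the desired local trivializations by a standard patching over small arcs in $S^1$.

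The hard step, and the main obstacle, is completeness of the flow of $w$: a priori an integral curve might approach $K_r$, where $\varphi$ is undefined, in finite time. Following Milnor's original strategy, I would control the growth of $\log|f|$ along integral curves. As $\bfz$ approaches $K_r$, the tangential component of $v(\bfz)$ grows (near $f=0$, the argument of $f$ varies rapidly along the sphere), forcing $|w(\bfz)| \to 0$ automatically and giving a first decay estimate. If this direct bound is insufficient, I would further modify $w$ by adding a correction tangent to the fibers of $\varphi$ and in the direction of the gradient of $|f|$ within the fiber, which does not disturb the identity $d\arg f(w) = 1$ but yields a uniform estimate of the form $\bigl|\tfrac{d}{dt}\log|f(\bfz(t))|\bigr| \le C$ on a neighborhood of $K_r$. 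Such a bound prevents $|f(\bfz(t))|$ from reaching $0$ in finite time and so gives completeness. The strong non-degeneracy of $f$ enters precisely through Lemma \ref{ExistenceRadius1}, which rules out the degenerate behavior of $d\log f$ on $S_r\setminus K_r$ that would otherwise allow integral curves to escape to $K_r$ and destroy local triviality; the independence of the fibration from the choice of $r$ in $(0,r_0]$ follows by applying the same construction to the parameterized family.
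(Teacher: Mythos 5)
Your architecture matches the paper's: use Lemma \ref{ExistenceRadius1} together with Lemma \ref{ExistenceRadius0} to get that $\varphi$ is a submersion, then build a vector field lifting the unit rotation of $S^1$ and show its flow is complete by controlling $\log|f|$. But the completeness step, which you correctly flag as the hard part, is not actually carried out, and your heuristic for it is not the right mechanism.

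The claim that as $\bfz \to K_r$ the tangential component of $v_2:=i(\overline{d\log f}-\bar d\log f)$ grows and therefore your normalized field $w$ decays is not justified, and even if it were, pointwise smallness of $|w|$ does not by itself prevent finite-time blow-up of $\log|f|$. Your fallback --- add a correction tangent to the $\varphi$-fibers and to $S_r$ that cancels (or bounds) $d\log|f|$ --- is the right idea and is what the paper does, but such a correction exists only if $\bfz$, $\bfv_1:=\overline{d\log f}+\bar d\log f$, and $\bfv_2$ are $\BR$-linearly independent near $K_r$: one needs to prescribe a tangent vector to $S_r$ (condition $\Re(\cdot,\bfz)=0$) with $\Re(\cdot,\bfv_2)=1$ and $\Re(\cdot,\bfv_1)=0$, which is three independent linear constraints. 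Lemma \ref{ExistenceRadius1} gives only the pairwise independence of $\bfz$ and $\bfv_2$ and does not deliver this. The paper closes the gap by invoking Lemma \ref{MilnorFibering2} (transversality of $f^{-1}(\eta)$ with $S_r$ for small $\eta\ne 0$) combined with Observation \ref{tangent-space}: since $T_{\bfz}f^{-1}(\eta)$ is exactly the orthogonal complement of $\operatorname{span}_{\BR}\{\bfv_1,\bfv_2\}$, transversality of $f^{-1}(\eta)$ with $S_r$ is equivalent to the three-vector independence of $\bfz,\bfv_1,\bfv_2$ on $\{\bfz\in S_r\mid 0<|f(\bfz)|\le\delta_0\}$. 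With that, one constructs $\mathcal V$ satisfying $\Re(\mathcal V,\bfz)=0$, $\Re(\mathcal V,\bfv_2)=1$ everywhere, and $\Re(\mathcal V,\bfv_1)=0$ near $K_r$, so $|f|$ is exactly preserved along the flow near $K_r$, giving completeness. Your proposal is missing the appeal to Lemma \ref{MilnorFibering2}, which is where the strong non-degeneracy enters a second time and is indispensable precisely because, unlike the holomorphic case, $\bfv_1$ and $\bfv_2$ are not perpendicular and Milnor's original estimate does not carry over directly.
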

\begin{proof}
Taking $r_0,\,r_1,\,\de_0$ sufficiently small so that 
$f\inv(\eta)$ and $S_r^{2n-1}$ intersect transversely for any $\eta\in
 \BC^*$
with $|\eta|\le \de_0$ and $r_1\le r\le r_0$ by Lemma
 \ref{MilnorFibering2}.
Combining with Observation \ref{tangent-space}, the transversality
 implies  that the three vectors
\[
\bfz,\, i\,(\overline{d\log f}-\bar d\log f)(\bfz,\bar\bfz),
\,\,(\overline{d\log f}+\bar d\log f)(\bfz,\bar\bfz)
\]
are linearly independent over $\BR$ on 
$\{\bfz\in S_r\,|\,0< |f(\bfz,\bar\bfz)|\le \de_0\}$.
Therefore  we can construct a horizontal vector field $\mathcal V$ for $\varphi$
on $S_r^{2n-1}\setminus K_r$  so that 
\begin{enumerate}
\item $\Re(\mathcal V(\bfz),\,i\,(\overline{d\log f}-\bar d\log
 f)(\bfz,\bar\bfz))=1$
and $\Re(\mathcal V(\bfz),\bfz)=0$ for any $\bfz\in S_r^{2n-1}-K_r$, and
moreover
\item  $\Re(\mathcal V(\bfz),\,(\overline{d\log f}+\bar d\log
 f)(\bfz,\bar\bfz))=0$ for $\bfz\in S_r$ with $|\,0< |f(\bfz,\bar\bfz)|\le \de_0 $.
\end{enumerate}
We show that the integral curve of $\cV$ does not approach to $K_r$.
In fact, assume that $\bfz(t),\,-\eps\le t\le \eps$ be an integral
 curve
with 
  $\bfz(0)=\bfw$,
$\cV(\bfw)=\bfv$. 
As we have seen in Observation \ref{tangent-space},
\begin{eqnarray}\label{abs-value}
\begin{split}
\frac d{dt}\log|f(\bfz(t),\,\bar\bfz(t))|=
 \Re(\bfv,\,(\overline{d\log f}+\bar d\log f)(\bfw,\bar\bfw)).
\end{split}
\end{eqnarray}
Therefore the condition (2)  guarantees that
 $\mathcal V(\bfz)$ is tangent to the level real hypersurface
of real codimension 1,
 $|f|_{\bfz}:=\{\bfw\in \BC^n\,|\,|f(\bfw)|=|f(\bfz)|\}$.
Thus it is obvious that $\mathcal V$ is integrable for any finite time
 interval
and we get the local triviality by the integration of $\mathcal V$.
\end{proof}

\subsection{Equivalence of two Milnor fibrations}
Take positive numbers $r,\,\de_0$ with $\de_0\ll r$  as in Theorem  \ref{Milnor2}. 
We compare the 
two fibrations
\begin{eqnarray*}
& f: \partial E(r,\de_0)\to S_{\de_0}^1,\qquad
\varphi: S_r^{2n-1}\setminus K_r\to S^1
\end{eqnarray*}
and we will show that they are isomorphic.
However the proof is much more complicated  compared with the case
of holomorphic functions. The reason is that
we have to take care of the  two  vectors
\[
 i\,(\overline{d\log f}-\bar d\log f),\, \overline{d\log f}+\bar d\log f
\]
which are not perpendicular.
(In the holomorphic case, the proof is easy as the two vectors reduce to 
the perpendicular vectors
$i \,\overline{d\log f},\,\overline{d\log f}$.)
Consider a smooth curve
$\bfz(t),\,-1\le t\le 1$, with $\bfz(0)=\bfw\in B_r^{2n}\setminus V$ and 
$\bfv=\frac{d\bfz(t)}{dt}(0)$. Put $\bfv=(v_1,\dots, v_n)$.
First from (\ref{argument-derivative}) and (\ref{abs-value}), we observe that
\begin{eqnarray*}
\begin{split}
& \frac{\log f(\bfz(t),\,\bar\bfz(t))}{dt}|_{t=0}=
\sum_{j=1}^n\left( v_j \,\frac{\partial \log f}{\partial z_j}(\bfw,\bar\bfw)
+\bar v_j \, \frac{\partial \log f}{\partial \bar
z_j}(\bfw,\bar\bfw)
\right)\\
&\qquad =\Re(\bfv,\,(\overline{d\log f}+\bar d\log f)(\bfw,\bar\bfw))+
i\,\Re(\bfv,\,i\,(\overline{d\log f}-\bar d\log f)(\bfw,\bar\bfw)).
\end{split}
\end{eqnarray*}
Define two vectors on $\BC^n-V$:
\begin{eqnarray*}
\begin{split}
\bfv_1(\bfz,\bar\bfz)&= \overline{d\log f}(\bfz,\bar\bfz)+\bar d\log
 f(\bfz,\bar\bfz)\\
 \bfv_2(\bfz,\bar\bfz)&=i\,(\overline{d\log f}(\bfz,\bar\bfz)-\bar d\log
 f(\bfz,\bar\bfz))
 \end{split}\end{eqnarray*}
 The above equality is translated as 
 \begin{eqnarray}\label{log-derivative}
\begin{split}
 \frac{\log f(\bfz(t),\bar\bfz(t))}{dt}|_{t=0}=
\Re(\bfv,\bfv_1(\bfw,\bar\bfw))+
i\,\Re(\bfv,\bfv_2(\bfw,\bar\bfw)).
\end{split}
\end{eqnarray}
 
 The following will play the  key role for the equivalence of two fibrations:
\begin{Lemma}\label{key lemma}
Under the same assumption as in Theorem \ref{MilnorFibering1},
there exists a  positive number $r_0$ so that for any $\bfz$ with
$\|\bfz\|\le r_0$ and $f(\bfz,\bar\bfz)\ne 0$, the three vectors
\[
 \bfz,\quad \bfv_1(\bfz,\bar\bfz),
\quad \bfv_2(\bfz,\bar\bfz)
\]
are either (i) linearly independent over $\BR$
or (ii) they are linearly dependent  over $\BR$ and 
the relation can be written as
\begin{eqnarray}\label{eq3}\begin{split}
& \bfz=a\, \bfv_1(\bfz,\bar\bfz)+b\,
 \bfv_2(\bfz,\bar\bfz), \,a,b\in \BR.
\end{split}
\end{eqnarray}
and the coefficient $a$ is positive.
\end{Lemma}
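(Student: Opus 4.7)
The plan splits the argument into three parts: establish that $\bfv_1$ and $\bfv_2$ are $\BR$-linearly independent for $\bfz$ close to $O$ with $f(\bfz)\ne 0$; conclude by elementary linear algebra that any $\BR$-dependence among $\{\bfz,\bfv_1,\bfv_2\}$ forces the asserted form $\bfz = a\bfv_1+b\bfv_2$ with $a,b\in\BR$ uniquely determined; and prove $a>0$ by contradiction via the Curve Selection Lemma.

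If $\bfv_1,\bfv_2$ were $\BR$-dependent at some $\bfw$ near $O$ with $f(\bfw)\ne 0$, then since $\bfv_2\ne 0$ by Lemma \ref{ExistenceRadius1} there would exist $\mu\in\BC$ with $|\mu|=1$ satisfying $\overline{d\log f}(\bfw,\bar\bfw) = \mu\,\bar d\log f(\bfw,\bar\bfw)$. Clearing the logarithmic denominators (and using $|f(\bfw)/\overline{f(\bfw)}|=1$), this rewrites as $\overline{df}(\bfw,\bar\bfw) = \mu'\,\bar df(\bfw,\bar\bfw)$ with $|\mu'|=1$, so by Proposition \ref{critical point} $\bfw$ is a critical point of $f:\BC^n\to\BC$. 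Since $|f(\bfw)|$ is arbitrarily small for $\bfw$ near $O$, this contradicts Lemma \ref{MilnorFibering2}. Granted this, a $\BR$-dependence $\al\bfz+\be\bfv_1+\ga\bfv_2=0$ must have $\be\ne 0$ (else $\bfz,\bfv_2$ would be dependent, against Lemma \ref{ExistenceRadius1}) and $\al\ne 0$ (else $\bfv_1,\bfv_2$ would be dependent), so one solves for $\bfz$ and obtains the unique real decomposition.

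For the positivity of $a$, assume the set $\{\bfz:\bfz=a\bfv_1+b\bfv_2,\ a\le 0\}$ accumulates at $O$. The Curve Selection Lemma then yields a real analytic arc $\bfz(t)$, $0\le t\le 1$, with $\bfz(0)=O$, $f(\bfz(t))\ne 0$ for $t>0$, and $\bfz(t)=a(t)\bfv_1+b(t)\bfv_2$ with $a(t)\le 0$; note $a\not\equiv 0$, else $\bfz$ and $\bfv_2$ would be $\BR$-proportional along the arc, violating Lemma \ref{ExistenceRadius1}. Put $I=\{j:z_j(t)\not\equiv 0\}$, expand $z_j(t)=c_j t^{p_j}+\cdots$ with $c_j\ne 0$ for $j\in I$, and set $P={}^t(p_j)_{j\in I}$, $d=d(P;f^I)$, $\bfc=(c_j)_{j\in I}$. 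Taking the Hermitian inner product of $\bfz=a\bfv_1+b\bfv_2$ with the weighted vector $P\bfz:=(p_jz_j)_{j\in I}$ and then the real part yields
\[
\sum_{j\in I}p_j|z_j(t)|^2 \;=\; a(t)\,\Re(P\bfz,\bfv_1) + b(t)\,\Re(P\bfz,\bfv_2).
\]
The left-hand side is positive, of order $t^{2\min_{j\in I}p_j}$. On the right, Euler's identity applied to the radially weighted homogeneous face function $f^I_P$ of weight type $(p_j;d)$ gives, in the generic case $f^I_P(\bfc,\bar\bfc)\ne 0$, the asymptotics $\Re(P\bfz,\bfv_1)=d+o(1)$ and $\Re(P\bfz,\bfv_2)=o(1)$ as $t\to 0$ (parallel to the order-matching used in the proof of Lemma \ref{ExistenceRadius1}). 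Comparing leading orders then forces the leading coefficient of $a(t)$ to be strictly positive, contradicting $a(t)\le 0$.

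The principal obstacle is the degenerate case $f^I_P(\bfc,\bar\bfc)=0$, in which $f(\bfz(t))$ vanishes to order strictly greater than $t^d$ along the arc and the bare asymptotic $\Re(P\bfz,\bfv_1)\to d$ breaks down. Here strong non-degeneracy is indispensable: $\bfc\in\BC^{*I}$ is a noncritical point of $f^I_P:\BC^{*I}\to\BC$, so at least one of $\partial_j f^I_P(\bfc,\bar\bfc)$, $\partial_{\bar j}f^I_P(\bfc,\bar\bfc)$ is nonzero, and one tracks the next surviving terms in the Taylor expansion of $f(\bfz(t))$. A refinement of the same Euler-identity argument, coupling the cancellations to the ordering on $t$-powers, still delivers a strictly positive leading coefficient for $a(t)$. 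This is precisely where strong (as opposed to ordinary) non-degeneracy enters, exactly parallel to its role in Lemma \ref{MilnorFibering2}.
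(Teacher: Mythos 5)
Your reduction is sound up to the positivity claim: $\BR$-independence of $\{\bfv_1,\bfv_2\}$ (via Proposition~\ref{critical point} and Lemma~\ref{MilnorFibering2}) and of $\{\bfz,\bfv_2\}$ (Lemma~\ref{ExistenceRadius1}), plus linear algebra, normalizes any dependence to $\bfz=a\bfv_1+b\bfv_2$ with $a,b\in\BR$. The scalar identity $\sum_{j\in I}p_j|z_j(t)|^2 = a(t)\Re(P\bfz,\bfv_1)+b(t)\Re(P\bfz,\bfv_2)$ and the asymptotics $\Re(P\bfz,\bfv_1)=d+o(1)$, $\Re(P\bfz,\bfv_2)=o(1)$ from the radial Euler identity of $f_P^I$ are also correct in the case $f_P^I(\bfc,\bar\bfc)\ne 0$. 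But ``comparing leading orders then forces the leading coefficient of $a(t)$ to be strictly positive'' is a genuine gap. The real meromorphic functions $a(t),b(t)$ may blow up as $t\to 0$, and nothing in your setup controls the lowest Laurent order of $b(t)$ relative to that of $a(t)$. If $b(t)$ has strictly lower order than $a(t)$, the term $b(t)\Re(P\bfz,\bfv_2)$ can enter the scalar equation at or below the order of $a(t)\Re(P\bfz,\bfv_1)$, and the sign of the leading coefficient of $a(t)$ is then invisible in this single real linear projection. You have collapsed the full vector relation in $\BC^I$ to one real functional $\Re(P\bfz,\cdot)$, and that discards the information needed to pin the sign.

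The second and larger gap is the case $f_P^I(\bfc,\bar\bfc)=0$: the sentence ``a refinement of the same Euler-identity argument \dots\ still delivers a strictly positive leading coefficient'' is a placeholder, not an argument. The paper's proof takes a genuinely different route at exactly this point. Instead of projecting onto $P\bfz$, it differentiates $f(\bfz(t),\bar\bfz(t))$ in $t$, substitutes the leading-order \emph{component-wise} consequences of the vector relation (Assertion~\ref{transversality}, which retains all $|I|$ components, not one scalar), and computes the $t^{d-1}$ coefficient in the form $R=\al(\be+\ga)$. Strong non-degeneracy enters through a Schwartz-inequality estimate $|\be|>|\ga|$, where equality would force $\bar df_P^I(\bfc,\bar\bfc)=u\,\overline{df_P^I}(\bfc,\bar\bfc)$ with $|u|=1$, i.e.\ a critical point of $f_P^I$ in $\BC^{*I}$. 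This inequality does two jobs at once: it forces $R\ne 0$, hence $\ell=d$, so the case $f_P^I(\bfc,\bar\bfc)=0$ is shown \emph{never to occur}; and it yields $\la_0>0$ by an elementary observation in $\BC$ (the picture in Figure~\ref{Schwartz}). Neither the elimination of the degenerate case nor the Schwartz-inequality mechanism appears in your Euler-identity approach, and I do not see how that approach can be made to supply them without in effect reconstructing the paper's computation.
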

\begin{proof}
First observe that the pairs 
\begin{eqnarray*}\begin{split}
&\cP_1=\left\{\bfv_1(\bfz,\bar\bfz),  \,
\bfv_2(\bfz,\bar\bfz)\right\},\,\,
\cP_2=\left\{\bfz, \, \bfv_2(\bfz,\bar\bfz)
\right\}
\end{split}
\end{eqnarray*}
are respectively linearly independent over $\BR$ by  Lemma
 \ref{MilnorFibering2}, Lemma \ref{ExistenceRadius1}
and the above equality. Assume that the assertion does not hold.
Consider the real analytic variety
$W$ where the three vectors are linearly dependent over $\BR$.
Let us consider the open set  $U=\BC^n\setminus V$.
Then $W\cap U$ has a finite  number of connected components.
The sign of the coefficient  $a$ in (\ref{eq3}) is constant on each component, as long as
 they are near enough to the origin. 
This is the result of the linear independence of
$\bfz,\,\bfv_2(\bfz,\bar\bfz)$.
We will show that
this sign is positive.
We use  the Curve Selection Lemma (\cite{Milnor,Hamm1}) to find an analytic curve
$\bfz(t),\,0\le t\le 1$, such that
$\bfz(0)=O$ and $\bfz(t)\notin V$ for $t\ne 0$ and there exist real
 valued functions
$\la(t),\mu(t)$ so that 
\[
 \bfz(t)=\la(t)\,\bfv_1(\bfz,\bar\bfz)+\mu(t)\,
\bfv_2(\bfz,\bar\bfz).
\]
Let $I=\{j\,|\,z_j(t)\not \equiv 0\}$. We may assume that $I=\{1,\dots,
 m\}$
and we do the argument in $\BC^I$. 
We consider the Taylor expansions of $\bfz(t)$ and $f(\bfz(t),\bar\bfz(t))$, 
and the Laurent expansions of $\la(t)$ and $\mu(t)$:
\begin{eqnarray*}
\begin{split}
&z_j(t)=a_j\,t^{p_j}+\text{(higher terms)},\quad a_j\in \BC^*,\,
 p_j\in \BN, \,1\le j\le m,\\
&f(\bfz(t),\bar\bfz(t))=\al\,t^{\ell}+\text{(higher terms)},\,\al\in
 \BC^*,\,
\ell\in \BN\\
&\la(t)=\la_0\, t^{\nu_1}+\text{(higher terms)},\,\,\la_0\in \BR^*,\,
\nu_1\in \BZ\\
&\mu(t)=\mu_0\, t^{\nu_2}+\text{(higher terms)},\,\,\mu_0\in \BR^*,\,
\nu_2\in \BZ.
\end{split}
\end{eqnarray*}
First we consider the equality:
\begin{eqnarray}\begin{split}\label{eqeq1}
 z_j(t)=&\la(t)\,(\frac{\overline{\partial f}}{\partial z_j}/\bar f+
\frac{\partial f}{\partial \bar z_j}/f)(\bfz(t),\bar\bfz(t))\\
&+\mu(t)\,
i\,(\frac{\overline{\partial f}}{\partial z_j}/\bar f-
\frac{\partial f}{\partial \bar z_j}/f)(\bfz(t),\bar\bfz(t)) ,\,j=1,\dots, m.
\end{split}\end{eqnarray}
Put $P=(p_1,\dots, p_m)$, $\bfa=(a_1,\dots, a_m)$ and $d=d(P,f^I)$.
Then we observe that
\begin{eqnarray*}
\begin{split}
&\frac{\overline{\partial f}}{\partial
 z_j}(\bfz(t),\bar\bfz(t))/\bar f(\bfz(t),\bar\bfz(t))=\left(\frac{\overline{\partial
 f_P^I}}{\partial z_j}(\bfa,\bar\bfa)/\bar\al\right) t^{d-p_j-\ell}+\text{(higher
 terms)},\\
&\frac{\partial f}{\partial \bar z_j}
(\bfz(t),\bar\bfz(t)))/f(\bfz(t),\bar\bfz(t))=
\left(\frac{\partial
 f_P^I}{\partial \bar z_j}(\bfa,\bar\bfa)/\al \right)t^{d-p_j-\ell}+\text{(higher
 terms)}
\end{split}
\end{eqnarray*}
Thus comparing the equality (\ref{eqeq1}), we see that
\[
 p_j\ge \min\{\nu_1+d-p_j-\ell,\nu_2+d-p_j-\ell\}.
\]
 To avoid the repetition of the similar
 argument and to treat the cases $\nu_2=\nu_1$,  $\nu_2<\nu_1$
and $\nu_2>\nu_1$ simultaneously, we put $\nu_0=\min(\nu_1,\nu_2)$ and
we rewrite the expansions as $\la(t),\mu(t)$ as 
\[
 \begin{split}
&\la(t)=r_0\,t^{\nu_0}+\cdots,\quad r_0\in \BR\\
&\mu(t)=m_0\,t^{\nu_1}+\cdots,\quad m_0\in \BR.
\end{split}
\]
Here  we  have $r_0=0$ or $r_0=\la_0$
(respectively $m_0=0$  or  $m_0=\mu_0$) according to
$\nu_1>\nu_0$ or $\nu_1=\nu_0$ (resp.
$\nu_2>\nu_0$ or  $\nu_2=\nu_0$).
By (\ref{eqeq1}), we get
\begin{eqnarray*}\begin{split}
&\la_0\,\left(\frac{\overline{\partial f_P^I}}{\partial
 z_j}(\bfa,\bar\bfa)/\bar\al+
\frac{\partial f_P^I}{\partial \bar z_j}(\bfa,\bar\bfa)/\al\right)+
i\, m_0\,\left(\frac{\overline{\partial f_P^I}}{\partial
 z_j}(\bfa,\bar\bfa)/\bar\al-
\frac{\partial f_P^I}{\partial \bar z_j}(\bfa,\bar\bfa)/\al\right)\\
&\qquad =\begin{cases}
a_j\quad & p_j=d-p_j+\nu_0-\ell\\
0\quad &p_j>d-p_j+\nu_0-\ell.
\end{cases}
\end{split}
\end{eqnarray*}
More precisely we assert
\begin{Assertion}\label{transversality}
Put $p_{min}=\min\, \{p_i\,|\,i\in I\}$ and $K=\{i\in I\,|\,p_i=p_{min}\}$.
Then  we have
\begin{eqnarray}\label{Assertion}\begin{split}
&\la_0\,\left(\frac{\overline{\partial f_P^I}}{\partial
 z_j}(\bfa,\bar\bfa)/\bar\al+
\frac{\partial f_P^I}{\partial \bar z_j}(\bfa,\bar\bfa)/\al\right)
+\\
&\qquad m_0\, i\,\left(\frac{\overline{\partial f_P^I}}{\partial
 z_j}(\bfa,\bar\bfa)/\bar\al-
\frac{\partial f_P^I}{\partial \bar z_j}(\bfa,\bar\bfa)/\al\right)
 =\begin{cases}\label{eq9}
a_j\quad & j\in K\\
0\quad &j\notin K.
\end{cases}\\
\end{split}
\end{eqnarray}
\end{Assertion}
\begin{proof} 
We examine the equality (\ref{eqeq1}).
The order of $\|\bfz(t)\|$ is $p_{min}$.
On the other hand, the order
of $j$-th component of the right side of (\ref{eqeq1})  is greater than or
equal to $d-p_j+\nu_0-\ell$
and the coefficient of $t^{d-p_j+\nu_0-\ell}$
is given by the left side of (\ref{Assertion}). If there is an
 index $j\notin K$
such that this coefficient is non-zero, then the order of the right side
of  (\ref{eqeq1})
is strictly smaller than $d-p_{min}+\nu_0-\ell$
and the limit of the normalized vector of the right side 
has 0 coefficient on any $j\in K$ and  we have the contradiction 
to $(\ref{eqeq1})$.
\end{proof}
Thus we have proved (\ref{Assertion}).
Now we examine the next equality more carefully:
\begin{eqnarray}\label{eq2}\begin{split}\quad
& \frac{df(\bfz(t),\bar\bfz(t))}{dt}\\
&=\sum_{j=1}^n\left(\frac{\partial f(\bfz(t),\bar\bfz(t))}{\partial z_j}
\frac{dz_j(t)}{dt}+\frac{\partial f(\bfz(t),\bar\bfz(t))}{\partial\bar z_j}
\frac{d \bar z_j(t)}{dt}
\right).\end{split}
\end{eqnarray}
The left hand side is simply 
\[
  \frac{df(\bfz(t),\bar\bfz(t))}{dt}=\al\,\ell\, t^{\ell-1}+\text{(higher terms)}.
\]
We introduce the complex vectors:
\begin{multline*}
  \begin{cases}&\bfv=(v_1,\dots, v_m),\,\,,v_j=\sqrt{p_j}\,f_j,\\
&\bfw=(w_1,\dots, w_m),\,\,w_j=\sqrt{p_j}\,f_{\bar j}
\end{cases}\\
\text{where}\quad
f_j=\frac{\partial f_P^I}{\partial z_j}(\bfa,\bar\bfa),\,\,
f_{\bar j}=\frac{\partial f_P^I}{\partial\bar z_j}(\bfa,\bar\bfa),\,\,
1\le j\le m.
\end{multline*}
The order of the right hand side of (\ref{eq2}) is greater than or equal to
$d-1$. Let  $R$ be  the coefficient of $t^{d-1}$ of the right side. By an
 easy calculation, we have
\begin{eqnarray*}\begin{split}
R&=\sum_{j=1}^m a_jp_j \,f_j +
\sum_{j=1}^m \bar a_jp_j\,f_{\bar j}\\
&=\sum_{j=1}^mp_j f_j
\left\{\la_0\left(\frac{\overline{f_j}}{\bar\al}+
\frac{f_{\bar j}}{\al}\right)+
i\,m_0 \left(\frac{\overline{f_j}}{\bar\al}-
\frac{f_{\bar j}}{\al}\right)\right\}\\
&\qquad\qquad\qquad +\sum_{j=1}^mp_j {f_{\bar j}}
\left\{\la_0\left(\frac{f_j}{\al}+\frac{\overline{f_{\bar j}}}{\bar\al}\right)
-i\, m_0\left(
\frac{f_j}\al-\frac{\overline{f_{\bar j}}}{\bar \al}\right)\right\}\\
&=\al\sum_{j=1}^m\left( p_j|f_j|^2+p_j|f_{\bar j}|^2\right)\left(
\frac{\la_0}{|\al|^2}+\frac{i\,m_0 }{|\al|^2}\right)
+\al\sum_{j=1}^m 2p_jf_jf_{\bar j}\left(
\frac{\la_0}{\al^2}-\frac{i\, m_0 }{\al^2}
\right)\\
&=\al (\|\bfv\|^2+\|\bfw\|^2)\left(
\frac{\la_0}{|\al|^2}+\frac{i\, m_0 }{|\al|^2}\right)+
2\al(\bfw,\bar\bfv)\left(
\frac{\la_0}{\al^2}-\frac{i\,m_0}{\al^2}
\right)
\end{split}
\end{eqnarray*}
Consider two complex numbers:
\[
     \be:=(\|\bar\bfv\|^2+\|\bfw\|^2)\left(
\frac{\la_0}{|\al|^2}+i\frac{m_0 }{|\al|^2}\right),
\,
\ga:=2(\bfw,\bar\bfv)\left(
\frac{\la_0}{\al^2}-i\frac{m_0 }{\al^2}
\right)
    \]
Using the Schwartz inequality,
we see that
\[
 |\ga|=2\frac{\sqrt{\la_0^2+{m_0}^2}}{|\al|^2}|(\bfw,\bar\bfv)|
\le 2\frac{\sqrt{\la_0^2+{m_0}^2}}{|\al|^2}\|\bar\bfv\|\,\|\bfw\|
\]
and by comparing with $|\be|$, we get
\[
 |\be|-|\ga|\ge\frac{\sqrt{\la_0^2+{m_0}^2}}{|\al|^2}|(\|\bar\bfv\|-\|\bfw\|)^2\ge 0.
\]
For the equality $|\be|=|\ga|$,
it is necessary that $\|\bar\bfv\|=\|\bfw\|$ and
 $|(\bfw,\bar\bfv)|=\|\bar\bfv\|\times \|\bfw\|$, or
\[
 \bfw=u\bar\bfv,\quad \exists u\in S^1\subset \BC.
\]
Note that this is equivalent to
$( f_{\bar 1},\dots, f_{\bar m})=u\,(\overline{f_1},\dots,\overline{f_m})$
which implies
$\bar df(\bfa,\bar\bfa)=u\,\overline{df}(\bfa,\bar\bfa)$.
This is a contradiction to the non-degeneracy assumption for 
$f^I(\bfz,\bar\bfz)$. Thus we conclude that $|\be|>|\ga|$ and $R\ne 0$.
\newline
Now the equality  (\ref{eq2}) says,
$\ell-1=d-1$ and 
\[\begin{split}
 &\ell\,\al\,=\,\al\,\be\,+\,\al\,\ga,\,\text{or}\\
&\ell=(\|\bar\bfv\|^2+\|\bfw\|^2)\left(
\frac{\la_0}{|\al|^2}+i\,\frac{m_0 }{|\al|^2}\right)+
2(\bfw,\bar\bfv)\left(
\frac{\la_0}{\al^2}-i\,\frac{m_0 }{\al^2}
\right)
\end{split}
\]
We now assert that  $\la_0>0$. Assume $\la_0\le 0$. Then 
$\Re (\be)\le 0$ and 
to get the
 equality
$\ell=\be+\ga$, we must have $|\ga|>|\be|$.
This is impossible as we have seen that $|\be|>|\ga|$.
See Figure \ref{Schwartz}.
\end{proof}
\begin{figure}[htb,here]
\setlength{\unitlength}{1bp}
\begin{picture}(600,150)(-100,0)
\put(0,130){\special{epsfile=./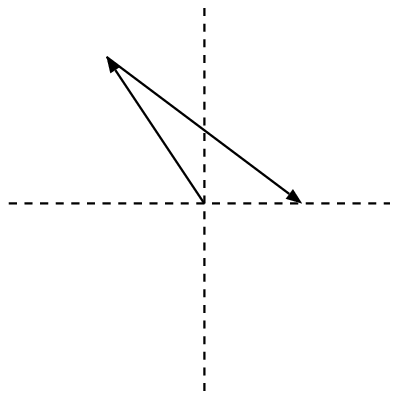 vscale=1 hscale=1}}
\put(25,100){$\be$}
\put (60,100){$\ga$}
\put(80,60){$\ell$}
\put(45,60){$O$}
\end{picture}
\caption{If $\la_0\le 0$,   $|\be|<|\ga|$}
\label{Schwartz}
\end{figure}
Now we are ready to prove the  isomorphism theorem:
\begin{Theorem}\label{equivalence}
Under the same assumption as in Theorem \ref{MilnorFibering1},
the two fibrations
\begin{eqnarray*}
& f: \partial E(r,\de_0)\to S_{\de_0}^1,\qquad
\varphi: S_r^{2n-1}\setminus K_r\to S^1
\end{eqnarray*}
are topologically isomorphic.
\end{Theorem}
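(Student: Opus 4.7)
The plan is to build a fiber-preserving diffeomorphism between the two Milnor fibrations by integrating a vector field constructed via Lemma \ref{key lemma}, which transports points from the tube $\partial E(r,\de_0)^*$ outward to $S_r^{2n-1}\setminus K_r$.

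The first step is to construct a smooth vector field $\cW$ on $U=\{\bfz\in B_r^\circ : f(\bfz,\bar\bfz)\ne 0\}$ satisfying
\[
\Re(\cW,\bfv_2)=0,\qquad \Re(\cW,\bfv_1)>0,\qquad \Re(\cW,\bfz)>0.
\]
Such $\cW$ exists locally by Lemma \ref{key lemma}: in Case (i) the triple $\bfz,\bfv_1,\bfv_2$ is $\BR$-linearly independent so all three conditions can be met by elementary linear algebra; in Case (ii) the relation $\bfz=a\bfv_1+b\bfv_2$ with $a>0$, combined with the first condition, forces $\Re(\cW,\bfz)=a\Re(\cW,\bfv_1)$, so the positivities of the last two conditions automatically coincide. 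Since the admissible set at each point is a non-empty open convex cone, a partition of unity yields a global $\cW$; normalising $\Re(\cW,\bfz)\equiv 1$ and invoking (\ref{log-derivative}), along each integral curve $\bfz(t)$ one has $\arg f(\bfz(t))$ constant, $|f(\bfz(t))|$ strictly increasing, and $\|\bfz(t)\|^{2}=\|\bfz(0)\|^{2}+2t$.

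Next, I would define $\Psi_1:\partial E(r,\de_0)^*\to S_r^{2n-1}\cap\{|f|\ge\de_0\}$ by $\Psi_1(\bfz_0):=\bfz(T_0)$ with $T_0:=(r^2-\|\bfz_0\|^2)/2$. This map is smooth and intertwines $f$ and $\varphi$ via the canonical identification $S^1_{\de_0}\cong S^1$. Injectivity comes from ODE uniqueness; surjectivity follows by backward integration from any $\bfw\in S_r^{2n-1}$ with $|f(\bfw)|\ge\de_0$, since $|f|$ strictly decreases along $-\cW$ and must cross $\de_0$ at a unique negative time, while $\|\bfz\|$ simultaneously decreases below $r$. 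To upgrade $\Psi_1$ to a fiber-preserving diffeomorphism onto all of $S_r^{2n-1}\setminus K_r$, I would then show that $\varphi$ restricted to $\{|f|\ge\de_0\}\cap S_r$ is isomorphic to the full fibration. On the collar $C:=\{0<|f|\le\de_0\}\cap S_r$ the transversality in Lemma \ref{MilnorFibering2}(b) amounts exactly to $\BR\bfz\cap\mathrm{span}_{\BR}(\bfv_1,\bfv_2)=\{0\}$, which rules out Case (ii) of Lemma \ref{key lemma}; hence on $C$ a second vector field $\cW'$ with $\Re(\cW',\bfz)=0$, $\Re(\cW',\bfv_2)=0$, $\Re(\cW',\bfv_1)>0$ exists (tangent to $S_r$, arg-preserving, $|f|$-increasing), and its flow produces the required collar diffeomorphism $\Psi_2:\{|f|\ge\de_0\}\cap S_r\to S_r^{2n-1}\setminus K_r$. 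The composite $\Psi_2\circ\Psi_1$ is the desired topological isomorphism.

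The main obstacle is this final collar step. In the holomorphic case $\bfv_1,\bfv_2$ are essentially the perpendicular pair $\overline{d\log f},\,i\,\overline{d\log f}$ and the corresponding construction is routine; here they need be neither orthogonal nor in general position relative to $\bfz$, and the fact that $\bfz,\bfv_1,\bfv_2$ remain $\BR$-linearly independent on the collar -- so that a tangent-to-sphere, $\arg f$-preserving, $|f|$-increasing vector field actually exists -- is precisely the point where the strong non-degeneracy hypothesis together with the transversality of Lemma \ref{MilnorFibering2}(b) is indispensable.
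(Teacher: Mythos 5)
Your proposal follows the paper's argument essentially verbatim: Lemma \ref{key lemma} supplies exactly the sign condition ($a>0$) needed to make the three inner-product conditions $\Re(\cW,\bfv_2)=0$, $\Re(\cW,\bfv_1)>0$, $\Re(\cW,\bfz)>0$ simultaneously satisfiable, a partition of unity globalizes the vector field, and its flow carries $\partial E(r,\de_0)$ onto $S_r\cap\{|f|\ge\de_0\}$ preserving $\arg f$. The only stylistic difference is the final collar step, where the paper simply invokes the product structure $N_r\cong D(\de_0)^*\times K_r$ while you spell out the same fact via a second vector field $\cW'$ tangent to $S_r$; this is a more explicit rendition of the same isomorphism, not a different route.
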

\begin{proof}
The proof  is done  as in the case of Milnor fibrations of a holomorphic
 function
(\cite{Okabook}).
We will construct a vector field $\mathcal V$ on 
\[
 E^c(r,\de_0):=B_r\setminus \Int(E(r,\de_0))=
\{\bfz\in B_r\,|\, |f(\bfz,\bar\bfz)|\ge \de_0\}
\]
so that
\begin{eqnarray}\label{eq4}\begin{cases}
& \Re(\mathcal V(\bfz),\bfv_2(\bfz,\bar\bfz))=0,\\
&\,\,\Re(\mathcal V(\bfz),\bfv_1(\bfz,\bar\bfz))>0,\\
&\qquad\,\, \Re(\mathcal V(\bfz),\bfz)>0.
\end{cases}
\end{eqnarray}
Assume for a moment that we have constructed such a vector field.
Along the integral curve $h(t,\bfw)$ of $\mathcal V$ with $h(0,\bfw)=\bfw$, 
the argument of \nl $f(h(t,\bfw),\bar h(t,\bfw))$ is
 constant and the absolute value $|f(h(t,\bfw))|$ and
the norm $\|h(t,\bfw)\|$ are monotone
 increasing. The integral curve is well-defined as long as 
$h(t,\bfw)$ is inside $E^c(r,\de_0)$.
 For each $\bfw\in E^c(r,\de_0)$, there exists a unique
 $\tau(\bfw)$
such that  $\|h(\tau(\bfw))\|=r$.
Thus this gives a topological isomorphism
$\psi: \partial E(r,\de_0)\to S_r^{2n-1}\setminus N_r$
which is defined by $\psi(\bfw)= h(\tau(\bfw),\bfw)$
\[
 \begin{matrix}
 \partial E(r,\de_0)&\mapright{f}& S_{\de_0}^1\\
\mapdown{\psi}&&\mapdown{1/\de_0}\\
 S_r^{2n-1}\setminus \Int\,N_r&\mapright{\varphi}& S^1
\end{matrix}\]
where $N_r=S_r^{2n-1}\cap\{\bfz\,|\, |f(\bfz)|\le \de_0\}$.
 As $N_r\cong D({\de_0})^* \times K_r$
with $D(\de_0)^*=\{\eta\in \BC|0<|\eta|\le \de\}$, the restriction
$\varphi:S_r^{2n-1}\setminus N_r\to S^1$ is isomorphic to the Milnor
 fibration
$\varphi:S_r^{2n-1}\setminus K_r\to S^1$.

For the construction of $\mathcal V$, we use Lemma \ref{key lemma}.
 Take a point $\bfw\in E^c(r,\de_0)$.
If the three vectors
$\bfv_1(\bfw,\bar\bfw),\,\bfv_2(\bfw,\bar\bfw),\,\bfw$
are linearly independent over $\BR$, it is also linearly 
independent over a small open neighborhood $U(\bfw)$.
It is easy to construct locally
$\mathcal V$ on $U(\bfw)$, satisfying the above property (\ref{eq4}).
If the three vectors are linearly dependent over $\BR$, consider the 
expression:
\begin{eqnarray*}\begin{split}
& \bfw=a\, \bfv_1(\bfw,\bar\bfw)+b\,
 \bfv_2(\bfw,\bar\bfw), \,a,b\in \BR, 
\end{split}
\end{eqnarray*}
with $a>0$, we construct $\mathcal V$  on a neighborhood
 $U(\bfw)$
of
$\bfw$  so that 
\begin{eqnarray*}
 \Re(\mathcal V(\bfz),\bfv_2(\bfz,\bar\bfz))=0,
\,\,\Re(\mathcal V(\bfz),\bfv_1(\bfz,\bar\bfz))>0.
\end{eqnarray*}
on $U(\bfw)$.
Note that
\[
 \Re(\bfw,\mathcal V(\bfw))=a\,\Re(\bfv_1(\bfw,\bar\bfw),\mathcal V(\bfw))>0
\]
If $U(\bfw)$ is sufficiently small, this inequality holds on
 $U(\bfw)$.
Consider the open covering
$\cU=\{U(\bfw)\,|\, \bfw\in E^c(r,\de_0)\}$.
Taking a locally finite refinement $\cU'$ of this covering,
 we glue together  vector fields constructed locally  on 
each open set in $\cU'$  using a
 partition of unity as usual.
\end{proof}
\subsection{Polar weighted homogeneous polynomial and its Milnor fibration}
Consider a mixed polynomial $f(\bfz,\bar\bfz)$ which is a radially 
weighted homogeneous polynomial of type $(q_1,\dots, q_n;d_r)$ and a polar
weighted homogeneous polynomial  of type $(p_1,\dots, p_n;d_p)$.
Put $V=f\inv(0)$ as before. 
Then $f:\BC^n\setminus V\to \BC^*$ is a locally trivial fibration
\cite{OkaPolar}. We call it {\em the global fibration}.
On the other hand, the Milnor fibration of the first type:
\[
 \varphi:=f/|f|: S_r\setminus K_r\to S^1,\quad K_r=f\inv(0)\cap S_r
\]
always exists for any $r>0$ and the isomorphism
class does not depend on the choice of $r$.
This can be shown easily, using the polar action.
We simply use the polar action to show the local triviality:
\[\begin{split}
&\psi: \varphi\inv(\theta)\times (\theta-\pi,\theta+\pi)\to
   \varphi\inv((\theta-\pi,\theta+\pi))\\
&\qquad \psi(\bfz,\theta+\eta):=(z_1\exp(i\,p_1\eta /d_p),\dots,
   z_n\exp(i\,p_n\eta /d_p))
\end{split}
\]
Now we have the following assertion
which is  a generalization of  the same 
assertion for  weighted homogeneous polynomials.
\begin{Theorem}\label{Polar Milnor} Let  $f(\bfz,\bar\bfz)$ be a polar weighted polynomial
 as above. We assume that the radial weight vector ${}^t(q_1,\dots,
 q_n)$
is strictly positive. Then
the two fibrations
\[
 f:f\inv(S_\de^1)\to S_\de^1,\quad \varphi=f/|f|: S_r^{2n-1}-K_r\to S^1,
\]
are isomorphic for any $r>0$ and $\de>0$.
\end{Theorem}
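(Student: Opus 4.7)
The strategy is to exploit the $\BR^+$-action coming from the radial weights to construct a fiber-preserving diffeomorphism between the two total spaces. The key mechanism is that under the radial flow $t\circ\bfz=(t^{q_1}z_1,\dots,t^{q_n}z_n)$ with $t\in\BR^+$, we have $f(t\circ\bfz,t\circ\bar\bfz)=t^{d_r}f(\bfz,\bar\bfz)$, so the value of $f$ gets scaled by a positive real while its argument is preserved. Combined with the polar action already used (just before the theorem) to prove local triviality of $\varphi$, this essentially sets up the isomorphism by a single radial retraction.

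First I would use the strict positivity of the radial weights together with Lemma \ref{transversality} to conclude that the radial flow lines are transverse to every sphere $S_r$. Since $\|t\circ\bfz\|^2=\sum_{i=1}^n t^{2q_i}|z_i|^2$ is strictly monotone increasing in $t>0$ (for any $\bfz\ne O$), each orbit $\{t\circ\bfz\mid t>0\}$ meets $S_r$ in a unique point. This gives a smooth retraction
\[
\rho_r:\BC^n\setminus\{O\}\to S_r^{2n-1},\quad \rho_r(\bfz)=t(\bfz)\circ\bfz,
\]
where $t(\bfz)\in\BR^+$ is uniquely determined by $\|\rho_r(\bfz)\|=r$, and $t$ depends real-analytically on $\bfz$ by the implicit function theorem. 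Since $f\circ\rho_r=t(\bfz)^{d_r}f(\bfz)$, we have $\rho_r(\bfz)\in K_r$ iff $f(\bfz)=0$, so $\rho_r$ restricts to a map $\Psi:f\inv(S_\de^1)\to S_r^{2n-1}\setminus K_r$.

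Next I would verify that $\Psi$ is a diffeomorphism by exhibiting the explicit inverse. Given $\bfw\in S_r^{2n-1}\setminus K_r$, so $f(\bfw)\ne 0$, set $s(\bfw):=(\de/|f(\bfw)|)^{1/d_r}\in\BR^+$ and define $\Psi\inv(\bfw):=s(\bfw)\circ\bfw$. Then $|f(\Psi\inv(\bfw))|=s(\bfw)^{d_r}|f(\bfw)|=\de$, so $\Psi\inv(\bfw)\in f\inv(S_\de^1)$, and the composition with $\Psi$ is the identity in both directions because each $\BR^+$-orbit meets both $f\inv(S_\de^1)$ and $S_r$ in exactly one point (by monotonicity of $|f|$ and $\|\cdot\|$ along the orbit). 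Smoothness of the inverse is clear from the formula.

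Finally I would verify the commutative diagram
\[
\begin{matrix}
f\inv(S_\de^1)&\mapright{f}&S_\de^1\\
\mapdown{\Psi}&&\mapdown{\eta\mapsto\eta/\de}\\
S_r^{2n-1}\setminus K_r&\mapright{\varphi}&S^1.
\end{matrix}
\]
For $\bfz\in f\inv(S_\de^1)$ with $f(\bfz)=\de e^{i\theta}$, one has $f(\Psi(\bfz))=t(\bfz)^{d_r}\de e^{i\theta}$, a positive real multiple of $\de e^{i\theta}$, so $\varphi(\Psi(\bfz))=e^{i\theta}=f(\bfz)/\de$. This establishes the desired isomorphism of locally trivial fibrations, and the argument is manifestly independent of $r$ and $\de$.

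There is no substantial obstacle here: the proof rests entirely on the fact that the strictly positive radial weights give a free, proper $\BR^+$-action transverse to both spheres $S_r$ and level sets $|f|=\de$, and the $\BR^+$-action preserves the argument of $f$. The only point requiring any care is checking that $\rho_r$ is smooth (which needs strict positivity of the radial weights to avoid degeneracy of $\partial\|t\circ\bfz\|^2/\partial t$), and this is automatic from $\sum q_i t^{2q_i-1}|z_i|^2>0$ whenever $\bfz\ne O$.
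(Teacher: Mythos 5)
Your proposal is correct and takes essentially the same route as the paper: both construct the isomorphism $\Psi(\bfz)=\tau(\bfz)\circ\bfz$ by flowing each point of $f\inv(S_\de^1)$ along its $\BR^+$-radial orbit until it hits $S_r^{2n-1}$, and both use the same commutative diagram with $\eta\mapsto\eta/\de$ on the base. You fill in a few details the paper leaves tacit (the explicit inverse $s(\bfw)=(\de/|f(\bfw)|)^{1/d_r}$, smoothness of $\tau(\bfz)$ via the implicit function theorem), but the argument is the same.
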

\begin{proof}
First,  observe that the isomorphism class of  the global fibration
\[
 f:f\inv(S_\de^1)\to S_\de^1
\]
does not depend on $\de>0$.
This follows from the commutative diagram:
\[
 \begin{matrix}
f\inv(1)&\mapright{\phi_\de}&f\inv(\de)\\
\mapdown{f}&&\mapdown{f}\\
S^1&\mapright{\de}&S_\de^1
\end{matrix},\quad
\phi_\de:(\bfz,\bar\bfz) \mapsto \root{d}\of {\de}\circ(\bfz,\bar\bfz)
\]
where $d=\rdeg\, f$ and $\circ$ denotes the $\BR^*$ action by the radial weights.
Now the global fibration $f:f\inv(S_{\de}^1)\to S_{\de}^1$ is 
isomorphic to  the second fibration
$\varphi:S_r^{2n-1}\setminus K_r\to S^1$ as follows.
For any $\bfz\in f\inv(S_{\de}^1)$, consider the orbit of the radial action
$\tau\mapsto \tau\circ \bfz=(\tau^{q_1}z_1,\dots, \tau^{q_n}z_n)$, $\tau>0$. There
 exists a unique positive real number $\tau=\tau(\bfz)$ so that 
$\|\tau(\bfz)\circ \bfz\|=r$
by the strict positivity assumption of $Q$. Put $\psi: f\inv(S_{\de}^1)\to
 S_r^{2n-1}\setminus K_r$ by 
$\psi(\bfz)=\tau(\bfz)\circ \bfz$ and $\xi:S_{\de}\to S_1$ by
$\xi(\eta)=\eta/{\de}$. Then we have a canonical commutative diagram
which gives an isomorphism of two fibrations.
\[
 \begin{matrix}
f\inv(S_{\de}^1)&\mapright{f}& S_{\de}^1\\
\mapdown{\psi}&&\mapdown{\xi}\\
S_{r}^{2n-1}\setminus K_r&\mapright{\varphi}& S^1
\end{matrix}
\]
\end{proof}

The following is an important criterion for the connectivity of the
Milnor fiber of a polar weighted mixed polynomial.
\begin{Proposition}\label{0-connected}
Let $f(\bfz,\bar\bfz)$ be a  polar weighted mixed
 polynomial of $n$ variables $\bfz=(z_1,\dots,z_n)$.
We assume that $f\inv(0)$ has at least one mixed smooth point.
Then the fiber  $F:=f\inv(1)\subset \BC^n$ is connected.
\end{Proposition}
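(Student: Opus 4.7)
My plan is to extract $\pi_0(F)$ from the long exact sequence of the locally trivial fibration $f : E \to \BC^*$, $E := \BC^n \setminus V$, whose local triviality is the global fibration property of polar weighted homogeneous polynomials recalled just above. The real codimension two of $V$ at the mixed smooth point $\bfw_0$ (combined with the real analyticity and polar $S^1$-equivariance of $V$) ensures that $E$ is connected, so the homotopy LES identifies $\pi_0(F)$ with the image of the boundary map $\pi_1(\BC^*, 1) \to \pi_0(F)$; that is, with the orbit of a chosen basepoint $[\bfz_0]$ under the monodromy $\tau$ of the generator.

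The monodromy $\tau$ is computed explicitly from the polar $S^1$-action. Since $f(\la \circ \bfz) = \la^{d_p} f(\bfz)$, the path $t \mapsto e^{2\pi i t/d_p} \circ \bfz_0$, $t \in [0,1]$, lifts the generator $t \mapsto e^{2\pi i t}$ of $\pi_1(\BC^*, 1)$ starting at $\bfz_0 \in F$, with endpoint $e^{2\pi i/d_p} \circ \bfz_0$; hence $\tau : [\bfz] \longmapsto [\, e^{2\pi i/d_p} \circ \bfz\,]$ in $\pi_0(F)$. It therefore suffices to exhibit a single $\bfz_0 \in F$ with $[e^{2\pi i/d_p} \circ \bfz_0] = [\bfz_0]$: the orbit---and hence $\pi_0(F)$---will then consist of a single element.

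To produce such a $\bfz_0$ I use the mixed smooth point. Because $f$ is a submersion at $\bfw_0$, the real analytic implicit function theorem (viewing $f : \BR^{2n} \to \BR^2$) yields a real analytic section $\sigma : D_\eps \to \BC^n$ of $f$ with $\sigma(0) = \bfw_0$. Pick a small $r > 0$ and set $\bfz_0 := r^{-1/d_r} \circ \sigma(r) \in F$. The generating loop $\beta(t) := r e^{2\pi i t}$ of $\pi_1(\BC^*, r)$ admits the sectional lift $t \mapsto \sigma(r e^{2\pi i t})$, which is a \emph{genuine loop} in $E$ returning to $\sigma(r)$, so the monodromy of $\beta$ on $\pi_0(f\inv(r))$ fixes $[\sigma(r)]$. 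The radial path $s \mapsto s$, $s \in [1,r]$, lifts via the radial action to the change-of-basepoint isomorphism $F \to f\inv(r)$, $\bfz \mapsto r^{1/d_r} \circ \bfz$, which sends $[\bfz_0]$ to $[\sigma(r)]$; by the naturality of monodromy under this transport we obtain $\tau[\bfz_0] = [\bfz_0]$, finishing the proof.

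The main obstacle is the basepoint bookkeeping of the last step: the polar lift of the generator of $\pi_1(\BC^*)$ is based at $1 \in \BC^*$, while the sectional lift is based at $r \neq 1$, so one has to intertwine the two monodromies through the radial transport $F \to f\inv(r)$ and invoke the fact that monodromy is natural along paths in the base. A secondary technical point is the connectedness of $E$, which requires $V$ to have real codimension at least two; this is immediate at $\bfw_0$ from the submersion property and propagates by the $S^1$-symmetry of $V$ together with the real analytic structure of the smooth locus of $V$.
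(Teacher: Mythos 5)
Your argument mirrors the paper's proof: both hinge on producing a loop in $\BC^n \setminus V$ near a mixed smooth point whose $f$-image winds once around the origin --- you take the boundary circle of a local section $\sigma$ through $\bfw_0$, the paper takes a small normal lasso --- and both then use the $\BR^* \times S^1$-action to lift. You package the conclusion through the exact sequence of the global fibration, reducing to the single identity $\tau[\bfz_0] = [\bfz_0]$, whereas the paper joins two arbitrary points $P, Q \in F$ by a path, corrects its winding number with copies of the lasso, and lifts the resulting null-homotopy; these are the same argument in two dialects, and your radial-transport bookkeeping for the basepoint change is sound.

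There is, however, a shared soft spot, which you flag but do not actually close. Both your exact sequence and the paper's ``connect $P$ to $Q$ by a path in $\BC^n \setminus V$'' need $\BC^n \setminus V$ to be path-connected, hence $V$ of real codimension at least two throughout. Your justification --- codimension two at $\bfw_0$ propagating via $S^1$-equivariance and real analyticity --- controls only the component of the smooth locus through $\bfw_0$; it cannot exclude a separate codimension-one piece of $V$ away from the smooth locus. This can genuinely occur for a polar weighted polynomial with mixed smooth points: take $f = z_1^2\bar z_1 - z_1 z_2 \bar z_2 = z_1\bigl(|z_1|^2 - |z_2|^2\bigr)$, polar weighted of type $(1,0;1)$ and radially weighted of type $(1,1;3)$. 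Every point of $\{z_1 = 0,\ z_2 \ne 0\}$ is mixed smooth (there $df = (-|z_2|^2,0)$ and $\bar d f = 0$), yet $V$ contains the real hypersurface $\{|z_1| = |z_2|\}$, and on $F = f\inv(1)$ the coordinate $z_1$ is forced to be a nonzero real number, so $F$ splits into two nonempty pieces according to the sign of $z_1$. This example is Newton degenerate along its single face, so it does not affect the paper's actual uses of the proposition (all for non-degenerate face functions), but it shows that as stated both your proof and the paper's need $V$ to have real codimension two (equivalently, $\BC^n \setminus V$ connected) as an explicit hypothesis.
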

\begin{proof}
 Put $V=f\inv(0)$.
 Take two points $P,Q\in F$. Connect$P,Q$  by a path $\ell$
in  $\BC^n\setminus V$.
Then $f_{\sharp}(\ell)$ is a closed path in $\BC^*$
based at $1\in \BC^*$
 and let $s$ be the
 rotation number of $f_{\sharp}(\ell)$ around the origin.
Take a smooth point
$R$ in $V$ and take a small lasso $\omega$ around $V$ in the normal
 plane
at $R$. Connect $\omega$ to $P$ in $\BC^n\setminus V$ to get a closed
 path $\omega'$ at $P$. The image of $\omega'$ by $f$ is a closed loop
with the  rotation number 1 around the origin.
Take a new path
$\ell'={\omega'}^{-s}\circ \ell$. Then the image of $\ell'$ has 0
 rotation number around the origin
and thus it is homotopic to the constant loop at $1\in \BC^*$  by a homotopy
$k_t:f\circ\omega'\simeq c_1$, where $c_1$ is the constant path at $1$.
Now lift this homotopy by the radial and the  polar actions
 to get a path $\wtl k_1$ from $P$ to $Q$. Obviously $f\circ\wtl k_1 $
is the constant path $c_1$. Thus $\wtl k_1$ is a path in the fiber $F$
which connects $P$ and $Q$.
(For a holomorphic case, this assertion follows from the 
 Kato-Matsumoto theorem,
\cite{Kato-Matsumoto}).\end{proof}

\section{Curves defined by mixed functions}
In this section, we focus our  study to mixed  plane
 curves  ($n=2$).
\subsection{Holomorphic plane curves}\label{Holomorphic}
Assume that $C$ is a germ of a complex analytic curve defined by a 
convenient non-degenerate holomorphic function $f(z_1,z_2)$ and let
$\De_j,\,j=1,\dots, r$
be the 1-dimensional faces and $M_0,M_1,\dots, M_{r-1}, M_r$
 be the vertices of $\Ga(f)$ such that  $\De_j=\overline{M_{j-1}M_j}$
and 
$M_0,M_r$ are on the coordinate axes. 
Then each face function $f_{\De_j}$ can be
factorized
as 
\[
 f_{\De_j}(z_1,z_2)=c_j\, z_1^{a_j}\,z_2^{b_j}\prod_{i=1}^{\nu_j}
(z_1^{p_j}+\al_{j,i}\, z_2^{q_j}),\quad \gcd(p_j,q_j)=1
\]
where  $\al_{j,1},\dots,\al_{j,\nu_j}$ are mutually distinct.
Then any toric modification with respect to a regular simplicial cone
subdivision
$\Si^*$ of the dual Newton diagram $\Ga^*(f)$ gives a good resolution of 
$f:(\BC^2,O)\to (\BC,0)$. Let $P_j$ be the weight vector of the face
$\De_j$.
 Each vertex $P$ of $\Si^*$ gives an
exceptional
divisor
$\what E(P)$ and the strict transform $\widetilde C$ intersects
with $\what E(P)$ if and only if $P=P_j$ for some $j=1,\dots,
r$.
In the case $P=P_j$, $\what E(P_j)\cap \widetilde C$ is $\nu_j$ point which
corresponds to
 irreducible components
associated with $f_{\De_j}$. 
The vertices $M_1,\dots, M_{r-1}$ do not contribute to the irreducible
components.
The number of irreducible components of $(C,O)$ is 
given by $\sum_{i=1}^{r}\nu_i$.
Note that 
$1+\sum_{i=1}^{r}\nu_i$ is the number of integral points on $\Ga(f)$
(\cite{Okabook}).
The situation for mixed polynomials is more complicated as we will see later.

\subsection{Mixed curves}
Now we consider curves defined by a
 mixed function with the same Newton boundary as in the previous subsection.
Let $f(\bfz,\bar\bfz)$ be a non-degenerate convenient mixed function
with two variables $\bfz=(z_1,z_2)$
and let $C=f\inv(0)$. Let
\[
 \varphi:Y\mapright{\omega} X\mapright{{\what \pi}} \BC^2
\]($Y=\cR X$, $\omega=\omega_{\BR}$ or $\cP X$ and $\omega=\omega_p$)
be the resolution map, described in Theorem \ref{real-resolution}
and Theorem \ref{polar-resolution}.
Let $\widetilde E(P)=\omega\inv(\what E(P))$ for a vertex $P$ of $\Si^*$.
\subsubsection{Simple vertices}
%
A vertex $M=(a,b)\in \Ga(f)$ is called {\em simple} if $f_M$ contains
only a single monomial $z_1^{a_1}\,z_2^{b_1}\,\bar z_1^{a_2}\,\bar z_2^{b_2}$
such that $a=a_1+a_2,\,b=b_1+b_2$. Otherwise we say $M$ is a {\em multiple
vertex}
of $\Ga(f)$.
\begin{Example}{\rm 
Let $f(\bfz,\bar\bfz)=z_1^3+t\, z_1^2\,\bar
z_1+z_2^2$.
Then $\Ga(f)$ has one face with  edge vertices $M_1=(3,0)$ and
$M_2=(0,2)$. $f(\bfz,\bar\bfz)$ is a radially weighted homogeneous
polynomial
of type $(2,3;6)$.
The vertex $M_1$ is a multiple vertex as 
$f_{M_1}(\bfz,\bar \bfz)=z_1^3+t\, z_1^2\bar z_1$.}
\end{Example}
\begin{Lemma} Suppose $M=(n,0)$ and let 
$f_M(z_1,\bar z_1)=\sum_{j=0}^n c_j \,z_1^j\bar z_1^{n-j}$.
Consider the factorization
$f_M(z_1,\bar z_1)=c\,\prod_{j=1}^n (z_1-\al_j\,\bar z_1)$.
Then  
$V^*:=\{z_1\in \BC^*\,|\, f_M(z_1,\bar z_1)=0\}$ is empty  if and only if 
 $|\al_j|\ne 1$ for any $j=1,\dots,n$.
\end{Lemma}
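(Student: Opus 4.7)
The plan is to analyze each linear factor $z_1-\alpha_j\bar z_1$ separately, since the zero set of $f_M$ on $\BC^*$ is the union of the zero sets of these factors, and $V^*=\emptyset$ iff each individual factor has no nonzero root.

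So first I would fix one factor $z_1-\alpha\bar z_1$ and look for $z_1\in\BC^*$ with $z_1=\alpha\bar z_1$. Writing $z_1=r e^{i\theta}$ with $r>0$, the equation becomes $r e^{i\theta}=\alpha\, r e^{-i\theta}$, which (after dividing by $r$) is equivalent to $e^{2i\theta}=\alpha$. This equation is solvable in $\theta\in\BR$ if and only if $|\alpha|=1$, in which case the solution set is a real line through the origin, parametrized by $r>0$ and the two values of $\theta$ modulo $\pi$ satisfying $2\theta\equiv\arg\alpha\pmod{2\pi}$.

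Then I would conclude: the factor $z_1-\alpha_j\bar z_1$ vanishes somewhere on $\BC^*$ iff $|\alpha_j|=1$. Hence $V^*=\bigcup_{j=1}^n\{z_1\in\BC^*:z_1=\alpha_j\bar z_1\}$ is empty iff every $|\alpha_j|\neq 1$, giving both implications at once.

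There is essentially no obstacle here; the only thing to be careful about is that the $\alpha_j$ and the factorization itself are furnished by the hypothesis (the lemma simply takes this factorization as given), so I do not need to justify its existence, only to analyze each factor. The brief argument above suffices and fits in a few lines.
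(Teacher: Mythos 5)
Your proposal is correct and follows essentially the same route as the paper: decompose $V^*$ as the union $\bigcup_j V_j^*$ with $V_j^*=\{z_1\in\BC^*: z_1=\al_j\bar z_1\}$ and observe that $V_j^*\ne\emptyset$ iff $|\al_j|=1$. The only difference is that you spell out the polar-coordinate computation ($e^{2i\theta}=\al$) that the paper leaves as ``it is easy to see.''
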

\begin{proof}
Let $V_j^*:=\{z_1\in \BC^*\,|\,z_1=\al_j\bar z_1\}$. 
Then $V^*=\bigcup_{j=1}^n V_j^*$.
It is easy to see that $V_j^*$ is not empty if and only if $|\al_j|=1$.
\end{proof}
Note that $f_M(z_1,\bar z_1)$ is non-degenerate if and only if $V^*=\emptyset$.
For an inside vertex $M_j$ (namely, $M_j$ is not on the axis), the criterion for non-degeneracy
 of the function $f_{M_j}(\bfz,\bar\bfz)$ is not so
simple.
\begin{Example}{\rm 
Consider
 \[
	 C:=\{\bfz\in \BC^2\,|\,f_M(\bfz,\bar\bfz)=t\, z_1z_2\,+\,z_1\bar z_2\,+\,\bar z_1 z_2\}.
 \]
We assert that}
\end{Example}
\begin{Assertion}
 $f_M\inv(0)\subset \BC^{*2}$ is
non-empty if and only if  $|t|\le 2$. 
 $f_M$ is non-degenerate if and
only if $|t|> 2$ or $0<|t|<2$.
\end{Assertion}
\begin{proof}
Put 
\[
 z_1=\rho_1\exp(\theta\, i),\,\,z_2=\rho_2\exp(\eta\, i ),
\,\, t=\xi\exp(\al\,i).
\]
Then  we  see that $C$ is radially homogeneous and it is defined by
\[
 C:\quad 2\,\cos \left( -\theta+\eta \right) +\xi\,{e^{ i\, \left( a+\theta+\eta
 \right) }}=0.
\]
For the existence of non-trivial solutions, we need to have:
\begin{eqnarray}
 &\xi=|t|\le 2,\quad 
\begin{cases}\al+\theta+\eta=m\pi,\,\,\exists m\in \BZ\label{8sol}\\
2\cos(-\theta+\eta)+\xi (-1)^m=0
\end{cases}\\
&\text{or}\quad\xi=0,\quad 
\cos(-\theta+\eta)=0
\end{eqnarray}
Assume that $t\ne 0,\,|t|<2$.
The equation (\ref{8sol}) 
 has 8 solutions  with $0\le \theta,\eta<2\pi$ for $\xi<2$ and $4$
 solutions for $\xi=2$ or $0$.
We can show that $V=f_M\inv(0)$ is non-singular for $0\ne \xi<2$, using
 Proposition \ref{critical point}.
In fact, assume that $\overline {df}(\bfz)=\la \bar df(\bfz)$ with $|\la|=1$. Then we have
\[
 \bar t \bar z_2+z_2=\la z_2,\, \bar t \bar z_1+z_1=\la z_1,\,
tz_1z_2+(z_1\bar z_2+\bar z_1z_2)=0
\]
This implies that $\la^2=\pm 1$ and $|t|=0$ or $|t|=2$.
\end{proof}

\subsubsection{Link components}
Let $f(\bfz,\bar\bfz)$ be a mixed function
with two variables $\bfz=(z_1,z_2)$
and let $C=f\inv(0)$. 
The {\em link components} at the origin  are the components
of $S_{\eps}^{3}\cap C$ for a sufficiently small $\eps$. We are
interested 
in finding out how to compute the number of the
link components  of $C$ at the origin. 
Let us denote this number 
by
$\lkn(C,O)$  and we call $\lkn(C,O)$ {\em the link component number}.
Let us denote the number of components which are not the coordinate
axes
$z_1=0$ or $z_2=0$ by $\lkn^*(C,0)$.
In the case of $f$ being a holomorphic function,
$\lkn(C,O)$ is equal to the number of irreducible components of $(C,O)$,
which is  a combinatorial invariant, provided   $f$ is Newton
non-degenerate, as we have seen in the  previous section \S
\ref{Holomorphic}.
However for a generic mixed function, $\lkn(C,O)$ might be strictly
greater than the number of irreducible components (see 
Example \ref{counter-example} for example).

\begin{Theorem}\label{ln-formula}Assume that $f(\bfz,\bar\bfz)$ is a convenient non-degenerate mixed
polynomial of two variables $\bfz=(z_1,z_2)$
and let $C=f\inv(0)$.  Let $\cF$ be the set of 1-faces of $\Ga(f)$.
Assume that the vertices of $\Ga(f)$ are simple. Then 
the number of the link components $\lkn(C,O)$
is given be the formula:
\[
 \lkn(C,O)=\sum_{\De\in \cF}\lkn^*(f_\De\inv(0),O).
\]
\end{Theorem}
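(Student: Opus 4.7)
My plan is to pass to the admissible polar resolution of $f$ from Theorem \ref{polar-resolution} and count intersections of the strict transform $\wtl C$ with the exceptional divisors face by face.

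Fix a regular simplicial subdivision $\Si^*$ of $\Ga^*(f)$ whose vertex set $\cV^+$ contains the weight vector $P_\De$ of every face $\De\in\cF$, and let $\Phi_p=\what\pi\circ\omega_p\colon\cP X\to\BC^2$ be the associated polar resolution. By Theorem \ref{isolatedness}, for all sufficiently small $\eps>0$ the link $L=C\cap S_\eps^3$ is a smooth $1$-manifold; convenience forces $L$ to avoid the coordinate axes, so $\lkn(C,O)=\lkn^*(C,O)$. Since $\wtl C$ is smooth of real codimension $2$ in $\cP X$ and meets the exceptional locus transversely, a standard tubular-neighborhood argument identifies the components of $L$ with those of $\wtl C\cap\bigcup_{P\in\cV^+}\wtl E(P)$; provided $\wtl C$ avoids the codimension-two corners $\wtl E(P)\cap\wtl E(P')$, I therefore obtain
\[
 \lkn(C,O)=\sum_{P\in\cV^+}\bigl|\pi_0\bigl(\wtl C\cap\wtl E(P)^*\bigr)\bigr|.
\]

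Next I classify the vertices $P\in\cV^+$. Either $\De(P,f)$ is a $1$-face (so $P=P_\De$ for a unique $\De\in\cF$), or $\De(P,f)=\{M\}$ is a single vertex of $\Ga(f)$. In the latter case the simple vertex hypothesis says $f_P=f_M$ is a single mixed monomial, so $f_P^{-1}(0)\cap\BC^{*2}=\emptyset$ and hence $\wtl C\cap\wtl E(P)^*=\emptyset$. The same monomial argument, applied at an endpoint vertex $M$ of a face $\De$, shows that the defining equation of $\wtl C$ restricts near the corner $\wtl E(P_\De)\cap\wtl E(P_M)$ to a nowhere-vanishing monomial in the transverse variable, so $\wtl C$ also stays away from every corner. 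Only the divisors $\wtl E(P_\De)$, $\De\in\cF$, therefore contribute, and the intersections lie inside the open strata $\wtl E(P_\De)^*$.

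Finally I identify each face contribution. In a toric chart indexed by a unimodular $\si=(P_\De,P')$ with polar coordinate $u_{\si 1}=r_1\exp(i\theta_1)$ and complex coordinate $u_{\si 2}$, the same computation as in the proof of Theorem \ref{polar-resolution} yields
\[
 \Phi_p^*f \;=\; r_1^{d(P_\De)}\bigl(G_\De(\theta_1,u_{\si 2},\bar u_{\si 2})+O(r_1)\bigr),
\]
where $G_\De$ is the expression obtained from $f_\De$ via the toric change of variables combined with $u_{\si 1}=r_1\exp(i\theta_1)$. Thus $\wtl C\cap\wtl E(P_\De)^*=\{G_\De=0\}$ inside $S^1\times\BC^*$. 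Because $\psi_\si$ is a diffeomorphism of tori, the same formula describes $V_\De^*:=f_\De^{-1}(0)\cap\BC^{*2}$ as $\{G_\De=0\}\times\BR^+$ in polar coordinates. Since $f_\De$ is radially weighted homogeneous with strictly positive weight vector $P_\De$, the $\BR^+$-action on $V_\De^*$ is free and each orbit meets $S_\eps^3$ in exactly one point, whence
\[
 \lkn^*(f_\De^{-1}(0),O)=|\pi_0(V_\De^*)|=|\pi_0(\{G_\De=0\})|=\bigl|\pi_0\bigl(\wtl C\cap\wtl E(P_\De)^*\bigr)\bigr|.
\]
Summing over $\De\in\cF$ yields the formula. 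The main obstacle will be a clean justification of the tubular-neighborhood reduction in the second step, together with the verification that the simple vertex hypothesis forces $\wtl C$ to remain in the open strata $\wtl E(P_\De)^*$; the toric bookkeeping that then matches each face-wise intersection with $\lkn^*$ of the face function is routine.
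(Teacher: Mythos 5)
Your proposal is correct and follows the same strategy as the paper's (very terse) proof: pass to the polar (the paper uses the real) resolution, let simplicity of the vertices keep $\wtl C$ away from the vertex divisors and the corners, and identify the components of $\wtl C\cap\wtl E(P_\De)^*$ with $\lkn^*(f_\De\inv(0),O)$. One small point worth making explicit in your final step is that the equality $\lkn^*(f_\De\inv(0),O)=|\pi_0(V_\De^*)|$ itself relies again on the simplicity of the end vertices of $\De$: the dominant monomial at a simple endpoint prevents $\overline{V_\De^*}$ from accumulating on the punctured coordinate axes, so no non-axis component of the link of $f_\De\inv(0)$ can touch an axis and be split when one passes to $\BC^{*2}$.
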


\noindent
{\em Proof.} 
Let $\Phi:\cR X\to \BC^2$ be the resolution of $f$
by the composite of a toric modification
${\what \pi}:X\to \BC^2$ and the normal real blowing-up $\omega: \cR X\to X$.
The simplicity of the vertices implies that 
$\Phi\inv(\what E(P))\cap \wtl C=\emptyset$ for any $P$ for which 
$\De(P)$ a vertex of $\Ga(f)$.
Thus by Theorem \ref{real-resolution}, it is immediate that
there is  one link  component of $(C,O)$
for every
connected component of 
$\widetilde E(P)=\Phi\inv(\what E(P))\cap\widetilde C$
with $\De(P)\in \cF$.
The assertion follows from this observation.
\qed

Now our interest is finding out how we can compute $\lkn^*(f_\De\inv(0),O)$.
In general, it is not so easy to compute this number but there is a
class for which the link number is easily computed.
\subsubsection{Good Newton polar boundary}\label{good polar}
Suppose that  $f(\bfz,\bar\bfz)$ is a mixed function of two variables
and let $\De$ be a face of the Newton boundary. Suppose that
$f_\De(\bfz,\bar\bfz)$ is also a polar weighted homogeneous polynomial. 
Let $Q={}^t(q_1,q_2)$ and $P={}^t(p_1,p_2)$ be the radial and the  polar weight vectors
and $d_r,d_p$ be the respective degree.
In general, the mixed face $\what\De(Q)$ is two-dimensional as
the possible monomial 
$z_1^{\nu_1}\,z_2^{\nu_2}\,\bar z_1^{\mu_1}\,\bar z_2^{\mu_2}$
satisfies two linear equations
\[
 (\nu_1+\mu_1)q_1+(\nu_2+\mu_2)q_2=d_r,\,\,
(\nu_1-\mu_1)p_1+(\nu_2-\mu_2)p_2=d_p.
\]
We say that $f_\De(\bfz,\bar\bfz)$ is {\em a good polar weighted polynomial} if $\dim\,\what \De=1$
and $f_\De(\bfz,\bar\bfz)$  factors as
\begin{eqnarray}\label{eq6}
 f_\De(\bfz,\bar\bfz)=c\,\bfz^{\bfm}\,\bar\bfz^{\bfn}\, \prod_{j=1}^k (z_2^a\,\bar
 z_2^{a'}-\la_j\, {z_1}^b\,\bar z_1^{b'})^{\mu_j}
\end{eqnarray}
with $a\ne a',\,b\ne b'$ and $\gcd(a,a',b,b')=1$.
Note that in this case, $p_1(b-b')=p_2(a-a')$ and non-zero.
We say that 
$f(\bfz,\bar\bfz)$ has {\em a good Newton polar boundary}
if for every face $\De$ of $\Ga(f)$, $f_\De(\bfz,\bar\bfz)$ is a good polar
weighted polynomial.

\begin{Lemma}\label{polar-factor} Assume that $f_\De(\bfz,\bar\bfz)$ is a good polar
 weighted polynomial and assume that a factorization of $f_\De(\bfz,\bar\bfz)$ 
is given as (\ref{eq6}).
Then 
 $f_\De(\bfz,\bar\bfz)$ is  non-degenerate if
 and only if $\mu_1=\dots=\mu_k=1$.
\end{Lemma}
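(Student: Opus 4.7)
I prove the two implications separately. Throughout, abbreviate $g_j(\bfz,\bar\bfz) := z_2^a\bar z_2^{a'} - \la_j z_1^b\bar z_1^{b'}$ and $V_j^* := g_j^{-1}(0)\cap \BC^{*2}$, so that $f_\De = c\,\bfz^{\bfm}\bar\bfz^{\bfn}\prod_j g_j^{\mu_j}$ and, because the $\la_j$ are mutually distinct, $f_\De^{-1}(0)\cap \BC^{*2}$ is the disjoint union of the $V_j^*$'s.

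\emph{Suppose some $\mu_{j_0}\ge 2$.} A Leibniz computation shows that at any $\bfw\in V_{j_0}^*$ every term of $\partial f_\De/\partial z_i$ and of $\partial f_\De/\partial \bar z_i$ carries a factor of $g_{j_0}(\bfw)^{\mu_{j_0}-1}$ (from differentiating the $j_0$-th bracket once) or $g_{j_0}(\bfw)^{\mu_{j_0}}$ (from all other terms), so $df_\De(\bfw)=\bar d f_\De(\bfw)=0$; by Proposition \ref{critical point} every such $\bfw$ is a critical point of $f_\De$. To conclude degeneracy I check that $V_{j_0}^*\ne\emptyset$: writing $z_i=r_i\exp(i\theta_i)$ splits $g_{j_0}=0$ into a modulus equation $r_2^{a+a'}=|\la_{j_0}|\,r_1^{b+b'}$ (solvable since $a+a'\ge 1$ and $b+b'\ge 1$) and an argument equation $(a-a')\theta_2-(b-b')\theta_1\equiv \arg\la_{j_0}\pmod{2\pi}$ (solvable since $a\ne a'$ and $b\ne b'$).

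\emph{Conversely, suppose $\mu_1=\dots=\mu_k=1$.} Since $f_\De$ is polar, hence radially, weighted homogeneous, so is each $g_j$ and the product $h:=\prod_j g_j$. Applying the unlabeled regular-value proposition stated just before Proposition \ref{restriction} with $M=c\,\bfz^{\bfm}\bar\bfz^{\bfn}$ (a nonvanishing unit on $\BC^{*2}$), the non-degeneracy of $f_\De$ is equivalent to that of $h$. Near a point $\bfw\in V_{j_0}^*$ the other factors $g_k(\bfw)$, $k\ne j_0$, are nonzero constants whose product I denote by $u$; then $dh(\bfw)=u\,dg_{j_0}(\bfw)$ and $\bar d h(\bfw)=u\,\bar d g_{j_0}(\bfw)$, so the criterion $\overline{dh}=\alpha\bar d h$ of Proposition \ref{critical point}, after absorbing the unimodular factor $\bar u/u$ into $\alpha$, reduces to checking that $g_{j_0}:\BC^{*2}\to\BC$ has no critical point on $V_{j_0}^*$.

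For this final check I use
\[
dg_j=(-\la_j b\,z_1^{b-1}\bar z_1^{b'},\ a\,z_2^{a-1}\bar z_2^{a'}),\qquad \bar d g_j=(-\la_j b'\,z_1^{b}\bar z_1^{b'-1},\ a'\,z_2^{a}\bar z_2^{a'-1}),
\]
with the convention that a zero exponent kills the corresponding partial. I compare first components in $\overline{dg_j}=\beta\bar d g_j$ with $|\beta|=1$. If $b,b'>0$ this yields $\beta=(\bar\la_j b/\la_j b')(\bar z_1/z_1)^{b-b'}$, whose modulus is $b/b'$, so $|\beta|=1$ forces $b=b'$, contradicting the hypothesis. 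If exactly one of $b,b'$ vanishes, one side of the first-component equation is zero while the other is nonzero on $\BC^{*2}$, excluding any $\beta$; and $b=b'=0$ is ruled out by $b\ne b'$. Hence $g_{j_0}$ has no critical point on $V_{j_0}^*$, and $f_\De$ is non-degenerate. The only delicate point I anticipate is this boundary case-analysis when some exponent vanishes, but the hypotheses $a\ne a'$, $b\ne b'$ with $\gcd(a,a',b,b')=1$ are exactly what makes the first-component comparison (or, symmetrically, the second) decisive; everything else is mechanical bookkeeping with the product rule.
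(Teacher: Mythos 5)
Your proof is correct and follows essentially the same route as the paper: in one direction, a Leibniz argument showing both gradients vanish on a repeated factor; in the other, a direct computation of $dg_j,\bar dg_j$ together with Proposition \ref{critical point} showing that $\overline{dg_j}=\beta\,\bar d g_j$ with $|\beta|=1$ would force $b=b'$. You add two small points of rigor the paper leaves implicit: the check that $V_{j_0}^*\ne\emptyset$ (without which a repeated factor would not actually produce a critical point on $f_\De^{-1}(0)\cap\BC^{*2}$), and the separate treatment of the case where one of $b,b'$ is zero, where the modulus comparison must be replaced by a zero/nonzero comparison. Reducing to a single factor $g_{j_0}$ via the unlabeled regular-value proposition, rather than differentiating the full product as the paper does, is a cosmetic simplification; both are fine.
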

\begin{proof}
Assume that $\mu_j\ge 2$ for some $j$. Then it is easy to see that 
$df(\bfz,\bar\bfz)=\bar df(\bfz,\bar\bfz)=0$ on 
$z_1^{b}\bar z_1^{b'}-\la_j z_2^{a}\bar z_2^{a'}=0$.
Thus it is degenerate. Assume that $\mu_j=1$ for any $j$.
As $f_\De$ is polar weighted, we only need  to show that
 $f_\De\inv(0)\cap \BC^{*2}$ is mixed non-singular. 
Take a point $\bfw\in f_\De\inv(0)\cap \BC^{*2}$ such that 
$w_2^{a}\bar w_2^{a'}-\la_1 w_1^{b}\bar w_1^{b'}=0$ for example.
Then we have
\begin{eqnarray*}\begin{split}
 df(\bfw,\bar\bfw) &=c\,\bfw^{\nu}\bar\bfw^\mu\,\prod_{j=2}^k (w_2^a\bar
 w_2^{a'}-\la_j \,{w_1}^b\bar w_1^{b'})\times\left(
-b\,\la_1\, w_1^{b-1}\bar w_1^{b'},aw_2^{a-1}\bar w_2^{a'}\right)\\
\bar df(\bfw,\bar\bfw) &=c\,\bfw^{\nu}\bar\bfw^\mu\,\prod_{j=2}^k (w_2^a\bar
 w_2^{a'}-\la_j\, {w_1}^b\bar w_1^{b'})\times\left(
-b'\,\la_1\, w_1^{b}\bar w_1^{b'-1},a'w_2^{a}\bar w_2^{a'-1}\right)
\end{split}\end{eqnarray*}
Suppose that $\overline{df}(\bfw,\bar\bfw)=u\,\bar df(\bfw,\bar\bfw)$
for some $u$ with $|u|=1$. This implies that 
$ b=b',\, a=a'$.
This does not happen as we have assumed that $a\ne a',\,b\ne b'$.
\end{proof}
\begin{Example}{\rm 
Let $f(\bfz,\bar\bfz)=z_1^5+\bar z_1 z_2 (z_1^2-\bar z_2^2)+ \bar
z_2^5$.
Then $\Ga(f)$ has three 1-faces and the corresponding face functions are
\[
 z_1^2\bar z_1(z_1^3\bar z_1^{-1}+ z_2),\,\,\bar z_1 z_2 (z_1^2-\bar z_2^2),\,\,
 z_2\bar z_2^2(-\bar z_1+ z_2^{-1}\bar z_2^3).
\]
Thus $f$ has a good Newton polar boundary.}
\end{Example}
\subsubsection{Good binomial polar weighted polynomial}
A polynomial $f(\bfz,\bar\bfz)=z_2^{a}\bar z_2^{a'}-\la z_1^{b}\bar
z_1^{b'}$
with $a\ne a',\,b\ne b'$, $\la\ne 0$ and $\gcd(a,a',b,b')=1$
 is called an  {\em irreducible binomial polar weighted homogeneous polynomial}.
It is irreducible as a mixed polynomial.
By Lemma \ref{polar-factor}, this is a basic polar weighted polynomial
for our purpose.
Put $c_1=b-b'$ and $c_2=a-a'$. Then the associated Laurent polynomial
in the sense of \cite{OkaPolar} is
\[
 g(z_1,z_2)=z_2^{c_2}-\la z_1^{c_1}.
\]
Let $C=\{f=0\}$ and $C'=\{g=0\}$. Note that $c_1,c_2\ne 0$ by the
polar weightedness. 
\begin{Lemma}\label{link-number-binomial} 
We have the equality:
\[
 \lkn^*(C,O)=\gcd(c_1,c_2)=\sharp(C')
\]
where $\sharp(C')$ is the number if irreducible components of $C'$.
\end{Lemma}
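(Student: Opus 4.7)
The plan is to describe $C \cap S_\eps^3$ explicitly in polar coordinates and reduce the problem to counting connected components of an affine subgroup of the torus $T^2$. Write $z_j = r_j \exp(i\theta_j)$ and $\lambda = |\lambda|\exp(i\alpha)$. A direct computation gives
\[
 z_2^a\bar z_2^{a'} = r_2^{a+a'}\exp(i c_2\theta_2),\quad z_1^b\bar z_1^{b'} = r_1^{b+b'}\exp(i c_1\theta_1),
\]
so the equation $f(\bfz,\bar\bfz)=0$ is equivalent to the two real conditions
\[
 r_2^{a+a'} = |\lambda|\, r_1^{b+b'},\qquad c_2\theta_2 - c_1\theta_1 \equiv \alpha \pmod{2\pi}.
\]
Note also, as a preliminary remark, that $z_1=0$ forces $z_2=0$ (since $a\ne a'$ implies at least one of $a,a'$ is positive, so $z_2^a\bar z_2^{a'}=0$ gives $z_2=0$) and vice versa; therefore $C\cap S_\eps^3 \subset \BC^{*2}$ and $\lkn^*(C,O) = \lkn(C,O)$.

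Intersecting with $S_\eps^3$ adds the equation $r_1^2+r_2^2=\eps^2$. Combined with $r_2^{a+a'}=|\lambda|\,r_1^{b+b'}$, this determines the positive pair $(r_1,r_2)$ uniquely (the modulus equation defines a smooth monotone arc in the first quadrant that meets the sphere at a single point). Consequently $C\cap S_\eps^3$ is contained in a single torus $T^2 = \{|z_1|=r_1,\, |z_2|=r_2\}$, and it is cut out in $T^2$ by the single linear congruence
\[
 L := \bigl\{(\theta_1,\theta_2)\in (\BR/2\pi\BZ)^2 \,\big|\, c_1\theta_1-c_2\theta_2 \equiv -\alpha\pmod{2\pi}\bigr\}.
\]
So the problem reduces to counting connected components of $L$.

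Lifting to $\BR^2$, the set $L$ is the image of the family of parallel lines $\{c_1\theta_1-c_2\theta_2 = 2\pi k - \alpha\}_{k\in\BZ}$, all with direction vector $(c_2,c_1)$. Since $(c_1,c_2)\ne(0,0)$, $L$ is a disjoint union of smooth circles in $T^2$. Two lines labeled by $k$ and $k'$ project to the same circle in $T^2$ iff they differ by a lattice translation $(2\pi m, 2\pi n)$, which shifts $k$ by $c_1 m - c_2 n$. The set of such shifts is $\gcd(c_1,c_2)\BZ$, so the circles are indexed by $\BZ/\gcd(c_1,c_2)\BZ$. Hence $L$ has exactly $d:=\gcd(c_1,c_2)$ components, giving $\lkn^*(C,O)=d$.

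For the second equality, write $c_1=dc_1'$, $c_2=dc_2'$ with $\gcd(c_1',c_2')=1$, and let $\mu_0,\dots,\mu_{d-1}$ be the $d$-th roots of $\lambda$. Then
\[
 g(z_1,z_2) = z_2^{c_2}-\lambda z_1^{c_1} = \prod_{k=0}^{d-1}\bigl(z_2^{c_2'}-\mu_k z_1^{c_1'}\bigr),
\]
and each factor is irreducible because $\gcd(c_1',c_2')=1$ (standard fact: $z_2^p - c z_1^q$ is irreducible in $\BC[z_1,z_2]$ whenever $\gcd(p,q)=1$). Therefore $\sharp(C')=d=\gcd(c_1,c_2)$, completing the chain of equalities. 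The only step requiring any care is the component count on $T^2$; everything else is formal manipulation of polar coordinates.
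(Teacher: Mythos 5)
Your proof is correct and follows essentially the same route as the paper: pass to polar coordinates, observe that after WLOG (or after fixing the modulus), the zero set is cut out in $T^2=\{|z_1|=r_1,|z_2|=r_2\}$ by the single congruence $c_2\theta_2\equiv c_1\theta_1+\alpha \pmod{2\pi}$, and count components of that sublocus. The only difference is cosmetic — the paper counts components via an explicit parametrization $\theta_2=\phi_k(\theta_1)$ and identifies $C_k=C_{k_0}$ when $k\equiv k_0\pmod{c_0}$, whereas you count lattice-translation classes of the lifted lines in $\BR^2$, which is a cleaner version of the same bookkeeping; you also spell out the factorization proving $\sharp(C')=\gcd(c_1,c_2)$, a step the paper dispatches with "it is easy to see."
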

\begin{proof} It is easy to see that the number of irreducible
 components
of $C'$ in $\BC^{*2}$ is
$\gcd(c_1,c_2)$. We know that $C\cap \BC^{*2}$ and $C'\cap \BC^{*2}$
are homeomorphic by the same argument as in \cite{OkaPolar}.
We will show that $\lkn^*(C,0)=\gcd(c_1,c_2)$ without using this isomorphism.
We consider  components of $C$ in $\BC^{*2}$.
For this purpose, we use the polar modification.
So we put  $z_1=r_1\exp(\theta_1\,i )$ and $z_2=r_2\exp(\theta_2\,i )$.
Considering the conjugation  diffeomorphism, we may assume that
 $c_1,\,c_2 > 0$. 
For brevity we put $r_1=s_1^{c_2}$ and $\la=\rho^{c_2}\exp(\eta\,i)$
for some $s_1,\, \rho>0$.
Thus 
\begin{eqnarray*}\begin{split}
 f(\bfz,\bar\bfz)&=
r_2^{c_2}\exp(  c_2\,\theta_2\,i)
-\la\,
 r_1^{c_1}\exp( c_1\,\theta_1\, i)\\
&=r_2^{c_2}\exp(  c_2\,\theta_2\, i)-\rho^{c_2}(s_1^{c_1})^{c_2}\exp( ( c_1\,\theta_1+\eta)\,i)
\end{split}\end{eqnarray*}
Thus we have $r_2=\rho s_1^{c_1}$ and 
\[
  \exp(  c_2\,\theta_2\,i)-\exp( ( c_1\theta_1+\eta)\,i)=0.
\]
Put $c_0=\gcd(c_1,c_2)$ and write $c_i=c_0\, c_i'$ for $i=1,2$.
The above  equation is solved as follows.
\begin{eqnarray*}
 r_2=s_1^{c_1}\rho,\,\,
  c_2\,\theta_2\equiv c_1\theta_1+\eta\,\,\modulo \,\,2\pi.
 \end{eqnarray*}
 The last equality can be solved so that the component $C_j$ of $C$ is given as
 \begin{eqnarray*}
 C_j:= \{(s_1^{c_1}\rho\exp(\theta_1\,i ),s_1^{c_2c_1}
\exp(\theta_2\,i ))\,|\,\theta_2=\phi_k(\theta_1),
  \,\, 0\le \theta_1\le 2c_2'\pi\}\notag
\end{eqnarray*} 
where  $ \phi_k(\theta_1):={c_1'\theta_1}/{c_2'}-\eta/c_2+2k\pi/c_2$
 for $k=0,1,\dots, c_2-1$.
 For $k\ge c_0$, write $k=c_0k_1+k_0,\,0\le k_0<c_0$.
 Then $\phi_k(\theta_1)=\phi_{k_0}(\theta_1+2k_1\pi)$ and 
 $C_k=C_{k_0}$ as we have
\begin{eqnarray*}
\begin{split}
 C_k&=\{(r_1\exp(\theta_1\,i ),r_2\exp(\phi_k(\theta_1)\,i )\,|\,
0\le \theta_1\le 2c_2'\pi\}\\
&=\{(r_1\exp(\theta_1\,i ),r_2\exp(\phi_k(\theta_1)\,i ))\,|\,
2k_1\pi\le \theta_1\le 2c_2'\pi+2k_1\pi\}\\
 &=
 \{
 (r_1\exp((\theta_1+2k_0\pi)\,i ),r_2\exp(\phi_{k_0}(\theta_1+2k_0\pi)\,i )\,|\,0\le \theta_2\le 2c_2'\pi\}\\
 &=C_{k_0}.
 \end{split}
 \end{eqnarray*}
Thus 
we get  $\lkn^*(C,O)=c_0$. 
\end{proof}
\begin{Corollary} Let  $f_\De(\bfz,\bar\bfz)$ be  a good polar
 weighted polynomial which is factored as 
\[
 f_\De(\bfz,\bar\bfz)=c\,\bfz^{\nu}\bar\bfz^\mu\prod_{j=1}^k (z_2^a\bar
 z_2^{a'}-\la_j\, {z_1}^b\bar z_1^{b'})
\]
with $\gcd(a,a',b,b')=1,\,a\ne a',\,b\ne b'$ as in Lemma \ref{polar-factor} and let $C=f_\De\inv(0)$.
Then $\lkn^*(C)=k\gcd(a-a',b-b')$.
 \end{Corollary}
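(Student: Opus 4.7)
The plan is to reduce the corollary to Lemma \ref{link-number-binomial} applied to each factor, after verifying that the $k$ factors cut out disjoint subvarieties of $\BC^{*2}$ whose link components can be counted independently.

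First, I would observe that in $\BC^{*2}$ the monomial prefactor $c\,\bfz^{\nu}\bar\bfz^{\mu}$ is a unit, so
\[
C \cap \BC^{*2} \,=\, \bigcup_{j=1}^{k} C_j, \qquad C_j \,:=\, \{(z_1,z_2)\in \BC^{*2} \,|\, z_2^{a}\bar z_2^{a'} - \la_j\, z_1^{b}\bar z_1^{b'} = 0\}.
\]
Each $C_j$ is the zero set of an irreducible binomial polar weighted homogeneous polynomial in the sense of \S 6.2, since $\gcd(a,a',b,b')=1$ and $a\ne a',\,b\ne b'$. By Lemma \ref{link-number-binomial}, applied with $c_1 = b-b'$ and $c_2 = a-a'$, each $C_j$ has exactly $\gcd(a-a',\,b-b')$ link components at the origin.

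Next I would check that the $C_j$ are pairwise disjoint in $\BC^{*2}$. Indeed, if $(z_1,z_2)\in C_i\cap C_j$ with $i\ne j$, then $z_2^a\bar z_2^{a'} = \la_i z_1^b\bar z_1^{b'} = \la_j z_1^b\bar z_1^{b'}$; since $z_1\ne 0$ forces $z_1^b\bar z_1^{b'}\ne 0$, we get $\la_i = \la_j$, contradicting the assumption that the $\la_j$ are distinct (they are distinct because $f_\De$ is a good polar weighted polynomial, cf.\ the factorization in \S \ref{good polar}). Consequently the link components of $C \cap \BC^{*2}$ are exactly the disjoint union of the link components coming from each $C_j$, which yields
\[
\lkn^{*}(C,O) \,=\, \sum_{j=1}^{k}\lkn^{*}(C_j,O) \,=\, k\,\gcd(a-a',\,b-b').
\]

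The potentially delicate point — and what I would want to verify carefully — is that we are genuinely counting link components and not, for instance, over-counting because two $C_j$ might approach each other along the link $S_\eps^3$. But this is handled by the disjointness above: since $C_i \cap C_j = \emptyset$ in $\BC^{*2}$ for $i\ne j$ and the link $K_\eps = C\cap S_\eps^3$ of $C$ stays in $\BC^{*2}$ (any coordinate axis hit by $C$ is absorbed into the prefactor $\bfz^\nu\bar\bfz^\mu$, contributing to $\lkn$ but not to $\lkn^*$), the components are genuinely separate and Lemma \ref{link-number-binomial} applies term by term. No further computation is needed.
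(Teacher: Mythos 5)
Your proof is correct and takes the same approach the paper intends: the paper gives no explicit proof of this corollary, deriving it implicitly from Lemma \ref{link-number-binomial} applied factor by factor, and you have simply filled in the (genuinely worth-checking) details — that the monomial prefactor is a unit on $\BC^{*2}$, that the $C_j$ are pairwise disjoint there because the $\la_j$ are distinct, and that the closure of each $C_j$ meets $S_\eps^3$ away from the coordinate axes so the counts add.
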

 \subsubsection{ Newton pseudo conjugate weighted homogeneous function}
Assume that $f(\bfz,\bar\bfz)$ is a non-degenerate 
Newton pseudo conjugate weighted homogeneous function.
Then for any face $\De$, we can write
\[
 f_\De(\bfz,\bar\bfz)\,=\,M h(\bfz,\bar\bfz)
\]
where $M$ is a mixed monomial and 
 $h$ is a $J$-conjugate weighted homogeneous polynomial
for some $J\subset \{1,2\}$. Thus we can
factorize $h$ as
\[
\iota_J^* h(\bfz,\bar\bfz)=
c\,\prod_{j=1}^k(z_2^{p_1}-\la_j\, z_1^{p_2}),\quad c\ne 0
\]
with $\gcd(p_1,p_2)=1$. In this case, it is easy to see that
\[
 \lkn^*(f_\De\inv(0))=k.
\]
Thus  we obtain a similar formula:
\begin{Proposition}
Assume that $f(\bfz,\bar\bfz)$ is a non-degenerate  convenient
Newton pseudo conjugate weighted homogeneous function. Then
\[
 \lkn(f\inv(0))+1=\text{number of integral points on }\,\,\Ga(f).
\]
\end{Proposition}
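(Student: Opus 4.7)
The plan is to apply Theorem \ref{ln-formula} and then match the number of link components contributed by each face with the number of lattice steps along that face. I would begin by verifying that under the Newton pseudo conjugate hypothesis every vertex of $\Ga(f)$ is automatically simple. Let $M$ be a vertex of $\Ga(f)$ and choose $P\in N^{++}$ with $\De(P;f)=\{M\}$; write $f_P = M_0\cdot h$ with $M_0$ a mixed monomial and $h$ a $J(P)$-conjugate weighted homogeneous polynomial. Every monomial of $f_P$ has exponent vector $\nu+\mu = M$, and $\iota_{J(P)}$ preserves $\nu+\mu$, so every monomial of the holomorphic weighted homogeneous polynomial $\iota_{J(P)}^*h$ carries the same total exponent vector; hence $\iota_{J(P)}^*h$, and therefore $f_P$ itself, reduces to a single monomial up to a nonzero scalar, so $M$ is simple. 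Theorem \ref{ln-formula} then applies and gives
\[
\lkn(C,O) \;=\; \sum_{\De\in\cF}\lkn^*(f_\De\inv(0),O),\qquad C:=f\inv(0).
\]

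Next, for each $1$-face $\De$ of $\Ga(f)$ the discussion preceding the statement yields $f_\De = M_\De \cdot h_\De$ with $M_\De$ a mixed monomial and
\[
\iota_{J(\De)}^*h_\De \;=\; c\prod_{j=1}^{k_\De}(z_2^{p_1}-\la_j z_1^{p_2}),\qquad \gcd(p_1,p_2)=1,
\]
so that $\lkn^*(f_\De\inv(0)) = k_\De$. Therefore $\lkn(C,O) = \sum_{\De}k_\De$, and it remains to identify $k_\De+1$ with the number of integer points on $\De$. Expanding the product, the monomials of $f_\De$ have $\nu+\mu$-exponents $(a_0 + a\,p_2,\;b_0 + (k_\De-a)\,p_1)$, $a=0,\dots,k_\De$, where $(a_0,b_0)$ is the exponent vector of $M_\De$. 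These are $k_\De+1$ consecutive lattice points on the line $p_1x_1+p_2x_2=d(P;f)$, consecutive because $\gcd(p_1,p_2)=1$, and they span $\De$ from one endpoint to the other. Hence $\De$ carries exactly $k_\De+1$ lattice points.

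Finally, if $\Ga(f)$ has faces $\De_1,\dots,\De_r$ and each pair of adjacent faces shares exactly one endpoint, the total number of lattice points on $\Ga(f)$ is
\[
\sum_{j=1}^r(k_j+1) - (r-1) \;=\; 1 + \sum_{j=1}^r k_j \;=\; 1 + \lkn(C,O),
\]
which is the desired identity. The main obstacle I foresee is the simple-vertices reduction; once that is in hand, the rest is lattice bookkeeping, for the factorization recalled just before the statement already packages both the link-component count and the lattice geometry of each face in a compatible way.
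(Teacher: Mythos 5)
Your proof is correct and follows the route the paper intends: reduce via the face-wise link-component formula $\lkn(C,O)=\sum_{\De\in\cF}\lkn^*(f_\De\inv(0),O)$, compute each $\lkn^*(f_\De\inv(0))$ from the factorization of $\iota_J^*h$, and finish with the lattice bookkeeping. Your explicit verification that the Newton pseudo conjugate hypothesis forces every vertex of $\Ga(f)$ to be simple --- the hypothesis under which that formula is proved, and a point the paper's brief discussion leaves unremarked --- is the right way to close the one gap, and the argument (that $\iota_{J(P)}$ preserves $\nu+\mu$, so a weighted homogeneous holomorphic polynomial all of whose monomials share the same exponent vector must collapse to a single monomial, forcing $h$ and hence $f_P$ to be a monomial) is sound.
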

\subsubsection{Example of a radially weighted homogeneous polynomial
   with
a non-simple vertex }\label{non-simple}
The link number for a radially weighted homogeneous polynomial
 with a  non-simple vertex is more complicated,
as is seen by the next example.
\noindent
Consider the radially weighted homogeneous polynomial
$$f(\bfz,\bar\bfz)=z_1^3+c\,z_1\bar z_1^2-z_2^3$$
and put $C=f\inv(0)$.
Then $\Ga(f)$ consists of a single face with vertices $(3,0),\, (0,3)$.
It is easy to see that $f$ is non-degenerate if and only if $|c|\ne 1$.
The vertex (3,0) is not simple.
For $|c|< 1$, we have
\[
 z_2=z_1\omega^j \left(1+c \,\exp(-4\,\theta\,i )\right)^{1/3},\,\,j=0,1,2
\] where $\omega=\exp(2\,\pi \,i /3),\,z_1=r\exp(\theta\, i)$
and $\lkn(C,O)=3$. The function
$(1+c \,\exp(-4\,\theta \,i))^{1/3}$ is a well-defined 
single-valued function of $c,\, z_1$ with $|c|<1$ so that 
it takes value 1 for $c=0$.
Considering the family $f(\bfz,\bar\bfz,t)=z_1^3+\,c\,t\,z_1\bar
z_1^2-z_2^3$
for $0\le t\le 1$, we see that this curve is topologically the 
same as $z_1^3+z_2^3=0$.

Assume that $|c|>1$. Then
$(1+c \exp(-4\,\theta\, i))^{1/3}$ is not a single valued function
as a function of $0\le\theta\le 2\pi$.
However  we have a better expression. Put $z_1=r\exp(\theta\, i)$ and
$c=s\exp(\eta\, i)$.
\[
 z_2=s^{1/3}\,r\, \omega^j \exp(i\,\frac{-\theta+\eta}{3})
\left (1+\frac{\exp(4\,\theta\, i)}c\right
 )^{1/3},\,\,j=1,2,3
\]
where $0\le \theta\le 2\pi$. Note that $f\inv(0)\setminus\{O\}$
is a 3-sheeted covering over $\{z_1\ne 0\}$ and 
  three points over $\theta=0$ are
cyclically permuted by the 
monodromy $\theta:0\to 2\pi$. Thus this expression shows that 
$\lkn(C,O)=1$.  
It is also easy to see that this knot is topologically the same with
$z_1|z_1|^2-z_2^3=0$.
Thus we observe that {\em  the topology of a mixed singularities is not
a combinatorial invariant of $\Ga(f)$}.
\section{Milnor fibration for mixed polynomials with
  non-isolated singularities}
We consider a true strongly non-degenerate 
mixed polynomial $f(\bfz,\bar\bfz)$
which is not necessarily convenient.
Take a positive weight vector $P={}^t(p_1,\dots, p_j)\in N^+$ 
which is not strictly positive and we put 
\[
 I(P)=\{j\,|\,p_j=0 \},\qquad
J(P)=\{j\,|\,p_j\ne 0 \}.
\]
We consider the face function $f_P(\bfz,\bar\bfz)$ as a mixed polynomial
in variables $\{z_j|j\in J(P)\}$ and we consider the other variables
$\{z_i|i\in I(P)\}$ are fixed  non-zero complex numbers.
Thus it defines  a family of mixed polynomial functions parameterized by
$\bfz_{I(P)}=(z_i)_{i\in I(P)}$:
\[
 f_P: \BC^{J(P)}(\bfw_{I(P)})\to \BC,\quad
\bfz_{J(P)}\mapsto f_P(\bfz,\bar\bfz).
\]
Here
\[
 \BC^{*J(P)}({\bfw_{I(P)}})=\{\bfz\in \BC^{*n}\,|\,
\bfz_{I(P)}=\bfw_{I(P)}:\text{fixed}\}\cong\BC^{*J(P)}.
\] 
Thus we are considering $f_P$ as a family of mixed polynomials in
$\bfz_{J(P)}$ with coefficients in
$\BC\{\bfz_{I(P)},\bar\bfz_{I(P)}\}$.
If $d(P,f)=0$, then
 $f_P\in \BC\{\bfz_{I(P)},\bar\bfz_{I(P)}\}$.
\begin{Definition}
{\rm We say that $f$ is {\em super strongly non-degenerate}
if the following condition (SSND) is satisfied.

\noindent
(SSND): for any subset $P\in N^+$, either 

\begin{itemize}
\item[(a)] $d(P,f)=0$ i.e., 
$f_P\in \BC\{\bfz_{I(P)},\bar\bfz_{I(P)}\}$  or

\item[(b)] 
$d(P,f)>0$ and 
 $f_P:\BC^{*J(P)}({\bfw_{I(P)}})\to \BC^*$ has no critical points
for any  $\bfw_{I(P)}\in \BC^{*I(P)}$.
\end{itemize}}\end{Definition}
The following is an immediate consequence of the definition.
\begin{Proposition} 
\begin{enumerate}
\item
If $f(\bfz,\bar\bfz)$ is a  convenient strongly non-degenerate
mixed function, then
 $f(\bfz,\bar\bfz)$ is super strongly non-degenerate.
\item Assume that $f(\bfz,\bar\bfz)$ is super strongly non-degenerate
and  $I\in \cN\cV(f)$. Then $f^I$ is also super strongly non-degenerate.
\end{enumerate}
\end{Proposition}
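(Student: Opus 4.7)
The plan is to establish the two parts of the Proposition separately, both by a direct reduction to the definitions rather than by any new analytic argument.

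For part~(1), I would observe that convenience of $f$ reduces (SSND) on every non–strictly–positive $P\in N^+$ to case~(a) automatically. Indeed, if $I(P)\neq\emptyset$, pick any $i\in I(P)$; since $f$ is convenient it contains a monomial $z_i^{a_i}\bar z_i^{b_i}$, and this monomial has $\rdeg_P = p_i(a_i+b_i)=0$ because $p_i=0$. Since every monomial of $f$ has $\rdeg_P\ge 0$, this forces $d(P,f)=0$, and then $f_P$ consists only of monomials supported on $I(P)$, i.e.\ $f_P\in\BC\{\bfz_{I(P)},\bar\bfz_{I(P)}\}$. For strictly positive $P$ we have $I(P)=\emptyset$ and $J(P)=\{1,\dots,n\}$, so condition~(b) reduces to $f_P:\BC^{*n}\to\BC^*$ having no critical points, which is an immediate weakening of strong non-degeneracy at $P$. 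This disposes of both cases and proves part~(1).

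For part~(2), given a positive weight vector $P_I$ on $\BC^I$, I would extend it to a weight $P\in N^+$ on $\BC^n$ by setting $p_j=(P_I)_j$ for $j\in I$ and $p_j=\nu$ for $j\notin I$, with $\nu$ a positive integer larger than $d(P_I,f^I)$. The hypothesis $I\in\cN\cV(f)$ is used to guarantee that $f$ contains monomials with no $I^c$-variables, and any such monomial has the same $\rdeg_P$ as $\rdeg_{P_I}$ of its $I$-restriction; on the other hand a monomial involving an $I^c$-variable contributes at least $\nu>d(P_I,f^I)$ to $\rdeg_P$, so cannot belong to $f_P$. This forces the identification
\[
 f_P=f^I_{P_I}
\]
as mixed polynomials on $\BC^n$, together with $I(P)=I(P_I)$, $J(P)=J(P_I)\sqcup I^c$, and $d(P,f)=d(P_I,f^I)$.

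Given this identification, case~(a) of (SSND) for $f$ at $P$ is identical to case~(a) for $f^I$ at $P_I$. For case~(b), observe that $f_P$ is independent of the variables $\bfz_{I^c}$, so $\partial f_P/\partial z_j$ and $\partial f_P/\partial\bar z_j$ vanish identically for $j\in I^c$; by the critical–point criterion (Proposition~\ref{critical point}) a critical point of $f_P$ on the slice $\BC^{*J(P)}(\bfw_{I(P)})$ is the same thing as a critical point of $f^I_{P_I}(\,\cdot\,,\bfw_{I(P_I)})$ on $\BC^{*J(P_I)}$, extended arbitrarily in the $I^c$-directions. Consequently the absence of critical points for $f_P$ on every slice, granted by (SSND) for $f$ at $P$, is equivalent to the absence of critical points for $f^I_{P_I}(\,\cdot\,,\bfw_{I(P_I)})$ on the corresponding slice, which is (SSND) case~(b) for $f^I$ at $P_I$.

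The only mildly delicate step is the choice of $\nu$ in part~(2) and the essential use of $I\in\cN\cV(f)$; without this hypothesis the minimisation of $\rdeg_P$ would never realise the value $d(P_I,f^I)$ and the identification $f_P=f^I_{P_I}$ would break down. Everything else is direct bookkeeping with the radial degree and the critical-point criterion, and no further analytic input is required.
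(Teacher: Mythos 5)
Your proof is correct and follows the same route the paper intends. The paper states part~(1) is an ``immediate consequence of the definition'' without spelling it out, and you supply exactly the right bookkeeping: convenience gives, for each $i$, a monomial in $z_i,\bar z_i$ only, which has $\rdeg_P=0$ whenever $p_i=0$, forcing $d(P,f)=0$ and hence (SSND)(a) for every non–strictly–positive $P$; and for strictly positive $P$ the set $I(P)$ is empty, so (SSND)(b) is a literal weakening of strong non-degeneracy. For part~(2) the paper simply says ``exact same way as the proof of Proposition~\ref{restriction},'' and your construction --- extending $P_I$ by a large weight $\nu$ on $I^c$ so that $f_P=f^I_{P_I}$, then noting that the $I^c$-components of $df_P$ and $\bar d f_P$ vanish so the critical-point criterion of Proposition~\ref{critical point} reduces to the one on $\BC^{*J(P_I)}$ --- is precisely that argument, carried over with the additional bookkeeping $I(P)=I(P_I)$, $J(P)=J(P_I)\sqcup I^c$, $d(P,f)=d(P_I,f^I)$ needed to track (SSND) rather than mere non-degeneracy. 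Nothing to fix.
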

The assertion (2)  can be proved  in the
exact same way as the proof of Proposition \ref{restriction}.
\qed

The following key lemma
 is a mixed polynomial version of Lemma (2.1.4) of Hamm-L\^e \cite{Hamm-Le1}.
\begin{Lemma}\label{non-isolated-Milnor}
Assume that $f(\bfz,\bar\bfz)$ is a true super strongly non-degenerate
mixed function and consider the mixed hypersurface $V$ and its open subset
$V^{\sharp}$. Take a positive number $r_0$
so that $V^{\sharp}\cap B_{r_0}$ is mixed non-singular and any sphere
$S_r$ intersects transversely with $V^{\sharp}$ for any $0<r\le r_0$.
Then for any fixed $0<r\le r_0$, there exists a sufficiently small positive number
$\de$  such that for any $\eta\in \BC$ with $0<|\eta|\le \de$,
the fiber $f\inv(\eta)\cap B_{r_0}$ is smooth and any sphere
$S_s$ intersects transversely with $f\inv(\eta)$ for any $r\le s\le r_0$
and $\eta$ with $0<|\eta|\le \de$.
\end{Lemma}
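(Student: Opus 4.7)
The plan is to adapt the proof of Lemma \ref{MilnorFibering2} to the non-convenient setting, with super strong non-degeneracy (SSND) handling the essential new difficulty that the limit of a critical path supplied by the Curve Selection Lemma may lie on a proper coordinate subspace rather than at the origin. Both conclusions follow by parallel contradiction arguments; I sketch the smoothness of $f\inv(\eta) \cap B_{r_0}$ in detail and indicate the adjustments for the sphere-transversality statement.

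Assume the smoothness assertion fails for every $\de > 0$. Then there is a sequence of mixed critical points $\bfz_n \in B_{r_0}$ of $f$ with $0 < |f(\bfz_n)| \to 0$. Passing to a convergent subsequence and applying the Curve Selection Lemma to the semianalytic set of such critical points, we obtain a real analytic path $\bfz(t)$, $0 \le t \le 1$, with $\bfz(0) \in V \cap B_{r_0}$ and, for $0 < t \le 1$, each $\bfz(t)$ a critical point of $f$ with $f(\bfz(t)) \ne 0$. Put $I = \{j : z_j(t) \not\equiv 0\}$; since $f(\bfz(t)) = f^I(\bfz(t), \bar\bfz(t)) \not\equiv 0$, we have $I \in \cN\cV(f)$. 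Split $I = J \sqcup K$ with $J = \{j \in I : z_j(0) \ne 0\}$, so $\bfz(0) \in \BC^{*J}$. If $J \in \cN\cV(f)$, then $\bfz(0) \in V^\sharp$ and by continuity $\bfz(0)$ is a mixed critical point of $f$, contradicting the non-singularity of $V^\sharp \cap B_{r_0}$. Hence $f^J \equiv 0$, so every monomial of $f^I$ involves at least one variable in $K$.

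Expand $z_j(t) = b_j t^{a_j} + \cdots$ with $b_j \ne 0$, where $a_j = 0$ and $b_j = z_j(0) =: w_j$ for $j \in J$, and $a_j > 0$ for $j \in K$; set $P = (a_j)_{j \in I} \in N^+$. Since $f^J \equiv 0$, we have $d(P; f^I) > 0$, so case (b) of SSND applied to $f^I$ (which inherits SSND from $f$ by the same proof as Proposition \ref{restriction}) gives that $f^I_P : \BC^{*K}(\bfw_J) \to \BC$ has no critical points, where $\bfw_J := \bfz_J(0) \in \BC^{*J}$. Matching the leading $t$-orders on both sides of the critical relation
\[
\overline{df^I}(\bfz(t), \bar\bfz(t)) = \la(t)\, \bar d f^I(\bfz(t), \bar\bfz(t)), \qquad |\la(t)| = 1,
\]
as in the proofs of Theorem \ref{isolatedness} and Lemma \ref{MilnorFibering2} (the equality of $P$-weights on $z_j$ and $\bar z_j$ ensures that no terms drop out), we obtain
\[
\overline{df^I_P}(\bfb_I, \bar\bfb_I) = \la_0\, \bar d f^I_P(\bfb_I, \bar\bfb_I), \qquad \la_0 \in S^1,
\]
at the point $\bfb_I = (b_j)_{j \in I}$ with $\bfb_J = \bfw_J$ and $\bfb_K \in \BC^{*K}$. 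By Proposition \ref{critical point}, $\bfb_I$ is a critical point of $f^I_P : \BC^{*I} \to \BC$; restricting to the slice $\BC^{*K}(\bfw_J)$ on which $\bfb_J$ is fixed at $\bfw_J$, $\bfb_K$ is then a critical point of $f^I_P(\,\cdot\,, \bfw_J)$, contradicting SSND.

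The sphere-transversality statement is handled analogously: a would-be failure yields, via Curve Selection, a path $\bfz(t)$ with $\bfz(t) \in S_{s(t)} \cap f\inv(\eta(t))$, $\eta(t) \to 0$, $s(t) \in [r, r_0]$, and non-transversality encoded as a linear dependence of $\bfz(t)$ with the real and imaginary gradients of $f$ (as in the proof of Lemma \ref{transversality}); the same stratification of $\bfz(0)$ combined with SSND produces a critical point on a slice $\BC^{*K}(\bfw_J)$ and a contradiction. The main obstacle throughout is precisely the non-convenience: the weight $P$ emerging from the path is only positive and not strictly positive, so ordinary strong non-degeneracy would not suffice; the fiberwise no-critical-point conclusion of SSND is exactly what enables the Newton-theoretic argument of Lemma \ref{MilnorFibering2} to go through.
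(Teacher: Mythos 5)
Your proposal takes a genuinely different route from the paper's, so a comparison is worthwhile. The paper runs a single Curve Selection argument driven by the failure of transversality between the fiber $f\inv(\alpha(t))$ and the sphere of radius $\|\bfz(t)\|$, encoded as $\bfz(t)=\la(t)\bfv_1(\bfz,\bar\bfz)+\mu(t)\bfv_2(\bfz,\bar\bfz)$ with the vectors $\bfv_1,\bfv_2$ of Observation \ref{tangent-space}; this is exactly the setup of Lemma \ref{key lemma}, so the paper reuses Assertion \ref{transversality} from that proof to extract the critical-point relation for $f_P^I$ on the slice $\BC^{*J(P)}(\bfa_{I(P)})$. You instead split the two conclusions, proving smoothness directly from the critical-point condition $\overline{df}=\la(t)\,\bar d f$ of Proposition \ref{critical point}, and relegating sphere-transversality to an "analogous" argument. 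The $I=J\sqcup K$ splitting and the observation $f^J\equiv 0$ (hence $d(P;f^I)>0$) are the same as the paper's $I(P)/J(P)$ bookkeeping and correctly identify where SSND is needed.

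Two things need attention. First, the contradiction you draw from SSND requires $f_P^I(\bfb,\bar\bfb)\ne 0$: condition (b) of SSND only forbids critical points of $f_P:\BC^{*J(P)}(\bfw_{I(P)})\to\BC^*$, i.e.\ critical points with nonzero value, and since $P$ is not strictly positive when $\bfz(0)\ne O$, strong non-degeneracy does not close the case $f_P^I(\bfb)=0$. The paper asserts $f_P^I(\bfa,\bar\bfa)\ne 0$ at the corresponding step; in the paper's setup this is supplied by the $|\be|>|\ga|$ Schwartz-inequality estimate in the proof of Lemma \ref{key lemma}, which forces the order $\ell$ of $f$ along the curve to equal $d$. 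Your critical-point expansion alone does not visibly produce that estimate, so you should either import that computation or supply another argument that $\ell=d$. Second, your citation for the transversality half is off: the proof of Lemma \ref{transversality} (Lemma 1) uses radial weighted homogeneity and does not apply to a general mixed function; the relevant argument is the $\bfv_1,\bfv_2$ machinery of Lemma \ref{key lemma}/Assertion \ref{transversality}, which is precisely what the paper deploys. Once you bring that machinery in, the two conclusions are most economically handled together, as the paper does; the separation is workable but re-derives material you end up needing anyway.
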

\begin{proof}
Assume that the assertion is not true.
 Using the  Curve Selection Lemma (\cite{Milnor,Hamm1}), we can find a
real analytic curve
$\bfz(t),\,0\le t\le 1$ such that 
\[
r\le \|\bfz(t)\|\le r_0,\,\, f(\bfz(t),\bar\bfz(t))\not\equiv 0,
\,\,  \bfz(0)\in f\inv(0)
\]
and the fiber $f\inv(\al(t))$ and the sphere of radius 
$\|\bfz(t)\|$ is not transverse at $\bfz(t)$ where
 $\al(t)=f(\bfz(t),\bar\bfz(t))$.
Recall that we have defined two special vectors:
\begin{eqnarray*}
\begin{split}
\bfv_1(\bfz,\bar\bfz)&= \overline{d\log f}(\bfz,\bar\bfz)+\bar d\log
 f(\bfz,\bar\bfz)\\
 \bfv_2(\bfz,\bar\bfz)&=i(\overline{d\log f}(\bfz,\bar\bfz)-\bar d\log
 f(\bfz,\bar\bfz))
 \end{split}\end{eqnarray*}
Recall that  the tangent space of the fiber $T_{\bfz}f\inv(\eta)$
is  spanned by the vectors which  are perpendicular to 
$\bfv_1(\bfz,\bar\bfz)$ and $\bfv_2(\bfz,\bar\bfz)$. Thus  under the assumption
there exist real-valued  analytic functions
$\la(t),\mu(t)$ so that 
\[
 \bfz(t)=\la(t)\,\bfv_1(\bfz,\bar\bfz))+\mu(t)\,
\bfv_2(\bfz,\bar\bfz)),
\]
as in the proof of Lemma \ref{key lemma}. 
Let $I=\{j\,|\,z_j(t)\not \equiv 0\}$.
Then $I\in \cN\cV(f)$. We may assume that $I=\{1,\dots,
 m\}$
and we do the  same argument  in $\BC^I$ as in the proof of Lemma
 \ref{key lemma}. 
We consider the Taylor expansions of $\bfz(t),\,f(\bfz(t),\bar\bfz(t))$ 
and the Laurent expansions of $\la(t)$ and $\mu(t)$:
\begin{eqnarray*}
\begin{split}
&z_j(t)=a_jt^{p_j}+\text{(higher terms)},\quad a_j\in \BC^*,\,
 p_j\ge 0, \,1\le j\le m,\\
&f(\bfz(t),\bar\bfz(t))=\al\,t^{\ell}+\text{(higher terms)},\,\al\in
 \BC^*,\,
\ell\in \BN\\
&\la(t)=\la_0 t^{\nu_1}+\text{(higher terms)},\,\,\la_0\in \BR^*,\,
\nu_1\in \BZ\\
&\mu(t)=\mu_0 t^{\nu_2}+\text{(higher terms)},\,\,\mu_0\in \BR.
\end{split}
\end{eqnarray*}
Here we understand $\nu_1=\infty$ or $\nu_2=\infty$ if $\la(t)\equiv 0$
or
$\mu(t)\equiv 0$ respectively.
We put $\nu_0=\min\{\nu_1,\mu_1\}$
and we write for simplicity as follows.
\begin{eqnarray*}
\begin{split} 
&\la(t)=\what\la_0t^{\nu_0}+\text{(higher terms)},\,\,\la_0\in \BR^*,\,
\nu_1\in \BZ\\
&\mu(t)=\what\mu_0 t^{\nu_0}+\text{(higher terms)},\,\,m_0\in \BR\\
&\text{where}\,\,
\what\la_0=\begin{cases}\la_0\,\,\quad &\text{if}\,\,\nu_1=\nu_0\\
	   0\,\,\quad&\text{if}\,\,\nu_1>\nu_0\end{cases},\\
&\qquad \quad \what\mu_0=\begin{cases}\mu_0\,\,\quad &
\text{if}\,\,\nu_2=\nu_0\\
	   0\,\,\quad &\text{if}\,\,\nu_2>\nu_0\end{cases}\\
\end{split}
\end{eqnarray*}
Note that $\nu_0<\infty$ and  $(\what\la_0,\what\mu_0)\ne (0,0)$ anyway.
Consider the equality:
\begin{eqnarray*}\label{eqeqeq1}\begin{split}
 z_j(t)=&\la(t)\,(\frac{\overline{\partial f}}{\partial z_j}/\bar f+
\frac{\partial f}{\partial \bar z_j}/f)(\bfz(t),\bar\bfz(t))\\
&\quad +\mu(t)\,
i\,(\frac{\overline{\partial f}}{\partial z_j}/\bar f-
\frac{\partial f}{\partial \bar z_j}/f)(\bfz(t),\bar\bfz(t)) ,\,j=1,\dots, m.
\end{split}\end{eqnarray*}
Put $P=(p_1,\dots, p_m)$, 
$I(P)=\{j\,|\,p_j=0\}$,
$J(P)=\{j\,|\,p_j\ne 0\}$, $\bfa=(a_1,\dots, a_m)$ and $d=d(P,f^I)$.
Note that $\bfz(0)\in \BC^{*I(P)}$.
Assume that $ I(P)\in \cN\cV(f)$. Then $\bfz(0)\in V^{\sharp}$
and it is a smooth point. Thus by the assumption, the sphere
$S_{\|\bfz(0)\|}$ intersects transversely with $V^{\sharp}$. Thus the
 same is true for $S_{\|\bfz(t)\|}$ and $f\inv(\al(t))$
for any sufficiently small $t$ which 
is a contradiction to the assumption.
Thus we may assume that  $\bfz(0)\in V\setminus V^{\sharp}$ and therefore
$I(P)\notin \cN\cV(f^I)$ ($\iff$
 $I(P)\cup I\notin \cN\cV(f)$). 
Then we observe that
\begin{eqnarray*}
\begin{split}
&\frac{\overline{\partial f}}{\partial
 z_j}(\bfz(t),\bar\bfz(t))/\bar f(\bfz(t),\bar\bfz(t))=\left(\frac{\overline{\partial
 f_P^I}}{\partial z_j}(\bfa,\bar\bfa)/\bar\al\right) t^{d-p_j-\ell}+\text{(higher
 terms)},\\
&\frac{\partial f}{\partial \bar z_j}
(\bfz(t),\bar\bfz(t)))/f(\bfz(t),\bar\bfz(t))=
\left(\frac{\partial
 f_P^I}{\partial \bar z_j}(\bfa,\bar\bfa)/\al \right)t^{d-p_j-\ell}+\text{(higher
 terms)}
\end{split}
\end{eqnarray*}
By Assertion \ref{transversality}, we have
\begin{eqnarray*}\begin{split}
&\what\la_0\,\left(\frac{\overline{\partial f_P^I}}{\partial
 z_j}(\bfa,\bar\bfa)/\bar\al+
\frac{\partial f_P^I}{\partial \bar z_j}(\bfa,\bar\bfa)/\al\right)+
\what\mu_0\, i\,\left(\frac{\overline{\partial f_P^I}}{\partial
 z_j}(\bfa,\bar\bfa)/\bar\al-
\frac{\partial f_P^I}{\partial \bar z_j}(\bfa,\bar\bfa)/\al\right)\\
&\qquad =
0,\quad j\in J(P) .
\end{split}
\end{eqnarray*}
This implies that $\bfa_{J(P)}$ is a critical point of the 
mixed polynomial
$f_P^I:\BC^{*J(P)}(\bfa_{I(P)})\to \BC$ and $f_P^I(\bfa,\bar\bfa)\ne 0$
with $\bfz_{I(P)}=\bfa_{I(P)}$ fixed. This is a contradiction to
the super strong  non-degeneracy of $f^I$.
\end{proof}
\subsection{Milnor fibration for non-isolated singularities}
Now, by Lemma \ref{non-isolated-Milnor} and Lemma
 \ref{ExistenceRadius1},
 we have the following non-isolated version of the Milnor fibration.
Note that $\varphi=f/|f|: \,S_r^{2n-1}\setminus K_r\to S^1$
is a fibration using a 
$|f|$-level preserving vector field near $K_r$ by
the transversality of $f\inv(\eta)$ and $S_r$ for $\eta,\,|\eta|\ll \de$.
\begin{Theorem}\label{NIM}
Assume that $f(\bfz,\bar\bfz)$ is a super strongly non-degenerate mixed
 function. Then there exists a stable radius $r_0>0$ so that 
for any $r$ with $0<r\le r_0$  and  a sufficiently small number $\de$ 
(compared with $r$), we have two equivalent fibrations:
\[
 \begin{split}
&f:\quad \partial E(r,\de)^*\to S_{\de}^1\\
&\varphi=f/|f|: \,S_r^{2n-1}\setminus K_r\to S^1
\end{split}
\]
where $K_r=f\inv(0)\cap S_r^{2n-1}$.
Moreover, if $f$ is a polar weighted polynomial, the global fibration
$ f:f\inv(S_\de^1)\to S_\de^1$
is also equivalent to the above fibration.
\end{Theorem}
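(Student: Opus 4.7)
The strategy is to imitate the four theorems from the isolated case (Theorem~\ref{Milnor2}, Theorem~\ref{MilnorFibering1}, Theorem~\ref{equivalence}, Theorem~\ref{Polar Milnor}) line by line, with Lemma~\ref{non-isolated-Milnor} replacing Lemma~\ref{MilnorFibering2} wherever transversality between fibers and spheres was used. Lemma~\ref{ExistenceRadius1} is already stated without a convenience hypothesis, so it remains available as is. Thus I would first fix $r$ with $0<r\le r_0$, then apply Lemma~\ref{non-isolated-Milnor} to obtain $\delta>0$ so that $f\inv(\eta)$ meets every sphere $S_s$ with $r\le s\le r_0$ transversely and is itself smooth inside $B_{r_0}$ for all $\eta$ with $0<|\eta|\le\delta$. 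Ehresmann's theorem applied to the proper submersion $f:\partial E(r,\delta)^*\to S^1_\delta$ (note that the transversality at the boundary component $\partial B_r$ is exactly what is needed to keep the fibration proper after trimming) yields the second fibration.

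For the first fibration $\varphi=f/|f|:S_r^{2n-1}\setminus K_r\to S^1$, Lemma~\ref{ExistenceRadius1} guarantees that $\bfz$ and $i(\overline{d\log f}-\bar d\log f)(\bfz,\bar\bfz)$ are linearly independent over $\BR$ on all of $S_r\setminus K_r$. Near $K_r$, where $|f|$ is small, I combine this with Lemma~\ref{non-isolated-Milnor} to construct a horizontal vector field $\cV$ on $S_r\setminus K_r$ satisfying $\Re(\cV,i(\overline{d\log f}-\bar d\log f))=1$, $\Re(\cV,\bfz)=0$, and additionally (in the tube $0<|f|\le\delta$) $\Re(\cV,\overline{d\log f}+\bar d\log f)=0$, so that the integral curves stay on a level of $|f|$ and cannot run off into $K_r$. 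This is exactly the argument of Theorem~\ref{MilnorFibering1}, and it needs no convenience.

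The equivalence of the two fibrations reduces to verifying Lemma~\ref{key lemma} in the super strongly non-degenerate (possibly non-convenient) setting. The Curve Selection Lemma produces an analytic arc $\bfz(t)$ with $\bfz(0)=O$ along which $\bfz(t)$ is a real linear combination of $\bfv_1$ and $\bfv_2$; restricting to $I=\{j\mid z_j(t)\not\equiv 0\}\in \cN\cV(f)$ and running the Taylor-expansion analysis, one obtains at leading order either a critical point of $f^I_P$ at the restricted arc or the sign contradiction on $\la_0$ arising from the Schwartz inequality $|\beta|>|\gamma|$. The required non-degeneracy of $f^I_P$ on the subtorus $\BC^{*J(P)}(\bfw_{I(P)})$ is precisely what SSND buys in the non-convenient setting (via the stability statement of Proposition~\ref{restriction} applied to $f^I$, which is again super strongly non-degenerate). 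Once Lemma~\ref{key lemma} is in hand, the vector field construction on $E^c(r,\delta)=\{\delta\le|f|\}\cap B_r$ of Theorem~\ref{equivalence}, satisfying the three conditions in~(\ref{eq4}), carries over verbatim and gives the topological isomorphism $\psi:\partial E(r,\delta)\to S_r^{2n-1}\setminus N_r$.

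For the last clause, if $f$ is polar weighted with strictly positive radial weights, the argument of Theorem~\ref{Polar Milnor} applies unchanged: the radial $\BR^*$-action moves each point of $f\inv(S^1_\delta)$ to a unique point on $S_r^{2n-1}\setminus K_r$, intertwining $f$ with $\varphi$ up to the rescaling $\eta\mapsto\eta/\delta$. The main obstacle I anticipate is the super strong non-degeneracy step inside the Curve Selection argument for Lemma~\ref{key lemma}: one must check carefully that the weight vector $P$ extracted from the orders $p_j$ of the arc is only required to lie in $N^+$ (not $N^{++}$), and that the relevant face function $f^I_P$ is treated as a family of mixed polynomials in $\bfz_{J(P)}$ with the frozen variables $\bfz_{I(P)}$ specialised to a point of $\BC^{*I(P)}$ — which is exactly the setting covered by clause (b) of (SSND).
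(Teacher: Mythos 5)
Your proof follows essentially the same route the paper intends: Lemma~\ref{non-isolated-Milnor} and the Ehresmann theorem for the second description, Lemma~\ref{ExistenceRadius1} together with a $|f|$-level-preserving vector field near $K_r$ for the first, the vector-field argument of Theorem~\ref{equivalence} (resting on the dichotomy of Lemma~\ref{key lemma}) for the equivalence, and Theorem~\ref{Polar Milnor} for the polar weighted case. The paper itself gives only a two-line indication, so the detail you supply is appropriate.

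One correction to the worry you flag at the end. In the Curve Selection argument underlying Lemma~\ref{key lemma}, the arc $\bfz(t)$ satisfies $\bfz(0)=O$ by construction, so for every $j\in I=\{j\mid z_j(t)\not\equiv 0\}$ the leading exponent $p_j$ is a \emph{positive} integer; hence the extracted weight vector $P$ lies in $N^{++}_I$, and the contradiction only uses the ordinary strong non-degeneracy of $f^I$ (valid via Proposition~\ref{restriction}, and the condition $f(\bfz(t))\not\equiv 0$ already guarantees $I\in\cN\cV(f)$ without any convenience hypothesis). Thus the extension of Lemma~\ref{key lemma} to the non-convenient setting is unproblematic. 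The place where $P$ may have vanishing components, and where clause (b) of (SSND) with the frozen variables $\bfz_{I(P)}\in\BC^{*I(P)}$ is genuinely used, is inside the proof of Lemma~\ref{non-isolated-Milnor}: there the arc satisfies $\bfz(0)\in V\setminus V^{\sharp}$ rather than $\bfz(0)=O$, so some $z_j(t)$ have nonzero limit. Since the paper has already proved Lemma~\ref{non-isolated-Milnor}, you may simply cite it, and your remaining steps go through as you describe.

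Your cautious addition of the strict positivity of the radial weights for the last clause is reasonable; the paper tacitly works in the setting where polar weighted polynomials are also radially weighted homogeneous, and Theorem~\ref{Polar Milnor} needs $Q\in N^{++}$ to define the retraction to the sphere.
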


\begin{Example}{\rm 
I.
A monomial $z_1^{\mu_1}\bar z_1^{\nu_1}z_2^{\mu_2}\bar z_2^{\nu_2}$
  is called  {\em an inside monomial} if $\mu_1+\nu_1,\,\mu_2+\nu_2>0$.
An inside  monomial $z_1^{\mu_1}\bar z_1^{\nu_1}z_2^{\mu_2}\bar z_2^{\nu_2}$
is called {\em polar admissible} if $\mu_1\ne \nu_1$ 
and
$\mu_2\ne \nu_2$.
 Let $g(\bfz,\bar\bfz)$ be a  strongly non-degenerate
polar weighted mixed function of two variables $\bfz=(z_1,z_2)$
with two simple end vertices $A,\,B$ of $\Ga(g)$.
We assume that 
\[
 A=(m_1,n_1),\,B=(m_2,n_2),\,\,m_1<m_2,\,n_1>n_2
\]
which  come from the mixed monomials $z_1^{\mu_1}\bar
 z_1^{\nu_1}z_2^{\mu_2}\bar z_2^{\nu_2}$
and $z_1^{\mu_1'}\bar
 z_1^{\nu_1'}z_2^{\mu_2'}\bar z_2^{\nu_2'}$.
Here
\[
 m_1=\mu_1+\nu_1,\,n_1=\mu_2+\nu_2,\,  m_2=\mu_1'+\nu_1',\,n_2=\mu_2'+\nu_2'.
\]
 Consider $P={}^t(1,0)$ for example.
Then $g_P(\bfz,\bar\bfz)=c\bfz^{\mu}\bar\bfz^\nu$ with some non-zero
constant $c$.
Assume that $m_1>0$.
 To check if $g_P:\BC^{*}\to \BC^*$
has a critical point or not as a function of $z_1$  variable, we can use $\log g_P$ instead of $g_P$.
Now we have
\begin{eqnarray*}
\overline{d_{z_1}\log g_P}(\bfz,\bar\bfz)=\frac{\mu_1}{\bar z_1},
\quad
\bar d_{z_1}\log g_P(\bfz,\bar\bfz)=\frac{\nu_1}{\bar z_1}.
\end{eqnarray*}
If $z_1\in \BC^*$ is a critical point of $g_P$ for some fixed $z_2\in \BC^*$,
we must have $u\in S^1$ such that $\frac{\mu_1}{\bar
z_1}=u\frac{\nu_1}{\bar z_1}$.
This is only possible if $\nu_1=\mu_1$.
By a similar discussion for $Q={}^t(0,1)$, we have shown the following.
\begin{Lemma}
 Assume that $g(\bfz,\bar\bfz)$ is a non-degenerate polar weighted mixed
 polynomial whose two end monomials are
$z_1^{\mu_1}\bar
 z_1^{\nu_1}z_2^{\mu_2}\bar z_2^{\nu_2}$
and $z_1^{\mu_1'}\bar
 z_1^{\nu_1'}z_2^{\mu_2'}\bar z_2^{\nu_2'}$ with 
$\mu_1+\nu_1<\mu_2+\nu_2$.
Then $g(\bfz,\bar\bfz)$ is super strongly non-degenerate if and only
 if the following conditions are satisfied.
\begin{enumerate}
\item Either $\mu_1=\nu_1=0$ or $z_1^{\mu_1}\bar
 z_1^{\nu_1}z_2^{\mu_2}\bar z_2^{\nu_2}$ is
 polar admissible.

\item Either $\mu_2'=\nu_2'=0$ or  $z_1^{\mu_1'}\bar
 z_1^{\nu_1'}z_2^{\mu_2'}\bar z_2^{\nu_2'}$
is polar admissible.
\end{enumerate}
\end{Lemma}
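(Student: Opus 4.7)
The plan is to reduce the (SSND) check to a finite list of weight vectors and then invoke the critical-point computation already displayed in the example preceding the lemma.

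First, since $g$ is polar weighted, the Remark in Section 2.3 gives that non-degeneracy of $g_P$ upgrades automatically to strong non-degeneracy for every $P\in N^{++}$, so clause $(b)$ of (SSND) is satisfied on $N^{++}$. One only needs to verify the condition on $N^{+}\setminus N^{++}$, which in two variables consists, modulo positive rescaling, of exactly the two weights $P={}^t(1,0)$ and $Q={}^t(0,1)$.

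For $P={}^t(1,0)$ the linear form $\ell_P(\nu)=\nu_1$ is minimized on the vertex of smallest $z_1$-total-exponent, namely vertex $A$, so $g_P=c\,z_1^{\mu_1}\bar z_1^{\nu_1}z_2^{\mu_2}\bar z_2^{\nu_2}$. If $\mu_1=\nu_1=0$ we are in case $(a)$ of (SSND); otherwise $m_1:=\mu_1+\nu_1>0$ puts us in case $(b)$, and we must verify that, for every fixed $w_2\in\BC^{*}$, the restriction $z_1\mapsto g_P(z_1,\bar z_1,w_2,\bar w_2)$ has no critical point on $\BC^{*}$. Applying Proposition \ref{critical point} to $\log g_P$ — exactly the calculation shown in the text before the lemma — a critical point exists iff there is $u\in S^{1}$ with $\mu_1/\bar z_1=u\,\nu_1/\bar z_1$, i.e.\ iff $\mu_1=\nu_1$. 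Hence $(b)$ holds at vertex $A$ precisely when $\mu_1\neq\nu_1$. The symmetric argument at $Q={}^t(0,1)$ identifies the face function with the monomial at vertex $B$ and yields the analogous conclusion with $(\mu_1,\nu_1)$ replaced by $(\mu_2',\nu_2')$.

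Finally, to recast the coordinate-wise inequalities just obtained as the polar-admissibility clauses in the statement, one invokes the polar-weighted identity $p_1(\mu_1-\nu_1)+p_2(\mu_2-\nu_2)=d_p\neq 0$ — valid at each end vertex — which forbids the simultaneous equalities $\mu_1=\nu_1$ and $\mu_2=\nu_2$. Together with the end-vertex ordering $m_1<m_2,\,n_1>n_2$ this sorts the admissible configurations of each end monomial into the three possibilities (polar admissible, $\mu_1=\nu_1$ with $\mu_2\neq\nu_2$, or $\mu_1\neq\nu_1$ with $\mu_2=\nu_2$), and matching these against Step~2 yields both directions of the equivalence. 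The main obstacle is precisely this last reconciliation: the critical-point computation returns only the one-sided inequality $\mu_1\neq\nu_1$ (resp.\ $\mu_2'\neq\nu_2'$), so one must carefully use the polar-weighted degree relation together with the end-vertex placement to upgrade these one-sided inequalities to the bilateral inequalities demanded by "polar admissibility" in each case.
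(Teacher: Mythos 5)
Your reduction of the (SSND) check to the two non‑strictly‑positive weight vectors $P={}^t(1,0)$ and $Q={}^t(0,1)$, together with the critical‑point computation on $\log g_P$, is correct and is essentially the paper's own argument: the displayed calculation immediately preceding the lemma is precisely this, and the paper's "proof'' consists of nothing more. So the first two steps of your proposal and the paper coincide.

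The "main obstacle'' you flag at the end is, however, a real gap, and the route you propose to close it does not work. The polar‑weighted identity $p_1(\mu_1-\nu_1)+p_2(\mu_2-\nu_2)=d_p\ne 0$ only forbids the \emph{simultaneous} equalities $\mu_1=\nu_1$ and $\mu_2=\nu_2$; it does not exclude the configuration $\mu_1\ne\nu_1$, $\mu_2=\nu_2$, and in that configuration the end monomial at $A$ is an inside monomial that is \emph{not} polar admissible while (SSND) nevertheless holds. Concretely, take
\[
g(\bfz,\bar\bfz)=z_1^2 z_2\bar z_2+z_1^3 z_2 = z_1^2 z_2(\bar z_2+z_1),
\]
a polar weighted polynomial with radial weight type $(1,2;6)$ and polar weight type $(1,-1;2)$. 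One checks, e.g. via Proposition \ref{non-degeneracy} with the $\{2\}$‑conjugate holomorphic factor $z_1+\bar z_2$, that $g$ is strongly non‑degenerate. Its end vertices are $A=(2,2)$ from $z_1^2 z_2\bar z_2$ and $B=(4,1)$ from $z_1^3 z_2$, so $m_1=2<4=m_2$ and $n_1=2>1=n_2$. For $P={}^t(1,0)$ one gets $g_P(z_1;w_2)=z_1^2\lvert w_2\rvert^2$, holomorphic in $z_1$ with nowhere‑vanishing derivative; for $Q={}^t(0,1)$ one gets $g_Q(z_2;w_1)=w_1^3 z_2$, linear and nonsingular; and on $N^{++}$ polar‑weightedness handles everything, so $g$ is super strongly non‑degenerate. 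Yet at $A$ we have $\mu_1=2\ne 0=\nu_1$ but $\mu_2=\nu_2=1$, so $A$ is inside and not polar admissible, and condition (1) of the lemma fails. Thus the "only if'' direction, read with "polar admissible'' in its stated two‑sided sense, does not follow from your Step 2 and in fact does not hold. What your computation (and the paper's) actually proves is the one‑sided characterization: (SSND) $\iff$ [$\mu_1=\nu_1=0$ or $\mu_1\ne\nu_1$] and [$\mu_2'=\nu_2'=0$ or $\mu_2'\ne\nu_2'$]. The polar‑admissibility phrasing in the lemma is strictly stronger at $A$ (it additionally forces $\mu_2\ne\nu_2$) and at $B$ (it additionally forces $\mu_1'\ne\nu_1'$); only the "if'' direction of the stated lemma is justified by the argument. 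So you should not expect a reconciliation step to exist, and your proof should stop at the one‑sided conditions, which is where the paper's displayed proof also stops.
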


\noindent
II.
Let $f(\bfz,\bar\bfz)=z_1^{a_1}\bar z_2^{b_1}+z_2^{a_2}\bar
z_3^{b_2}+\cdots+z_n^{a_n}\bar z_1^{b_n}$
be a simplicial polar weighted homogeneous mixed polynomial.
We assume that $a_j>b_{j-1}\ge 1$  for $j=1,\dots, n$ with
$b_{0}=b_n$. 
We assert that {\em $f$ is super strongly non-degenerate}.
\begin{proof}
Consider $f_P^I$ for some $I\in \cN\cV(f)$ and $P\in N^+$ and let
 $I(P),\,J(P)$  be
as in the proof of Lemma \ref{non-isolated-Milnor}.
We assume that $d(P,f)>0$.
Suppose that $z_1^{a_1}\bar z_2^{b_1}$ is in $f_P^I$.
Then $\{1,2\}\cap J(P)\ne \emptyset$.
Assume that $2\in J(P)$ for example.
Then $\frac{\partial f_P}{\partial \bar z_2}\ne 0$.
If $f_P^I$ has a critical point as a mapping
$f_P^I:\BC^{*J(P)}\to \BC^*$,
we need a non-zero $\frac{\partial f_P^I}{\partial z_2}$ by Proposition 7, which implies
$z_2^{a_2}\bar z_3^{b_2}$ must be in $f_P^I$.
As $a_2>b_1$ by the assumption and
$p_1a_1+p_2b_1=p_2a_2+p_3b_2$, this implies that $1\in J(P)$ i.e.,
 $p_1\ne 0$.
This implies again that $\frac{\partial f_P^I}{\partial \bar z_1}\ne 0$
and therefore $z_n^{a_n}\bar z_1^{b_n}$ must be in $f_P^I$.
By the same reasoning, $a_1>b_n$ implies that $p_n>0$ and $n\in J(P)$.
Then we consider  $\frac{\partial f_P^I}{\partial z_n}$ and we see that
$n-1\in J(P)$ and $z_{n-1}^{a_{n-1}}\bar z_n^{b_{n-1}}$ is in $f_P^I$.
Continuing the same discussion, we conclude
$f_P^I=f$ i.e., $I=\{1,\dots,n\}$. However, $f(\bfz,\bar\bfz)$ is polar weighted and it has no
critical point over $\BC^*$. Thus $f$ is super strongly non-degenerate.
\end{proof}
}\end{Example}
\section{Resolution of a polar type  and the  zeta function}
In this section, we will study the relation between a resolution 
of a polar type and the Milnor fibration of the second type.
We expect a similar formula like the formula of A'Campo
(\cite{AC-zeta}) or the formula of Varchenko
\cite{Varchenko}.
We will restrict ourselves to the case of  mixed curves.
\subsection{Polar weighted case}
Let $f(\bfz,\bar \bfz)$ be a mixed polynomial of $n$ variables 
$z_1,\dots, z_n$ and let $(q_1,\dots, q_n;d_r)$ and
$(p_1,\dots,p_n;d_p)$
be the radial and polar weight types. We assume that $d_p>0$.
Then $f: \BC^{*n}-f\inv (0)\to\BC^*$ is a fibration.
Put $F_s^*=f\inv(s)\cap \BC^{*n}$ for $s\in \BC^*$.
 Then the monodromy map
$h: F_s^*\to F_s^*$ is given by the polar action as
\[
 h(z_1,\dots, z_n)=(z_1\omega^{p_1},\dots, z_n\omega^{p_n}),\,
 \omega=\exp(\frac{2\pi i}{d_p})
\]
Put $F^*=F_1^*$ and let  $\chi(F^*)$ be the Euler characteristic of $F^*$.
Then the monodromy has the period $d_p$ and
the set of the fixed points of 
$h^j:F^*\to F^*$ is  empty if $j\not \equiv 0$ modulo $d_p$,
where $h^j=h\circ\dots\circ h$ (j-times). Thus using
the formula of the zeta function for a periodic mapping (\cite{Milnor}),
we
get
\begin{Lemma}\label{zetaPolar1} Under the above assumption, the
 zeta-function of $h: F^*\to F^*$
is given as
\[
 \zeta(t)= (1-t^{d_p})^{-\chi(F^*)/d_p}.
\]
\end{Lemma}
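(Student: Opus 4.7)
The plan is to apply the standard zeta-function formula for a periodic self-map, exactly as recorded in \cite{Milnor}. The main preliminary step is to verify two facts about $h$ acting on $F^*$: (i) $h$ has period $d_p$, i.e.\ $h^{d_p}=\id_{F^*}$, and (ii) for every integer $j$ with $j\not\equiv 0 \pmod{d_p}$, the iterate $h^j:F^*\to F^*$ has no fixed points.

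Fact (i) is immediate: $h^{d_p}(\bfz)_i = z_i\,\omega^{d_p p_i}=z_i$ because $\omega=\exp(2\pi i/d_p)$. For (ii), a point $\bfz\in F^*\subset\BC^{*n}$ is fixed by $h^j$ precisely when $\omega^{jp_i}=1$ for every $i$, i.e.\ $d_p\mid jp_i$ for all $i$. Since $\gcd(p_1,\dots,p_n)=1$ by the definition of a polar weighted polynomial, one can pick integers $a_1,\dots,a_n$ with $\sum_i a_ip_i=1$, whence $d_p\mid\sum_i a_i(jp_i)=j$, forcing $d_p\mid j$. Conversely, when $d_p\mid j$ the map $h^j$ is the identity on $F^*$, so $\mathrm{Fix}(h^j)=F^*$.

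With these in hand, the Lefschetz fixed-point formula for a periodic map (applied to the iterates $h^j$) gives $\Lambda(h^j)=\chi(\mathrm{Fix}(h^j))=0$ for $j\not\equiv 0\pmod{d_p}$ and $\Lambda(h^{d_pk})=\chi(F^*)$. Plugging into the logarithmic formula
\[
\log\zeta(t)=\sum_{j\ge 1}\Lambda(h^j)\,\frac{t^j}{j}
\]
we obtain
\[
\log\zeta(t)=\sum_{k\ge 1}\chi(F^*)\,\frac{t^{d_pk}}{d_pk}
=\frac{\chi(F^*)}{d_p}\sum_{k\ge 1}\frac{t^{d_pk}}{k}
=-\frac{\chi(F^*)}{d_p}\log(1-t^{d_p}),
\]
which exponentiates to $\zeta(t)=(1-t^{d_p})^{-\chi(F^*)/d_p}$.

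There is no real obstacle; the only point worth stressing is the use of $\gcd(p_1,\dots,p_n)=1$ to conclude the fixed point set is empty for the non-trivial iterates. Everything else is a direct invocation of the periodic-map zeta function formula, so I would simply cite \cite{Milnor} rather than reproducing its proof.
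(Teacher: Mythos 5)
The proposal is correct and follows the paper's argument exactly: both reduce to two facts — that $h$ has period $d_p$ and that $h^j$ is fixed-point free for $j\not\equiv 0\pmod{d_p}$ — and then invoke the zeta-function formula for a periodic map from \cite{Milnor}. You merely make explicit the role of $\gcd(p_1,\dots,p_n)=1$ in proving the fixed-point-free claim and spell out the logarithmic derivation of Milnor's formula, both of which the paper leaves implicit.
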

The zeta function of the global fibration
$f:\BC^n\setminus f\inv(0)\to \BC^*$ can be obtained by patching the
data for each torus stratum.

 Let us do this for curves ($n=2$).
Let $f(\bfz)$ be a non-degenerate
 polar weighted homogeneous polynomial
of type $(p_1,p_2;d_p)$. The signs of $p_1,p_2$ are chosen so that 
$d_p>0$. Suppose that the two edge vertices of $\Ga(f)$
are
simple. Assume that 
 the two end  monomials
 are 
\[
 z_1^{\mu_1}\bar z_1^{\nu_1}z_2^{\mu_2}\bar z_2^{\nu_2},
\quad
z_1^{\mu_1'}\bar z_1^{\nu_1'}z_2^{\mu_2'}\bar z_2^{\nu_2'}
\]
with $\mu_1+\nu_1<\mu_1'+\nu_1'$ and $\mu_2+\nu_2>\mu_2'+\nu_2'$.

Assume that $\mu_1=\nu_1=0$ and $\mu_2'=\nu_2'=0$ i.e.,
$f(\bfz,\bar\bfz)$ is convenient. In this case the  two monomials reduces to
$z_2^{\mu_2-\nu_2}|z_2|^{2\nu_2},\, z_1^{\mu_1'-\nu_1'}|z_1|^{2\nu_1'}$.
Let $F=f\inv(1)\subset \BC^2$, $F_{z_1}=F\cap\{z_2=0\}$ and 
$F_{z_2}=F\cap \{z_1=0\}$.
Note that 
\[
 F_{z_1}=\{(z_1,0)\,|\, z_1^{\mu_1'-\nu_1'}=1\},\quad
 F_{z_2}=\{(0,z_2)\,|\, z_2^{\mu_2-\nu_2}=1\}.
\]
The monodromy map is defined by
\[
 h:F\to F,\quad (z_1,z_2)\mapsto (z_1\omega^{p_1},z_2\omega^{p_2}),
\,\,\omega=\exp(\frac{2\pi i}{d_p})
\]
Note that $p_1(\mu_1'-\nu_1')=p_2(\mu_2-\nu_2)=d_p$.
Therefore  the  fixed points set ${\rm Fix}(h^j)$ of $h^j$ is non-empty
only  for $j=|\mu_1'-\nu_1'|,\,|\mu_2-\nu_2|$, or $ d_p$
and their multiples.
Thus using the calculation through $\exp\zeta(t)$ as in \cite{Milnor},
we get
\begin{Lemma}\label{zetaPolar} Let $f(\bfz,\bar \bfz)$ be a polar
 weighted convenient polynomial as above. Let 
$z_1^{\mu_1'}\bar z_1^{\nu_1'},\, z_2^{\mu_2}\bar z_2^{\nu_2}$ be the 
end monomials and let $d_p$ be the polar degree.
Then the  Euler-Poincar\'e characteristic $\chi(F)$ and the zeta function
of the monodromy $h: F\to F$ are
 given as
\[\begin{split}
&\chi(F)=\chi(F^*)+|\mu_1'-\nu_1'|+|\mu_2-\nu_2|,\,\mu=1-\chi(F)\\
& \zeta(t)=\frac{(1-t^{d_p})^{-\chi(F^*)/d_p}}{(1-t^{|\mu_1'-\nu_1'|})\,
(1-t^{|\mu_2-\nu_2|})}
\end{split}
\]
\end{Lemma}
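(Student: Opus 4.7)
My plan is to split the fiber $F=f^{-1}(1)$ according to the torus stratification $\BC^2=\BC^{*2}\sqcup(\BC^*\times\{0\})\sqcup(\{0\}\times\BC^*)$ and compute both invariants stratum by stratum. Since $f$ is convenient, $F$ misses the origin, so $F=F^*\sqcup F_{z_1}\sqcup F_{z_2}$ as a disjoint union. On the $z_1$-axis, $f|_{z_2=0}$ reduces to the end monomial $c_1\,z_1^{\mu_1'}\bar z_1^{\nu_1'}$, and the relation $p_1(\mu_1'-\nu_1')=d_p>0$ forces $\mu_1'\neq\nu_1'$; writing $z_1=re^{i\theta}$, the equation $c_1\,r^{\mu_1'+\nu_1'}e^{i(\mu_1'-\nu_1')\theta}=1$ fixes $r$ and yields exactly $|\mu_1'-\nu_1'|$ solutions in $\theta$. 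Symmetrically $|F_{z_2}|=|\mu_2-\nu_2|$, and additivity of the Euler characteristic over this finite stratification gives $\chi(F)=\chi(F^*)+|\mu_1'-\nu_1'|+|\mu_2-\nu_2|$, with $\mu=1-\chi(F)$ being the resulting Milnor number.

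For the zeta function I would use the same stratification. The monodromy $h(z_1,z_2)=(\omega^{p_1}z_1,\omega^{p_2}z_2)$ preserves each of the three strata (it preserves the vanishing locus of each coordinate), so the Lefschetz numbers $\Lambda(h^j)$ split as sums over the three pieces, and consequently $\zeta_h(t)=\zeta_{h|F^*}(t)\cdot\zeta_{h|F_{z_1}}(t)\cdot\zeta_{h|F_{z_2}}(t)$. The factor $\zeta_{h|F^*}(t)=(1-t^{d_p})^{-\chi(F^*)/d_p}$ is exactly the content of Lemma~\ref{zetaPolar1}, so only the two axis factors remain to be computed.

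For the $z_1$-axis factor I would verify that $h$ acts on the $m_1:=|\mu_1'-\nu_1'|$ points of $F_{z_1}$ as a single cyclic permutation. The relation $p_1(\mu_1'-\nu_1')=d_p$ together with $d_p>0$ gives $2\pi p_1/d_p=\varepsilon\cdot 2\pi/m_1$ with $\varepsilon=\pm1$, so multiplication by $\omega^{p_1}$ rotates the $m_1$ equally-spaced arguments by one step, producing a single $m_1$-cycle. A single $m_1$-cycle has Lefschetz numbers $\Lambda(h^j)=m_1$ when $m_1\mid j$ and $0$ otherwise, whence $\exp\sum_{j\ge1}\Lambda(h^j)t^j/j=\exp(-\log(1-t^{m_1}))=(1-t^{m_1})^{-1}$. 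The identical argument on the $z_2$-axis yields $(1-t^{|\mu_2-\nu_2|})^{-1}$, and multiplying the three factors gives the claimed formula.

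The routine steps are the Euler characteristic decomposition and the standard Lefschetz computation for a cyclic permutation; the only point needing care is the identification $|2\pi p_1/d_p|=2\pi/m_1$, which relies on the simplicity of the end vertex of $\Gamma(f)$ and on $f$ being convenient, so that $f|_{z_2=0}$ is a single mixed monomial with non-vanishing polar degree. If either hypothesis were dropped, the axis stratum could fail to be a clean single cycle (or could become positive-dimensional), and the formula would require correction terms.
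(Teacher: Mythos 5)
Your proposal is correct and takes essentially the same approach as the paper's own (very terse) argument. The paper notes that $p_1(\mu_1'-\nu_1')=p_2(\mu_2-\nu_2)=d_p$, observes that ${\rm Fix}(h^j)$ is nonempty only for $j$ a multiple of $|\mu_1'-\nu_1'|$, $|\mu_2-\nu_2|$, or $d_p$, and then invokes Milnor's $\exp$-formula for the zeta function of a periodic map. Your stratification $F=F^*\sqcup F_{z_1}\sqcup F_{z_2}$ just makes the same fixed-point bookkeeping explicit: the $d_p$-multiples come from $F^*$, the $m_1$- and $m_2$-multiples from the axis points, and multiplicativity of $\zeta$ over the $h$-stable open-closed strata packages the calculation. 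The two details you flag as needing care --- uniqueness of the axis monomial (forced by simplicity of the end vertex plus polar and radial homogeneity) and $|2\pi p_1/d_p|=2\pi/m_1$ giving a single $m_1$-cycle --- are exactly the facts the paper is using when it asserts the fixed-point periods, so the proposal fills in the intended reasoning rather than diverging from it.
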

\begin{Remark}\label{zetaPolar2}{\rm  By a similar consideration, if
 $f(\bfz,\bar\bfz)$
is a polar weighted polynomial which is not convenient,
the assertion is true under the following modification.
Put $\eps_1=1$ or $0$ according to $\mu_2'+\nu_2'=0$ or $\mu_2'+\nu_2'>0$.
Similarly $\eps_2=1$ or  $0$ according to $\mu_1+\nu_1=0$ or
 $\mu_1+\nu_1>0$.
Then 
\[\begin{split}
&\chi(F)=\chi(F^*)+\eps_1|\mu_1'-\nu_1'|+\eps_2|\mu_2-\nu_2|,\,\mu=1-\chi(F)\\
& \zeta(t)=\frac{(1-t^{d_p})^{-\chi(F^*)/d_p}}{(1-t^{|\mu_1'-\nu_1'|})^{\eps_1}
\,(1-t^{|\mu_2-\nu_2|})^{\eps_2}}
\end{split}
\]
}
\end{Remark}
\subsubsection{Simplicial polar weighted polynomial}
Let $f(\bfz,\bar\bfz)=\sum_{j=1}^m c_j\,\bfz^{\mu_j}\bar\bfz^{\nu_j}$.
The associated Laurent polynomial $g(\bfz)$ is defined by 
\[
 g(\bfz)=\sum_{j=1}^m c_j\,\bfz^{\mu_j-\nu_j}.
\]
Recall that $f(\bfz,\bar\bfz)$ is called {\em  simplicial
polar weighted homogeneous} if
$m=n$ and the two matrices have a non-zero determinant {\cite{OkaPolar}}:
\[
 M=\left(
\begin{matrix}
\mu_{11}+\nu_{11}&\dots& \mu_{1n}+\nu_{1n}\\
\vdots&\cdots&\vdots\\
\mu_{n1}+\nu_{n1}&\dots& \mu_{nn}+\nu_{nn}
\end{matrix}\right),\quad
 N=\left(
\begin{matrix}
\mu_{11}-\nu_{11}&\dots& \mu_{1n}-\nu_{1n}\\
\vdots&\cdots&\vdots\\
\mu_{n1}-\nu_{n1}&\dots& \mu_{nn}-\nu_{nn}
\end{matrix}\right)
\]
where $\mu_j=(\mu_{j1},\dots,\mu_{jn})$ and
$\nu_j=(\nu_{j1},\dots,\nu_{jn}),\,j=1,\dots,n$
respectively.
If $f$ is a simplicial polar weighted homogeneous polynomial, we have
shown  that
the  two fibrations defined by
$f(\bfz,\bar\bfz)$ and $g(\bfw)$:
\[
f:\BC^{*n}\setminus f\inv(0)\to \BC^*,\quad
g:\BC^{*n}\setminus g\inv(0)\to \BC^*
\]
are equivalent (\cite{OkaPolar}).
Thus the topology of the Milnor fibration is determined by the 
mixed face $\what \Delta$ where 
$\De$ is the unique face of $\Ga(f)$. In particular, the zeta function of 
$h:F^*\to F^*$ is given as
$\zeta(t)=(1-t^{d_p})^{(-1)^n d/d_p}$ where $d=|\det(N)|$
(\cite{OkaPolar}).
On the other hand, if $f$ is not simplicial,
the topology is not even a combinatorial invariant of $\what \De$ (\S
\ref{non-simple}).
Therefore there does not exist any direct connection with
the topology of the associated Laurent polynomial $g(\bfz)$.
 However here is a useful lemma.
\begin{Lemma}
Suppose that $f_t(\bfz,\bar\bfz),\,0\le t\le 1$ is a family of convenient,
non-degenerate polar weighted 
homogeneous polynomials with the same radial and the  polar weights, and
 assume  that
$\Ga(f_t)$ is constant.
 Then the Milnor fibration
$f_t:\BC^n\setminus f_t\inv(0) \to \BC^*$
and its restriction
$ \BC^{*n}\setminus f_t\inv(0))\to \BC^{*}$  are homotopically
 equivalent
to $f_0:\BC^n\setminus f_0\inv(0) \to \BC^*$ and 
$f_0:\BC^{*n}\setminus f_0\inv(0)\to \BC^{*}$
respectively.
\end{Lemma}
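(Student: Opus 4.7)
The plan is to reduce to the Milnor sphere fibration via Theorem \ref{Polar Milnor} and then apply Ehresmann's fibration theorem to the parametrized family on a fixed small sphere. By Theorem \ref{Polar Milnor}, for each $t$ the global fibration $f_t : \BC^n \setminus f_t\inv(0) \to \BC^*$ (and its restriction over $\BC^{*n}$) is isomorphic to the sphere Milnor fibration $\varphi_t = f_t/|f_t| : S_r^{2n-1} \setminus K_{t,r} \to S^1$ for any $r > 0$, and since the radial weights are independent of $t$ the identifying diffeomorphism varies continuously with $t$. Hence it suffices to produce a continuous family of fiber bundle isomorphisms $\varphi_0 \cong \varphi_t$ on a single sphere.

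First I would establish the following uniform-in-$t$ statement: there exists $r_0 > 0$ such that for every $t \in [0,1]$, the hypersurface $V_t = f_t\inv(0)$ is mixed non-singular in $B_{r_0} \setminus \{O\}$, the sphere $S_r$ is transverse to $V_t$ for $0 < r \le r_0$, and the three real vectors $\bfz,\ \bfv_1(\bfz,\bar\bfz),\ \bfv_2(\bfz,\bar\bfz)$ of Lemma \ref{ExistenceRadius1} are $\BR$-linearly independent on $S_r \setminus K_{t,r}$. I would prove these by adapting the Curve Selection Lemma arguments of Theorem \ref{isolatedness} and Lemma \ref{ExistenceRadius1} to the real-analytic family $(\bfz,t) \mapsto f_t(\bfz,\bar\bfz)$ on $\BC^n \times [0,1]$. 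The crucial input is that $\Ga(f_t)$ and the support of each face function $(f_t)_P$ are independent of $t$, so an analytic failure curve $(\bfz(s), t(s))$ with $t(s) \to t_0$ would produce a face function $(f_{t_0})_P$ violating the non-degeneracy of $f_{t_0}$, a contradiction.

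With $r_0$ in hand, fix $r = r_0$ and consider the smooth map
\[
 \Phi : \{(\bfz,t) \in S_{r_0}^{2n-1} \times [0,1] : f_t(\bfz,\bar\bfz) \ne 0\} \longrightarrow S^1 \times [0,1], \quad (\bfz, t) \mapsto \bigl(f_t(\bfz,\bar\bfz)/|f_t(\bfz,\bar\bfz)|,\, t\bigr).
\]
The uniform $\BR$-linear independence of $\bfz,\ \bfv_1,\ \bfv_2$ shows that $\Phi$ is a submersion (this is exactly the point at which Lemma \ref{ExistenceRadius1}'s computation guarantees surjectivity of $d\Phi$ onto $T(S^1 \times [0,1])$). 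Since $S_{r_0}^{2n-1}$ is compact, $\Phi$ is proper, so Ehresmann's theorem yields local triviality. As $[0,1]$ is contractible, integrating a horizontal lift of $\partial/\partial t$ gives a fiberwise diffeomorphism $\Psi_t : S_{r_0} \setminus K_{0,r_0} \to S_{r_0} \setminus K_{t,r_0}$ with $\varphi_t \circ \Psi_t = \varphi_0$, producing the desired equivalence of sphere fibrations, which then transports back to the global fibrations via Theorem \ref{Polar Milnor}.

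The main obstacle is the uniform-in-$t$ estimate in the second paragraph. While each individual $f_t$ is non-degenerate with some radius $r_0(t)$, one must show $\inf_t r_0(t) > 0$. The delicate step is running the Curve Selection Lemma on $\BC^n \times [0,1]$ with $t$ as an additional real-analytic parameter: the Taylor expansion of the failure curve now includes a component in $t$, and one has to track that the face-function weight vector $P$ is determined only by the leading orders of the $\bfz$-components, so that the limiting face function $(f_{t_0})_P$ makes sense and inherits non-degeneracy from $f_{t_0}$. Once this uniformity is secured, the Ehresmann trivialization step is entirely routine.
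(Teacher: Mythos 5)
Your strategy (uniform transversality in the family, then Ehresmann, then transport via Theorem~\ref{Polar Milnor}) matches the paper's, but the execution has a genuine gap: the claim ``Since $S_{r_0}^{2n-1}$ is compact, $\Phi$ is proper'' is false. The domain of $\Phi$ is $\{(\bfz,t)\in S_{r_0}\times[0,1]\,:\,f_t(\bfz,\bar\bfz)\ne 0\}$, which is an \emph{open} subset of the compact space $S_{r_0}\times[0,1]$ obtained by removing the closed family of links $\bigcup_t K_{t,r_0}\times\{t\}$; the preimage under $\Phi$ of the compact codomain $S^1\times[0,1]$ is this whole non-compact domain, so $\Phi$ is not proper and Ehresmann cannot be invoked directly. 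This is exactly the difficulty Milnor already had to circumvent in the single-fibration case by a controlled vector field near $K_r$, and it does not disappear in the parametrized setting. The paper sidesteps it entirely by running the Ehresmann argument on the \emph{tube} fibrations $f_t^I:E_t^I(1,\de)^*\to D(\de)^*$, which \emph{are} proper (preimages of compacts in $D(\de)^*$ are closed and bounded inside $B^I$), obtains the equivalence of tube fibrations uniformly over the stratification by coordinate subspaces, and only at the very end passes to the sphere/global picture via Theorem~\ref{Polar Milnor}. To repair your proof you should do the same: replace the sphere map $\Phi$ by the family of tube maps, or else restrict to $\{|f_t|\ge\eps\}$ and handle the collar near the family of links separately via transversality of $f_t\inv(\eta)$ with $S_{r_0}$ for small $|\eta|$.

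A secondary point: the parametrized Curve Selection Lemma argument you propose for a uniform $r_0$ is unnecessary, and you should not regard it as the ``main obstacle.'' Because each $f_t$ is radially (and polar) weighted homogeneous with a $t$-independent weight vector, Lemma~\ref{transversality} already gives transversality of $S_r$ with $V_t^\sharp$ for \emph{every} $r>0$ (the radial $\BR^*$-action scales the sphere while preserving $V_t$ and its mixed-singular locus), and the sphere Milnor fibration of $f_t$ exists for every $r>0$ by the polar $S^1$-action; there is no small radius $r_0(t)$ to control. Fix $r=1$. The only uniformity in $t$ you need is a single $\de>0$ such that $(f_t^I)\inv(\eta)$ meets $S^I$ transversely for all $|\eta|\le\de$, $t\in[0,1]$, and $I$ — and that follows from openness of the transversality condition plus compactness of $[0,1]$, with no appeal to Curve Selection. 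Also note that Lemma~\ref{key lemma} does \emph{not} assert that $\bfz,\bfv_1,\bfv_2$ are always $\BR$-independent (they may be dependent with $a>0$); what the submersion of $\varphi_t$ on $S_r\setminus K_{t,r}$ actually needs is only the independence of $\bfz$ and $\bfv_2$, which is Lemma~\ref{ExistenceRadius1}/Lemma~\ref{ExistenceRadius0}.
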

\begin{proof}
Consider the unit  sphere $S^{2n-1}=S_1^{2n-1}$.
For each $I\subset \{1,\dots, n\}$,
$|I|\ne\emptyset$,
the intersection $(f_t^I)\inv(0)\cap S^I$ is transverse and smooth
for any $t$
where $S^I=\{\bfz^I\in \BC^I\,|\,\|\bfz\|=1\}$.
Thus by the compactness argument, there exists a common positive
number $\de$ such that 
the intersection $(f_t^I)\inv(\eta)\cap S^I,$ is transverse and smooth
for any $t,\,0\le t\le 1$ and $\eta$ with $|\eta|\le \de$.
This implies by the Ehresmann fibration theorem (\cite{Wolf1})
that
the fibrations
\[
 f_t^I:   E_t^I(1,\de)^*\to D(\de)^*
\]
are equivalent for each $t$, where
\[
 E_t^I(1,\de)=(f_t^I)\inv(D(\de)^*)\cap B^I,\,
B^I=\{\bfz^I\in \BC^I\,|\,\|\bfz^I\|\le 1\}.
\]
Thus we can construct characteristic diffeomorphisms
 \[
  h_{\theta}: f_t\inv (\de)\cap B^{2n}\to f_t\inv(\de\exp(\theta\,i))\cap B^{2n}
\]
 for 
$0\le \theta\le 2\pi$
which preserve the stratification
$f\inv(\de)\cap B^{I}$,
$I\subset \{1,\dots, n\}$.
Now the assertion follows from Theorem \ref{Polar Milnor}.
\end{proof}
\begin{Example}{\rm 
Consider the  family of  polar weighted mixed polynomials in  two variables:
\[
 f_t(\bfz,\bar\bfz)=-2z_1^2\,\bar z_1+z_2^2\,\bar z_2+t\,z_1^2\,\bar z_2,\,t\in
 \BC
\]
and let $C_t=f_t\inv(0)$.
The radial and polar weight types are $(1,1;3)$ and $(1,1;1)$
respectively.
Thus the critical points of $f_t:\BC^2\to \BC$ are the solutions of
\begin{eqnarray}|\al|=1,\quad
 \begin{cases}
&-4 z_1\,\bar z_1+2\bar t\,\bar z_1\,z_2=-2\al\, z_1^2 \\
&2z_2\,\bar z_2=\al\,(z_2^2+t\,z_1^2)\\
&-2z_1^2\,\bar z_1+z_2^2\,\bar z_2+t\,z_1^2\,\bar z_2=0.\\
\end{cases} \label{eq123}
\end{eqnarray}
 \begin{figure}[htb]   
\setlength{\unitlength}{1bp}   
\begin{picture}(600,150)(-20,0)  
\put(70,130){\special{epsfile=./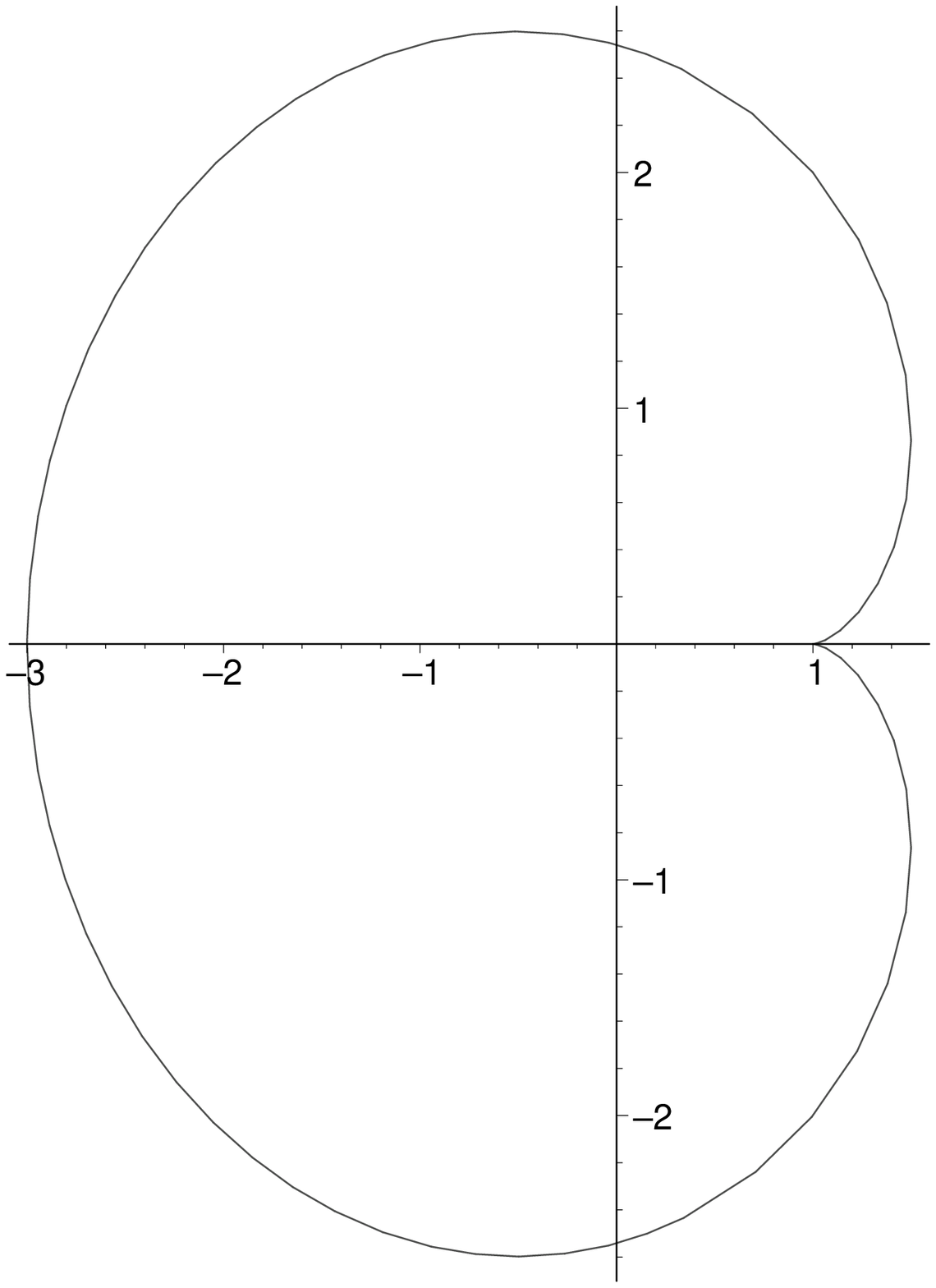 vscale=0.3 hscale=0.4}}   
\put(160,45){$O$}
\put(140,80){$U_1$}
\put(240,80){$U_2$}
\put(200,115){$\Xi$}
\end{picture}
\vspace{.5cm}
\caption{Degeneration locus $\Xi$}\label{Fig1}
\end{figure}   

First it is easy to see that
for a solution $(\bfz,\al)$ of $(16)$, either
$\bfz=(0,0)$ or  $\bfz\in \BC^{*2}$.
Secondly the equations are homogeneous in $z_1,z_2$.
Thus we may assume that $|z_2|=1$.
By (\ref{eq123}), we get 
$2z_2\,\bar z_2^2=2\,\al\, z_1^2\,\bar z_1$. Thus $|z_1|=1$.
Put $z_1/z_2=\exp(\theta\,i)$.
Then we can solve as 
\[
 t=-\exp(-2\theta i)+2\exp(-\theta i),\,
z_1=z_2\exp(\theta\,i),\,\,\al=\frac 2{z_2^2+tz_1^2}.
\]
Put $\Xi:=\{-\exp(-2\theta i)+2\exp(-\theta i)|0\le \theta\le 2\pi\}$.
$\Xi$ is the locus where $f_t$ is degenerate.
The complement $\BC\setminus \Xi$ has two components, $U_1,\,U_2$
where $U_1$ is the  bounded region with boundary $\Xi$. See Figure 3.
By  further calculation, we can see that 
$\lkn(C_t)=1$, $\chi(F)=1$, $\chi(F^*)=-1$ for  $t\in U_1$  and $\lkn(C_t)=3$,
$\chi(F)=-1$, $\chi(F^*)=-3$ for $t\in U_2$.
(See Appendix for the calculation.)
The associated Laurent polynomial is
$g_t(\bfz)= -2z_1+z_2+t\,z_1^2 z_2^{-1}$ which is non-degenerate for
$t\ne 1,0$.
Thus we see that $\chi(G_t^*)=-2$ for $t\ne 0,1$
where $G_t^*=g_t\inv(1)\cap \BC^{*2}$ (see \cite{Okabook}). 
This example shows that Theorem 10 of \cite{OkaPolar} does not hold
for non-simplicial polar weighted polynomials.
}\end{Example}
\subsection{Zeta function of  non-degenerate mixed curves}
Let $f(\bfz,\bar\bfz)$ be a convenient non-degenerate mixed polynomial
 and   let $\De_1,\dots, \De_s$ be the
faces of $\Ga(f)$. Let $Q_j={}^t(q_{j1},q_{j2})$ be the weight vector of
$\De_j$
for $j=1,\dots, s$.
Assume that each face function $f_{\De_j}$ is also polar weighted and
the inside monomials corresponding to the vertices
$M_j=\De_j\cap \De_{j+1},\,j=1,\dots, s-1$ are polar admissible.
Let $(a_1+2b_1,0),\,(0,a_2+2b_2)$ be the vertices of $\Ga(f)$ on the coordinate
axes which come from the monomials
$z_1^{a_1}|z_1|^{{2b_1}}$ and $z_2^{a_2}|z_2|^{2b_2}$ respectively.
We call $a_1,\,a_2$ {\em the polar sections of $\Ga(f)$} on the respective
coordinate axes
 $z_2=0$ and $z_1=0$.
Let $f_{\De_i}(\bfz,\bar\bfz)$ be the face function of 
$\De_i$
and assume that $(p_{i1},p_{i2};m_i)$ is the polar weight type of
$f_{\De_i}(\bfz,\bar\bfz)$.
Let 
$F_i^*=\{\bfz\in \BC^{*2}\,|\, f_{\De_i}(\bfz,\bar\bfz)=1\}$.
Then we have the following.
\begin{Theorem}\label{MixAC}
Assume that $f(\bfz,\bar\bfz)$ is a non-degenerate convenient mixed
 polynomial
such that its face functions $f_{\De_j}(\bfz,\bar\bfz),\,j=1,\dots, s$
 are
polar weighted polynomials. Then 
the Euler-Poincar\'e characteristic of
the Milnor fiber $F$ of $f$  and the zeta function
of the monodromy $h:F\to F$ are  given as follows.
\[
 \begin{split}
&\chi(F)=\sum_{i=1}^s \chi(F_i^*)\,+\, |a_1|+|a_2|\\
&\zeta(t)=\frac{\prod_{i=1}^s
  (1-t^{m_i})^{-\chi(F_i^*)/m_i}}{(1-t^{|a_1|})\,(1-t^{|a_2|})}
\end{split}
\]
where $a_1,a_2$ are the respective polar sections and $m_j$
is the polar degree of the face function $f_{\De_i}(\bfz,\bar\bfz)$,
$j=1,\dots,s$ as above ($m_j>0$).
\end{Theorem}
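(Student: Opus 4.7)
The plan is to decompose the Milnor fibre $F=f^{-1}(\eta)\cap B_r$ (for $0<|\eta|\ll r\ll 1$) into pieces indexed by an admissible resolution of polar type, to compute the Euler characteristic and the zeta function of the geometric monodromy on each piece, and then to assemble using the multiplicativity of $\chi$ and $\zeta$ on locally trivial stratifications compatible with $h$. Since $f$ is convenient, we can split $F$ at the start according to the coordinate stratification of $\BC^2$:
\[
F \;=\; (F\cap \BC^{*2}) \;\sqcup\; (F\cap \{z_1\ne 0,z_2=0\})\;\sqcup\; (F\cap \{z_2\ne 0,z_1=0\}).
\]
The axis contributions are elementary. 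On $z_2=0$ the equation becomes $c\,z_1^{\al}|z_1|^{2b_1}=\eta$ with $\al$ equal (up to sign) to $a_1$; writing $z_1=\rho e^{i\theta}$ fixes $\rho$ and produces $|a_1|$ solutions in $\theta$, cyclically permuted by the monodromy $\eta\mapsto \eta e^{2\pi i}$ with period $|a_1|$. The standard Milnor computation for a periodic permutation of $|a_1|$ points yields zeta factor $(1-t^{|a_1|})^{-1}$ and Euler contribution $|a_1|$, and symmetrically for the other axis.

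For the toric part $F\cap\BC^{*2}$ I would apply Theorem \ref{polar-resolution}: choose a regular simplicial subdivision $\Si^*$ of $\Ga^*(f)$ whose strictly positive vertices are exactly $\cV^+=\{P_1,\dots,P_s\}\cup\{\text{inserted vertices}\}$, where $P_i$ is the weight of $\De_i$, and form $\Phi_p:\cP X\to \BC^2$. Over $\BC^{*2}$ the map $\Phi_p$ is a diffeomorphism, so $F\cap\BC^{*2}\cong \Phi_p^{-1}(F)\setminus (\text{strict transforms of the axes})$. Cover the exceptional set by polar tubular neighbourhoods $\cT(P)$ of $\wtl E(P)$ for $P\in\cV^+$, together with annular gluing collars around each compact intersection $\wtl E(P)\cap\wtl E(P')$; both $\chi$ and the zeta function of $h$ are additive/multiplicative on such a decomposition because the collars between two divisors are trivial $S^1$-bundles over an interval times a fibre preserved by the rotation, and these contribute $\chi=0$ and $\zeta=1$.

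The key local computation is the contribution of each tubular piece $F\cap\cT(P)\simeq \Phi_p^{-1}(F)\cap\cT(P)$. In toric coordinates $(u_1,u_2)$ with $P=P_1$ and $u_1=r_1e^{i\theta_1}$, Theorem \ref{polar-resolution} gives
\[
\Phi_p^*f \;=\; r_1^{d(P)}\,e^{i\,d_p(P)\theta_1}\,\what f_{P,\si}(\bfu'),
\]
where $d_p(P)$ is the polar degree along $P$ of $f_P$; hence, up to homotopy, the intersection of $F$ with a tube $\{r_1=\mathrm{const}\}\times S^1_{\theta_1}\times\{\what f_{P,\si}=\eta'\}$ is a mapping torus of the $S^1$-action on $F_P^*:=f_P^{-1}(1)\cap\BC^{*2}$ with period $d_p(P)=m_P$. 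This is precisely the situation of Lemma \ref{zetaPolar1}, which gives Euler contribution $\chi(F_P^*)$ and zeta contribution $(1-t^{m_P})^{-\chi(F_P^*)/m_P}$. When $P=P_i$ corresponds to a 1-face, $F_P^*=F_i^*$ and $m_P=m_i$, producing the advertised factor. When $P$ corresponds to an inserted vertex or to an interior vertex $M_j$ of $\Ga(f)$, $f_P$ reduces to the single polar-admissible monomial $c\bfz^\mu\bar\bfz^\nu$; for such a monomial $F_P^*\cong S^1\times\BR$ (fibre-wise: a single equation on $\BC^{*2}$ whose modulus and argument parts are each one real equation with nowhere vanishing derivatives because $\mu_j\ne\nu_j$), so $\chi(F_P^*)=0$ and the factor is trivial. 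This is exactly where polar admissibility is used; it is also the main technical point, as one has to verify both that the tubular model is correct and that the $S^1\times\BR$ description is homotopy-invariant under gluing.

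Summing Euler characteristics and multiplying zeta functions over $\cV^+$ and the two axis strata yields the stated formulas. The main obstacle I anticipate is rigorously justifying the local-to-global patching: one must show that the intersections of the tubular neighbourhoods give collar bundles on which $h$ acts freely with trivial Lefschetz contribution, so that the Mayer--Vietoris-type argument works both for $\chi$ and for $\zeta$. The super-strong non-degeneracy guaranteed by the polar-weighted hypothesis on every $f_{\De_i}$ (together with polar admissibility at the inside vertices) is precisely what ensures transversality of $\{f=\eta\}$ to each $\wtl E(P)$ and to every stratum $\wtl E(P)\cap\wtl E(P')$, which is the technical heart of the argument.
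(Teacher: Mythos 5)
Your overall strategy — resolve of polar type, decompose into tubes and collars, compute each stratum, and assemble by multiplicativity of $\chi$ and $\zeta$ — is the same one the paper uses. The axis contributions, the triviality of the collar/double-point strata, and the triviality at inserted or interior vertices are all handled correctly and match the paper's assertions (1)--(4). The gap is in the face contribution (5), which is the heart of the theorem.

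Your key formula
\[
\Phi_p^*f \;=\; r_1^{d(P)}\,e^{i\,d_p(P)\theta_1}\,\widehat f_{P,\sigma}(\bfu')
\]
is false in general. From the proof of Theorem~\ref{polar-resolution} one only has, for each monomial $\bfz^\nu\bar\bfz^\mu$ with $\nu+\mu\in\De$,
\[
\omega_p^*\pi_\sigma^*(\bfz^\nu\bar\bfz^\mu)\;=\;r_1^{d(P_1)}\,e^{iP_1(\nu-\mu)\theta_1}\,u_2^{P_2(\nu)}\bar u_2^{P_2(\mu)},
\]
and while $P_1(\nu+\mu)=d(P_1)$ is constant on $\De$, the exponent $P_1(\nu-\mu)$ generally \emph{varies} from monomial to monomial; it is constant only when the radial weight $P=P_1$ happens to be proportional to the polar weight vector of $f_\De$, which is not assumed. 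Consequently the angular dependence cannot be pulled out as a single factor $e^{i d_p(P)\theta_1}$, $\widehat f_{P,\sigma}$ genuinely depends on $\theta_1$, and the asserted ``mapping torus of the $S^1$-action on $F_P^*$ with period $d_p(P)$'' picture does not follow from this computation. The paper avoids this by a bootstrap: since each $f_{\De_j}$ is polar weighted, it is super strongly non-degenerate, so its own Milnor fibration of the second type exists (Theorem~\ref{NIM}) and agrees with its global fibration (Theorem~\ref{Polar Milnor}); applying the assertions (1)--(4) to $f_{\De_j}$ with the \emph{same} polar resolution shows that the contribution of the tube $W_{\nu_j}$ to the zeta function of $f$ equals the zeta function of the global fibration $f_{\De_j}:\BC^{*2}\setminus f_{\De_j}^{-1}(0)\to\BC^*$, which is then computed by Lemma~\ref{zetaPolar1}. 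You reach the right factor $(1-t^{m_j})^{-\chi(F_j^*)/m_j}$, but your derivation of it has a genuine gap and needs to be replaced by this recursive identification. (A minor additional remark: for an interior polar-admissible vertex, $F_P^*$ is $\BR$ times a disjoint union of $\gcd(|\mu_1-\nu_1|,|\mu_2-\nu_2|)$ circles rather than a single $S^1\times\BR$; this does not affect $\chi(F_P^*)=0$ but the diffeomorphism type you state is not quite right.)
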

\begin{Remark} The assertion is true for 
non-degenerate mixed polynomials with polar weighted face functions
in two variables
which may not be convenient.
For example, if $\Ga(f)\cap \{z_2=0\}=\emptyset$,
we eliminate $|a_1|$ and $(1-t^{|a_1|})$ from the formula.\end{Remark}
The proof occupies the rest of the section.
For the proof, we use the
following multiplicative property of the zeta function.
Consider an excision pair $\{A,B\}$ in the Milnor fiber $F$.
We say $\{A,B\}$ is {\em stable} for the monodromy map $h$ if
$h(A)\subset A$ and $h(B)\subset B$.
\begin{Proposition} {\rm (Proposition 2.8, \cite{Okabook})}
Suppose that $F$ decomposes into $h$ stable  excision couple $A,\,B$
so that $F=A\cup B$.
Put $C=A\cap B$.
Then let $\zeta(t),\,\zeta_A(t)$,
$\zeta_B(t)$ and $\zeta_C(t)$  be the zeta functions of $h:F\to F$ and 
$h_A:=h|_A: A\to A$, $h_B:=h|_B:B\to B$
and $h_{C}:=h|_{C}:C\to C$ respectively. Then
\[
 \zeta(t)=\frac{\zeta_A(t)\,\zeta_B(t)}{\zeta_C(t)}.
\]
\end{Proposition}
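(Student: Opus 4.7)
The plan is to derive the identity from the additivity of Lefschetz numbers along the Mayer--Vietoris sequence associated with the excision couple $(A,B)$, using the exponential presentation of the zeta function.

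First, I would fix the definition to be used: for a continuous self-map $\phi:X\to X$ with well-defined Lefschetz numbers $\Lambda(\phi^n)$, set
\[
\zeta_\phi(t)=\exp\!\left(\sum_{n\ge 1}\frac{\Lambda(\phi^n)}{n}\,t^n\right)
=\prod_{q\ge 0}\det\!\bigl(I-t\,\phi_*\bigl|_{H_q(X;\BQ)}\bigr)^{(-1)^{q+1}}.
\]
The equivalence of the two expressions is the Lefschetz fixed-point formula combined with the identity $\det(I-tA)^{-1}=\exp\sum_{n\ge 1}\tfrac{\tr A^n}{n}t^n$. With this, it suffices to establish for every $n\ge 1$ the additivity relation
\[
\Lambda(h^n)=\Lambda(h_A^n)+\Lambda(h_B^n)-\Lambda(h_C^n),
\]
because exponentiating $\sum_n \tfrac{t^n}{n}$ times this identity produces exactly $\zeta(t)=\zeta_A(t)\zeta_B(t)/\zeta_C(t)$.

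Next I would set up the Mayer--Vietoris machinery. Because $(A,B)$ is an excision couple in $F=A\cup B$, we have the long exact sequence
\[
\cdots \to H_q(C)\xrightarrow{\;\alpha\;} H_q(A)\oplus H_q(B)\xrightarrow{\;\beta\;} H_q(F)\xrightarrow{\;\partial\;} H_{q-1}(C)\to\cdots
\]
with rational coefficients. The hypothesis that $h(A)\subset A$ and $h(B)\subset B$ implies $h(C)\subset C$, and naturality of Mayer--Vietoris then makes the whole sequence commute with the endomorphisms induced by $h_A$, $h_B$, $h_C$, $h$. I would invoke the standard algebraic fact that for a long exact sequence of finite-dimensional vector spaces equipped with compatible endomorphisms, the alternating sum of traces vanishes. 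Applying this to the Mayer--Vietoris sequence grade-by-grade yields, for each $n\ge 1$,
\[
\sum_q (-1)^q\tr(h^n_*|H_q(F))=\sum_q(-1)^q\bigl(\tr(h_A^n)+\tr(h_B^n)-\tr(h_C^n)\bigr)\bigl|_{H_q},
\]
which is precisely the desired Lefschetz additivity.

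Finally, I would translate back to the zeta function. Substituting into the exponential formula,
\[
\zeta(t)=\exp\!\sum_{n\ge 1}\tfrac{t^n}{n}\Lambda(h^n)=\exp\!\sum_{n\ge 1}\tfrac{t^n}{n}\!\left[\Lambda(h_A^n)+\Lambda(h_B^n)-\Lambda(h_C^n)\right]=\frac{\zeta_A(t)\,\zeta_B(t)}{\zeta_C(t)}.
\]
The only delicate point, and the one I would be most careful about, is justifying that $F$, $A$, $B$, $C$ have finite-dimensional rational homology vanishing in high degrees so that the Lefschetz numbers are defined and the algebraic trace-additivity applies; in the intended application $F$ is a Milnor fiber (a finite CW-complex up to homotopy) and $A,B$ are closed subcomplexes, so this is automatic, but I would state it as a standing hypothesis. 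Everything else is formal manipulation of exact sequences and generating functions.
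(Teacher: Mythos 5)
Your proof is correct and is the standard argument; the paper does not prove this proposition itself but cites it to Oka's book, where the argument is exactly this Mayer--Vietoris plus alternating-trace computation, so you have reproduced the intended proof. (A small shortcut: one can apply directly the fact that the alternating product $\prod_i\det(I-t\phi_i)^{(-1)^i}=1$ over a long exact sequence of finite-dimensional vector spaces with compatible endomorphisms, which gives the zeta identity without passing through the exponential of Lefschetz numbers; also note that the equality of your two expressions for $\zeta_\phi$ is pure linear algebra and does not require the Lefschetz fixed-point theorem, only the definition of $\Lambda(\phi^n)$ as an alternating sum of traces.)
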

\subsubsection{Resolution of a polar type  and the Milnor fibration}
Let us consider an admissible toric modification
$\what \pi: X\to \BC^2$ with respect to the regular fan $\Si^*$ with vertices
$\{P_0,P_1,\dots, P_{\ell+1}\}$ and we assume that 
$Q_j=P_{\nu_j},j=1,\dots, s$ and $P_0=E_1={}^t(1,0)$
and $P_{\ell+1}=E_2={}^t(0,1)$.
Then we take the polar modification 
$\omega_p:\cP X\to X$
along $\widehat E(P_1),\dots, \widehat E(P_{\ell})$.
Put $\Phi_p:\cP X\to \BC^2$ be the composite with $\what \pi: X\to \BC^2$.
Consider the second Milnor fibration
\[
 f\circ \Phi_p:\Phi_p\inv( E(r,\delta)^*)\to D(\de)^*
\]
on the resolution space $\cP X$.
Take $P_j$ for $1\le j\le \ell$. There are two toric coordinate charts
of $X$
which contain the  vertex $P_j$:
\[\begin{split}
& \si_{j-1}=\Cone(P_{j-1},P_j)\quad
\text{gives the coordinate chart}\,\,(U_{j-1},(u_{j-1},v_{j-1}))\\
& \si_{j}=\Cone(P_{j},P_{j+1})\quad
\text{gives the coordinate chart}\,\,(U_j,(u_{j},v_{j})).\\
\end{split}
\]
Put $M=(P_j,P_{j+1})\inv (P_{j-1},P_j)$. It takes the form:
\[
   M=\left(\begin{matrix}
\ga_j&1\\
-1&0
\end{matrix}
\right).
 \]
Then the two coordinate systems are connected by the relation
\begin{eqnarray}\label{adjacent}
 u_j=u_{j-1}^{\ga_j}v_{j-1},\quad v_j=u_{j-1}^{-1}.
\end{eqnarray}
Put $P_j={}^t(c_j,d_j),\,j=1,\dots, \ell$.
The inverse image $\wtl U_j:=\omega_p\inv(U_j)$ has
the polar coordinates
$(r_j,\theta_j,s_j,\eta_{j})$
which corresponds to $(u_j,v_j)$ with $u_j=r_j\exp(i\,\theta_j )$
and $v_j=s_j\exp(i\,\eta_j )$.
The relation (\ref{adjacent}) says that 
\begin{eqnarray}
 s_j=r_{j-1}\inv ,\qquad \eta_j=-\theta_{j-1}.
\end{eqnarray}
We do not take  a normal
polar modification along the two non-compact divisors $u_0=0$ and $v_\ell=0$.
Thus the coordinates of $\wtl U_0$ and $\wtl U_\ell$ are
$(u_0,s_0,\eta_0)$ and $(r_\ell,\theta_\ell,v_\ell)$ respectively.
Recall that the exceptional divisor
$\wtl E(P_j)$ is defined by $r_j=0$ in $\wtl U_j$ and 
by $s_{j-1}=0$ in $\wtl U_{j-1}$ for $1\le j\le \ell$.
Note that $u_0=0$ in $U_0$ corresponds bijectively to 
the axis $z_1=0$ in the base space $\BC^2$
and 
\[
(P_0,P_1)=\left(\begin{matrix}1&c_1\\
0&1\end{matrix}\right),\,d_1=1,\, z_1=u_0v_0^{c_1},\,z_2=v_0.
    \]
Similarly on $\wtl U_\ell$, $v_\ell=0$ corresponds to $z_2=0$
and \[
     z_1=u_\ell,\,z_2=u_\ell^{d_\ell}v_\ell,\,\,c_\ell=1.
    \]
   \begin{figure}[htb]   
\setlength{\unitlength}{1bp}   
\begin{picture}(600,150)(-20,0)  
\put(70,130){\special{epsfile=./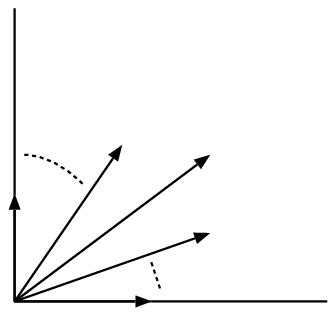 vscale=1 hscale=1}}   
\put(60,45){$O$}
\put(60,80){$E_2$}
\put(100,30){$E_1$}
\put(130,60){$P_{j-1}$}
\put(135,90){$P_j$}
\put(100,100){$P_{j+1}$}
\put(120,72){$\si_{j-1}$}
\put(105,82){$\si_j$}
\end{picture}
\vspace{-.8cm}
\caption{Regular fan $\Si^*$}\label{Fig2}
\end{figure}   

\subsection{Decomposition of the fiber}
Recall that
\[\begin{split}
& E(r,\de)^*=\{(z_1,z_2)\,|\, 0<|f(z_1,z_2,\bar z_1,\bar z_2)|\le \de,\,
\|(z_1,z_2)\|\le r\}\\
&\phi(\bfz):=\sqrt{|z_1|^2+|z_2|^2},\quad \wtl B_r=\phi\inv(B_r)\\
&F_{\de}=\{(z_1,z_2)\,|\, f(z_1,z_2,\bar z_1,\bar z_2)= \de,\,
(z_1,z_2)\in  B_r\}:\,\,\text{Milnor fiber}.
\end{split}
\]
We denote the pull-back  of a function $h$ on $\BC^2$ to
$\cP X$ by
$\tilde h$ for simplicity.
On $\cP X$, we consider the subsets
\[\begin{split}
& W_j(r,\rho)=\{\wtl \bfx=(r_j,\theta_j,s_j,\eta_j)\in\wtl U_j\,|\,
\,1/\rho\ge s_j\ge \rho\}\\
&T_{j-1}(\rho)=\{(r_{j-1},\theta_{j-1},s_{j-1},\eta_{j-1})\in
   \wtl U_{j-1}\,|\,
r_{j-1}\le \rho,\,s_{j-1}\le \rho\}\\
&WT_j(\rho)=\{(r_j,\theta_j,s_j,\eta_j)\in \wtl U_j\,|\,
s_j= \rho,\,r_j\le \rho\}\\
&TW_j(\rho)=\{(r_{j-1},\theta_{j-1},s_{j-1},\eta_{j-1})\in \wtl U_{j-1}\,|\,
r_{j-1} =\rho,\,s_{j-1}\le \rho\}\\
\end{split}
\]
and 
\[
 \begin{split}
& T_0(\rho):=\{(u_0,s_0,\eta_0)\in \wtl U_0\,|\,
  |u_0|\le \rho,\,s_0\le \rho\}\\
& W_0(r,\rho):=\{(u_0,s_0,\eta_0)\in \wtl U_0\,|\,\tl
  \phi(u_0,s_0,\eta_0)\le r,\,|u_0|\ge \rho,\,s_0\ge \rho\}\\
& T_\ell(\rho):=
\{(r_{\ell},\theta_{\ell},v_\ell)\in \wtl U_\ell\,|\,r_\ell \le \rho,\,
|v_\ell|\le \rho\}\\
& W_{\ell}(r,\rho):=
\{(r_{\ell},\theta_{\ell},v_\ell)\in \wtl U_\ell,|\,r_\ell\ge \rho,\,
|v_{\ell}|\ge \rho,\,\wtl\phi(r_{\ell},\theta_{\ell},v_\ell)\le r\}\\
\end{split}
\]

 \begin{figure}[htb]   
\setlength{\unitlength}{1bp}   
\begin{picture}(600,150)(-20,0)  
\put(70,130){\special{epsfile=./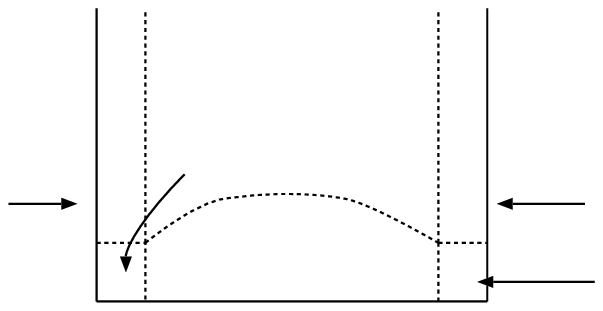 vscale=1 hscale=1}}   
\put(35,80){$\{r_{j-1}=0\}$}
\put(90,35){$\{s_{j-1}=0\}\cup\{r_j=0\}$}
\put(130,50){$W_j(r,\rho)$}
\put(120,85){$T_{j-1}(\rho)$}
\put (230,80){$\{s_j=0\}$}
\put (245,50) {$T_j(\rho)$}
\end{picture}
\vspace{-.8cm}
\caption{Decomposition of $\cP X$}\label{Fig2}
\end{figure}   
Note that 
\[
 \begin{split}
&\wtl \phi(u_0,s_0,\eta_0)=s_0\sqrt{1+|u_0|^2s_0^{2c_1-2}}= s_0+o(s_0)\\
&\wtl\phi(r_{\ell},\theta_{\ell},v_\ell)=r_\ell\sqrt{1+{|v_\ell|}
r_\ell^{2d_\ell-2}}= r_\ell+o(r_\ell)
\end{split}\]
Here $o(s_0)$ implies $o(s_0)/s_0\to 0$ when $s_0\to 0$.
Put 
\[
 A(r,\rho)=\bigcup_{j=0}^{\ell+1}W_j(r,\rho)\cup
\bigcup_{j=0}^{\ell}T_{j}(\rho).
\]
Put $\wtl{ E}(r,\de)^*=\Phi_p\inv( E(r,\de)^*)$ with $\de\ll r$
and  $ A(r,\rho,\de)^*= A(r,\rho)\cap \wtl f\inv(D_\de^*)$
with $\de\ll r,\rho$.
It is easy to see that $A(r,\rho,\de)^*=\wtl{ E}(r,\de)^*$
as long as  $\rho\ll r$ and  $\de\ll \rho,\,r$.
We see that the choice of $\rho$ does not give any effect on $A(r,\rho,\de)^*$,
as long as $\de\ll \rho\ll r$.
 Thus we can use $A(r,\rho,\de)^*$
as the total space of the Milnor fibration:
$\wtl f: A(r,\rho,\de)^*\to D_\de^*$.
We decompose $A(r,\rho,\de)^*$ into monodromy invariant subspaces
as follows.
\[
\begin{split}
& A(r,\rho,\de)^*\cap W_j(r,\rho),\,\,
A(r,\rho,\de)^*\cap
 T_{j}(\rho)\\
&A(r,\rho,\de)^*\cap
 TW_j(\rho),\,\,
A(r,\rho,\de)^*\cap WT_j(\rho),\quad j=0,\dots,\ell.
\end{split}
\]

\subsubsection{Transversality}
Assume that
 $\De(P_j)=\De_t\cap \De_{t+1}=\{M_t\}$
 and that $M_t$ comes from the monomial
$z_1^{\al_{t1}}\,|z_1|^{2\be_{t1}}\,z_2^{\al_{t2}}\,|z_2|^{2\be_{t2}}$.
 By the definition we can
write
\[\begin{split}
& \wtl f(r_j,\theta_j,s_j,\eta_j)\equiv r_j^{d(P_j)}\,s_j^{d(P_{j+1})}
\exp((\al_{t1}\,c_j+\al_{t2}\,d_j)\theta_j \,i)\\
&\qquad \times \exp((\al_{t1}\,c_{j+1}+\al_{t2}\,d_{j+1})\eta_j\, i)
+\, O (r_j^{d(P_j)+1}).
\end{split}
\]
Thus it is easy to see that $\wtl f\inv(\xi),\,|\xi|=\de$
 intersects transversely
with
$WT_j(\rho)$ if $\de$  is  sufficiently small and
$ \de\ll r,\rho$. Similarly  $\wtl f\inv(\xi)$ intersects transversely
with
$TW_j(\rho)$ under the same assumptions.

\noindent
Fix such $r,\de,\rho$.
Under the above decomposition of
$A(r,\rho,\de)^*$,
the Milnor fiber $\wtl F_\de:=\wtl f\inv(\de)\cap \wtl B$ decomposes
into
the following strata:
\[
 \begin{split}
& \wtl F_\de\cap W_j(r,\rho),\,\,\wtl F_\de\cap T_j(\rho),\,
\,\wtl F_\de\cap WT_j(\rho),\,\,
\wtl F_\de\cap TW_{j}(\rho),\,j=0,\dots, \ell.
\end{split}
\]
 By the above transversality,
we see that (after choosing a suitable vector field to define the
characteristic diffeomorphisms) $\wtl F_\de\cap W_j(r,\rho)$,
$\wtl F_\de\cap T_{j}(\rho)$,
$\wtl F_\de\cap TW_j(\rho)$ and $\wtl F_\de\cap WT_j(\rho)$
are invariant by the monodromy $h:\wtl F_\de\to \wtl F_\de$.
Now the proof of Theorem \ref{MixAC} follows from the following
observations.
\begin{enumerate}
\item  The zeta functions of $h$ restricted  on $\wtl F_\de\cap
      T_{j}(\rho)$
are trivial for $1\le j\le \ell-1$.
\item The zeta functions of $h$ restricted on
$\wtl F_\de\cap W_j(r,\rho)$ with $j\ne \nu_1,\dots, \nu_s$ are
      trivial.
\item  The zeta functions of $h$ restricted on
$\wtl F_\de\cap WT_j(\rho)$ and
$\wtl F_\de\cap TW_j(\rho)$   are trivial.
\item The zeta functions of $h$ on $\wtl F_\de\cap
      T_0(\rho)$ and $\wtl F_\de\cap
      T_{\ell}(\rho)$  are respectively given by
\[
 \frac 1{(1-t^{|a_2|})},\quad \frac 1{(1-t^{|a_1|})}.
\]
\item (Face contribution) The zeta function of $h: \wtl F_\de\cap
      W_{\nu_j}(\rho)$ is 
\nl
$(1-t^{m_j})^{-\chi(F_j^*)/m_j}$
where $F_j^*=f_{\De_j}\inv(1)\cap \BC^{*2}$ and $m_j$ is  the polar degree
       of $f_{\De_j}$.

\end{enumerate}
\subsection{Outline of the proof of the assertions (1) to (5)}
~~~

(1) Consider $\wtl F_\de\cap T_j(\rho)$.
Assume that
 $\De(P_j)=\De_t\cap \De_{t+1}=\{M_j\}$
 and that $M_j$ comes from the monomial
$z_1^{\al_{t1}}|z_1|^{2\be_{t1}}z_2^{\al_{t2}}|z_2|^{2\be_{t2}}$
as above.
Then 
\[
 \wtl F_\de\cap T_j(\rho)=
\{(r_j,\theta_j,s_j,\eta_j)\,|\, r_j,s_j\le \rho,\,
 \wtl f(r_j,\theta_j,s_j,\eta_j)=\de\}.
\]
$\wtl f(r_j,\theta_j,s_j,\eta_j)$ takes the form
\[\begin{split}
& \wtl f(r_j,\theta_j,s_j,\eta_j)\equiv
 c_{M_t}\, r_j^{d(P_j)}s_j^{d(P_{j+1})}
\exp((\al_{t1}\,c_j+\al_{t2}\,d_j)\theta_j\, i)\\
&\qquad \times \exp((\al_{t1}\,c_{j+1}+\al_{t2}\,d_{j+1})\eta_j\,  i) \,+\,
O(r_j^{d(P_j)+1}s_j^{d(P_{j+1})+1})
\end{split}
\]
($c_{M_t}$ is a non-zero constant) and the homotopy type of
 this part of the  Milnor fiber is given by
\[
 \{(\theta_j,\eta_j)\in S^1\times S^1\,|\,c_{M_t}
\exp(((\al_{t1}c_j+\al_{t2}d_j)\theta_j +(\al_{t1}
c_{j+1}\al_{t2}d_{j+1})\eta_j) i)=1\}
\]
which is a finite union of copies of  $S^1$ by the following.
\begin{Observation}
Let $a,b$ be  integers with $(a,b)\ne (0,0)$ and
let
\[
 F=\{(\exp(\theta\, i),\exp(\eta \,i))\in S^1\times S^1\,|\,
\exp((a\theta+b\eta)\,i)=1\}.
\] Then $F$ is a disjoint union of  copies of $S^1$
 and the number of $S^1$ is $\gcd(a,b)$.
\end{Observation}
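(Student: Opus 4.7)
The plan is to recognize $F$ as the kernel of a Lie group homomorphism between compact tori and then diagonalize that homomorphism by a unimodular change of coordinates. Concretely, writing $T^2 = S^1 \times S^1$ as a compact abelian Lie group, define the continuous homomorphism
\[
\phi : T^2 \to S^1, \qquad \phi(\exp(i\theta), \exp(i\eta)) = \exp(i(a\theta + b\eta)).
\]
By construction $F = \phi^{-1}(1) = \ker \phi$, so $F$ is a closed subgroup of $T^2$, hence a compact Lie subgroup.

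Next I would reduce to a diagonal form. Set $d = \gcd(a,b)$ and write $a = da'$, $b = db'$ with $\gcd(a',b') = 1$. By B\'ezout there exist integers $p,q$ with $a'p + b'q = 1$, so the matrix
\[
A = \begin{pmatrix} a' & b' \\ -q & p \end{pmatrix} \in GL(2,\BZ)
\]
is unimodular. The associated automorphism $\Psi_A : T^2 \to T^2$ sending $(z,w)$ to $(z^{a'}w^{b'},\, z^{-q}w^{p})$ is a diffeomorphism of $T^2$. In the new coordinates $(\zeta,\omega) = \Psi_A(z,w)$, the homomorphism $\phi$ pulls back to $(\zeta,\omega) \mapsto \zeta^{d}$, since $z^a w^b = (z^{a'}w^{b'})^d = \zeta^d$.

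Under $\Psi_A$, the set $F$ corresponds to $\{(\zeta,\omega) \in T^2 \mid \zeta^d = 1\} = \mu_d \times S^1$, where $\mu_d \subset S^1$ is the group of $d$-th roots of unity. This is a disjoint union of exactly $d = \gcd(a,b)$ circles, each of the form $\{\zeta_0\} \times S^1$ for $\zeta_0 \in \mu_d$. Pulling back by the diffeomorphism $\Psi_A$ preserves the number of connected components and their diffeomorphism type, which gives the claim.

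There is essentially no obstacle: the only subtle point is to note that $(a,b) \ne (0,0)$ ensures $d \ge 1$ so that $\phi$ is non-trivial and the above B\'ezout-based change of basis is available, and that when $a = 0$ or $b = 0$ the formula degenerates in the expected way (e.g.\ $F = \mu_{|b|} \times S^1$ if $a = 0$, consistent with $\gcd(0,b) = |b|$).
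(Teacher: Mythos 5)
The paper states this Observation without any proof -- it is presented as a standard fact immediately before being used in the zeta-function computation -- so there is no paper argument to compare against. Your proof is correct and self-contained: recognizing $F$ as the kernel of the torus homomorphism $\phi(z,w)=z^a w^b$ and then diagonalizing by a unimodular change of basis via B\'ezout is exactly the clean way to do it. The one point worth double-checking (and you have it right) is the determinant of $A$, namely $a'p - b'(-q) = a'p + b'q = 1$, so $A \in SL(2,\BZ)$ and $\Psi_A$ is genuinely an automorphism of $T^2$ with integer inverse, and the pull-back identity $z^a w^b = (z^{a'}w^{b'})^d = \zeta^d$ follows from $a = da'$, $b = db'$. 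This gives $\Psi_A(F) = \mu_d \times S^1$, a disjoint union of $d = \gcd(a,b)$ circles, which is the claim; your observation that $(a,b) \ne (0,0)$ ensures $d \ge 1$ so the argument is non-vacuous is also the right caveat. For what it is worth, one could also argue in the paper's more computational style (as in the proof of Lemma \ref{link-number-binomial}): solve $a\theta + b\eta \equiv 0 \pmod{2\pi}$ directly by reducing to $a'\theta + b'\eta \equiv 0 \pmod{2\pi/d}$ and parametrizing the solution set, but your homomorphism-and-$SL(2,\BZ)$ version is shorter and makes the topological structure of $F$ transparent without case analysis.
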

The monodromy acts as the permutation
of the components and we see that the characteristic polynomials
on the  $0$-th homology and the  $1$-th homology is the same. Thus 
the zeta function
is trivial.
The assertion (3) can be shown  in the same way.

\noindent
Let us see the assertion  (2). By the same argument,
\[
\wtl F_\de\cap W_j(\rho)=\{\wtl
f(r_j,\theta_j,s_j,\eta_j)=\de,
\, 1/\rho\ge s_j \ge\rho\}
\]
and by throwing away higher terms, we may consider that $\wtl f$
is again homotopically defined by 
\[
c_{M_t} \,\exp\left((\al_{t1}c_j+\al_{t2}d_j)\theta_j \,i+(\al_{t1}
c_{j+1}+\al_{t2}d_{j+1})\eta_j \,i\right)
\]
Again we see that the Milnor fiber is fibered over the interval
$\{\rho\le s_j\le 1/\rho\}=[\rho,1/\rho]$
 with fiber being a finite union of $S^1$'s. Thus the zeta
function is again trivial. (Recall that $r_{j-1}=1/s_j$.)

\vspace{.2cm}\noindent
(4) Let us consider the fibration restricted on $T_0(\rho)$.
The situation is different from that of (3).
Let $M_0=\Ga(f)\cap \{z_1=0\}$ and assume it comes from the monomial
$z_2^{a_2}|z_2|^{2b_2}$.
The pull back function takes the form:
\[
 \wtl f(u_0,s_0,\eta_0)=c_{M_0}\,s_0^{a_2+2b_2}\exp(a_2\eta_0\, i)+O(s_0^{a_2+2b_2+1})
\]
and throwing away the higher term and putting $c_{M_0}=\tau_0\exp(\xi\, i)$, we see that 
$\wtl F_\de\cap T_0(\rho) $ consists of $a_2$-contractible components:
\[
\wtl F_\de\cap T_0(\rho)= \{(u_0,s_0,\eta_0)\,|\,\tau_0\,s_0^{a_2+2b_2}=\de,
\, \xi+a_2\eta_0\equiv 0\,\, \modulo\, 2\pi\}.
\]

More precisely, 'throwing away' implies  the following standard  discussion.
Consider the  family of functions
\[
  \wtl f_\tau(u_0,s_0,\eta_0):=c_{M_0}\,s_0^{a_2+2b_2}\exp(a_2\eta_0\,
  i)+\tau\,O(s_0^{a_2+2b_2+1}),\,\,0\le \tau\le 1.
\]
In the level of the  original function $f$, this corresponds to the family
$f_\tau=c_{M_0}\bfz^{M_0}+\tau\,(f(\bfz,\bar \bfz)-c_{M_0}\bfz^{M_0})$.
Consider the strata of the respective  Milnor fibers restricted in this 
neighborhood $T_0(\rho)$ and their union:
\[
\begin{split}
\wtl  F_{\de,\tau}&=\{(u_0,s_0,\eta_0)|\wtl f_\tau(u_0,s_0,\eta_0)=\de,\,
(u_0,s_0,\eta_0)\in T_0(\rho) \}\\
\wtl {\mathcal F}_{\de}&=\{(u_0,s_0,\eta_0,\tau)|\wtl f_\tau(u_0,s_0,\eta_0)=\de,\,
(u_0,s_0,\eta_0,\tau)\in T_0(\rho)\times [0,1] \}.
\end{split}
\]
Taking $\de$ sufficiently small, we may assume that 
$\wtl  F_{\de,\tau}$ is smooth and intersects transversely with the
boundary of $T_0(\rho)$ for any $0\le \tau\le 1$. Now we apply the
Ehresmann fibering theorem   (\cite{Wolf1}) to the
projection
$\pi:\wtl {\mathcal F}_{\de}\to [0,1]$ and we conclude that the Milnor fibers
$\wtl  F_{\de,\tau},\,0\le \tau \le 1$ are diffeomorphic to
$\wtl  F_{\de,0}$. (We apply this
argument to each case (1) to (5).)

Thus using the Milnor fiber $\wtl  F_{\de,0}$, we see that
each component is homeomorphic to a disk
$\{u_0\,|\,|u_0|\le \rho\}$,
as the above equation has $a_2$ solutions for $\eta_0$.
The monodromy is acting cyclically among these components.
Thus the zeta function of this restriction is 
$1/(1-t^{|a_2|})$. 

 We see also that 
$\wtl F_\de\cap W_0(\rho)=\emptyset$ if $\de\ll \rho$.

The other edge $T_\ell(\rho)$  gives the term
$1/(1-t^{|a_1|})$.

(5) Now we consider the restriction of $W_{\nu_j}(\rho)$.
Then the principal part takes the form
\[\begin{split}
& \wtl f(r_j,\theta_j,s_j,\eta_j)=\wtl
   f_{\De_j}(r_j,\theta_j,s_j,\eta_j)+O(r^{d(P_{\nu_j})+1})
\end{split}
\]
where $ \ord_{r_j}\, f_{\De_j}(r_j,\theta_j,s_j,\eta_j)=d(P_{\nu_j})$
and the Milnor fibering restricted on this stratum  $W_{\nu_j}(\rho)$
is determined by the principal part
$\wtl f_{\De_j}(r_j,\theta_j,s_j,\eta_j)$.
The last work for us is to determine this contribution.

Consider the curve $C_j=\{f_{\De_j}(\bfz,\bar\bfz)=0\}$
and its polar type resolution by the same mapping $\Phi_p:\cP X\to
\BC^2$.
By the   polar admissibility assumption of the inside vertices,
the Milnor fibration of the second description exists and it is equivalent to the
Milnor fibration
of the first description by Theorem \ref{NIM}.
Then combining the assertions (1) to (4)
applied for $C_j$, we see that 
the above contribution is nothing but the
zeta function of
the monodromy of 
$f_{\De_j}:\BC^{*2}\setminus f_{\De_j}\inv(0)\to \BC^*$,
which is given by 
$(1-t^{m_j})^{-\chi(F_j^*)/m_j}$ as we have seen 
in Lemma \ref{zetaPolar1} and Theorem \ref{NIM}.
Note that
$\wtl F_\de\cap W_0(\rho)=\emptyset$ and 
$\wtl F_\de\cap W_{\ell+1}(\rho)=\emptyset$.
This completes the proof of Theorem \ref{MixAC}.
\subsection{Topology of a  polar weighted polynomial and Kouchnirenko
  type formula}
We consider  a non-degenerate polar weighted mixed polynomial
$f_\De(z_1,z_2,\bar z_1,\bar z_2)$ with
$\De=\overline{AB}$ where $A,B$ are polar admissible simple vertices.
Let $(p_1,p_2;m_\De)$ be the polar weight type. Let $F_\De=f_\De\inv(1)$
be the fiber of the global fibration,
$F_\De^*=F_\De\cap \BC^{*2}$ and let $K_\De=f_\De\inv(0)\cap S^3$.
Note that $F_\De$ is diffeomorphic to the fiber of  the Milnor fibration
$f_\De/|f_\De|: S^3\setminus K_\De\to S^1$ or 
$f_\De:\partial E(r,\de)^*\to S_\de^1$, as $f_\De$ is 
super strongly non-degenerate by Theorem \ref{NIM}.
The Milnor fiber is connected by
 Proposition \ref{0-connected}.
Let $P_i(t)$ be the characteristic polynomial of the monodromy
at the $i$-th homology for $i=0,1$.
Then $P_1(t)=\zeta(t)(1-t)$ as $P_0(t)=(1-t)$.
 We consider the Wang sequence of the Milnor fibration:
\[
0 \to H_2(S^3-K_\De)\to H_1(F_\De)\mapright{h_*-\id}
H_1(F_\De)\to H_1(S^3-K_\De)\to \BZ\to 0.
\]
Put $r_\De^*=\lkn^*(f_\De\inv(0))$. 
Thus $H_0(K_\De)=\BZ^{r_\De^*+\eps(\De)}$
where $\eps(\De)$ is the number of coordinate axes which are a subset of
$f_\De\inv(0)$.
Thus $\eps(\De)=0,1,2$  according to the   two vertices $A,\,B$ are 
either on the axis
or not.
Let $\mu_\De$ and $\mu_\De'$ be the multiplicities  of the factor
 $(t-1)$ in
$P_1(t)$ and  $\zeta(t)$ respectively. Then by the equality 
$P_1(t)=\zeta(t)(1-t)$ and Lemma \ref{zetaPolar} and Remark \ref{zetaPolar2},
\[
 \mu_\De=\mu_\De'+1,\quad \mu_\De'= -\chi(F_\De^*)/m_\De-2+\eps(\De).
\]
On the other hand  by the Alexander duality, we have the isomorphism:
\[
 H_2(S^3-K_\De)\cong H^1(S^3,K_\De)\cong \wtl H^0(K_\De).
\]
As  the monodromy map $h_*$ is periodic,  we have
\[
r_\De^*+\eps(\De)-1= \dim\,{\Ker}\,\{h_*-\id:H_1(F_\De)\to H_1(F_\De)\}
=\mu_\De.
\]
Thus we obtain
\begin{Lemma}\label{lknEuler}
The Euler-Poincar\'e characteristic and the link component number
satisfy the following equality:
\[
 r_\De^*=-\chi(F_\De^*)/m_\De.
\]
\end{Lemma}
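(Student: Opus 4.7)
My plan is to extract $r_\De^*$ from two different computations of the same quantity, namely the multiplicity of the eigenvalue $1$ in the monodromy action on $H_1(F_\De)$, and then reconcile them.

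First, I will invoke Proposition \ref{0-connected} to know that $F_\De$ is connected, so $P_0(t)=1-t$ and the Wang exact sequence
\[
0\to H_2(S^3\setminus K_\De)\to H_1(F_\De)\xrightarrow{h_*-\id} H_1(F_\De)\to H_1(S^3\setminus K_\De)\to \BZ\to 0
\]
is available. Since $f_\De$ is polar weighted, the monodromy $h$ is periodic of order dividing $m_\De$, hence $h_*$ is semisimple on $H_1(F_\De;\BQ)$; consequently $\dim_{\BQ}\Ker(h_*-\id)$ equals the multiplicity $\mu_\De$ of the factor $(t-1)$ in $P_1(t)=\zeta(t)(1-t)$.

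Second, I will read off $\mu_\De$ from the zeta function via Lemma \ref{zetaPolar} together with Remark \ref{zetaPolar2}, which gives the multiplicity $\mu_\De'$ of $(t-1)$ in $\zeta(t)$ as
\[
\mu_\De'=-\chi(F_\De^*)/m_\De-2+\eps(\De),
\]
where $\eps(\De)\in\{0,1,2\}$ counts how many coordinate axes lie in $f_\De^{-1}(0)$ (equivalently, how many endpoints of $\De$ lie on the axes); thus $\mu_\De=\mu_\De'+1$.

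Third, I will compute $\dim H_2(S^3\setminus K_\De)$ by Alexander duality:
\[
H_2(S^3\setminus K_\De)\cong H^1(S^3,K_\De)\cong \wtl H^0(K_\De)=\BZ^{r_\De^*+\eps(\De)-1},
\]
since the components of $K_\De$ are precisely the $r_\De^*$ non-axial link components plus the $\eps(\De)$ axial ones. Combining this with the Wang sequence gives
\[
r_\De^*+\eps(\De)-1=\mu_\De=-\chi(F_\De^*)/m_\De-1+\eps(\De),
\]
and the $\eps(\De)$'s cancel, leaving $r_\De^*=-\chi(F_\De^*)/m_\De$.

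The main obstacle is not the algebra—that is mechanical—but ensuring the hypotheses needed to apply the ingredients: the Wang sequence needs the Milnor fibration of the first description, which requires strong non-degeneracy for $f_\De$ on $S^3$; this is supplied because polar admissibility of the end vertices makes $f_\De$ super strongly non-degenerate (Theorem \ref{NIM}), so the first- and second-description Milnor fibrations agree and $F_\De$ is diffeomorphic to the global fiber. Once this identification is made, connectedness of $F_\De$ via Proposition \ref{0-connected} and the periodicity of $h$ (used to pass from characteristic polynomial multiplicity to kernel dimension) are the only subtle points.
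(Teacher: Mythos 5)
Your proposal reproduces the paper's argument essentially verbatim: connectedness of $F_\De$ via Proposition \ref{0-connected} to get $P_0(t)=1-t$, the Wang sequence identifying $H_2(S^3\setminus K_\De)\cong\Ker(h_*-\id)$, Alexander duality to compute that kernel's rank as $r_\De^*+\eps(\De)-1$, periodicity of $h_*$ to equate this with the multiplicity of $(t-1)$ in $P_1(t)$, and Lemma \ref{zetaPolar}/Remark \ref{zetaPolar2} for the zeta-function side of the count, after which $\eps(\De)$ cancels. The one thing you make explicit that the paper leaves implicit is the use of polar admissibility and Theorem \ref{NIM} to guarantee the Milnor fibration on the sphere exists and agrees with the global fiber, which is a worthwhile clarification but not a different route.
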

Usually it is easier to compute $r_\De^*$ and we can compute
$\chi(F_\De^*)$ by Lemma \ref{lknEuler}.
Now we can state our Kouchnirenko type formula:
\begin{Theorem}\label{MixKouchnirenko}
Let $f(\bfz,\bar\bfz)$ be a non-degenerate convenient mixed
polynomial as in Theorem \ref{MixAC}.
Let $\De_1,\dots, \De_s$ be  faces of $\Ga(f)$ and  we assume that 
$f_{\De_j}(\bfz,\bar\bfz)$ is a polar weighted homogeneous polynomial
with polar degree $m_j$.
Let $r_j=\lkn^*(f_{\De_j}\inv(0))$ for $j=1,\dots, s$.
Then the Milnor number $\mu(F)=b_1(F)$ is given by the formula:
\[
 \mu(F)=\sum_{j=1}^s r_j \,m_j\,-\, |a_1|\,-\,|a_2|\,+\,1.
\]
Here $m_j$ is the polar degree of $f_{\De_j}$ and we assume that
 $m_j>0$.
 $a_1,\,a_2$ are the
 polar sections of $\Ga(f)$ on the respective coordinate axes.
\end{Theorem}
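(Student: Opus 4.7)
The plan is short: the formula is almost a direct corollary of the two preceding results, so the proof amounts to a topological bookkeeping step plus a connectivity argument. The strategy is to combine the A'Campo-type Euler-characteristic formula of Theorem \ref{MixAC} with the link-number identity of Lemma \ref{lknEuler}, and then convert $\chi(F)$ into $b_1(F)$ using the fact that the Milnor fiber $F$ is a connected real $2$-manifold with boundary (recall $n=2$).

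First, I would substitute $\chi(F_j^*)=-r_j m_j$ (Lemma \ref{lknEuler}) into the A'Campo formula of Theorem \ref{MixAC} to obtain
\[
\chi(F) \;=\; \sum_{j=1}^s \chi(F_j^*) \;+\; |a_1| \;+\; |a_2|
\;=\; -\sum_{j=1}^s r_j m_j \;+\; |a_1| \;+\; |a_2|.
\]
Since the Milnor fiber $F = f^{-1}(\de)\cap B_r$ is a smooth compact real $2$-manifold with boundary (here the boundary is the transverse intersection $f^{-1}(\de)\cap S_r$, whose existence is guaranteed by Theorem \ref{Milnor2} under our assumptions), we have $b_2(F)=0$, so $\chi(F)=b_0(F)-b_1(F)$. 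Granting for a moment that $b_0(F)=1$, this yields
\[
\mu(F)\;=\;b_1(F)\;=\;1-\chi(F)\;=\;1\;+\;\sum_{j=1}^s r_j m_j\;-\;|a_1|\;-\;|a_2|,
\]
which is the claimed formula.

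The one substantive point that remains is the connectivity of $F$, i.e.\ $b_0(F)=1$. I would derive this from the decomposition of the Milnor fiber used in the proof of Theorem \ref{MixAC}. In that decomposition $\wtl F_\de$ is partitioned into the face strata $\wtl F_\de\cap W_{\nu_j}(\rho)$, the vertex strata $\wtl F_\de\cap T_j(\rho)$, and the interface strata $\wtl F_\de\cap WT_j(\rho)$ and $\wtl F_\de\cap TW_j(\rho)$. Each face stratum is homotopy equivalent to the Milnor fiber $F_j$ of the polar-weighted face function $f_{\De_j}$ restricted to a neighbourhood where the exceptional divisor plays the role of $\BC^{*2}$; by Proposition \ref{0-connected} each $F_j$ is connected, and the end strata $\wtl F_\de\cap T_0(\rho),\ \wtl F_\de\cap T_\ell(\rho)$ contribute $|a_2|$ and $|a_1|$ discs respectively. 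The interface strata $WT_j(\rho)$ and $TW_j(\rho)$ are non-empty circle bundles that glue a face stratum to its neighbouring vertex or end stratum, so the dual graph of the decomposition is connected (it follows the linear sequence of faces of $\Ga(f)$). A standard Mayer--Vietoris argument on this stable covering then gives $b_0(F)=1$.

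The main obstacle is precisely this connectivity/gluing verification: one must check carefully that the interface strata are non-empty and that the monodromy preserves the decomposition, so that the pieces really do assemble into a connected $F$. Once that is in place the rest is a one-line substitution. A parallel remark: the same argument, applied to the characteristic polynomial of the monodromy via $P_1(t)=\zeta(t)\,(1-t)$, gives an alternative route, paralleling the proof of Lemma \ref{lknEuler}, and can be used as a consistency check.
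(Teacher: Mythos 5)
Your proposal is correct and follows the same route the paper intends: the paper gives no explicit proof of Theorem~\ref{MixKouchnirenko}, presenting it immediately after Lemma~\ref{lknEuler} as a direct substitution of $\chi(F_j^*)=-r_jm_j$ into the $\chi(F)$ formula of Theorem~\ref{MixAC}, followed by $\mu(F)=b_1(F)=1-\chi(F)$ for the surface with boundary $F$. The one place I would push back is the connectivity step: your Mayer--Vietoris argument over the strata of $\cP X$ is plausible but requires verifying that every component of each end stratum $\wtl F_\de\cap T_0(\rho)$, $\wtl F_\de\cap T_\ell(\rho)$ (there are $|a_2|$ and $|a_1|$ of them) and of each intermediate $T_j$-piece actually attaches through the interface strata to the neighbouring face stratum, which is more bookkeeping than you have written down. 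A cleaner and shorter route, and the one implicit in the paper's own Proposition~\ref{0-connected}, is to use the Milnor fibration $\varphi\colon S_r^{2n-1}\setminus K_r\to S^1$ (which exists here because each $f_{\De_j}$ being polar weighted upgrades non-degeneracy to strong non-degeneracy) and its homotopy exact sequence
\[
\pi_1(S_r^{3}\setminus K_r)\longrightarrow \pi_1(S^1)\longrightarrow \pi_0(F)\longrightarrow \pi_0(S_r^{3}\setminus K_r)=0;
\]
a lasso around a mixed-smooth point of $V$ (which exists since $\codim_\BR V=2$) maps to a generator of $\pi_1(S^1)$, so the first map is onto, and hence $\pi_0(F)=0$. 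With $b_0(F)=1$ and $b_2(F)=0$ in hand, your one-line substitution gives the formula exactly as claimed.
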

As a special case, the following is a formula for a good polar weighted
mixed polynomial (see \S \ref{good polar} for the definition)
which corresponds to the  Orlik-Milnor formula 
\cite{MilnorOrlik} for a
weighted homogeneous isolated singularity.
\begin{Corollary}
Assume that 
  $f(\bfz,\bar\bfz)$ is  a good polar weighted
 polynomial
which is factored as
\begin{eqnarray}
 &f(\bfz,\bar\bfz)=c
  \prod_{j=1}^k (z_2^a\,
|z_2|^{2a'}-\la_j \,{z_1}^b\,|z_1|^{2b'}),\quad c\ne 0
\end{eqnarray}
with $a\ne 0,\,b\ne 0$.
Let $r=\gcd(|a|,|b|)$.
The polar weight is given by
$P={}^t(p_1\eps_1,p_2\eps_2)$
where
$p_1=|a|/r,\,p_2=|b|/r$,
$\eps_1= b/|b|,\eps_2= a/|a|$ and  the polar degree  $d_p$
is given as $d_p=|a|\,|b|\,k/r$,  $\lkn(f\inv(0))=r\,k$ and 
\[
 \begin{split}
&\mu=|a|\,|b|\,k^2 -k\,(|a|+|b|)+1=(k\,|a|-1)\,(k\,|b|-1)\,\,\text{and}\\
&\zeta(t)=\frac{(1-t^{d_p})^{rk}}{(1-t^{|a|})(1-t^{|b|})}.
\end{split}\]
\end{Corollary}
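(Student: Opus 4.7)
The plan is to derive every assertion by applying Lemmas \ref{polar-factor}, \ref{link-number-binomial} and \ref{lknEuler} together with Theorems \ref{MixAC} and \ref{MixKouchnirenko} to the binomial product $f$.

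First I pin down the polar weight structure. Each factor
\[
 g_j := z_2^a|z_2|^{2a'} - \lambda_j z_1^b|z_1|^{2b'}
\]
is a mixed binomial; written out in $z_i,\bar z_i$, its two monomials have polar degrees $p_2 a$ and $p_1 b$ under a weight ${}^t(p_1,p_2)$. Equating them forces $p_1/p_2 = a/b$, and normalising by $r=\gcd(|a|,|b|)$ with signs chosen so that the common polar degree is positive yields $p_1=\epsilon_1|a|/r$ and $p_2=\epsilon_2|b|/r$ with $\epsilon_1=b/|b|$, $\epsilon_2=a/|a|$. The polar degree of each $g_j$ is then $|a||b|/r$, so $f=c\prod_j g_j$ is polar weighted of degree $d_p=k|a||b|/r$. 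Distinctness of the $\lambda_j$ together with Lemma \ref{polar-factor} gives non-degeneracy, so $\Gamma(f)$ consists of a single face $\Delta$ whose face function is $f$ itself.

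Next I count link components and compute $\chi(F_\Delta^*)$. Applying Lemma \ref{link-number-binomial} to each $g_j$ (with $c_1=b$, $c_2=a$ in the notation of that lemma) gives $\lkn^*(g_j^{-1}(0)) = \gcd(|a|,|b|) = r$, and since the factors cut out mutually disjoint loci in $\mathbb{C}^{*2}$ one obtains $\lkn^*(f^{-1}(0))=kr$. Because $a,b\ne 0$, the coordinate axes avoid $f^{-1}(0)$, so $\lkn(f^{-1}(0))=kr$. Lemma \ref{lknEuler} then furnishes $\chi(F_\Delta^*) = -kr\cdot d_p = -k^2|a||b|$.

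Finally I extract $\mu$ and $\zeta(t)$. The two axial vertices of $\Gamma(f)$ come from the monomials $z_1^{kb}|z_1|^{2kb'}$ and $z_2^{ka}|z_2|^{2ka'}$, so the polar sections satisfy $|a_1|=k|b|$ and $|a_2|=k|a|$. Theorem \ref{MixKouchnirenko} then gives
\[
 \mu = kr\cdot d_p - k|a| - k|b| + 1 = k^2|a||b| - k(|a|+|b|) + 1 = (k|a|-1)(k|b|-1),
\]
while Theorem \ref{MixAC} gives
\[
 \zeta(t) = \frac{(1-t^{d_p})^{-\chi(F_\Delta^*)/d_p}}{(1-t^{|a_1|})(1-t^{|a_2|})} = \frac{(1-t^{d_p})^{kr}}{(1-t^{k|a|})(1-t^{k|b|})}.
\]
The only subtle point is the sign bookkeeping in the first step: when $a$ or $b$ is negative, the conversion of $z_i^a|z_i|^{2a'}$ to a monomial in $\{z_i,\bar z_i\}$ swaps which variable carries the larger exponent, and the signs $\epsilon_1,\epsilon_2$ must then be chosen so that $d_p>0$. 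Once that is settled, all remaining steps are a direct plug-in to the structure theorems just recalled.
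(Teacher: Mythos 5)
Your derivation is correct and follows exactly the route the paper intends: determine the polar weights by balancing the two monomials in each binomial factor, apply Lemma \ref{polar-factor} for non-degeneracy and Lemma \ref{link-number-binomial} (with $c_1=b$, $c_2=a$) for the per-factor link count, sum over the $k$ disjoint components, convert to $\chi(F^*)$ via Lemma \ref{lknEuler}, identify the polar sections $|a_1|=k|b|$ and $|a_2|=k|a|$, and plug into Theorems \ref{MixAC} and \ref{MixKouchnirenko}. The sign bookkeeping with $\eps_1,\eps_2$ is handled properly.

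One thing you should flag explicitly, rather than leave silent: your final zeta function
\[
\zeta(t)=\frac{(1-t^{d_p})^{kr}}{(1-t^{k|a|})\,(1-t^{k|b|})}
\]
does \emph{not} agree with the formula printed in the Corollary, which has $(1-t^{|a|})(1-t^{|b|})$ in the denominator, i.e.\ the factors of $k$ in the exponents are missing. Your version is the correct one. This is confirmed by a consistency check with the Milnor number: $\deg_t\zeta$ must equal $\mu-1$, and your denominator gives $kr\,d_p-k|a|-k|b|=k^2|a||b|-k(|a|+|b|)=\mu-1$, whereas the printed formula gives $k^2|a||b|-|a|-|b|\ne\mu-1$ once $k>1$. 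It is also confirmed by the holomorphic test case $f=z_1^3+z_2^3$ ($k=3$, $a=b=1$, $a'=b'=0$, $r=1$, $d_p=3$), whose classical zeta function is $1-t^3$: your formula yields $(1-t^3)^3/(1-t^3)^2=1-t^3$, while the printed one would give $(1-t^3)^3/(1-t)^2$. The statement of the Corollary simply has a typo, substituting the factor exponents $|a|,|b|$ for the polar sections $|a_1|=k|b|$, $|a_2|=k|a|$; your proof catches it, but you should say so.
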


\subsection{Appendix: Calculation of Example 8.1.2}
We give the detail of the calculation for  Example 8.1.2.
Let 
\begin{eqnarray*}\begin{split}
& f_t(\bfz,\bar\bfz)=-2z_1^2\bar z_1+z_2^2\bar z_2+t\,z_1^2\bar z_2,\,t\in
 \BC\\
&V_t^*:=\{(z_1,z_2)\in \BC^{*2}\,|\, f_t(\bfz,\bar\bfz)=0\}\\
&F_t^*:=\{(z_1,z_2)\in \BC^{*2}\,|\, f_t(\bfz,\bar\bfz)=1\}.
\end{split}
\end{eqnarray*}
and we compute link components.
As $f_t$ is radially weighted, we may assume that 
$|z_2|=1$.
Thus we compute the section with $|z_2|=1$.
We put
\[
z_1=x_1+y_1\,i,\,z_2=x_2+y_2\,i,\,\, x_2=\cos(a),\quad y_2=\sin(\theta),\,
\]
Then $f_t(\bfz,\bar\bfz)=0$ can be rewritten as $f_1=f_2=0$ where
\begin{eqnarray*}
\begin{split}
&f_1=-2\,{x_1}^{3}-2\,x_1\,{y_1}^{2}+ ( \cos
 ( a )  ) ^{3}+\cos ( a )  ( \sin
 ( a )  ) ^{2}+t{x_1}^{2}\cos ( a )\\
&\qquad +
2\,tx_1\,y_1\,\sin ( a ) -t{y_1}^{2}\cos
 ( a ) \\
&f_2=-2\,{x_1}^{2}y_1-2\,{y_1}^{3}+ ( \cos
 ( a )  ) ^{2}\sin ( a ) + ( \sin
 ( a )  ) ^{3}-t{x_1}^{2}\sin ( a )\\
&\qquad  +
2\,tx_1\,y_1\,\cos ( a ) +t{y_1}^{2}\sin
 ( a ) 
\end{split}
\end{eqnarray*}
The resultant $R$ of $f_1$ and $f_2$ in $y_1$ takes the form $R=g_1g_2$
where
\[
 \begin{split}
&{ g_1}=2\,{x_1}^{3}- ( \cos ( a )  ) ^{3
}-t{x_1}^{2}\cos ( a )\\
&{ g_2}={t}^{4}{x_1}^{2}-{t}^{3} ( \sin ( a ) 
 ) ^{2}-2\,\cos ( a ) x_1 \,{t}^{2}+ ( \cos
 ( a )  ) ^{2}+ ( \sin ( a ) 
 ) ^{2}
\end{split}
\]
{\bf $U_1$:} Assume that $t=0$. Then $g_2\equiv 1$. The equation $g_1=0,\,f_1=f_2=0$
has a unique solution 
\[
 \begin{cases}
&x_1=\frac 12 \,{2}^{2/3}\cos ( a ) \\
&y_1=\frac 12 \,{2}^{2/3}\sin ( a ) \\
&x_2=\cos(a),\,\quad y_2=\sin(a),
\end{cases}
\qquad 0\le a\le 2\pi.
\]
This can be also observed by \cite{R-S-V}.

\vspace{.3cm}\noindent
{\bf $U_2$: } Consider the case $t=3$ as a model of $V_t,\,t\in U_3$.
 First, $g_1,g_2$ takes the following form.
\[
 \begin{split}
&g_1=2\,{x_1}^{3}- ( \cos ( a )  ) ^{3}-3\,{{
\it x_1}}^{2}\cos ( a ) \\
&g_2=81\,{x_1}^{2}-26\, ( \sin ( a )  ) ^{2}-18
\,\cos ( a ) x_1 + ( \cos ( a ) 
 ) ^{2}
\end{split}
\]
Over $g_1=0$, we have one component parametrized as
\[
 \begin{split}
&x_1= ( \frac 12 \,\sqrt [3]{3+2\,\sqrt {2}}+\frac 12 \,{\frac {1}{\sqrt [3]{3+2\,
\sqrt {2}}}}+\frac 12  ) \cos ( a ) \\
&y_1=-{\frac {\sin ( a ) }{ ( 3+2\,\sqrt {2} ) ^{2/3}
- ( 3+2\,\sqrt {2} ) ^{2/3}\sqrt {2}-\sqrt [3]{3+2\,\sqrt {
2}}+\sqrt [3]{3+2\,\sqrt {2}}\sqrt {2}}}\\
& 0\le a\le 2\pi.
\end{split}
\]
Over $g_2=0$, we have two components parametrized as
\[
 \begin{split}
&x_1=\frac 19 \,\cos ( a ) \pm \frac 19 \,\sqrt {26}\sin ( a  )\\
&y_1=
\frac {1}{234}\, ( \sqrt {26}\sin ( a ) \pm 26\,\cos
 ( a )  ) \sqrt {26},\,\,0\le a\le 2\pi.\\
\end{split}
\]
Thus we have shown that $\lkn (V_0)=1$ and $\lkn(V_3)=3$.

It is my pleasure to thank to the referee for the careful checking of
the first draft and a nice suggestion to make our paper more understandable.
\def\cprime{$'$} \def\cprime{$'$} \def\cprime{$'$} \def\cprime{$'$}
  \def\cprime{$'$} \def\cprime{$'$} \def\cprime{$'$} \def\cprime{$'$}

\end{document}